\theoremstyle{plain}
\newtheorem{theorem}{Theorem}[section]
\newtheorem{corollary}[theorem]{Corollary}
\newtheorem{lemma}[theorem]{Lemma}
\newtheorem{proposition}[theorem]{Proposition}
\theoremstyle{definition}
\newtheorem{definition}{Definition}[section]
\theoremstyle{remark}
\newtheorem{remark}{Remark}[section]
\numberwithin{equation}{section}
\begin{document}

\title[Quasiconvexity at the boundary and the nucleation of austenite]
{Quasiconvexity at the boundary and the nucleation of austenite}
\author[J.~M.~Ball]{John M.~Ball}
\address{Oxford Centre for Nonlinear PDE\\
Mathematical Institute\\
University of Oxford\\
Andrew Wiles Building\\
Radcliffe Observatory Quarter\\
Woodstock Road\\
Oxford, OX2 6GG, UK
}
\email{ball@maths.ox.ac.uk}

\author[K.~Koumatos]{Konstantinos Koumatos}
\address{Gran Sasso Science Institute,\\
Viale Francesco Crispi 7\\
L'Aquila\\
67100\\
Italy
}
\email{koumatos@maths.ox.ac.uk}

\thanks{The research of both authors was supported by the EPSRC Science and Innovation award to the Oxford Centre for Nonlinear PDE (EP/E035027/1) and the European Research Council under 
the European Union's Seventh Framework Programme (FP7/2007-2013) / ERC grant agreement ${\rm n^o}$ 291053. The research of JMB was also supported by a Royal Society Wolfson Research Merit Award.
Both authors would like to thank H.~Seiner and R.~D.~James for helpful discussions, as well as the referees for their careful reading of the manuscript.}

\begin{abstract} 
Motivated by experimental observations of H.~Seiner \textit{et al.}, we 
study the nucleation of austenite in a single crystal of a CuAlNi shape-memory alloy stabilized as a single variant of martensite. In the experiments the nucleation 
process was induced by localized heating and it was observed that, regardless of where the localized heating was applied, the nucleation points were always 
located at one of the corners of the sample - a rectangular parallelepiped in the austenite.
Using a simplified nonlinear elasticity model, we propose an explanation for the location of the nucleation points by showing that the martensite is a 
local minimizer of the energy with respect to localized variations in the interior, on faces and edges of the sample, but not at some corners, where a 
localized microstructure, involving austenite and a simple laminate of martensite, can lower the energy. 
The result for the interior, faces and edges is established by showing that the free-energy function 
satisfies a set of quasiconvexity conditions at the stabilized variant in the interior, faces and edges, respectively, provided the specimen is suitably 
cut.
\end{abstract}

\maketitle

\textsc{MSC (2010): 74N15, 49K30}

\keywords{quasiconvexity at the boundary, microstructure, phase transitions, nucleation}

\section{Introduction}
\label{intro}

The purpose of this paper is to provide the mathematical analysis, and proposed explanation, for a remarkable experimental observation of Seiner \textit{et al} on a 
single crystal of CuAlNi; see \cite{icomat11} for details on the experimental procedure, observations and mathematical results.

In the experiment, the specimen was a parallelepiped of dimensions 12$\times$3$\times$3mm$^3$ in its high temperature phase, the austenite, with edges 
approximately along the cubic axes $(1,0,0)^T$, $(0,1,0)^T$, $(0,0,1)^T$ (see \cite{PhaseTran} for a detailed description). By applying a uniaxial 
compression along its longest edge, the specimen was transformed into a single variant of its low temperature phase, the martensite. However, due to an 
effect known as mechanical stabilization of martensite, the critical temperature for the transition back to austenite was significantly increased and the reverse 
transition did not occur during unloading.

The specimen was then locally heated by touching its surface with a heated iron tip with temperature electronically controlled at 200$^\circ$C 
(control accuracy $\sim \pm 5^\circ$C). We note that the temperature $\theta_c$ required for the transition back to austenite by homogeneous heating 
was approximately 60$^\circ$C, i.e.~significantly lower than the temperature of the iron tip.

The localized heating was applied in three different ways: (i) with the tip touching one of the corners surrounding the upper face; (ii) with the tip 
touching one of the edges, approximately in the middle between two corners; (iii) with the tip touching approximately at the centre of the upper face. 
These experiments were repeated with various faces chosen to be the upper (observed) ones.

When heating was applied at a corner, the nucleation was always induced exactly at that corner and occurred nearly immediately after touching the 
specimen with the tip. When heating was applied either on an edge or at the centre of the upper face, the nucleation occurred at one of the corners as well, i.e.~the 
localized heating did not result in formation of the nucleus under the tip. Moreover, the nucleus was only observable after 30-60 s, which was enough 
time for the corner to reach the increased critical temperature by thermal conduction. In different tests the nuclei were observed at different corners. 
After the nucleation, the transition front formed and propagated through the specimen.

In Fig.~\ref{fig3_HS}, snapshots from the observations are seen. The transition fronts have morphologies of the interfacial microstructures described in 
\cite{PhaseTran} ($X$- and $\lambda$-interfaces), in which the mechanically stabilized martensite is separated from austenite by a twinned region 
ensuring kinematical compatibility.

\begin{figure*}[ht]
\centering
\includegraphics[width=1\textwidth]{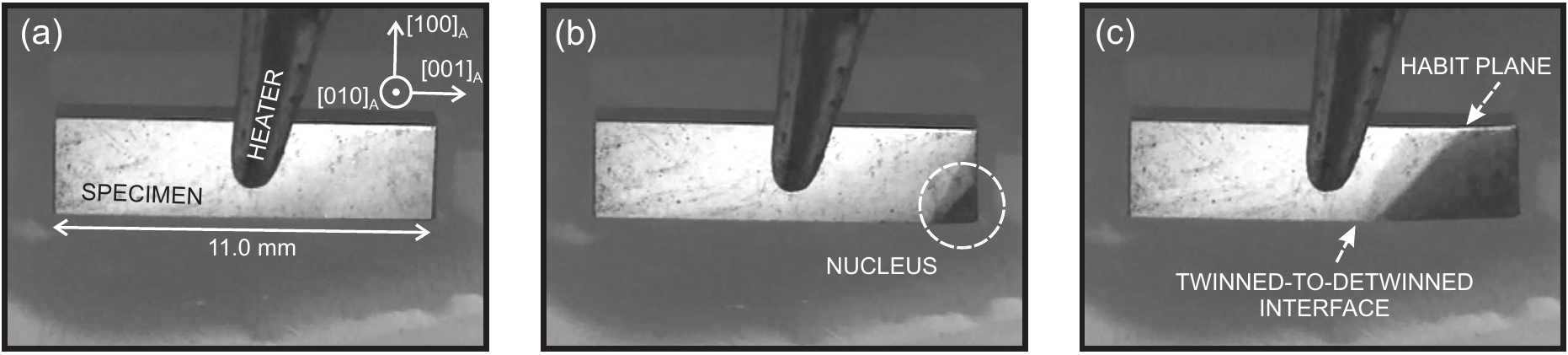}
\caption{Snapshots of the recorded video taken during the optical observations of the nucleation process (courtesy of H.~Seiner). (a) the initial state with the length and 
crystallographic orientation of the specimen given in the coordinate system of the austenitic lattice (indicated by the subscript A); (b) 
formation of the nucleus at a corner (the first frame of the recorded video in which the nucleus was clearly visible); (c) the fully formed transition 
front propagating through the specimen. The morphology of the interfacial microstructure is outlined by the arrows indicating the austenite-to-twinned 
martensite interface (the habit plane) and the twinned-to-detwinned interface between the laminate and the stabilized martensite.}
\label{fig3_HS}
\end{figure*}

It is only natural to treat this problem as one concerning local minimizers, and using a simplified nonlinear elasticity model we propose an 
explanation for the location of the nucleation points based on ideas of the modern calculus of variations. To be more specific, it is shown that the 
free-energy density of our simplified model must be quasiconvex in the interior, as well as quasiconvex at faces and edges of any suitably oriented 
convex polyhedral domain, Seiner's specimen belonging to this class of domains. In this analysis we assume that the initial state is a pure variant of martensite, as in Seiner's experiment. For some comments on the general case of nucleation of austenite from a microstructure of martensite see Section \ref{sec:4.2}.

The problem is expressed in terms of Young measures, and our quasiconvexity conditions in the interior and at faces are classical in the sense that they 
amount to the quasiconvexity condition of Morrey~\cite{morrey52} and the quasiconvexity at the boundary condition of Ball \& Marsden~\cite{ballmarsden} 
expressed in terms of Young measures. As for quasiconvexity at edges and corners, that is at the non-smooth parts of the boundary, these conditions can be defined 
analogously to quasiconvexity at the boundary. The quasiconvexity 
conditions for the stabilized variant in the interior, faces and edges imply that the stabilized variant is a minimizer of the energy with respect to 
localized variations in the interior, faces and edges respectively and nucleation of austenite cannot occur there. As for the corners, an explicit construction shows 
that, for the crystallographic directions of Seiner's specimen and at some corners, a specific microstructure containing austenite can lower the energy.

The paper is structured as follows: though in a simplified setting, our quasiconvexity-based approach is largely inspired by the remarkable work of 
Grabovsky \& Mengesha~\cite{grabovsky2009} on sufficient conditions for strong local minimizers, where an answer to the 
conjecture in~\cite{ballconjecture} on quasiconvexity-based sufficient conditions is given. In Section~\ref{sec:1}, we recall the 
classical definitions of quasiconvexity and briefly review the results in~\cite{grabovsky2009}. Further remarks on possible connections with our work 
and~\cite{grabovsky2009} are made in the concluding section of this paper. We also give a brief account of gradient Young measures along with standard 
results used in the paper and we present the general nonlinear elasticity model used to analyze microstructure formation.

In Section~\ref{sec:1a}, we take a small detour into the invertibility of Sobolev mappings and the interpenetration of matter. In particular, in our model, we will 
require invertibility for the maps underlying our admissible measures, as well as a certain regularity for the inverse map. This is achieved by employing the condition of 
Ciarlet \& Ne{\v{c}}as~\cite{ciarletnecas} and exploiting the theory of mappings with bounded distortion, all of which are introduced in this section along with 
auxiliary results used in the subsequent analysis. Our simplified elasticity model is introduced in Section~\ref{sec:2}, where $\Gamma$-convergence is 
employed in order to derive an appropriate energy functional and the set of admissible gradient Young measures for the problem is introduced.

Section~\ref{sec:3} is devoted to the proof of our main result, Theorem~\ref{theorem:main}, on the location of the nucleation points; our set of quasiconvexity conditions 
is also introduced and auxiliary results are established. Quasiconvexity at faces and edges is naturally dependent on the 
orientation of the convex polyhedral domain, i.e.~the direction of edges and face normals. The defining property of 
these directions is independent of the change of symmetry of the crystal lattice from austenite to martensite, but the directions themselves are 
dependent on the transformation; as such, for our main result to become applicable to Seiner's specimen these directions need to be identified for the 
cubic-to-orthorhombic transition of CuAlNi. Explicit formulae for these are given in Section~\ref{sec:4.1}. We warn the reader that the calculations are lengthy and, upon first reading, Section~\ref{sec:4.1} can be ignored. Instead, the reader can directly proceed to the concluding remarks in Section~\ref{sec:4.2}.

\section{Preliminaries}
\label{sec:1}

\subsection{The Weierstrass problem}
\label{sec:1.1}

Consider a general variational problem of the form
\[I(y)=\int_{\Omega}W(x,y(x),Dy(x))\,dx\]
over the class of admissible deformations
\[\mathcal{A}=\left\{y:\Omega\rightarrow\mathbb{R}^{N}:\,y\in\,X\:\:\mbox{and}\:\: y_{\vert\partial\Omega_{1}}=\bar{y}\right\}\]
where $\Omega\subset\mathbb{R}^{d}$ is a bounded Lipschitz domain, $\partial\Omega_1$ is a relatively open subset of $\partial\Omega$, 
$W:\overline{\Omega}\times\mathbb{R}^{N}\times\mathbb{R}^{N\times d}\rightarrow\mathbb{R}$ 
satisfies certain conditions, $X$ is an appropriate function space and $\bar{y}\in X$ is a specified mapping.

\begin{definition}
We say that $y_{0}\in\mathcal{A}$ is a \textit{strong local minimizer} of $I$ if there exists an $\epsilon>0$ such that 
$I(y_{0})\leq\,I(y)$ for all $y\in\mathcal{A}$ with $\| y_{0}-y\|_{\infty}<\epsilon$.\\
Similarly, $y_{0}\in\mathcal{A}$ is a \textit{weak local minimizer} of $I$ if there exists an $\epsilon>0$ such that 
$I(y_{0})\leq\,I(y)$ for all $y\in\mathcal{A}$ with $\| y_{0}-y\|_{1,\infty}<\epsilon$.
Here, $\|\cdot\|_{\infty}$ and $\|\cdot\|_{1,\infty}$ denote the norms in the spaces $L^{\infty}(\Omega,\mathbb{R}^N)$ and $W^{1,\infty}(\Omega,\mathbb{R}^N)$ 
respectively.
\end{definition}

The Weierstrass problem consists in finding necessary and sufficient conditions for a deformation $y_{0}\in\mathcal{A}$ to be a strong local minimizer of 
$I$. Of course, the function space $X$ and the conditions on the stored energy function $W$ are themselves part of the problem, in particular so that $I(y)$ is well defined for $y\in\mathcal A$. 
This is an old and long-standing problem in the calculus of variations which is fairly well understood in the so-called scalar cases of $d=1$ 
(Weierstrass) or $N=1$ (Hestenes \cite{hestenessufficiency}). However, in the vectorial case of $d\geq 2$, $N\geq 2$, which is of interest to us, the problem 
remained largely open until recently. For the convenience of the reader let us recall the definition of quasiconvexity~\cite{morrey52}:
\begin{definition}
We say that a function $f:\mathbb{R}^{N\times d}\rightarrow\mathbb{R}$ is quasiconvex at $F\in\mathbb{R}^{N\times d}$ if for $B=B(0,1)$ - the unit ball in $\mathbb{R}^d$ - 
and all $\psi\in W^{1,\infty}_{0}(B,\mathbb{R}^N)$,
\begin{equation}
f(F)\leq\frac{1}{\mathcal{L}^d(B)}\int_{B}f(F+D\psi(x))\,dx,
\end{equation}
where $\mathcal{L}^d$ denotes $d$-dimensional Lebesgue measure, whenever the integral exists. We say that $f$ is quasiconvex if it is quasiconvex at every $F\in\mathbb{R}^{N\times d}$.
\label{def:fqc}
\end{definition}

\begin{remark}
Under the above definition, quasiconvex functions are rank-one convex and hence continuous, see e.g. \cite[Lemma 4.3]{mullernotes}.
\end{remark}

With this definition at hand, let $X=C^{1}(\overline{\Omega},\mathbb{R}^{N})$, the space of functions $y\in C^{1}(\Omega,\mathbb{R}^N)$ which can be extended 
to a continuously differentiable function in an open set containing $\overline{\Omega}$. Also, for $y_0$ a strong local minimizer of $I$ in $\mathcal{A}$, let 
$\mathcal{R}=\left\{(y_0(x),Dy_0(x)):\,x\in\overline{\Omega}\right\}$. Assume that $W(x,y,F)$ is 
continuous and that its partial derivatives of first and second order in $(y,F)$ exist and are continuous on $\overline{\Omega}\times\mathcal{O}$ where 
$\mathcal{O}$ is a bounded and open neighbourhood of $\mathcal{R}$ in $\mathbb{R}^N\times \mathbb{R}^{N\times d}$. Letting
\[\rm{Var}(\mathcal{A})=\left\{\varphi\in\,C^{1}(\overline{\Omega},\mathbb{R}^{N}):\varphi_{\vert\partial\Omega_{1}}=0\right\},\]
known necessary conditions for the map $y_{0}$ to be a strong local minimizer of $I$ are the following:
\begin{itemize}
\item [(i)] Satisfaction of the weak form of the Euler-Lagrange equations, i.e.
\[\int_{\Omega}\left[W_{y}(x,y_{0}(x),Dy_{0}(x))\cdot\varphi(x)+W_{F}(x,y_{0}(x),Dy_{0}(x))\cdot D\varphi(x)\right]dx=0\]
for all $\varphi\in \rm{Var}(\mathcal{A})$, where $W_{y}$ and $W_{F}$ denote the derivatives of $W$ with respect to $y$ and $F=Dy$ respectively.
\item [(ii)] Positivity of the second variation, i.e.
\[\delta^{2}I(y_{0})=\frac{d^{2}}{d\epsilon^{2}}I(y_{0}+\epsilon\varphi)\vert_{\epsilon=0}\geq 0\]
for all $\varphi\in \rm{Var}(\mathcal{A})$.
\item [(iii)] Quasiconvexity in the interior, i.e.~for all $x_{0}\in\Omega$
\[\int_{B}W(x_{0},y_0(x_{0}),Dy_0(x_{0})+D\varphi(x))\,dx\geq\int_{B}W(x_{0},y_0(x_{0}),Dy_0(x_{0}))\,dx\]
for all $\varphi\in W^{1,\infty}_{0}(B,\mathbb{R}^{N})$ where $B=B(0,1)$ denotes the unit ball in $\mathbb{R}^{d}$; that is, $W(x_0,y_0,\cdot)$ is quasiconvex at 
$Dy_0(x_0)$.
\item [(iv)] Quasiconvexity at the boundary, i.e.~for all $x_{0}\in\partial\Omega\setminus\overline{\partial\Omega_{1}}$ - the free boundary - in the neighbourhood of which $\partial\Omega$ is $C^1$
\[\int_{B^{-}_{n(x_{0})}}W(x_{0},y_0(x_{0}),Dy_0(x_{0})+D\varphi(x))\,dx\geq\int_{B^{-}_{n(x_{0})}}W(x_{0},y_0(x_{0}),Dy_0(x_{0}))\,dx\]
for all $\varphi\in V_{n(x_{0})}$ where $n(x_{0})$ is the outward unit normal to $\Omega$ at $x_{0}$ and
\[V_{n(x_{0})}=\left\{\varphi\in W^{1,\infty}(B^{-}_{n(x_{0})},\mathbb{R}^{N})\,:\,\varphi\equiv 0\,\,
\mbox{on}\,\,\partial B\cap\partial{B^{-}_{n(x_{0})}}\right\}.\]
Here, $B^{-}_{n(x_{0})}=\left\{x\in B\,:\,x\cdot n(x_{0})<0\right\}$.
\end{itemize}
Conditions (i) and (ii) are classical. The necessity of condition (iii) is due to Meyers~\cite{meyers65} 
(see also~\cite{31}) and can be thought of as a multi-dimensional 
version of the Weierstrass positivity condition; in fact, it reduces to it when $d=1$. Condition (iv) was introduced by Ball \& 
Marsden~\cite{ballmarsden} and is a genuinely new condition valid for the vectorial case. For further references regarding the notion of quasiconvexity at the boundary, the reader is referred to \cite{kruzikqcboundary,mielkeqcboundary,silhavybook}.

For such $C^{1}$ maps $y_0$, in their seminal paper~\cite{grabovsky2009}, Grabovsky and Mengesha showed that a slightly strengthened version of the above set of 
necessary conditions is in fact sufficient for $y_{0}$ to be a strong local minimizer of $I$; see also Grabovsky \& Mengesha~\cite{grabovsky2007} and 
Grabovsky~\cite{grabovsky2008} for investigations on the problems of sufficient conditions for $W^{1,\infty}$ weak$\ast$ local minimizers and necessary conditions for 
$W^{1,\infty}$ strong local minimizers, respectively.

Grabovsky and Mengesha base their sufficiency proof on a decomposition lemma (see also \cite{fonseca1998analysis,kristensen1994finite}) 
which splits arbitrary variations of the dependent variable into a strong and a weak part. The core of their proof lies in showing that these two parts 
act on the functional independently. In particular, they show that the action of the weak part can be described in terms of the second variation, 
whereas, the action of the strong part is `localized', in the sense that it can be described as a superposition of `Weierstrass needles'. Then, it is 
the (uniform) positivity of the second variation and the (uniform) quasiconvexity conditions, respectively, that prevent the weak and the strong part 
from decreasing the functional.

Although it may be possible to extend the work in~\cite{grabovsky2009} to domains with edges and corners, it is formulated for smooth domains and everywhere defined and continuous $W$, and is not directly applicable to 
our case. However in this paper we restrict attention to localized variations only, corresponding to localized nucleation, and so it is the 
quasiconvexity conditions in the interior and at faces and edges of the parallelepiped domain which will be seen to prevent these localized variations from lowering the 
energy.

\subsection{Gradient Young measures}
\label{sec:1.2}

Young measures, introduced by L.C. Young~\cite{35}, are families of probability measures carrying the minimal information about a sequence 
$\left\{z^{k}\right\}$ that is necessary (under suitable hypotheses) to compute the weak limit of $f(z^{k})$ for continuous functions $f$. 

Let $\Omega\subset\mathbb{R}^d$ be a bounded Lipschitz domain and $C_{0}(\mathbb{R}^{N\times d})$ be the closure under the supremum norm of compactly supported, continuous functions on 
$\mathbb{R}^{N\times d}$, i.e.~the set of continuous functions from $\Omega$ to $\mathbb{R}^{N\times d}$ vanishing at infinity. By $\mathcal{M}(\mathbb{R}^{N\times d})$ 
we denote the dual space of $C_{0}(\mathbb{R}^{N\times d})$ consisting of signed Radon measures with finite mass equipped with the dual norm 
of total variation.

We say that a map $\mu:\Omega\rightarrow\mathcal{M}(\mathbb{R}^{N\times d})$ is weak$\ast$ measurable if the functions $x\mapsto\langle\mu(x),\psi
\rangle$ are measurable for all $\psi\in C_{0}(\mathbb{R}^{N\times d})$, where $\langle\mu(x),\psi\rangle=\int_{\mathbb{R}^{N\times d}}\psi\,d\mu(x)$.

Henceforth we denote the space of essentially bounded weak$\ast$ measurable functions from $\Omega$ to $\mathcal{M}(\mathbb{R}^{N\times d})$ by 
$L^{\infty}_{w^{\ast}}(\Omega, \mathcal{M}(\mathbb{R}^{N\times d}))$. Since $C_{0}(\mathbb{R}^{N\times d})$ is separable, 
$$L^{\infty}_{w^{\ast}}(\Omega, \mathcal{M}(\mathbb{R}^{N\times d}))= L^{1}(\Omega, C_{0}(\mathbb{R}^{N\times d}))^{\ast}$$ 
and is a Banach space when equipped with the norm $\|\mu\|=\mathrm{ess\,sup}_{x\in\Omega}\,\|\mu_{x}\|_{\mathcal{M}(\mathbb{R}^{N\times d})}$. 

A Young measure $\nu=(\nu_{x})_{x\in\Omega}$ is a map in $L^{\infty}_{w^{\ast}}(\Omega,\mathcal{M}(\mathbb{R}^{N\times d}))$ 
taking values in the space of probability measures. A classical result concerning Young measures (e.g.~\cite{ballym}) is that, for every 
sequence $\left\{z^k\right\}$ uniformly bounded in $L^{p}(\Omega,\mathbb{R}^{N\times d})$, $p>1$, there exists a subsequence (not relabelled) and a Young 
measure $\nu=(\nu_{x})_{x\in\Omega}$ such that, for all continuous functions $f:\mathbb{R}^{N\times d}\rightarrow\mathbb{R}$ satisfying 
$\vert f(F)\vert\leq c(1+\vert F\vert^q)$ with $1\leq q<p$,
\begin{equation}
\label{eq:ymlimit}
 f(z^k)\rightharpoonup\langle\nu_x,f\rangle:=\int_{\mathbb{R}^N}f(A)\,d\nu_x(A)\;\mbox{in}\;L^{p/q}(\Omega).
\end{equation}
In particular, since $z^k$ is bounded in $L^{p}(\Omega,\mathbb{R}^{N\times d})$,
\begin{equation}
\label{eq:ymlimitfns}
 z^k\rightharpoonup\langle\nu_x,\mathrm{id}\rangle=\int_{\mathbb{R}^N}A\,d\nu_x(A)\;\mbox{in}\;L^{p}(\Omega,\mathbb{R}^{N\times d}).
\end{equation}
Note that for $p=\infty$, (\ref{eq:ymlimit}) holds weak$\ast$ in $L^{\infty}(\Omega)$ and for all continuous $f:\mathbb{R}^{N\times d}\rightarrow\mathbb{R}$; similarly for (\ref{eq:ymlimitfns}).

We say that the Young measure $\nu=(\nu_{x})_{x\in\Omega}$ is generated by the sequence $z^k$ bounded in $L^{p}(\Omega,\mathbb{R}^{N\times d})$ if (\ref{eq:ymlimit}) 
holds. An important observation in the context of microstructure formation is that, for a compact set $K\subset\mathbb{R}^{N\times d}$,
\begin{equation}
 \mathrm{dist}(z^k,K)\rightarrow0\;\;\mbox{in measure}\,\Leftrightarrow\,\mathrm{supp}\,\nu_x\subset K\;\;\mbox{a.e.~in $\Omega$.}
\end{equation}

We note that there are more general versions of the above result (see e.g.~\cite{ballym}); however, we shall only be interested in Young measures 
generated by sequences bounded in $L^{p}(\Omega,\mathbb{R}^{N\times d})$ (in fact, mostly $L^{\infty}(\Omega,\mathbb{R}^{N\times d})$) and we do not elaborate further.

\begin{remark}
Given functions $\xi\in\,L^{1}(\Omega)$ and $f\in\,C_{0}(\mathbb{R}^{N\times d})$, the tensor product $\xi\otimes f$ denotes the element of 
$L^{1}(\Omega, C_{0}(\mathbb{R}^{N\times d}))$ given by $x\mapsto\xi(x)f$. We note that the span of such tensor products is dense 
in $L^{1}(\Omega, C_{0}(\mathbb{R}^{N\times d}))$. Then $\nu^{k}\in\,L^{\infty}_{w^{\ast}}(\Omega,\mathcal{M}(\mathbb{R}^{N\times d}))$ 
converges weak$\ast$ to $\nu$ in $L^{\infty}_{w^{\ast}}(\Omega, \mathcal{M}(\mathbb{R}^{N\times d}))$ if and only if $(\nu_k)$ 
is norm bounded in $L^{\infty}_{w^{\ast}}(\Omega, \mathcal{M}(\mathbb{R}^{N\times d}))$ and
\begin{equation}
\int_{\Omega}\xi(x)\langle\nu^{k},f\rangle\,dx\rightarrow\int_{\Omega}\xi(x)\langle\nu,f\rangle\,dx\nonumber
\end{equation}
for all $\xi\in\,L^{1}(\Omega)$, $f\in\,C_{0}(\mathbb{R}^{N\times d})$.
\end{remark}

We denote by $\mathcal{G}^{p}(\Omega,\mathbb{R}^{N\times d})$ the set of $W^{1,p}$ \textit{gradient Young measures}, i.e.~those Young measures generated by a 
sequence of gradients $Dy^k$ such that the sequence $y^k$ is uniformly bounded in $W^{1,p}(\Omega,\mathbb{R}^N)$. The following theorem due to Kinderlehrer \& 
Pedregal~\cite{36,37} provides a full characterization of $W^{1,p}$ gradient Young measures.

\begin{theorem}
\label{KPGYM}
A family $(\nu_{x})_{x\in\Omega}$ of probability measures on $\mathbb{R}^{N\times d}$, depending measurably on $x$, belongs to the space 
$\mathcal{G}^{p}(\Omega,\mathbb{R}^{N\times d})$ if and only if
\begin{itemize}
\item[(i)] $\int_{\Omega}\langle\nu_x,\vert\cdot\vert^p\,dx<\infty$ for $1 < p <\infty$ or $\mathrm{supp}\,\nu_{x}\subset K$ a.e.~for some compact set 
$K\subset\mathbb{R}^{N\times d}$ when $p=\infty$;
\item[(ii)] $\bar{\nu}_{x}=\langle\nu_{x},\mathrm{id}\rangle=Dy(x)$ a.e.~for some $y\in W^{1,p}(\Omega,\mathbb{R}^N)$ 
(referred to as the map underlying $\nu$);
\item[(iii)] $\langle\nu_{x},f\rangle\geq f(\bar{\nu}_{x})$ a.e.~for all quasiconvex $f:\mathbb{R}^{N\times d}\rightarrow\mathbb{R}$ satisfying 
$\vert f(A)\vert\leq c(1+\vert A\vert^p)$ if $1 < p <\infty$; for $p=\infty$ no growth condition is required.
\end{itemize}
\end{theorem}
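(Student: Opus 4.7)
The plan is to prove the equivalence by (a) deriving (i)--(iii) directly from the defining convergence (\ref{eq:ymlimit}) for gradient Young measures, and (b) proving the converse in two stages: first the homogeneous case via Hahn--Banach duality, then the general case via localization. For necessity, suppose $\nu$ is generated by $\{Dy^k\}$ with $\{y^k\}$ bounded in $W^{1,p}(\Omega,\mathbb{R}^N)$. Condition (i) is inherited from lower semicontinuity of $\|\cdot\|_{L^p}^p$ along the generating sequence (for $p=\infty$, the uniform gradient bound forces $\mathrm{supp}\,\nu_x$ into a common compact set). Condition (ii) follows from (\ref{eq:ymlimitfns}) together with the observation that a weak $W^{1,p}$-limit (along a subsequence) of $\{y^k\}$ is a map $y$ with $Dy = \bar\nu_x$. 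For (iii), Morrey's weak lower semicontinuity theorem yields $\int_{\Omega'} f(Dy)\,dx \leq \liminf_k \int_{\Omega'} f(Dy^k)\,dx$ for any subdomain $\Omega' \subset\subset \Omega$ and any quasiconvex $f$ of growth $p$; combining with (\ref{eq:ymlimit}) and localizing to Lebesgue points of $x\mapsto\nu_x$ gives $f(\bar\nu_x) \leq \langle \nu_x, f\rangle$ almost everywhere.

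For sufficiency, I would first establish the homogeneous case: given a probability measure $\nu$ on $\mathbb{R}^{N\times d}$ with finite $p$-th moment (or compact support) and barycenter $F = \bar\nu$ satisfying $\langle \nu, f\rangle \geq f(F)$ for all quasiconvex $f$ of growth $p$, construct a sequence $\{\varphi^k\} \subset W^{1,p}_0(B,\mathbb{R}^N)$ on the unit ball $B$ so that the gradients $F + D\varphi^k$ generate the constant Young measure $\nu_x \equiv \nu$. Let $\mathcal{H}_F$ denote the set of such homogeneous gradient Young measures with barycenter $F$, regarded as a convex subset of the dual of a suitable weighted space of continuous functions of growth $p$. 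Assuming $\nu \notin \mathcal{H}_F$, closedness of $\mathcal{H}_F$ in the appropriate topology together with Hahn--Banach produces a separating continuous test function $g$ of growth $p$. Replacing $g$ by its quasiconvex envelope $Qg$ (which has the same growth and satisfies $Qg(F) = \inf_\varphi \tfrac{1}{|B|}\int_B g(F+D\varphi)\,dx$) produces a quasiconvex function that violates (iii) at $F$, a contradiction.

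Globalization proceeds by localization. Partition $\Omega$ (up to null sets) into small cubes $\{Q_i^n\}$ and, using Lebesgue-point theory for the weak$^\ast$ measurable map $x\mapsto\nu_x$, choose on each cube a homogeneous measure $\nu^{i,n}$ close to the average $\tfrac{1}{|Q_i^n|}\int_{Q_i^n}\nu_x\,dx$ with barycenter close to the value of $Dy$ on $Q_i^n$. The homogeneous step produces on each cube a generating sequence with affine boundary values matching $Dy$, which can be patched together and corrected by cutoffs so that the assembled maps $y^{n,k}$ converge to $y$ in $L^p$ and their gradients generate an approximate version of $\nu_x$; a diagonal subsequence, extracted via separability of $L^1(\Omega, C_0(\mathbb{R}^{N\times d}))$, yields one sequence generating $\nu_x$ on all of $\Omega$. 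The principal obstacle is the homogeneous sufficiency via Hahn--Banach: one must identify the correct locally convex space in which $\mathcal{H}_F$ is closed, handle the growth bound for $1 < p < \infty$ by truncation and approximation, and most delicately pass from the arbitrary separator $g$ to its quasiconvex envelope while retaining both the integral identity and quasiconvexity. Once the homogeneous case is available, the localization step is technical but largely routine, the main care being to control the boundary-value corrections as the cube size shrinks.
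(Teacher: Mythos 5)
The paper does not prove this statement: it is the Kinderlehrer--Pedregal characterization theorem, stated and cited directly from \cite{36,37}. Your outline is a faithful summary of the original argument --- necessity by lower semicontinuity and localization, sufficiency by a Hahn--Banach separation in the homogeneous case followed by a localization/patching globalization --- and you have correctly identified the genuinely delicate points (closedness of the set of homogeneous gradient Young measures in the right dual topology, passage from the separating functional to its quasiconvex envelope while keeping the growth bound and nondegeneracy, and controlling boundary corrections as the partition refines).

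One small caution on the necessity of (iii) in the range $1<p<\infty$: the growth hypothesis allows $|f(A)|\leq c(1+|A|^p)$, i.e.\ growth \emph{equal} to $p$, whereas the Young-measure representation (\ref{eq:ymlimit}) only applies directly to integrands of growth $q<p$, and Morrey/Acerbi--Fusco lower semicontinuity in the borderline case requires $f$ bounded below by $-c(1+|A|^q)$ with $q<p$ or an equi-integrability argument. Your proposal invokes ``Morrey's weak lower semicontinuity theorem'' for quasiconvex $f$ of growth $p$ as if it applies without qualification; strictly it does not, and Kinderlehrer--Pedregal handle this by a truncation/approximation step (approximating $f$ from below by quasiconvex functions of sub-$p$ growth, or using the decomposition lemma to recover equi-integrability of $|Dy^k|^p$). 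You should either insert this approximation or restrict initially to $q<p$ and pass to the limit. With that repair, the structure you describe is the standard one and, modulo the technical work you yourself flag, would go through; it does not, however, constitute an alternative to anything in this paper, since the paper uses the theorem as a black box.
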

\noindent In particular, we note that every $W^{1,p}$ gradient Young measure , $p>1$, satisfies the \textit{minors relations:}
\begin{equation}
\label{eq:minors0}
\langle\nu_{x},J\rangle=J(\bar{\nu_{x}})
\end{equation}
for all subdeterminants $J=J(A)$ of order $s\leq p$. This follows from Theorem~\ref{KPGYM} and the fact that minors are quasiaffine, 
i.e.~both $\pm J$ are quasiconvex.

We end our discussion on Young measures by giving a few remarks concerning $W^{1,\infty}$ gradient Young measures which will be of importance to us: in~\cite{36}, 
Kinderlehrer and Pedregal showed that if $(\nu_{x})_{x\in\Omega}$ is a $W^{1,\infty}$ gradient Young measure, then for a.e.~$\alpha\in\Omega$, the probability measure $\nu=\nu_{\alpha}$ is a homogeneous 
(i.e.~$x$-independent) $W^{1,\infty}$ gradient Young measure. Moreover, they showed that whenever $\nu=(\nu_{x})_{x\in\Omega}$ is a $W^{1,\infty}$ gradient Young measure 
with underlying map $y$ such that $y-Fx\in W^{1,\infty}_0(\Omega,\mathbb{R}^N)$, $F\in\,\mathbb{R}^{N\times d}$, then the measure $\mathrm{Av}\,\nu$ defined for all continuous 
$\psi:\mathbb{R}^{N\times d}\rightarrow\mathbb{R}$ by
\begin{equation*}
\label{eq:averaging}
 \langle\mathrm{Av}\,\nu,\psi\rangle=\frac{1}{\mathcal{L}^d(\Omega)}\int_{\Omega}\langle\nu_{x},\psi\rangle\,dx
\end{equation*}
is a homogeneous $W^{1,\infty}$ gradient Young measure with $\overline{\mathrm{Av}\,\nu}=F$. This is referred to later as the \textit{averaging} of Young measures.

Lastly, we recall a technical result due to Zhang~\cite{38} known as \textit{Zhang's Lemma}. This says that if a sequence 
$Dy^{k}$ is bounded in $L^{p}$, $p>1$, and generates a Young measure $(\nu_{x})_{x\in\Omega}$ with $\mathrm{supp}\,\nu_{x}\subset K$ a.e.~where 
$K\subset \mathbb{R}^{N\times d}$ is compact, then there exists a sequence $z^{k}$ with $Dz^{k}$ bounded in $L^{\infty}$ which generates the same 
Young measure $(\nu_{x})_{x\in\Omega}$. In other words, a $W^{1,p}$ gradient Young measure with compact support is a $W^{1,\infty}$ gradient Young 
measure. In the context of martensitic transformations, this will enable us to restrict attention to $W^{1,\infty}$ gradient Young measures and, respectively, 
underlying deformations in $W^{1,\infty}(\Omega,\mathbb{R}^{N})$.

\subsection{General elasticity model for microstructures}
\label{sec:1.3}

The general nonlinear elasticity model for martensitic transformations~\cite{1,4}, which neglects interfacial energy, leads to the prediction of infinitely fine 
microstructures which are identified with limits of infimizing sequences $y^{k}$, $k=1,2,\ldots$, for a total free energy
\begin{equation}
\label{eq:functionalin1.3}
E_{\theta}(y)=\int_{\Omega}\varphi(Dy(x),\theta)\,dx.
\end{equation}
Here, $\Omega\subset\mathbb{R}^3$ is a bounded Lipschitz domain representing the region occupied in the reference configuration by the undistorted austenite at the critical temperature $\theta_{c}$ and 
$y(x)\in\mathbb{R}^3$ denotes the deformed position of the particle $x\in\Omega$. The free-energy density $\varphi(F,\theta)$ depends on the 
deformation gradient $F\in\mathbb{R}^{3\times 3}$ and the temperature $\theta$. By frame indifference, $\varphi(RF,\theta)=\varphi(F,\theta)$ for all 
$F$, $\theta$ and for all $R\in\,SO(3)=\left\{R\in\mathbb{R}^{3\times 3}:R^{T}R=\mathbf{1},\,\det{R}=1\right\}$. Let
\begin{equation}
K_{\theta}=\lbrace F:\varphi(G,\theta)\geq\varphi(F,\theta)\;\mbox{for all $G\in\mathbb{R}^{3\times 3}$}\rbrace\nonumber
\end{equation}
denote the set of energy-minimizing deformation gradients. Then we assume that
\[K_{\theta}=\left\{\begin{array}{ll}
\alpha(\theta)SO(3)\mbox{ - austenite}&\,\theta>\theta_{c}\\
SO(3)\cup\bigcup^{N}_{i=1}SO(3)U_{i}(\theta_c)&\,\theta=\theta_{c}\\
\bigcup^{N}_{i=1}SO(3)U_{i}(\theta)\mbox{ - martensite}&\,\theta<\theta_{c},
\end{array}\right.\]
where the positive definite, symmetric matrices $U_{i}(\theta)$ correspond to the $N$ distinct variants of martensite and $\alpha(\theta)$ is 
the thermal expansion coefficient of the austenite with $\alpha(\theta_{c})=1$. We note that the matrices $U_i$ are symmetry related in the sense that
$$\left\{U_1,\ldots, U_N\right\}=\left\{Q^TU_1Q: Q\in\mathcal{P}^a\right\},$$
where $\mathcal{P}^a$ denotes the symmetry group of the austenite. In our case of cubic austenite $\mathcal{P}^a=\mathcal{P}^{24}$ - the subgroup of $SO(3)$ consisting of 
the 24 rotations mapping the unit cube to itself.

The weak$\ast$ limit $Dy$ of the gradients $Dy^k$ of an infimizing sequence corresponds to the macroscopic deformation gradient. However, information is lost in taking 
this limit and a more complete way to describe microstructure is via the use of gradient Young measures, described above. Then we seek to minimize
\[I_{\theta}(\nu)=\int_{\Omega}\langle\nu_{x},\varphi\rangle\,dx=\int_{\Omega}\int_{\mathbb{R}^{3\times3}}\varphi(A)\,d\nu_{x}(A)dx\]
over the space of gradient Young measures. In this case, the underlying (macroscopic) deformation gradient $Dy(x)$ corresponds to the 
centre of mass of $\nu$, i.e.~$Dy(x)=\bar{\nu}_{x}$ (see \cite{4}). 

As an example of the use of Young measures in the context of microstructures, consider the homogeneous measure 
$\nu=\lambda\delta_F+(1-\lambda)\delta_G$, for some $\lambda\in(0,1)$, supported on two rank-one connected matrices $F$ and $G=F+a\otimes n$ 
where $a$, $n$ are vectors and $\delta_{\cdot}$ denotes a Dirac mass. This Young measure is generated by gradients $Dy^k$, uniformly bounded in 
$L^{\infty}$, consisting of simple laminates formed from alternating layers with normal $n$ of width $\lambda k^{-1}$ and $(1-\lambda)k^{-1}$ in 
which $Dy^k$ takes the respective values $F$ and $G$. 
At each $x$, $\nu_x$ gives the limiting probabilities $\lambda$, 
$1-\lambda$ as $k\rightarrow\infty$ of finding the matrices $F$ and $G$, respectively, in an infinitesimal neighbourhood of $x$ (see~\cite{ballym} for 
a precise statement of this probabilistic interpretation). In this case, the macroscopic gradient is 
$Dy(x)=\bar{\nu}_{x}=\lambda F+(1-\lambda)G$.

\begin{remark}
We remark that under appropriate coercivity and growth assumptions on the energy density $\varphi$ the problem of minimizing $E_{\theta}$ over a set of 
admissible maps $\mathcal{A} = \bar{y} + W^{1,p}_0(\Omega,\mathbb{R}^3)$, $\bar{y}\in W^{1,p}(\Omega,\mathbb{R}^3)$ relaxes to the problem of minimizing $I_{\theta}$ over the class of Young measures 
$\nu=(\nu_x)_{x\in\Omega}$ generated by gradients of functions in $W^{1,p}(\Omega,\mathbb{R}^3)$ under the compatibility condition that 
$\bar{\nu}_x\in\mathcal{A}$; that is, any Young measure minimizer of $I_{\theta}$ is generated by a sequence of gradients infimizing $E_{\theta}$ and 
vice versa - see e.g.~\cite{mullernotes,pedregalbook} for relaxation results involving Young measures.
\end{remark}

\section{The Ciarlet-Ne\v{c}as constraint}
\label{sec:1a}

Consider the problem of minimizing the total free energy in \eqref{eq:functionalin1.3} over the set of admissible maps 
$\mathcal A=\left\{y\in\,W^{1,p}(\Omega,\mathbb{R}^{3}):\,y_{\vert\partial\Omega_{1}}=\bar{y},\,\det Dy>0\,\mbox{a.e.}\right\}$, where as before $\Omega$ is a Lipschitz domain, $\partial\Omega_1$ is a relatively open subset of $\partial\Omega$ 
and $\bar{y}\in\mathcal A$. For $y\in \mathcal A$ with $p>3$ Ciarlet and Ne\v{c}as~\cite{ciarletnecas} suggested that a mathematical model of frictionless self-contact without interpenetration could be obtained by requiring 
that admissible deformations $y$ also satisfy the constraint
\begin{equation}
\int_{\Omega}\det Dy(x)\;dx\leq\mathcal{L}^3(y(\Omega)).\tag{C-N}\nonumber
\end{equation}
They showed that under this constraint any admissible deformation $y$ is injective a.e.~in $\overline{\Omega}$ in the sense that
\[\mathrm{card}\,y^{-1}(x')=1\quad\mbox{for almost all $x'\in\,y(\overline{\Omega})$}.\]
They also showed that condition (C-N) is closed under weak convergence in $W^{1,p}(\Omega,\mathbb{R}^3)$ for $p>3$ (weak$\ast$ if $p=\infty$) so that minimizers of the 
associated problem remain a.e.~injective. 

\begin{remark}
The interpenetration of matter and the invertibility of Sobolev (and BV) mappings has been considered by many authors, 
e.g.~Ball~\cite{ballinvertibility}, \v{S}ver\'{a}k~\cite{sverak1988regularity}, Ciarlet \& Destuynder~\cite{ciarletdestuynder}, Cs\"ornyei 
\textit{et al.}~\cite{malyhomeomorphisms}, Hencl \textit{et al.}~\cite{spaceregularity,bvregularity}, Henao \& Mora-Corral~\cite{carlosduvanlusin,carlosduvanbv} as well as 
the book of Fonseca \& Gangbo~\cite{fonsecagangbo}.
\end{remark}

Our model is expressed in terms of gradient Young measures and we employ condition (C-N) in the obvious way, i.e.~we require that the underlying 
deformation of any admissible gradient Young measure satisfies this constraint. In our context, the (C-N) constraint results in deformations (underlying 
admissible measures) which are homeomorphic in $\Omega$ rather than simply a.e.~injective. This is because our admissible deformations turn out to be mappings of bounded distortion, as defined below. 

\begin{definition}
Let $\Omega\subset\mathbb{R}^{d}$ be open. A continuous map $y:\Omega\rightarrow\mathbb{R}^{d}$ is called a \textit{mapping of bounded distortion} if:
\begin{itemize}
\item[(i)] $y\in W^{1,d}_{\mathrm{loc}}(\Omega;\mathbb{R}^{d})$,
\item[(ii)] $\det Dy(x)\geq 0$ for a.e. $x\in \Omega$, or $\det Dy(x)\leq 0$ for a.e. $x\in \Omega$, and
\item[(iii)] there exists a number $M\geq1$ such that
\[\| Dy(x)\|^{d}\leq M\vert\det Dy(x)\vert\]
for almost all $x\in\Omega$.
\end{itemize}
Here, $\| Dy(x)\|=\sup_{\vert z\vert\leq1}\vert Dy(x)z\vert=\sigma_{\max}(Dy(x))$ where $\sigma_{\max}(A)$ denotes the maximum singular value of a matrix $A\in\mathbb{R}^{3\times 3}$.
\end{definition}

Mappings of bounded distortion enjoy remarkable properties. In particular, the following result of Reshetnyak~\cite{reshetnyak} will be crucial for our analysis.

\begin{lemma}
\label{lemmareshetnyak}
Every mapping $y$ of bounded distortion from an open domain $\Omega$ to the space $\mathbb{R}^{d}$ which is not identically constant is an open mapping.
\end{lemma}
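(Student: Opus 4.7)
The plan is to establish openness by showing that $y$ is both sense-preserving and has strictly positive local topological degree, from which openness follows by standard degree-theoretic arguments. I would proceed in three stages.

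First, I would upgrade the hypothesis $\det Dy \ge 0$ a.e.\ to $\det Dy > 0$ a.e. Observe that the distortion inequality $\|Dy(x)\|^d \le M\,|\det Dy(x)|$ forces $Dy(x) = 0$ at every point where $\det Dy(x) = 0$. Thus the set $Z = \{x : \det Dy(x) = 0\}$ coincides with $\{x : Dy(x) = 0\}$. If $Z$ had positive measure in some connected component $U$ of $\Omega$, a covering-type argument combined with $y \in W^{1,d}_{\mathrm{loc}}$ and absolute continuity on lines would propagate the vanishing of $Dy$ throughout $U$, contradicting the hypothesis that $y$ is not identically constant (recall also that $y \in W^{1,d}$ with $d \ge 2$ is continuous after modification on a null set via Reshetnyak's continuity theorem, cf.~\cite{reshetnyak}).

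Second, and this is the crux, I would define a local topological degree $\deg(y, B, q)$ for balls $B = B(x_0, r) \Subset \Omega$ and $q \notin y(\partial B)$, obtained by approximation of $y$ in $W^{1,d}$ by smooth maps $y_\varepsilon$ and stabilization of the classical degrees $\deg(y_\varepsilon, B, q)$; the a.e.\ non-negativity of $\det Dy$ shows this degree is non-negative. The hard step is to prove that $\deg(y, B(x_0, r), y(x_0)) > 0$ for every $x_0 \in \Omega$ and all sufficiently small $r$. This is where the distortion bound enters decisively: the inequality $\|Dy\|^d \le M \det Dy$ is equivalent to each component $y^i$ being a weak (sub/super)solution of a uniformly quasilinear elliptic equation in divergence form of $d$-Laplacian type. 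A weak minimum principle for such equations, together with the first-stage conclusion $\det Dy > 0$ a.e., rules out the possibility that $y$ sends a small ball into a set not surrounding $y(x_0)$ and forces the local index to be strictly positive.

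Third, openness is then immediate. Since the degree is locally constant in $q$ on the complement of $y(\partial B)$, the inequality $\deg(y, B(x_0, r), y(x_0)) > 0$ persists for all $q$ in a neighbourhood $V$ of $y(x_0)$, whence $V \subset y(B(x_0, r))$. Applying this at every point of every open $U \subset \Omega$ shows $y(U)$ is open.

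The main obstacle is unambiguously the second stage: extracting a PDE formulation from the pointwise distortion bound and establishing a minimum principle sharp enough to conclude strict positivity of the local degree. This is Reshetnyak's central contribution and requires the full machinery of quasiregular mappings (Caccioppoli inequalities, capacity/modulus estimates, and the elliptic regularity theory for $d$-Laplacian-type equations); stages one and three are relatively routine once that ingredient is available.
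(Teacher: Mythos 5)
The paper does not prove this lemma; it is Reshetnyak's openness theorem for mappings of bounded distortion and is quoted directly from \cite{reshetnyak}. So there is no in-paper argument to compare against, and the question is whether your outline stands on its own. Your second and third stages do trace the shape of the classical argument (the coordinate functions are $A$-harmonic solutions of a degenerate elliptic equation of $d$-Laplacian type, the local topological degree at the image of each point is strictly positive by a minimum-principle argument, and degree theory then yields openness), but as you yourself note, stage two is precisely where the entire content of Reshetnyak's theorem lives, so the proposal is a road-map rather than a proof.

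The one place where you present something as routine that is not is stage one. It is true that the distortion inequality forces $Dy(x)=0$ wherever $\det Dy(x)=0$; but the assertion that this set having positive measure in a component $U$ would ``propagate the vanishing of $Dy$ throughout $U$'' by a covering argument plus absolute continuity on lines is false as stated. A $W^{1,d}$ map can have $Dy=0$ on a nowhere-dense compact set of positive measure and still fail to be locally constant anywhere in $U$; absolute continuity on lines is silent about what happens once a line leaves that set. In Reshetnyak's own development the logical order is in fact the reverse: openness and discreteness are established first, through the elliptic structure, and the statement that $\det Dy>0$ a.e.\ for a non-constant mapping of bounded distortion is then deduced as a consequence. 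So stage one is not a cheap preprocessing step --- it is either a corollary of the very openness you are trying to prove, or an independent piece of nonlinear potential theory (e.g.\ via higher integrability of the Jacobian), and in either case it belongs inside stage two rather than before it.
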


We end this section by proving an auxiliary result which will be used to establish the regularity of our deformations under the (C-N) constraint and that of being 
mappings of bounded distortion.

\begin{lemma}
\label{lemma:regularity}
Let $\Omega\subset\mathbb{R}^3$ be a bounded Lipschitz domain. Suppose that $y\in W^{1,\infty}(\Omega,\mathbb{R}^3)$ is a mapping of bounded distortion, satisfying \rm{(C-N)} 
and $\det Dy(x)\geq r >0$ a.e.~in $\Omega$. Then, $y$ is a homeomorphism between $\Omega$ and $y(\Omega)$, its inverse $y^{-1}$ belongs to 
$W^{1,\infty}(y(\Omega),\mathbb{R}^3)$ and
\begin{equation*}
Dy^{-1}(x') = \left[Dy(y^{-1}(x'))\right]^{-1}
\end{equation*}
for a.e.~$x'\in y(\Omega)$.
\end{lemma}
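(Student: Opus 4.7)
My plan is to split the conclusion into three parts --- (i) $y$ is a homeomorphism onto $y(\Omega)$; (ii) $y^{-1}\in W^{1,\infty}(y(\Omega),\mathbb{R}^3)$; (iii) the pointwise formula $Dy^{-1}(x') = [Dy(y^{-1}(x'))]^{-1}$ holds a.e. --- and to handle (ii) and (iii) simultaneously by identifying the weak derivative of $y^{-1}$ through a change of variables and the Piola identity.

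For (i), since $y$ is Lipschitz and non-constant (as $\det Dy \geq r > 0$), Lemma \ref{lemmareshetnyak} gives that $y$ is open on $\Omega$, and hence $y(\Omega)$ is open. The Ciarlet--Ne\v{c}as hypothesis supplies a.e.\ injectivity, and I would upgrade this to genuine injectivity by contradiction: if $y(x_1)=y(x_2)$ for distinct $x_1,x_2\in\Omega$, any two disjoint open neighbourhoods $U_1,U_2$ of these points have open images whose intersection is a non-empty open set on which every point has at least two preimages, contradicting (C-N). Continuity, openness, and bijectivity then make $y$ a homeomorphism onto $y(\Omega)$.

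For (ii) and (iii), set $L:=\|Dy\|_{L^\infty}$. The two largest singular values of $Dy$ are bounded by $L$, so $\det Dy \geq r$ forces $\sigma_{\min}(Dy) \geq r/L^2$ a.e.\ and hence $\|(Dy)^{-1}\|\leq L^2/r$ a.e.\ in $\Omega$. Since $y$ is Lipschitz (so satisfies Lusin's N) and $\det Dy\geq r$, the area formula yields $\mathcal{L}^3(y^{-1}(E))\leq r^{-1}\mathcal{L}^3(E)$ for Borel $E\subset y(\Omega)$, so that $(Dy)^{-1}\circ y^{-1}$ is defined $\mathcal{L}^3$-a.e.\ on $y(\Omega)$ and essentially bounded by $L^2/r$. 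To identify this composition as the weak gradient of $y^{-1}$, I would test against $\varphi\in C^\infty_c(y(\Omega))$ and change variables $x'=y(x)$ in $\int_{y(\Omega)} y^{-1}_i\,\partial_j\varphi\,dx'$; combining $\nabla_{x'}\varphi(y(x))=Dy(x)^{-T}\nabla_x(\varphi\circ y)(x)$ with the identity $(A^{-1})_{kj}\det A=(\mathrm{cof}\,A)_{jk}$ rewrites the integrand as $x_i(\mathrm{cof}\,Dy)_{jk}\partial_{x_k}(\varphi\circ y)$. Integrating by parts in $x_k$ (boundary terms vanish because the homeomorphism property makes $\varphi\circ y$ compactly supported in $\Omega$) and invoking the Piola identity $\sum_k\partial_{x_k}(\mathrm{cof}\,Dy)_{jk}=0$, which is available for Lipschitz $y$, collapses the expression to $-\int_\Omega (\mathrm{cof}\,Dy)_{ji}(\varphi\circ y)\,dx$. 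Reversing the change of variables produces $-\int_{y(\Omega)}[(Dy(y^{-1}(x')))^{-1}]_{ij}\varphi(x')\,dx'$, which simultaneously delivers the formula in (iii) and, via the bound $L^2/r$, the $W^{1,\infty}$ membership in (ii).

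The main obstacle will be ensuring that the integration-by-parts manoeuvre is fully legitimate under only $W^{1,\infty}$ regularity: the Piola identity must be used distributionally, and both changes of variables require the Lusin-type estimate from the previous step to transfer a.e.\ statements between $\Omega$ and $y(\Omega)$. Once the three ingredients --- Reshetnyak openness, the Ciarlet--Ne\v{c}as constraint, and the cofactor/Piola calculus --- are marshalled together, all three conclusions of the lemma emerge as a single package.
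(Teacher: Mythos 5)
Your proposal is correct and follows essentially the same route as the paper: Reshetnyak's open-mapping lemma plus the Ciarlet--Ne\v{c}as a.e.\ injectivity to upgrade to global injectivity, then a change of variables combined with the cofactor formula and the distributional Piola identity to identify the weak gradient of $y^{-1}$ and deduce its $W^{1,\infty}$ membership. The only cosmetic differences are that you make the singular-value bound $\|(Dy)^{-1}\|\le L^2/r$ and the Lusin-type area estimate explicit, and you correctly take the test function in $C^\infty_c(y(\Omega))$ (the paper has a small typo writing $C_0^\infty(\Omega)$ there).
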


\begin{proof}
We first show how the (C-N) constraint results in $y$ being a.e.~injective. For $\Omega$ a bounded and open subset of $\mathbb{R}^{3}$ and 
$y\in\,W^{1,p}(\Omega,\mathbb{R}^{3})$, $p>3$, a theorem of Marcus and Mizel~\cite{marcusmizel} shows that
\begin{equation}
\int_{\Omega}\vert\det\,Dy(x)\vert\,dx=\int_{y(\Omega)}\mathrm{card}\,y^{-1}(x')\,dx'
\label{eq:meaningful}
\end{equation}
whenever one of the two integrals is meaningful. In our case, $y\in W^{1,\infty}(\Omega,\mathbb{R}^3)$ and the integral on the left-hand side of (\ref{eq:meaningful}) is 
bounded. Then, using (\ref{eq:meaningful}) and the (C-N) constraint we infer that
\begin{equation}
\mathcal{L}^3(y(\Omega)) =\int_{y(\Omega)}\,dx'\leq\int_{y(\Omega)}\mathrm{card}\,y^{-1}(x')\,dx'=
\int_{\Omega}\det\,Dy(x)\,dx\leq\mathcal{L}^3( y(\Omega))\nonumber
\end{equation}
from which we obtain the required a.e.~injectivity, i.e.
\[\mathrm{card}\,y^{-1}(x') =1\quad\mbox{for a.e.~$x'\in\,y(\Omega)$}.\]

Injectivity everywhere in $\Omega$ now follows since $y$ has bounded distortion. Indeed, since constant maps cannot be a.e. injective, 
Lemma~\ref{lemmareshetnyak} implies that $y$ is an open mapping. Suppose for contradiction that there exist distinct $x_{1}, x_{2}\in\Omega$ 
such that $y(x_{1})=y(x_{2})=y_{0}\in\mathbb{R}^{3}$. Note that $y_{0}\in y(\Omega)$ which is open since $\Omega$ is open and $y$ 
maps open sets to open sets; hence, there exists $\epsilon>0$ such that $B(y_0,\epsilon)\subset\,y(\Omega)$.

By the continuity of $y$ the inverse image $y^{-1}(B(y_0,\epsilon))$ of $B(y_0,\epsilon)$ is an open set. However, 
$x_{1}$, $x_{2}\in y^{-1}(B(y_0,\epsilon))$ and we can thus find open neighbourhoods $U$, $V$ of $x_{1}$, $x_{2}$ respectively such that
\[U,V\subset y^{-1}(B(y_0,\epsilon)),\;x_{1}\in U,\;x_{2}\in V\;\;\mbox{and}\;\;U\cap V=\emptyset.\]
As $y$ is an open mapping, $y(U)$ and $y(V)$ are open sets. Furthermore, $y(U)\cap y(V)$ is open and non-empty, since $y_{0}\in y(U)\cap y(V)$. Thus, 
$\mathcal{L}^3(y(U)\cap y(V))>0$ and it must be the case that
\[\mbox{card}\;y^{-1}(x')\geq 2\;\;\mbox{for all}\;\;x'\in y(U)\cap y(V),\]
contradicting a.e.~injectivity. Then, $y$ being injective and open, it is a homeomorphism from $\Omega$ to $y(\Omega)$.

As for the regularity of $y^{-1}$, since $\det Dy(x)\geq r >0$ for a.e.~$x\in\Omega$ it follows that $Dy(y^{-1}(\cdot))^{-1}\in L^{\infty}(y(\Omega),\mathbb{R}^{3\times 3})$. 
To conclude the proof, we show that $Dy^{-1}(x')=Dy(y^{-1}(x'))^{-1}$ in the 
sense of distributions; then $y^{-1}$ being itself bounded in $y(\Omega)$, it is an element of $W^{1,\infty}(y(\Omega),\mathbb{R}^{3})$. 
Setting $w=y^{-1}$ and for any $\psi\in C_0^\infty(\Omega)$ we have that
\begin{eqnarray*}
\int_{y(\Omega)} w_i(x')\frac{\partial\psi}{\partial x_j'}(x')\,dx'&=&\int_{\Omega}x_i\frac{\partial\psi}{\partial x_j'}(y(x))\det Dy(x)\,dx\\
&=&\int_{\Omega}x_i\frac{\partial\psi}{\partial x_k}(y(x))Dy(x)^{-1}_{kj}\det Dy(x)\,dx,
\end{eqnarray*}
by the chain rule, the fact that $y\in W^{1,\infty}$ is differentiable a.e. and $Dy(\cdot)^{-1}\in L^{\infty}(\Omega,\mathbb{R}^{3\times3})$. Using Piola's identity and 
integrating by parts,
\begin{eqnarray*}
\int_{y(\Omega)} w_i(x')\frac{\partial\psi}{\partial x_j'}(x')dx'&=&-\int_{\Omega}\delta_{ik}\psi(y(x))(\mathrm{cof}\,Dy(x))_{jk}\,dx\\
&=&-\int_{\Omega}\frac{(\mathrm{cof}\,Dy(x))_{ji}}{\det Dy(x)}\psi(y(x))\det Dy(x)\,dx\\
&=&-\int_{y(\Omega)}\frac{(\mathrm{cof}\,Dy(w(x')))_{ji}}{\det Dy(w(x'))}\psi(x')\,dx',
\end{eqnarray*}
completing the proof.\qed
\end{proof}

\begin{remark}
We remark that since $y\in\,W^{1,\infty}(\Omega,\mathbb{R}^{3})$ is a homeomorphism with bounded distortion, a theorem from \cite{malyhomeomorphisms}
\footnote{In fact, in dimension 3, it only requires that $y\in\,W^{1,2}_{\mathrm{loc}}(\Omega,\mathbb{R}^{3})$ and is a homeomorphism of finite distortion; 
a notion weaker than that of bounded distortion.} asserts that its inverse already lies in the space $W^{1,1}_{\mathrm{loc}}(y(\Omega),\mathbb{R}^{3})$. However, 
for general homeomorphisms $y\in W^{1,\infty}(\Omega,\mathbb{R}^3)$ one cannot expect that $y^{-1}\in\,W^{1,1}_{\mathrm{loc}}(y(\Omega),\mathbb{R}^{3})$. As a typical example (see e.g.~\cite{malyhomeomorphisms}), 
consider the map $f(x)=x+u(x)$, $x\in\mathbb{R}$, where $u$ is the usual Cantor ternary function and let $g\equiv f^{-1}$. Then $g$ is 1-Lipschitz, and hence in $W^{1,\infty}(0,1)$, but $g^{-1}$ fails to be absolutely 
continuous. By setting $h(x)=(g(x_{1}),x_{2},x_{3})$, one obtains a Lipschitz homeomorphism whose inverse is not in $W^{1,1}_{\mathrm{loc}}(y(\Omega),\mathbb{R}^{3})$.
In~\cite{malyhomeomorphisms} the additional condition imposed on $y$ is that it has finite distortion, in which case $y^{-1}$ is also of bounded distortion. 
Without this assumption, $y^{-1}$ can only be expected to be in $BV_{loc}(y(\Omega),\mathbb{R}^{3})$. For applications 
of the theory of mappings of finite distortion and quasiconformal maps to the regularity of inverses of Sobolev (and BV) maps, the reader is referred 
to~\cite{planarregularity,spaceregularity,bvregularity,onninenregularity} and references therein.
\end{remark}

\section{The simplified model}
\label{sec:2}

To set up our model, let $\Omega\subset\mathbb{R}^{3}$ be a bounded convex polyhedral domain describing the undistorted austenite at the critical temperature 
$\theta_c$, that is $\Omega$ is a bounded domain that is the intersection of a finite number of open half-spaces. (The assumption on the form of $\Omega$ is made 
for ease of exposition, and it is not difficult to extend many of the results to a much wider class of domains having curved faces and edges.) Given any $p\in\mathbb{R}^3$, the set 
$E_p=\{ z\in\bar\Omega:p\cdot(z-x)\geq 0 \mbox{ for all }x\in\bar\Omega\}$ is either a closed polygon (whose relative interior we call a {\it face}), a closed line segment 
(whose relative interior we call an {\it edge}), or a point (which we call a {\it corner}). 

Let \[K:=SO(3)\cup\bigcup^{N}_{i=1}SO(3)U_{i}\] denote the union of the energy wells of the austenite and the 
martensite; for the cubic-to-orthorhombic transition of CuAlNi, $N=6$ and the martensitic variants $U_{i}$, $i=1,\ldots ,6$ are given by
\begin{equation}
\label{eq:orthorhombicvariants}
\begin{array}{cc}
U_{1}=\left(\begin{array}{ccc}
\beta & 0 & 0\\0 & \frac{\alpha+\gamma}{2} & \frac{\alpha-\gamma}{2}\\0 & \frac{\alpha-\gamma}{2} & \frac{\alpha+\gamma}{2}\end{array}\right) & 
U_{2}=\left(\begin{array}{ccc}
\beta & 0 & 0\\0 & \frac{\alpha+\gamma}{2} & \frac{\gamma-\alpha}{2}\\0 & \frac{\gamma-\alpha}{2} & \frac{\alpha+\gamma}{2}\end{array}\right)\\
\, & \, \\
U_{3}=\left(\begin{array}{ccc}
\frac{\alpha+\gamma}{2} & 0 & \frac{\alpha-\gamma}{2}\\0 & \beta & 0\\\frac{\alpha-\gamma}{2} & 0 & \frac{\alpha+\gamma}{2}\end{array}\right) & 
U_{4}=\left(\begin{array}{ccc}
\frac{\alpha+\gamma}{2} & 0 & \frac{\gamma-\alpha}{2}\\0 & \beta & 0\\\frac{\gamma-\alpha}{2} & 0 & \frac{\alpha+\gamma}{2}\end{array}\right)\\
\, & \, \\
U_{5}=\left(\begin{array}{ccc}
\frac{\alpha+\gamma}{2} & \frac{\alpha-\gamma}{2} & 0\\\frac{\alpha-\gamma}{2} & \frac{\alpha+\gamma}{2} & 0\\0 & 0 & \beta\end{array}\right) &	
U_{6}=\left(\begin{array}{ccc}
\frac{\alpha+\gamma}{2} & \frac{\gamma-\alpha}{2} & 0\\\frac{\gamma-\alpha}{2} & \frac{\alpha+\gamma}{2} & 0\\0 & 0 & \beta\end{array}\right).
\end{array}
\end{equation}

Henceforth, we denote the mechanically stabilized variant of martensite by $U_{s}$ where $s\in\left\{1,\ldots,N\right\}$; then the homogeneous gradient Young measure 
$\delta_{U_{s}}$ corresponds to a pure phase of the variant $U_{s}$. 

Also, we make the standing assumption on the lattice parameters that $\det U_{s}\leq 1$ and $\lambda_{\max}(\mathrm{cof}\,U_s)\geq1$, where 
$\lambda_{\max}(A)$ corresponds to the maximum eigenvalue of the matrix $A$; in particular, the same holds for all martensitic variants as they are symmetry related. 
This is a technical assumption and it is consistent with the lattice parameters of the CuAlNi specimen of the experiment.

As regards the regularity and growth of our energy density $\varphi:\mathbb{R}^{3\times 3}\rightarrow\overline{\mathbb{R}}=\mathbb{R}\cup\left\{+\infty\right\}$ we only assume that $\varphi$ is lower semicontinuous. 
Moreover, as described in the introduction, the temperature of the iron tip exceeded the critical temperature $\theta_c$ significantly and it was 
only after the lapse of sufficient time for the temperature at the corner to reach $\theta_c$ that the transformation initiated; thus, we assume that $\theta>\theta_{c}$, i.e.~austenite is 
energetically preferable to martensite. Specifically, on the energy wells $K$, we assume that
\begin{equation}
\varphi(F)=\left\{\begin{array}{ccc}
-\delta,&\quad &F\in SO(3)\\
0,&\quad &F\in\bigcup^{N}_{i=1}SO(3)U_{i}
\end{array}\right.
\label{eq:varphidefinition}
\end{equation}
where $\delta>0$ is such that $\min_{\mathbb{R}^{3\times 3}}\varphi=-\delta$; in particular, $\varphi$ is bounded 
below.

As noted above, we do not impose any coercivity or growth conditions for our minimization problem. However, the question of local minimizers 
is dependent on the growth of $\varphi$ off the energy wells and in order to make the problem more tractable, we wish to work with an energy functional 
which captures the essential behaviour of $\varphi$ but becomes infinite off the set $K$. This allows one to disregard the growth of $\varphi$ and 
instead concentrate only on microstructures supported on the wells; moreover, any configuration that lowers the energy will necessarily be partly 
supported on the austenitic well, $SO(3)$, so that austenite has nucleated. To derive this functional we invoke the topology of $\Gamma$-convergence. This type 
of convergence is only used to deduce our model rigorously and we do not make use of other results of $\Gamma$-convergence theory. As such we do not review $\Gamma$-convergence here and 
refer the reader to Dal Maso~\cite{dalmaso} for an extensive account.

\paragraph{\textbf{Derivation of the energy functional:}}

We note that, due to Zhang's lemma, it is natural to restrict attention to $W^{1,\infty}$ gradient Young measures since, once we consider the blown up version of 
our functional, any gradient Young measure with finite energy will be supported entirely within the compact set $K$. Of course, our admissible measures will come with 
further restrictions which, for the purposes of deriving our functional via 
$\Gamma$-convergence, we ignore. Instead, let us define our functionals on the broader class of measures
\begin{equation}
\label{eq:YMgamma}
\mathcal{A}:=\left\{\nu\in\mathcal{G}^{\infty}(\Omega,\mathbb{R}^{3\times3}) : \bar{\nu}_x=Dy(x)\mbox{ a.e., $\det Dy(x)>0$ a.e.~and $y$ satisfies (C-N)}\right\}.
\end{equation}

\begin{remark}
We remark that we are interested in the $\Gamma$-limit with respect to weak$\ast$ convergence in the space 
$L^{\infty}_{w^{\ast}}(\Omega,\mathcal{M}(\mathbb{R}^{3\times 3}))$. In principle, $\Gamma$-convergence can be defined for a general topological space, with the 
convergence being naturally induced by the topology; nevertheless, one needs to check that the sequential characterization of $\Gamma$-convergence is valid in this context.

However, the space $L^{\infty}_{w^{\ast}}(\Omega,\mathcal{M}(\mathbb{R}^{3\times 3}))$ is the dual of $L^{1}(\Omega,C_{0}(\mathbb{R}^{3\times 3}))$, a separable 
normed vector space, and hence the closed unit ball in $L^{\infty}_{w^{\ast}}(\Omega,\mathcal{M}(\mathbb{R}^{3\times 3}))$ is weak$\ast$ 
compact and when equipped with the topology induced by the weak$\ast$ convergence it becomes a metrizable space with metric given by
\begin{equation}
\mathrm{d}\left(\nu,\,\mu\right)=\displaystyle\sum^{\infty}_{i,j=1}2^{-i-j}\vert (\nu-\mu, h_{j}\otimes f_{i})\vert =
\displaystyle\sum^{\infty}_{i,j=1}2^{-i-j}\vert\int_{\Omega}h_j(x)\langle\nu_{x}-\mu_{x},f_{i}\rangle\,dx\vert ,
\label{eq:ymdistance}
\end{equation}
where $\left\{f_{i}\right\}$ and $\left\{h_{j}\right\}$ are countable dense subsets in the unit balls of $C_{0}(\mathbb{R}^{3\times 3})$ and 
$L^{1}(\Omega)$ respectively. Henceforth, we will freely use the term $\Gamma$-convergence with respect to the weak$\ast$ topology in the space of Young 
measures without referring to the above argument about its metrization.
\end{remark}

Let $\psi:\mathbb{R}^{3\times 3}\rightarrow\mathbb{R}$ be such that
\begin{equation}
\label{eq:psigrowth}
\begin{array}{ll}
\psi(F)\geq 0,& \psi(F)\geq -d+c|F|^p, \mbox{ for all } F,\\
\psi(F)=0& \mbox{ if and only if } F\in K
\end{array}
\end{equation}
for some $d$, $c>0$ and $p>3$. For each $k\in\mathbb{N}$, let $W^{k}:=k\psi+\varphi$ and define the functional 
$I^{k}:\mathcal{A}\rightarrow\overline{\mathbb{R}}$ by
\begin{equation}
I^{k}(\nu):=\int_{\Omega}\langle\nu_{x},W^{k}\rangle\;dx.
\label{eq:I^{k}}
\end{equation}
The function $\psi$ vanishes on the wells, whereas it is positive and growing away from them. Hence, in the limit $k\rightarrow\infty$, 
one expects that the functional will behave like $\varphi$ for measures supported a.e.~in $K$, while it should blow up whenever part of the support lies 
outside $K$.

\begin{proposition}
Let $\varphi$, $\psi:\mathbb{R}^{3\times 3}\rightarrow\overline{\mathbb{R}}$ be lower semicontinuous functions satisfying (\ref{eq:varphidefinition}) and 
(\ref{eq:psigrowth}) respectively and for $k\in\mathbb{N}$ write $W^{k}=k\psi +\varphi$. Let 
$I^{k}:\mathcal{A}\rightarrow\mathbb{R}$ be as in (\ref{eq:I^{k}}). Then, 
$I=\Gamma\mbox{-}\lim_{k\rightarrow\infty}I^{k}$ with respect to the weak$\ast$ convergence in $L^{\infty}_{w^{\ast}}(\Omega, \mathcal{M}(\mathbb{R}^{N\times d}))$, where
\begin{equation}
I(\nu)=\left\{\begin{array}{cl}
-\delta\int_{\Omega}\nu_{x}(SO(3))\;dx,&\:\:\mathrm{supp}\;\nu_x\subset K\,\,\mbox{a.e.}\\
+\infty,&\:\:\mathrm{otherwise}
\end{array}\right.
\end{equation}
and $\nu_{x}(SO(3))=\int_{SO(3)}\;d\nu_{x}(A)$.
\label{lemmagammaconvergence}
\end{proposition}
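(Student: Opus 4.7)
The plan is to verify the two standard inequalities of sequential $\Gamma$-convergence, working in the metrizable weak$*$ topology on bounded subsets of $L^{\infty}_{w^{\ast}}(\Omega,\mathcal{M}(\mathbb{R}^{3\times 3}))$ noted in the preceding remark. The recovery sequence (the $\limsup$ inequality) is essentially immediate: for any $\nu\in\mathcal{A}$ take $\nu^{k}\equiv\nu$. If $\mathrm{supp}\,\nu_{x}\subset K$ a.e., the hypothesis $\psi=0$ on $K$ gives $\langle\nu_{x},k\psi\rangle=0$, and (\ref{eq:varphidefinition}) yields $\langle\nu_{x},W^{k}\rangle=\langle\nu_{x},\varphi\rangle=-\delta\,\nu_{x}(SO(3))$, so $I^{k}(\nu)=I(\nu)$ for every $k$. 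Otherwise $I(\nu)=+\infty$ and any sequence achieves $\limsup I^{k}\leq+\infty$.

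For the $\liminf$ inequality, let $\nu^{k}\rightharpoonup^{\ast}\nu$; if $\liminf_{k}I^{k}(\nu^{k})=+\infty$ there is nothing to prove, so extract a subsequence $\nu^{k_{j}}$ realising the liminf with $I^{k_{j}}(\nu^{k_{j}})\leq C$. Using $\psi\geq 0$ and $\varphi\geq-\delta$,
\begin{equation*}
k_{j}\int_{\Omega}\langle\nu^{k_{j}}_{x},\psi\rangle\,dx\leq I^{k_{j}}(\nu^{k_{j}})+\delta\mathcal{L}^{3}(\Omega)\leq C+\delta\mathcal{L}^{3}(\Omega),
\end{equation*}
so $\int_{\Omega}\langle\nu^{k_{j}}_{x},\psi\rangle\,dx\to 0$. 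The main step is then a weak$*$ lower semicontinuity lemma: for every lower semicontinuous $f:\mathbb{R}^{3\times 3}\to\overline{\mathbb{R}}$ that is bounded below,
\begin{equation*}
\int_{\Omega}\langle\nu_{x},f\rangle\,dx\leq\liminf_{j}\int_{\Omega}\langle\nu^{k_{j}}_{x},f\rangle\,dx.
\end{equation*}
I would prove this by approximating $f$ from below by a monotone increasing sequence $g_{n}\in C_{c}(\mathbb{R}^{3\times 3})\subset C_{0}(\mathbb{R}^{3\times 3})$ with $g_{n}\nearrow f$ pointwise (inf-convolution with a Lipschitz kernel, followed by a compact cutoff and a running maximum); for each fixed $n$ the tensor product $1_{\Omega}\otimes g_{n}$ lies in $L^{1}(\Omega,C_{0}(\mathbb{R}^{3\times 3}))$, so the very definition of weak$*$ convergence gives $\int_{\Omega}\langle\nu^{k_{j}}_{x},g_{n}\rangle\,dx\to\int_{\Omega}\langle\nu_{x},g_{n}\rangle\,dx$, and monotone convergence in $n$ finishes the lemma.

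Applying the lemma to $\psi$ yields $\int_{\Omega}\langle\nu_{x},\psi\rangle\,dx=0$, and since $\psi\geq 0$ with $\{\psi=0\}=K$ this forces $\mathrm{supp}\,\nu_{x}\subset K$ for a.e.~$x$. Applying it to $\varphi$ and using $W^{k_{j}}\geq\varphi$,
\begin{equation*}
\liminf_{k}I^{k}(\nu^{k})\geq\liminf_{j}\int_{\Omega}\langle\nu^{k_{j}}_{x},\varphi\rangle\,dx\geq\int_{\Omega}\langle\nu_{x},\varphi\rangle\,dx=-\delta\int_{\Omega}\nu_{x}(SO(3))\,dx=I(\nu),
\end{equation*}
the last two equalities using $\mathrm{supp}\,\nu_{x}\subset K$ together with (\ref{eq:varphidefinition}). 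The main obstacle is the weak$*$ lower semicontinuity lemma: the natural duality pairs Young measures only against $C_{0}$ integrands, while $\psi$ and $\varphi$ are merely lower semicontinuous and possibly unbounded above (indeed $\psi$ has $|F|^{p}$ growth); the approximation-from-below device sketched above circumvents this cleanly, after which the remainder of the argument is essentially algebraic.
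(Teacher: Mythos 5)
Your proof follows essentially the same outline as the paper's -- monotone approximation from below of the lower semicontinuous integrands by $C_0$ functions, together with the constant recovery sequence -- but it skips one substantial block of the argument: verifying that the weak$*$ limit $\nu$ actually lies in $\mathcal{A}$, i.e.\ that $\nu$ is a $W^{1,\infty}$ gradient Young measure whose underlying deformation $y$ satisfies $\det Dy>0$ a.e.\ and the Ciarlet--Ne\v{c}as constraint. A weak$*$ limit in $L^\infty_{w^*}(\Omega,\mathcal{M}(\mathbb{R}^{3\times3}))$ of Young measures in $\mathcal{A}$ is a priori only a sub-probability-valued map (mass can escape to infinity), and even if it is probability-valued there is no automatic reason for it to be a gradient Young measure, nor for the underlying deformation to inherit the determinant and (C-N) constraints. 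Without $\nu\in\mathcal{A}$ the value $I(\nu)$ is not even declared ($I^k$ and $I$ are functionals on $\mathcal{A}$), and the constant recovery sequence you invoke in the $\limsup$ step ceases to be admissible.

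You in fact derive precisely the estimate needed to close this gap -- $\int_\Omega\langle\nu^{k_j}_x,\psi\rangle\,dx\to 0$, which together with the coercivity $\psi(F)\geq -d+c\lvert F\rvert^p$ gives a uniform $p$-th moment bound on the $\nu^{k_j}$ -- but then stop. The paper pushes this further: the moment bound yields tightness, so $\nu_x$ is a probability measure a.e.\ and the underlying maps $y^k$ are uniformly bounded in $W^{1,p}(\Omega,\mathbb{R}^3)$; then $\nu$ is a $W^{1,p}$ gradient Young measure (Sychev), and having support in the compact set $K$ it is upgraded to $W^{1,\infty}$ by Zhang's lemma; the determinant constraint follows from the minors relations, and the (C-N) constraint passes to the weak $W^{1,p}$ limit by the Ciarlet--Ne\v{c}as closedness result. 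Your ``weak$*$ lower semicontinuity lemma'' is correct and is the same device the paper uses on $\varphi+\delta$ (the paper chooses to argue by contradiction with a single fixed $f\leq\psi$ to get $\mathrm{supp}\,\nu_x\subset K$, but applying the lemma to $\psi$ is equivalent); what is missing is the membership $\nu\in\mathcal{A}$.
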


\begin{remark}
In terms of microstructures, the expression $\nu_{x}(SO(3))$ represents the volume fraction of austenite at $x\in\Omega$.
\end{remark}

\begin{proof}
Note that since $\psi\geq 0$, the sequence $\varphi^{k}$ is increasing and therefore so is $I^{k}$, i.e.~the $\Gamma$-limit 
exists and is given by the supremum of the lower semicontinuous envelopes of the functionals $I^k$, see e.g.~\cite{dalmaso}. To establish the 
$\liminf$-inequality, let $\nu^k\in\mathcal{A}$ be such that
\[\nu^{k}\overset{\ast}{\rightharpoonup}\nu\quad\mathrm{in}\quad L^{\infty}_{w^{\ast}}\left(\Omega,\,\mathcal{M}\left(\mathbb{R}^{3\times 3}\right)\right).\]
We first show that $\nu\in\mathcal{A}$. Clearly, we may assume that $\liminf_{k}I^k\left(\nu^{k}\right)<\infty$. In particular, this implies that 
$\mathrm{supp}\,\nu_{x}\subset\,K$ a.e.~in $\Omega$; otherwise, $\nu_x$ has part of its support on a set of positive measure outside $K$. Then, letting $f\in C_{0}(\mathbb{R}^{3\times 3})$ such that 
$0\leq f\leq\psi$ and $f\left(A\right)=0$ if and only if $A\in K$, we deduce by the weak$\ast$ convergence of $\nu^{k}$ that
\[\int_{\Omega}\langle\nu^{k}_{x},f\rangle\,dx\rightarrow\int_{\Omega}\langle\nu_{x},f\rangle\,dx=C>0.\]
Hence, for $k$ large enough,
\begin{eqnarray}
I^{k}(\nu^{k})&=&k\int_{\Omega}\langle\nu^{k}_{x},\psi\rangle\;dx+\int_{\Omega}\langle\nu^{k}_{x},\varphi\rangle\;dx\nonumber\\
&\geq &k\int_{\Omega}\langle\nu^{k}_{x},f\rangle\,dx-\delta\vert\Omega\vert\nonumber\\
&\geq &k\frac{C}{2}-\delta\vert\Omega\vert,\nonumber
\end{eqnarray}
contradicting $\liminf_{k}I^k\left(\nu^{k}\right)<\infty$.

Also, since $\liminf_{k}I^k\left(\nu^{k}\right)<\infty$, up to a subsequence we may also assume that $\sup_k I^k(\nu^k)<\infty$ and, by (\ref{eq:psigrowth}), we 
deduce that for some constant $C$,
\begin{equation}
\label{eq:uniformpbound}
\sup_{k}\int_{\Omega}\int_{\mathbb{R}^{3\times3}}\vert A\vert^p\,d\nu^{k}_{x}(A)dx\leq C.
\end{equation}
In particular, this implies that $\nu_x$ is a probability measure for a.e.~$x$ (e.g.~\cite[Theorem 3.6]{sychev}) and that (e.g.~\cite[Theorem 3.7]{sychev})
\begin{equation}
\label{eq:convergencecentreofmass}
\bar{\nu}^{k}\rightharpoonup\bar{\nu}\;\;\mbox{in $L^{p}(\Omega,\mathbb{R}^{3\times3})$}.
\end{equation}
Also, since $F\mapsto\vert F\vert^p$ is quasiconvex, non-negative and satisfies a $p$-growth condition, Theorem~\ref{KPGYM} gives that
\begin{equation}
\label{eq:uniformpbound2}
\sup_{k}\int_{\Omega}\vert\bar{\nu}^{k}_{x}\vert^p\,dx\leq C,
\end{equation}
i.e.~the mappings $y^k$ underlying the measures $\nu^k$ are uniformly bounded in $W^{1,p}(\Omega,\mathbb{R}^3)$ and in view of (\ref{eq:convergencecentreofmass}), 
$y^k\rightharpoonup y$ in $W^{1,p}(\Omega,\mathbb{R}^3)$ where $Dy(x)=\bar{\nu}_{x}$ a.e.~in $\Omega$; note, that {\it a priori} $y^k\in W^{1,\infty}(\Omega,\mathbb{R}^3)$ for 
each $k$ but the bound is not uniform.

By \cite[Proposition 4.6]{sychev}, we infer that $\nu$ is a $W^{1,p}$ gradient Young measure and, since $\mathrm{supp}\,\nu_x\subset K$ a.e., Zhang's lemma says that 
$\nu\in\mathcal{G}^{\infty}(\Omega,\mathbb{R}^{3\times3})$. The remaining conditions now follow: the determinant constraint follows from the minors relations 
(\ref{eq:minors0}) and the fact that $\mathrm{supp}\,\nu_x\subset K$ a.e.~in $\Omega$, since
\begin{equation}
\label{eq:detweakcts}
0<\min (\det U_s,1)=\min_{F\in K}\det F\leq\langle\nu_{x},\det\rangle=\det \bar{\nu}_{x}=\det Dy(x)\leq\max_{F\in K}\det F=\max (\det U_s,1)
\end{equation}
(where the last inequality is not needed her but will be used below), (C-N) follows by the fact that each $y^k$ satisfies (C-N) and $y^k\rightharpoonup y$ in $W^{1,p}(\Omega,\mathbb{R}^3)$, $p>3$ (see~\cite{ciarletnecas}).

Having established that $\nu\in\mathcal{A}$, let us prove the $\liminf$-inequality, that is
\[
I(\nu) \leq \liminf_{k\to\infty} I^k(\nu^k).
\]
Note that any non-negative lower semicontinuous function $f$ on 
$\mathbb{R}^{3\times3}$ (generally, on a metric space) is the pointwise supremum of a non-decreasing sequence of functions in $C_0\left(\mathbb{R}^{3\times 3}\right)$, 
e.g.~given $f$ not identically $+\infty$, its Lipschitz regularizations
$${\displaystyle g_n(F):=\inf_{A\in\mathbb{R}^{3\times3}}(f(A)+n\vert F-A\vert)}$$
satisfy $g_1\leq g_2\leq\ldots\leq f$ and $g_n(F)\rightarrow f(F)$ for all $F\in\mathbb{R}^{3\times3}$. Multiply each $g_n$ by a suitable decaying and continuous function $h_n$, e.g.
\[
h_n(F)=\left\{\begin{array}{lc}
1,& |F|\leq n,\\
1+n-|F|,& n\leq |F| \leq n+1\\
0,& |F|\geq n+1.
\end{array}\right.
\]
Then, $f_n=g_nh_n$ is the desired sequence in $C_0(\mathbb{R}^{3\times 3})$. Next, consider the map $\varphi +\delta$; this is 
non-negative, lower semicontinuous and hence there exists a non-decreasing sequence of functions, say $\varphi_{n}\in C_{0}\left(\mathbb{R}^{3\times 3}\right)$, with 
$0\leq\varphi_{n}\leq\varphi +\delta$ converging pointwise to $\varphi +\delta$. Then, by weak$\ast$ convergence and the fact that $\varphi_{n}\leq\varphi +\delta$, for 
$n\in\mathbb{N}$
\begin{eqnarray}
\int_{\Omega}\langle\nu_{x},\varphi_{n}\rangle\,dx&=&\displaystyle\liminf_{k\rightarrow\infty}\int_{\Omega}\langle\nu^{k}_{x},\varphi_{n}\rangle\,dx\nonumber\\
&\leq &\displaystyle\liminf_{k\rightarrow\infty}\int_{\Omega}\langle\nu^{k}_{x},\varphi +\delta\rangle\,dx\nonumber\\
&\leq &\displaystyle\liminf_{k\rightarrow\infty}\int_{\Omega}\langle\nu^{k}_{x},k\psi +\varphi +\delta\rangle\,dx =\displaystyle\liminf_{k\rightarrow\infty}I^{k}(\nu^{k})+\delta\vert\Omega\vert\nonumber
\end{eqnarray}
since $\psi\geq 0$ and $\nu^{k}_{x}$ is a probability measure for a.e.~$x$. But $\varphi_n\geq 0$ for all $n$, and letting $n\to\infty$ we obtain the 
$\liminf$ inequality by monotone convergence and the fact that $\nu_{x}$ is a probability measure a.e.~in $\Omega$.

For the recovery sequence, let $\nu\in\mathcal{G}^{\infty}(\Omega,\mathbb{R}^{3\times3})$. The recovery sequence is simply given by the constant sequence $\nu^{k}=\nu$, as then trivially 
$\nu^k\in\mathcal{G}^{\infty}(\Omega,\mathbb{R}^{3\times3})$, $\nu^{k}\overset{\ast}{\rightharpoonup}\nu$ and
\begin{equation}
I^{k}(\nu^{k})=I^{k}(\nu)=k\int_{\Omega}\langle\nu_{x},\psi\rangle\,dx+\int_{\Omega}\langle\nu_{x},\varphi\rangle\,dx,\nonumber
\end{equation}
so that if $\mbox{supp}\,\nu_x\subset K$ a.e. then $I^k(v)=I(v)<\infty$, while otherwise $I^k(v)=I(v)=\infty$.\qed
\end{proof}

We may alternatively write the energy functional $I(\nu)$ as
\begin{equation}
I(\nu)=\int_{\Omega}\langle\nu_{x},W\rangle\,dx,\nonumber
\end{equation}
where the energy density $W:\mathbb{R}^{3\times 3}\rightarrow\overline{\mathbb{R}}$ is given by
\begin{equation}
W(F)=\left\{\begin{array}{rl}
-\delta, &\:\: F\in SO(3)\\
0, &\:\: F\in\bigcup^{N}_{i=1}SO(3)U_{i}\\
+\infty, &\:\:\mathrm{otherwise}.
\end{array}\right.
\label{eq:W}
\end{equation}
For our simplified model, we use $I$ as our functional and we proceed to define the set of admissible measures.

\begin{remark}
Note that due to the singular form of our density, if $I(\nu)<\infty$ then $\mathrm{supp}\,\nu_{x}\subset K$ a.e.~in $\Omega$, implying that the 
underlying gradient $Dy(x)=\nu_x$ satisfies
\begin{equation}
\label{eq:dyinkqc}
Dy(x)\in K^{qc}\quad\mbox{for a.e.~$x\in\Omega$},
\end{equation}
where the quasiconvexification $K^{qc}$ of an arbitrary compact set $K\subset\mathbb{R}^{N\times d}$ is defined by (see \cite{sveraktwowells})
\[
K^{qc}:=\{F\in\mathbb{R}^{N\times d}\,:\,f(F)\leq \max_{K}f,\mbox{ for all $f:\mathbb{R}^{N\times d}\to\mathbb{R}$ quasiconvex}\}.
\]
For some general results regarding quasiconvex hulls of sets, we also refer the reader to \cite{newBJ,mullernotes}.
\end{remark}

\paragraph{\textbf{Admissible measures:}}
Crucially, we assume that the localized heating necessarily leads to a localized nucleation of austenite and, in our minimization problem, we only consider variations of 
$\delta_{U_{s}}$ which are localized in the interior, on faces, edges and at corners. In particular, let $x_0\in\bar\Omega$ and depending on whether $x_0$ is an interior point, 
belongs to a face or edge, or is a corner, let $S(x_0)\subset\Omega$ be as illustrated in Fig.~\ref{fig:admissible}.

\begin{figure}[ht]
	\centering
	\def\svgwidth{0.9\columnwidth}
	\begingroup
    \setlength{\unitlength}{\svgwidth}
  \begin{picture}(1,0.55363752)%
    \put(0,0){\includegraphics[width=\unitlength]{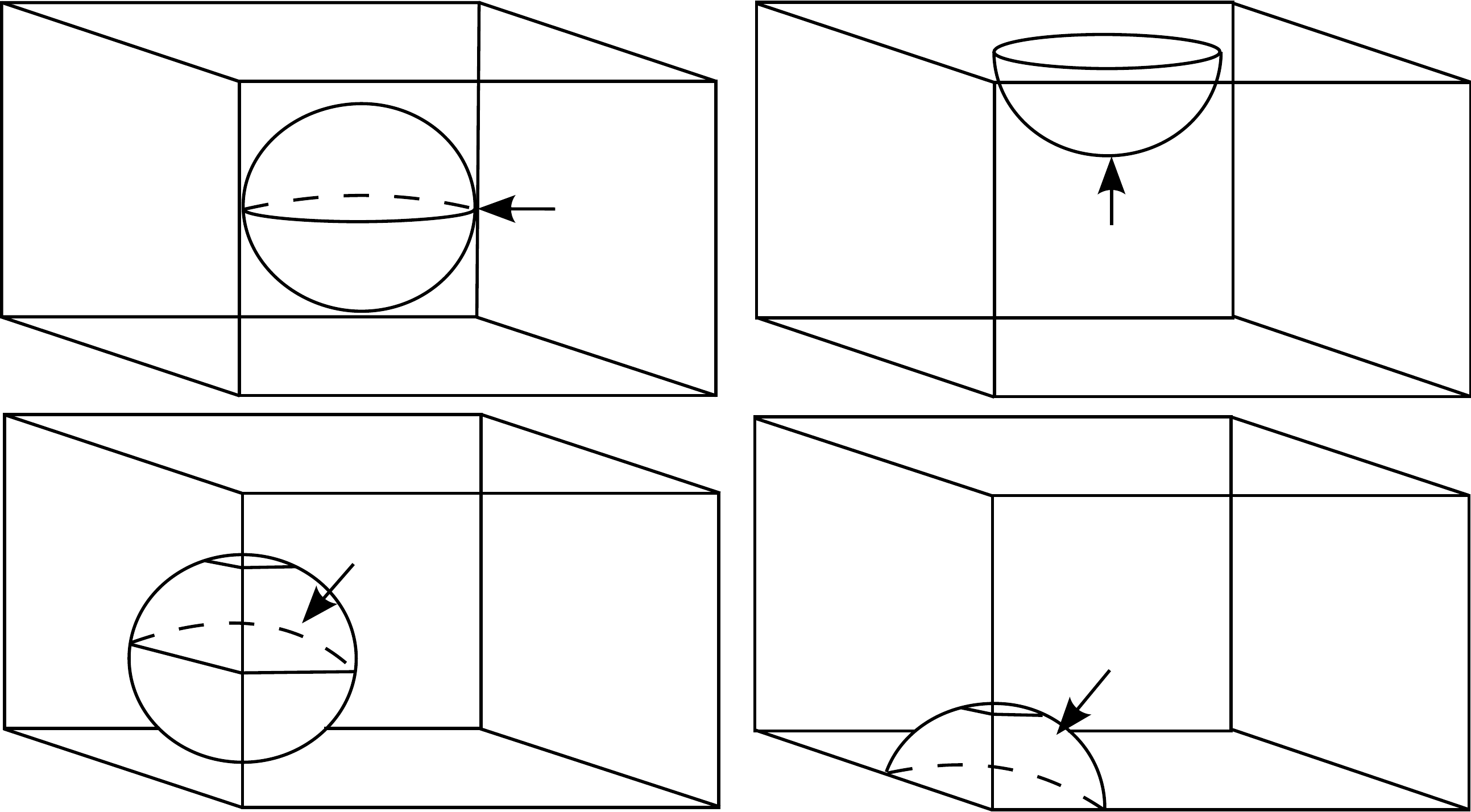}}%
    \put(0.38,0.394){\color[rgb]{0,0,0}\makebox(0,0)[lb]{\smash{$S(x_0)$}}}%
    \put(0.725,0.365){\color[rgb]{0,0,0}\makebox(0,0)[lb]{\smash{$S(x_0)$}}}%
    \put(0.24,0.17){\color[rgb]{0,0,0}\makebox(0,0)[lb]{\smash{$S(x_0)$}}}%
    \put(0.74,0.1){\color[rgb]{0,0,0}\makebox(0,0)[lb]{\smash{$S(x_0)$}}}%
    \put(0.014,0.46){\color[rgb]{0,0,0}\makebox(0,0)[lb]{\smash{interior}}}%
    \put(0.545,0.46){\color[rgb]{0,0,0}\makebox(0,0)[lb]{\smash{face}}}%
    \put(0.017,0.18){\color[rgb]{0,0,0}\makebox(0,0)[lb]{\smash{edge}}}%
    \put(0.54,0.18){\color[rgb]{0,0,0}\makebox(0,0)[lb]{\smash{corner}}}%
  \end{picture}%
\endgroup
	\caption{Subsets $S(x_0)$ of $\Omega$ (taken to be a rectangular parallelepiped) used for testing whether nucleation of austenite can occur in the interior, on a face, an edge and at a corner; 
these are given respectively by the intersection of $\Omega$ with a small ball centred at a point in the interior, on a face, an edge or a corner.}
	\label{fig:admissible}
\end{figure}

More precisely, let $B(x_0,r)$ be the ball of radius $r>0$ centred at $x_0$.
\begin{itemize}
\item[(i)] (interior) If $x_0\in\Omega$ then $S(x_0)=B(x_0,r)$ for some $r>0$ such that $\overline{S(x_0)}\subset\Omega$.
\item[(ii)] (face) If $x_0$ belongs to a face $F$ contained in the plane $\left\{x\cdot n=k\right\}$, where $n\in S^{2}$ is outward pointing and $k\in\mathbb{R}$, 
then
\[
 S(x_0)=\left\{x\in B(x_0,r):x\cdot n<k\right\}
\]
for some $r>0$ such that $\overline{S(x_0)}\setminus\left\{x\cdot n=k\right\}\subset\Omega$. 
\item[(iii)] (edge) If $x_0$ belongs to an edge $E$ that is the intersection of two faces contained in the planes $\left\{x\cdot n_{1}=k_{1}\right\}$ and 
$\left\{x\cdot n_{2}=k_{2}\right\}$ where $n_{i}\in S^{2}$ are outward pointing and $k_{i}\in\mathbb{R}$, $i=1,2$, then 
\[
 S(x_0)=\left\{x\in B(x_0,r)\,:\,x\cdot n_{i}<k_i,\,\,i=1,2\right\}
\]
for some $r>0$ such that $\overline{S(x_0)}\setminus\bigcup^{2}_{i=1}\left\{x\cdot n_i=k_i\right\}\subset\Omega$.
\item[(iv)] (corner) If $x_0$ is a corner $C$ that is the intersection of $N\geq 3$ faces contained in the planes $\left\{x\cdot n_{i}=k_{i}\right\}$ where 
$n_{i}\in S^{2}$ are outward pointing and $k_{i}\in\mathbb{R}$, $i=1,\ldots ,N$, then 
\[
 S(x_0)=\left\{x\in B(x_0,r)\,:\,x\cdot n_{i}<k_i,\,\,i=1,\ldots, N\right\}
\]
for some $r>0$ such that $\overline{S(x_0)}\setminus\bigcup^{N}_{i=1}\left\{x\cdot n_i=k_i\right\}\subset\Omega$.
\end{itemize}

\begin{remark}
Note that the sets $S(x_0)$ are chosen to be (parts of) balls for simplicity; clearly, no matter what the shape of a `nucleation 
region' is, it can always be embedded in (part of) such a ball.

Moreover, we can write $\partial S(x_0)$ as the disjoint union of 
$\partial S(x_0)\cap\Omega$ and $\partial S(x_0)\cap\partial\Omega$, which are relatively open and closed subsets of $\partial S(x_0)$ respectively.
\end{remark}
 
\begin{definition}
A measure $\nu=(\nu_x)_{x\in\Omega}$ is admissible if there exists $x_0\in\overline{\Omega}$, $r>0$ and $S(x_0)$ as above such that $\nu\in{\mathcal A}(x_0)$, where
\begin{equation}
\mathcal{A}(x_0)=\{\nu\in\mathcal{G}^{\infty}(\Omega,\mathbb{R}^{3\times3}) : \nu_{x}=\delta_{U_s}\mbox{ a.e. }x\notin S(x_0), y\mbox{ satisfies (C-N) }, \det\,Dy>0\mbox{ a.e., } y_{|\partial S(x_0)\cap\Omega}=U_{s}x\}.\nonumber
\end{equation}
and $y\in W^{1,\infty}(\Omega,\mathbb{R}^3)$ is the deformation underlying $\nu$.
\label{def:admissible+c-n}
\end{definition}

For faces, edges and corners $\partial S(x_0)\cap\partial\Omega$ act as free 
boundaries; these are comprised of part of the given face, parts of the two faces that meet at the given edge and parts of the faces that meet at the 
given corner, respectively.

We end this section by showing how condition (C-N) leads to finite-energy deformations which are homeomorphic in $\Omega$.

\begin{theorem}
Let $x_0\in\overline{\Omega}$ and $\nu\in{\mathcal A}(x_0)$. Suppose that $\mathrm{supp}\,\nu_x\subset K$ a.e.~in $\Omega$ and let 
$y\in W^{1,\infty}(\Omega,\mathbb{R}^3)$ be the underlying deformation of $\nu$. Then $y$ is a homeomorphism between $\Omega$ and $y(\Omega)$, its 
inverse $y^{-1}$ belongs to $W^{1,\infty}(y(\Omega),\mathbb{R}^3)$ and
$$Dy^{-1}(x')=[Dy(y^{-1}(x'))]^{-1}$$
for a.e.~$x'\in y(\Omega)$.
\label{theorem:homeomorphic}
\end{theorem}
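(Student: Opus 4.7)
The plan is to reduce the statement to a direct application of Lemma~\ref{lemma:regularity}. By the definition of $\mathcal{A}(x_0)$, the underlying map $y$ already lies in $W^{1,\infty}(\Omega,\mathbb{R}^3)$, satisfies (C-N) and has $\det Dy>0$ a.e.~in $\Omega$. Thus the only nontrivial points are to check (a) that $y$ is a mapping of bounded distortion and (b) that there exists $r>0$ with $\det Dy(x)\geq r$ a.e.~in $\Omega$, since then Lemma~\ref{lemma:regularity} yields the homeomorphism statement, the $W^{1,\infty}$ regularity of $y^{-1}$, and the formula $Dy^{-1}(x')=[Dy(y^{-1}(x'))]^{-1}$.

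First I would establish (b). Since $\nu\in\mathcal{G}^{\infty}(\Omega,\mathbb{R}^{3\times3})$ and the determinant is quasiaffine, the minors relations \eqref{eq:minors0} give
\[
\det Dy(x)=\langle\nu_x,\det\rangle\qquad\text{for a.e.~}x\in\Omega.
\]
Combining this with the assumption $\operatorname{supp}\nu_x\subset K$ a.e.~and the standing hypothesis $\det U_s\leq 1$ (so that $\min_{F\in K}\det F=\min(\det U_s,1)>0$ by positive definiteness of $U_s$), exactly as in \eqref{eq:detweakcts}, gives
\[
\det Dy(x)\geq r:=\min(\det U_s,1)>0\qquad\text{a.e.~in $\Omega$}.
\]

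Next, to establish (a), identify $y$ with its Lipschitz representative, which is in particular continuous, and note that $y\in W^{1,\infty}\subset W^{1,3}_{\mathrm{loc}}$. The pointwise a.e.~bound $\det Dy(x)>0$ gives the required sign condition on the Jacobian, while $y\in W^{1,\infty}$ implies $\|Dy(x)\|\leq L:=\|Dy\|_{L^\infty(\Omega)}$ a.e. Combining with the lower bound from (b),
\[
\|Dy(x)\|^{3}\leq L^{3}=\frac{L^{3}}{r}\,r\leq M\,|\det Dy(x)|\qquad\text{a.e.~in $\Omega$},
\]
with $M:=L^{3}/r$, so $y$ is a mapping of bounded distortion in the sense of the definition preceding Lemma~\ref{lemmareshetnyak}.

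With (a) and (b) in hand, Lemma~\ref{lemma:regularity} applies directly and yields all of the desired conclusions. There is no real obstacle here; the only subtlety is merely making sure that the minors relation and the support condition are combined to upgrade the a.e.~positivity of $\det Dy$ to the uniform positive lower bound needed by Lemma~\ref{lemma:regularity}, which in turn is what converts a.e.~injectivity (from (C-N) via Marcus--Mizel) into genuine injectivity through Reshetnyak's openness theorem (Lemma~\ref{lemmareshetnyak}).
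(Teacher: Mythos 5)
Your proof is correct and follows essentially the same route as the paper: reduce to Lemma~\ref{lemma:regularity} by checking bounded distortion and the uniform lower bound on $\det Dy$ via the minors relations and $\mathrm{supp}\,\nu_x\subset K$. The only (inconsequential) difference is that you bound $\|Dy(x)\|$ directly by $\|Dy\|_{L^\infty(\Omega)}$, available since $y\in W^{1,\infty}$, whereas the paper observes that the operator norm $F\mapsto\sigma_{\max}(F)$ is quasiconvex and uses the support condition to bound it by $\max_{F\in K}\|F\|$; both yield the bounded-distortion inequality.
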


\begin{proof}
In view of Lemma~\ref{lemma:regularity}, we need only prove that $y$ is a mapping of bounded distortion and that $\det Dy(x)\geq r > 0$ a.e.~in 
$\Omega$. The determinant constraint follows by the minors relations and the fact that $\mathrm{supp}\,\nu_x\subset K$ a.e.~in $\Omega$ as in (\ref{eq:detweakcts}). 
To show that $y$ has bounded distortion, note that $\Omega$ being Lipschitz, $y\in W^{1,\infty}(\Omega,\mathbb{R}^{3})$ is continuous. Moreover, 
$\mathrm{supp}\,\nu_{x}\subset K$ a.e.~and the norm on $\mathbb{R}^{3\times3}$, $\|F\|=\sigma_{\max}(F)$ is quasiconvex, i.e.
\[\|\bar{\nu}_x\|=\|Dy(x)\|\leq\max_{F\in K}\|F\|\:\:\mbox{for a.e.}\;x\in\Omega.\]
Also, by (\ref{eq:detweakcts}), $\det\,Dy(x)$ is bounded above and below a.e.~and does not change sign in $\Omega$. Then, for a.e.~$x\in\Omega$,
\[\frac{\| Dy(x)\|^{3}}{\vert\det Dy(x)\vert}\leq\frac{\max_{F\in K}\|F\|^{3}}{\min_{F\in K} \det F}\]
which is clearly bounded, so that $y$ is a mapping of bounded distortion.\qed
\end{proof}

\section{Proposed explanation of the nucleation points}
\label{sec:3}

Our main result implies that nucleation is only possible at a corner. This is however dependent on the directions along which the specimen is cut and the initial variant $U_s$. We define below a corresponding class of admissible domains for which this result will be proved to hold. We first note that for any admissible measure $\nu$ with finite energy, $\mathrm{supp}\,\nu_{x}\subset\,K=SO(3)\cup\bigcup^{N}_{i=1}SO(3)U_{i}$ a.e., which implies that 
$\bar{\nu}_{x}\in\,K^{qc}$ a.e.~in $\Omega$. In particular, for any quasiconvex function $f:\mathbb{R}^{3\times 3}\rightarrow\mathbb{R}$
\[f(\bar{\nu}_{x})\leq\displaystyle\max_{F\in\,K}f(F)\:\:\:\mbox{for a.e.~$x\in\Omega$.}\]
This motivates the following definition.

\begin{definition}
Let $K=SO\left(3\right)\cup\bigcup^{N}_{i=1}SO\left(3\right)U_{i}$ and $s\in\left\{1,\dots ,N\right\}$. We say that a vector $e\in S^{2}$ - the unit sphere in $\mathbb{R}^{3}$ - is a maximal direction for $U_{s}$ if
\begin{equation}
\vert U_{s}e\vert =\displaystyle\max_{F\in\,K}\vert Fe\vert=\displaystyle\max_{i\in\left\{1,\dots ,N\right\}}\left\lbrace\vert U_{i}e\vert ,1\right\rbrace,\nonumber
\end{equation}
where we use the shorthand $\max_{i\in\{1,\ldots,N\}}\{|U_ie|,1\}:=\max \{\max_{i\in\{1,\ldots,N\}}\{|U_ie|\},1\}$.
Similarly, we say that a vector $e\in\,S^{2}$ is a maximal direction for $U^{-1}_{s}$ if either
\begin{eqnarray}
\label{eq:maximalitycofactors}
\vert(\mathrm{cof}\,U_{s})e\vert&>&\displaystyle\max_{F\in\,K\setminus{SO(3)U_s}}\vert(\mathrm{cof}\,F)e\vert=\displaystyle\max_{i\in\left\{1,\dots ,N\right\}\setminus\left\{s\right\}}\lbrace\vert(\mathrm{cof}\,U_{i})e\vert ,1\rbrace\\
\mbox{or}&& e=e_{\max}(\mathrm{cof}\,U_s),\nonumber
\end{eqnarray}
where $e_{\max}(\mathrm{cof}\,U_s)$ denotes the eigenvector of $\mathrm{cof}\,U_s$ corresponding to its largest eigenvalue\footnote{Note that, since we  assume that $\lambda_{\rm max}({\rm cof}\,U_s)\geq 1$, 
$e_{\max}(\mathrm{cof}\,U_s)$ satisfies \eqref{eq:maximalitycofactors} with nonstrict inequality but may or may not do so with strict inequality.}.
We denote the set of maximal directions for $U_{s}$ and $U^{-1}_{s}$ by $\mathcal{M}_{s}$ and $\mathcal{M}^{-1}_{s}$ respectively.
\label{definitionmaximal}
\end{definition}

\begin{definition}
Let $\Omega\subset\mathbb{R}^{3}$ be a convex polyhedral domain. We say that an edge of $\Omega$ is admissible for $U_{s}$ if it is in the direction of a 
vector in $\mathcal{M}_{s}\cup\,U^{-2}_{s}\mathcal{M}^{-1}_{s}$. Similarly, a face of $\Omega$ is admissible for $U_{s}$ if the normal to the face is 
perpendicular to a vector in $\mathcal{M}_{s}\cup\,U^{-2}_{s}\mathcal{M}^{-1}_{s}$. The domain $\Omega$ is admissible for $U_{s}$ if all 
of its edges are admissible.
\end{definition}

\begin{remark}
We view the set $U^{-2}_{s}\mathcal{M}^{-1}_{s}$ as a subset of $S^2$; that is $e\in U^{-2}_{s}\mathcal{M}^{-1}_{s}$ if there 
exists $f\in\mathcal{M}^{-1}_{s}$ such that $e=U^{-2}_{s}f/\vert U^{-2}_{s}f\vert$. Moreover, note that if an edge is admissible it follows that the faces 
intersecting at that edge are also admissible since the normals are necessarily perpendicular to that edge. Therefore, for $\Omega$ to be admissible we 
need not require that its faces are admissible too.
\end{remark}

We are now in a position to state and prove our main result:

\begin{theorem}
Let $\Omega\subset\mathbb{R}^{3}$ be a convex polyhedral domain that is admissible for $U_{s}$, $s\in\{1,\ldots,N\}$, and assume that $\det\,U_{s}\leq 1$ as well as $\lambda_{\max}(\mathrm{cof}\,U_s)\geq1$. 
Let $x_0\in\overline{\Omega}$ and $\nu\in{\mathcal A}(x_0)$ be such that $I(\nu)<I(\delta_{U_s})$. 
Then, $x_0$ is a corner.\footnote{We note that in \cite{icomat11} the result was erroneously stated with nonstrict inequalities in \eqref{eq:maximalitycofactors} 
corresponding to the definition of the maximal directions for $U_s^{-1}$ and $U_s^{-1}{\mathcal M}_s^{-1}$ in place of $U_s^{-2}{\mathcal M}_s^{-1}$.}

Furthermore, let $\Omega\subset\mathbb{R}^3$ be a rectangular parallelepiped with edges along the vectors $e_1$, $e_2$, $e_3$ and define a coordinate system such that the axes are parallel to the edges 
and each corner of $\Omega$ belongs to a different octant $O_i$, $i=1,\ldots,8$. Fix $s\in\{1,\ldots,N\}$ and suppose that there exists $l\in\{1,\ldots,N\}$, $l\neq s$, such that
\begin{align*}
QU_l-U_s &=a\otimes n\\
U_s+\lambda a\otimes n &=R+b\otimes m,
\end{align*}
for some $\lambda\in(0,1)$, $R$, $Q\in SO(3)$, $a$, $b\in\mathbb{R}^3$ and $n$, $m\in S^2$ such that $n$, $m$ belong to the same octant, say $O_k$, they are not perpendicular to the vectors $e_i$, $i=1,2,3$ and $U^{-1}_{s}b\cdot n<0$. 
Then, for each of the corners $x_0$ belonging to the octants $O_k$ and $-O_k$, there exists an admissible measure $\nu\in\mathcal{A}(x_0)$ such that $I(\nu)<I(\delta_{U_s})$.

In particular, for the {\rm CuAlNi} specimen of the experiment undergoing a cubic-to-orthorhombic transformation with $N=6$, $e_i$, $i=1,2,3$, the standard basis of $\mathbb{R}^3$ and lattice parameters $\alpha=1.06372$, $\beta=0.91542$, $\gamma=1.02368$, for each $s\in\{1,\ldots,6\}$, 
there exist precisely four such corners.
\label{theorem:main}
\end{theorem}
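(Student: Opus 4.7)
The theorem comprises three assertions: \textbf{(A)} no admissible $\nu\in\mathcal{A}(x_0)$ lowers the energy below $I(\delta_{U_s})=0$ when $x_0$ lies in the interior, on a face, or on an edge of an admissible $\Omega$; \textbf{(B)} at four specific corners of any rectangular parallelepiped satisfying the given rank-one compatibility conditions, an explicit $\nu\in\mathcal{A}(x_0)$ achieves $I(\nu)<0$; \textbf{(C)} these hypotheses are satisfied in CuAlNi for every $s$. Throughout, I would work with the local austenite fraction $\theta(x):=\nu_x(SO(3))$, noting $I(\nu) = -\delta\int_\Omega\theta\,dx$, so that $I(\nu)<0$ is equivalent to $\theta>0$ on a set of positive measure.

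For (A), the plan is to establish a localized quasiconvexity inequality at $U_s$ by testing against the convex function $f_e(F)=|Fe|^2$ for $e\in\mathcal{M}_s$ and the polyconvex $g_e(F)=|(\mathrm{cof}\,F)e|^2$ for $e\in\mathcal{M}_s^{-1}$. Since $\mathrm{supp}\,\nu_x\subset K$, splitting the support across wells gives the upper bound $\langle\nu_x,f_e\rangle\leq\theta(x)+(1-\theta(x))|U_s e|^2$. On the other hand, convexity (Jensen) yields $\langle\nu_x,f_e\rangle\geq|Dy(x)e|^2$, and another application of Jensen combined with the boundary identity
\[
\int_{S(x_0)} Dy\,e\,dx = |S(x_0)|\,U_s e
\]
gives the reverse bound $\int_{S(x_0)}|Dy\,e|^2\,dx\geq|S(x_0)||U_s e|^2$. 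Integrating and combining yields $\int_{S(x_0)}\theta(|U_s e|^2-1)\,dx\leq 0$, which forces $\theta\equiv 0$ in $S(x_0)$ when $|U_s e|>1$. The cofactor version is parallel, using the quasiaffinity of $F\mapsto\mathrm{cof}\,F$ (so its integral is a null-Lagrangian boundary quantity) together with its Piola identity. The boundary identity above comes from the divergence theorem applied to $y=U_s x$ on $\partial S(x_0)\cap\Omega$, and extends across the free boundary only when $e\perp n$ at every free face of $\Omega$ meeting $\partial S(x_0)$: in the interior case, no constraint; on a face, one orthogonality; on an edge, two orthogonalities forcing $e$ along the edge. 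Admissibility of $\Omega$ for $U_s$ provides such $e\in\mathcal{M}_s\cup U_s^{-2}\mathcal{M}_s^{-1}$ in each case, the $U_s^{-2}$ factor arising because the cofactor identity is naturally tested against $U_s^2 e$ via $(\mathrm{cof}\,U_s)(U_s^2 v)=(\det U_s)U_s v$.

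For (B), at a corner $x_0$ in the designated octant I would construct a habit-plane microstructure localized near $x_0$. In a wedge cut off from $S(x_0)$ by a plane with normal $m$, on the corner side set $\nu_x=\delta_R$ (austenite nucleus); in a thin adjacent slab place the twin $\nu_x=(1-\lambda)\delta_{U_s}+\lambda\delta_{QU_l}$; elsewhere in $S(x_0)$ retain $\nu_x=\delta_{U_s}$. The rank-one identity $U_s+\lambda a\otimes n-R=b\otimes m$ ensures continuity of the underlying deformation across the habit plane, while $QU_l-U_s=a\otimes n$ allows the $QU_l$ layers to be terminated into $U_s$ by the standard piecewise-affine construction with thin transition regions. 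The sign conditions ($n,m\in O_k$, not perpendicular to any $e_i$, and $U_s^{-1}b\cdot n<0$) ensure that the wedge lies inside $\Omega$ at the corners in $O_k$ and, by the reflection $x\mapsto-x$, in $-O_k$; the austenite nucleus has positive volume, so $I(\nu)<0$. Part (C) reduces to a direct computation: substitute the given $\alpha,\beta,\gamma$ into (\ref{eq:orthorhombicvariants}), enumerate the solutions $(U_l,Q,a,n,R,b,m,\lambda)$ of the twin and habit-plane equations with $U_s$, verify the octant and sign conditions, and organize the survivors into orbits of four under the residual cubic symmetry group fixing $U_s$ combined with $x\mapsto-x$.

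The principal obstacle I foresee is the constructive step in (B): producing a piecewise-affine $y\in W^{1,\infty}$ that realizes the above Young measure, satisfies \rm{(C-N)} with $\det Dy>0$, and matches $U_s x$ \emph{exactly} on $\partial S(x_0)\cap\Omega$ requires careful bookkeeping of the laminate termination and of the interface with the outer $U_s x$-region, so that the austenite nucleus survives with positive measure. A secondary technical point in (A) is the cofactor boundary identity on an edge, where two planar free-boundary pieces meet and the null-Lagrangian structure of $\mathrm{cof}\,Dy$ must be unpacked carefully; this is also where the $U_s^{-2}$ factor in the admissibility condition gets its natural justification.
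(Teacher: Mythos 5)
Your integral argument for part (A) via the boundary identity $\int_{S(x_0)} Dy\,e\,dx=|S(x_0)|U_se$ combined with Jensen's inequality is valid for $e\in\mathcal{M}_s$ and is, in fact, a cleaner route than the paper's, which instead applies a rigidity lemma (Lemma~\ref{lemma:rigidity}) along individual line segments $r_x(t)=x+te$ to deduce $z(x)=U_sx$ pointwise. Both versions exploit the orthogonality $e\cdot n_{\rm free}=0$ to kill the free-boundary contribution of the divergence-structure identity, and both give the strict conclusion $\theta\equiv 0$ because $|U_se|>1$ under the lattice-parameter hypotheses. Your treatment of the interior, the outline of the corner construction (B), and the plan for (C) likewise track the paper.

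However, the proposed \emph{cofactor} version for $e\in U_s^{-2}\mathcal{M}_s^{-1}$ does not work, and this is a genuine gap. Unlike $Dy\,e$, whose $i$-th component is $\mathrm{div}(y_ie)$ and whose boundary integrand $y_i(n\cdot e)$ vanishes on the free face when $e\perp n_{\rm free}$, the entries of $\mathrm{cof}\,Dy$ are quadratic null Lagrangians: writing $(\mathrm{cof}\,Dy)_{ij}=\partial_p\bigl(\tfrac12\epsilon_{imn}\epsilon_{jpq}y_m\partial_qy_n\bigr)$, the boundary integrand for $\int(\mathrm{cof}\,Dy)e\,dx$ is $\tfrac12\epsilon_{imn}(e\times n)_q\,y_m\partial_qy_n$, which involves the tangential derivatives of $y$ on the \emph{free} face and does not vanish when $e\perp n_{\rm free}$. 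A one-line check with $U_s=\mathbf1$, $n_{\rm free}=e_3$, $y=x+\phi(x)e_3$ (with $\phi$ supported away from the prescribed boundary) and $e=e_1$ gives $(\mathrm{cof}\,Dy)e_1=(1+\partial_3\phi)e_1$, so $\int(\mathrm{cof}\,Dy)e_1\,dx=\bigl(|S|+\int_{\rm free}\phi\,dS\bigr)e_1\neq|S|e_1$. So the identity you would need, $\int(\mathrm{cof}\,Dy)\,v\,dx=|S(x_0)|(\mathrm{cof}\,U_s)v$ for the appropriately chosen $v$, simply fails. This is not a minor technicality: it is precisely why the paper's Lemmas~\ref{lemmamaximaldirectionsusinverse}--\ref{lemmafefinal} take a completely different route, passing to the inverse map $w=z^{-1}$ (which exists and lies in $W^{1,\infty}$ by the (C-N) constraint plus bounded distortion, Theorem~\ref{theorem:homeomorphic}) and running the rigidity argument along segments in the \emph{deformed} configuration, after establishing via a covering argument that $U_sD\subset z(B_{f,e})$. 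Your outline omits (C-N) from part (A) entirely, which is a symptom of this missing mechanism: the determinant/cofactor hypotheses and (C-N) are exactly what drive the $U_s^{-2}\mathcal{M}_s^{-1}$ case, and your integral strategy gives you no foothold there.
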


\begin{remark}\label{rem:aftermain}
For the simplified model constructed above, if $\delta_{U_{s}}$ is a local minimizer with respect to variations in ${\mathcal A}(x_0)$ for $x_0$ not a corner then 
no nucleation can occur in the neighbourhood of $x_0$. This is clear from the form of our energy as $I(\delta_{U_s})=0$ and
\[
I(\nu)<0\:\mbox{ if and only if }\:\int_{\Omega}\nu_x(SO(3)) dx=\int_{\Omega}\int_{SO(3)}\,\,d\nu_{x}(A)>0.
\]
On the other hand, if there exists $\nu\in\mathcal{A}(x_0)$ such that $I(\nu)<I(\delta_{U_s})$, we infer that $\nu$ must be partly supported on $SO(3)$; in 
particular, austenite has nucleated.
Thus the first part of Theorem~\ref{theorem:main} says that $\delta_{U_s}$ is a local minimizer with respect to localized variations in the interior, on faces 
and at edges of $\Omega$ and no nucleation can occur there. The second part says that, for Seiner's specimen, $\delta_{U_s}$ is not a local minimizer with 
respect to localized variations at some corner and hence the austenite can indeed nucleate at that corner. Therefore, Theorem~\ref{theorem:main} states 
that austenite must and indeed can nucleate at a corner.

We should point out that the requirement that $\Omega$ is admissible for $U_{s}$ as well as the conditions on the lattice parameters and the constraint 
(C-N) are only relevant for faces and edges; for corners and the interior no such conditions are required.

The intuition behind the definition of maximal directions is the following: suppose that $y$ is a deformation underlying a finite-energy admissible measure and consider a line segment along a maximal direction for $U_s$ joining two points on the prescribed boundary of $S(x_0)$; that is the boundary which is deformed according to $U_s x$. Note that such a line segment always exists if $\Omega$ is admissible. Since $Dy\in K^{qc}$ a.e., the definition of a maximal direction implies that the length of the deformed line segment cannot exceed the length of the same segment when transformed by $U_s x$, i.e. of the straight line joining the two points on the prescribed boundary in the deformed configuration; but, since this is the shortest distance, the line segment must have deformed by $U_s x$. The case of maximal directions for $U_s^{-1}$ is similar (but less intuitive) with the argument applied to the inverse deformation provided by the constraint (C-N). This type of rigidity is made clear through Lemma 8, 9 and 11.
\end{remark}

\paragraph{\textbf{Quasiconvexity conditions:}}
At this stage we introduce a set of quasiconvexity conditions in the interior, at faces, edges and corners; we use these to prove the first part of 
Theorem~\ref{theorem:main}. As discussed in \cite[p.~9]{ballopenprobs}, some care is needed when defining quasiconvexity conditions for integrands $W$ taking the value 
$+\infty$, as in our case, and we take the route of defining the conditions in terms of gradient Young measures. 
\begin{definition}
Let  $W:\mathbb{R}^{3\times 3}\rightarrow\mathbb{R}\cup\{+\infty\}$ be bounded below and Borel measurable; let $\Omega\subset\mathbb{R}^3$ be a bounded convex polyhedral domain and 
let $B=B(0,1)=\left\{x\in\mathbb{R}^{3}\,:\,\vert x\vert<1\right\}$, the unit ball in $\mathbb{R}^{3}$.
\begin{itemize}
\item[(i)] (interior) We say that $W$ is quasiconvex at $F\in\mathbb{R}^{3\times 3}$ in the interior of $\Omega$ if
\[\langle\mu,W\rangle\geq\,W(F)\]
for all $\mu\in\mathcal{B}_{i}$ where
\[\mathcal{B}_{i}=\left\{\mbox{$\mu\in\mathcal{G}^{\infty}(B,\mathbb{R}^{3\times3})$: $\mu$ homogeneous, $\bar{\mu}=F$}\right\}.\]
\item[(ii)] (face) Let a face of $\Omega$ be contained in the plane $\left\{x\cdot n=k\right\}$ for some outward pointing normal $n\in S^{2}$ and 
$k\in\mathbb{R}$; let $B_{f}=\left\{x\in B\,:\,x\cdot n<0\right\}$. We say that $W$ is quasiconvex at $F\in \mathbb{R}^{3\times 3}$ on that face if
\[\int_{B_{f}}\langle\mu_{x},W\rangle\,dx\geq\int_{B_{f}}W(F)\,dx\]
for all $\mu\in\mathcal{B}_{f}$ where
\[\mathcal{B}_{f}=\{\mu\in\mathcal{G}^{\infty}(B_f,\mathbb{R}^{3\times3}):\,\;\bar{\mu}_{x}=Dz(x)
\mbox{ a.e.~in $B_{f}$, $z$ satisfies (C-N) and}\,\,z(x)=Fx\,\,\mbox{on $\partial B\cap\partial{B_f}$}\}.\]
\item[(iii)] (edge) Let an edge of $\Omega$ be the intersection of two faces contained in the planes $\left\{x\cdot n_{i}=k_{i}\right\}$ for some outward pointing 
normals $n_{i}\in S^{2}$ and $k_{i}\in\mathbb{R}$, $i=1,2$; let $B_{e}=\left\{x\in B\,:\,x\cdot n_{i}<0,\,i=1,2\right\}$. We say that $W$ is 
quasiconvex at $F\in \mathbb{R}^{3\times 3}$ at that edge if
\[\int_{B_{e}}\langle\mu_{x},W\rangle\,dx\geq\int_{B_{e}}W(F)\,dx\]
for all $\mu\in\mathcal{B}_{e}$ where
\[\mathcal{B}_{e}=\{\mu\in\mathcal{G}^{\infty}(B_e,\mathbb{R}^{3\times3}):\,\;\bar{\mu}_{x}=Dz(x)
\mbox{ a.e.~in $B_{e}$, $z$ satisfies (C-N) and}\,\,z(x)=Fx\,\,\mbox{on $\partial B\cap\partial{B_e}$}\}.\]
\item[(iv)] (corner) Let a corner of $\Omega$ be the intersection of three faces contained in the planes $\left\{x\cdot n_{i}=k_{i}\right\}$ for some outward 
pointing normals $n_{i}\in S^{2}$ and $k_{i}\in\mathbb{R}$, $i=1,2,3$; let $B_{c}=\left\{x\in B\,:\,x\cdot n_{i}<0,\,i=1,2,3\right\}$. We say 
that $W$ is quasiconvex at $F\in \mathbb{R}^{3\times 3}$ at that corner if
\[\int_{B_{c}}\langle\mu_{x},W\rangle\,dx\geq\int_{B_{c}}W(F)\,dx\]
for all $\mu\in\mathcal{B}_{c}$ where
\[\mathcal{B}_{c}=\{\mu\in\mathcal{G}^{\infty}(B_c,\mathbb{R}^{3\times3}):\,\;\bar{\mu}_{x}=Dz(x)
\mbox{ a.e.~in $B_{c}$, $z$ satisfies (C-N) and}\,\,z(x)=Fx\,\,\mbox{on $\partial B\cap\partial{B_c}$}\}.\]
\end{itemize}
\label{def:quasiconvexity}
\end{definition}

\begin{remark}
The above definition of quasiconvexity in the interior at $F$ is equivalent to the existence of a nondecreasing sequence of everywhere finite (and thus continuous) 
quasiconvex functions $W^{(j)}$ such that $W^{(j)}(F)\to W(F)$ as $j\to\infty$ (see \cite[Remark 4]{newBJ}). It is natural to expect that similar equivalences hold for 
the other quasiconvexity conditions, but this lies outside the scope of this paper.
\end{remark}

\begin{remark}
We note that whenever a measure $\mu\in\mathcal{B}_{\omega}$, ${\omega}\in\{f,e,c\}$, satisfies $\mathrm{supp}\,\mu_{x}\subset K$, its underlying deformation $z$ becomes a mapping 
of bounded distortion and thus by Lemma~\ref{lemma:regularity} inherits the property of being homeomorphic with an inverse in the space $W^{1,\infty}(z(B_{\omega}),\mathbb{R}^{3})$.

Moreover, if we suppose that the map $W$ is finite and continuous everywhere and ignore the determinant and (C-N) constraints, the quasiconvexity conditions in the 
interior and on a face are essentially the standard quasiconvexity conditions in the interior and at the boundary but phrased in terms of Young measures. 
To see this, suppose that $W$ is quasiconvex in the interior at $F$ in the sense of Definition~\ref{def:quasiconvexity} and let 
$z\in Fx+W^{1,\infty}_{0}(\Omega,\mathbb{R}^{3})$; consider the measure $\nu=\delta_{Dz(\cdot)}$ which is clearly a $W^{1,\infty}$ gradient Young measure 
and define $\mu=\mathrm{Av}\,\nu$, the average of $\nu$, through its action on a continuous function $f$, by
\[
\langle\mu,f\rangle=\frac{1}{\vert\Omega\vert}\int_{\Omega}\langle\nu_{x},f\rangle\,dx.
\]
As remarked in Section~\ref{sec:1}, $\mu$ is a homogeneous gradient Young measure and satisfies $\bar{\mu}=F$. By the quasiconvexity of $W$ we get that
\begin{align}
\langle\mu, W\rangle\geq W(F)&\Rightarrow\frac{1}{\vert\Omega\vert}\int_{\Omega}\langle\nu_{x},W\rangle\,dx\geq W(F)\nonumber\\
&\Rightarrow\frac{1}{\vert\Omega\vert}\int_{\Omega}W(Dz(x))\,dx\geq W(F).
\label{laquila}
\end{align}
Conversely, suppose that \eqref{laquila} holds.  Let $\mu$ be a homogeneous $W^{1,\infty}$ gradient Young measure with $\bar{\mu}=F$ 
and consider its generating sequence $z^{k}$. We may assume that this lies in $W^{1,\infty}(\Omega,\mathbb{R}^{3})$ and, by a standard modification 
(e.g.~\cite{321}), we may also assume that $z^{k}\in Fx+W^{1,\infty}_{0}(\Omega,\mathbb{R}^{3})$. Using \eqref{laquila} for $z^k$ we deduce that
\begin{eqnarray*} W(F)&\leq &\lim_{k\to\infty}\frac{1}{|\Omega|}\int_\Omega W(Dz^k)\,dx\\
&=&\frac{1}{|\Omega|}\int_\Omega \langle\mu,W\rangle\,dx\\
&=&\langle\mu,W\rangle
\end{eqnarray*}
since $\mu$ is homogeneous. The case of a face is similar.

In regards to the other two conditions, these are natural extensions of the quasiconvexity conditions in the interior and at a face. The existence of such conditions at boundary points 
having conical singularities, such as at edges and corners, was discussed in \cite[p.~259 Remark 2]{ballmarsden}, but to the authors' knowledge this idea has not previously been applied.
\end{remark}

Next we prove that the above quasiconvexity conditions in the interior, at a face, an edge or a corner are sufficient for the measure $\delta_{U_{s}}$ 
to be a minimizer with respect to localized variations in the interior, at a face, an edge or a corner respectively.

\begin{lemma}
Let $\Omega\subset\mathbb{R}^{3}$ be a bounded convex polyhedral domain and suppose that $W:\mathbb{R}^{3\times 3}\rightarrow\overline{\mathbb{R}}$ is quasiconvex at 
$U_{s}$ in the interior (resp.~on faces, edges or at corners) of $\Omega$ in the sense of Definition~\ref{def:quasiconvexity}. Let $x_0$ belong to the interior (resp. a face, edge or corner). Then 
$I(\delta_{U_{s}})\leq I(\nu)$ for any ${\mathcal A}(x_0)$.
\label{lemma:qciffmin}
\end{lemma}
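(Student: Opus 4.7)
The proof is essentially a rescaling argument that reduces the admissible variations around $x_0$ to test measures in the appropriate class $\mathcal{B}_i$, $\mathcal{B}_f$, $\mathcal{B}_e$, or $\mathcal{B}_c$, followed by a direct application of the quasiconvexity inequality.

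First, I would rewrite the difference $I(\nu)-I(\delta_{U_s})$ in useful form. Since $\nu_x=\delta_{U_s}$ for a.e.\ $x\notin S(x_0)$ and $W(U_s)=0$, the integrand $\langle\nu_x,W\rangle-W(U_s)$ vanishes outside $S(x_0)$, so it suffices to prove
\[
\int_{S(x_0)}\langle\nu_x,W\rangle\,dx\;\geq\;\int_{S(x_0)}W(U_s)\,dx\;=\;0.
\]

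Next, fix the radius $r>0$ used in the definition of $S(x_0)$ and introduce the change of variables $x=x_0+rz$. Define $\tilde y(z):=r^{-1}\bigl(y(x_0+rz)-U_sx_0\bigr)$ and $\mu_z:=\nu_{x_0+rz}$, so that $D\tilde y(z)=Dy(x_0+rz)=\bar\mu_z$. Because each face of $\Omega$ through $x_0$ satisfies $x_0\cdot n_i=k_i$, the inequalities $x\cdot n_i<k_i$ transform exactly to $z\cdot n_i<0$; hence $S(x_0)$ rescales to $B$ in the interior case and to $B_f$, $B_e$, $B_c$ respectively in the face, edge and corner cases. The prescribed boundary $\partial S(x_0)\cap\Omega$ rescales to the spherical part $\partial B\cap\partial B_\omega$, and the condition $y(x)=U_sx$ there becomes $\tilde y(z)=U_sz$. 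I would then verify that $\mu\in\mathcal G^\infty(B_\omega,\mathbb{R}^{3\times 3})$ (gradient Young measures are preserved under translation and rescaling), that $\det D\tilde y>0$ a.e., and that the (C-N) constraint passes to $\tilde y$: restricting $y$ to $S(x_0)$ keeps it a.e.\ injective, so $\int_{S(x_0)}\det Dy\,dx=\mathcal{L}^3(y(S(x_0)))$, which scales to $\int_{B_\omega}\det D\tilde y\,dz=\mathcal{L}^3(\tilde y(B_\omega))$.

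In the face, edge, and corner cases these checks place $\mu$ directly in $\mathcal{B}_f$, $\mathcal{B}_e$, or $\mathcal{B}_c$ respectively (with $F=U_s$), and the appropriate quasiconvexity inequality of Definition~\ref{def:quasiconvexity} yields
\[
\int_{B_\omega}\langle\mu_z,W\rangle\,dz\;\geq\;\int_{B_\omega}W(U_s)\,dz\;=\;0,
\]
which after undoing the rescaling gives the required inequality for $\nu$.

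The interior case requires one additional step, because $\mathcal{B}_i$ consists of \emph{homogeneous} measures with mean $U_s$. Since $\tilde y(z)=U_sz$ on $\partial B$, the averaging procedure recalled in Section~\ref{sec:1.2} shows that $\mathrm{Av}\,\mu$ is a homogeneous $W^{1,\infty}$ gradient Young measure with $\overline{\mathrm{Av}\,\mu}=U_s$, hence $\mathrm{Av}\,\mu\in\mathcal{B}_i$. Applying interior quasiconvexity at $U_s$ gives $\langle\mathrm{Av}\,\mu,W\rangle\geq W(U_s)=0$. The main subtlety here is that $W$ may take the value $+\infty$ so the identity $\langle\mathrm{Av}\,\mu,W\rangle=|B|^{-1}\int_B\langle\mu_z,W\rangle\,dz$ is not immediate from the continuous-test-function definition of averaging; I would handle this by approximating $W+\delta\geq 0$ from below by a nondecreasing sequence of functions in $C_0(\mathbb{R}^{3\times 3})$ (exactly as in the proof of Proposition~3.1) and invoking monotone convergence, so that the averaging identity extends to lower semicontinuous nonnegative integrands. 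This is the one technical point that requires care; once it is in place, the scaling identity and the quasiconvexity inequality combine to give $\int_{S(x_0)}\langle\nu_x,W\rangle\,dx\geq 0$, completing the proof in all four cases.
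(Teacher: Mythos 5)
Your proposal is correct and matches the paper's proof for faces, edges and corners: rescale $S(x_0)$ to $B_\omega$, verify that the rescaled measure belongs to $\mathcal{B}_\omega$ (with $F=U_s$), apply quasiconvexity and undo the rescaling. For the interior case the paper is slightly more direct: it does not rescale at all but simply sets $\mu:=\mathrm{Av}\,\nu$ over $\Omega$, which is a homogeneous $W^{1,\infty}$ gradient Young measure with $\bar\mu=U_s$, hence an element of $\mathcal{B}_i$; and rather than invoking monotone convergence to handle the fact that $W$ may be $+\infty$, it first reduces to $\mathrm{supp}\,\nu_x\subset K$ a.e.\ (otherwise $I(\nu)=+\infty$ and there is nothing to prove) and then replaces $W$ by any continuous function agreeing with $W$ on the compact set $K$, for which the averaging identity $\langle\mathrm{Av}\,\nu,W\rangle=\vert\Omega\vert^{-1}\int_\Omega\langle\nu_x,W\rangle\,dx$ is immediate. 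Your monotone-convergence route gives the same conclusion, so the two proofs are essentially the same; the paper's version just avoids a limiting argument by exploiting that everything is already supported on the compact well set.
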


\begin{proof}
The proof for faces, edges and corners is similar and we only treat the case of a face; the case of the interior differs from the rest and we treat it last.

Suppose that $W$ is quasiconvex at $U_s$ at a face contained in $\left\{x\cdot n=k\right\}$ where $n\in S^2$ is outward pointing and $k\in\mathbb{R}$; 
let $x_0$ belong to the face and let $\nu=(\nu_x)_{x\in\Omega}\in\mathcal{A}(x_0)$ with underlying deformation $y\in W^{1,\infty}(\Omega,\mathbb{R}^3)$. We wish 
to deduce that $I(\nu)\geq I(\delta_{U_s})$. Since $W=+\infty$ outside $K$, we may also assume that
\[\mathrm{supp}\,\nu_x\subset K\:\:\:\mbox{a.e.}\]
as, otherwise, $I(\nu)>I(\delta_{U_s})$ and there is nothing to prove.

The set $S(x_0)\subset\Omega$ where $\nu$ is allowed to differ from $\delta_{U_{s}}$ is of the form $\left\{x\in B(x_0,r):x\cdot n<k\right\}$ for $r>0$ sufficiently small; clearly $k=x_0\cdot n$ and
\[
 S(x_0)=x_0+rB_f,
\]
where $B_f=\left\{x\in B(0,1):x\cdot n<0\right\}$. Then also $x_0+r(\partial B_{f}\cap\partial\Omega)=\partial S(x_0)\cap\partial\Omega$ and 
$x_0+r(\partial B_{f}\cap\Omega)=\partial S(x_0)\cap\Omega$.

We note that, since $\nu$ is a $W^{1,\infty}$ gradient Young measure, the parametrized measure $(\nu_x)_{x\in S(x_0)}$ is a $W^{1,\infty}$ gradient Young 
measure with underlying deformation $y\vert_{S(x_0)}$; this follows directly from Theorem~\ref{KPGYM}. Also, $y\vert_{S(x_0)}$ satisfies the (C-N) constraint 
since $y$, and thus $y\vert_{S(x_0)}$, is injective. Define $\mu=(\mu_{x})_{x\in B_{f}}$ by
\begin{equation}
\mu_{x}=\nu_{x_0+rx}.\nonumber
\end{equation}
We claim that $\mu$ is a $W^{1,\infty}$ gradient Young measure. Suppose that $y^k\in W^{1,\infty}(S(x_0),\mathbb{R}^3)$ generates the measure $(\nu_x)_{x\in S(x_0)}$; in 
particular, we may assume that $y^k\overset{\ast}{\rightharpoonup}y\vert_{S(x_0)}$ in $W^{1,\infty}(S(x_0),\mathbb{R}^3)$. For $x\in B_f$, define 
\[
 z^k(x)=\frac{1}{r}[y^k(x_0+rx)-U_sx_0].
\]
This is a sequence uniformly bounded in $ W^{1,\infty}(B_f,\mathbb{R}^3)$ and for any $\xi\in L^{1}(B_f)$, 
$\psi\in C_{0}(\mathbb{R}^{3\times 3})$,
\begin{eqnarray*}
\lim_{k\to\infty}\int_{B_f}\xi(x)\psi(Dz^k(x))\,dx&=&\lim_{k\to\infty}\frac{1}{r^3}\int_{S_f}\xi(\frac{x'-x_0}{r})\psi(Dy^k(x'))\,dx'\\
&=&\frac{1}{r^3}\int_{S_f}\xi(\frac{x'-x_0}{r})\langle\nu_{x'},\psi\rangle\,dx'\\
&=&\int_{B_f}\xi(x)\langle\mu_x,\psi\rangle\,dx,
\end{eqnarray*}
where we have used the change of variables $x'=x_0+rx$; i.e. the sequence $z^k$ generates the $W^{1,\infty}$ gradient Young measure $\mu$. Note also that 
$z(x)=\frac{1}{r}[y(x_0+rx)-U_sx_0]$ is the underlying deformation of $\mu$ and, for $x\in\partial B_f\cap\Omega$, $z(x)=U_sx$. Also, $z$ satisfies the 
(C-N) constraint as
\begin{eqnarray*}
\int_{B_f}\det Dz(x)\,dx&=&\frac{1}{r^3}\int_{S(x_0)}\det Dy(x')\,dx'\\
&\leq &\frac{1}{r^3}\mathcal{L}^3(y(S(x_0)))=\mathcal{L}^3(z(B_f)),
\end{eqnarray*}
since $y\vert_{S(x_0)}$ satisfies (C-N). In particular, $\mu\in\mathcal{B}_{f}$ and by the quasiconvexity assumption,
\begin{equation}
\int_{B_{f}}\langle\mu_{x},W\rangle\,dx\geq\int_{B_{f}}W(U_{s})dx.\nonumber
\end{equation}
Noting that $\mu_{x}=\nu_{x_0+rx}$, changing variables to $x'=x_0+rx$ and multiplying by $r^3$, we deduce that
\begin{equation*}
\int_{S(x_0)}\langle\nu_{z},W\rangle\,dz\geq\int_{S(x_0)}W(U_{s})dz.
\end{equation*} 
This is precisely what we need to show since then
\begin{eqnarray}
I(\nu)&=&\int_{S(x_0)}\langle\nu_{x},W\rangle\,dx+\int_{\Omega\setminus S(x_0)}W(U_{s})\,dx\nonumber\\
&\geq &\int_{S(x_0)}W(U_{s})\,dx+\int_{\Omega\setminus S(x_0)}W(U_{s})\,dx\nonumber\\
&=&I(\delta_{U_s}).\nonumber
\end{eqnarray}

For the interior, let $x_0\in\Omega$ and $\nu=(\nu_{x})_{x\in\Omega}\in\mathcal{A}(x_0)$; as before, we may assume that for a.e.~$x\in\Omega$, 
$\mathrm{supp}\,\nu_x\subset K$. Define $\mu:=\mathrm{Av}\,\nu$, so that $\mu$ is a homogeneous $W^{1,\infty}$ gradient Young measure with 
$\mathrm{supp}\,\mu\subset K$ and $\bar{\mu}=U_{s}$. Then, $\mu\in\mathcal{B}_{i}$ and by the quasiconvexity assumption we deduce that
$\langle\mu,W\rangle\geq W(U_{s})$ and hence $I(\mu)\geq I(\delta_{U_{s}})$.
To finish the proof, it suffices to show that $I(\nu)=I(\mu)$ so that $I(\nu)\geq I(\delta_{U_s})$. Since the measures $\mu$ and $\nu_x$, for a.e.~$x$, are 
supported in $K$, we may replace $W$ by any continuous function agreeing with $W$ on $K$ (not relabelled) and then
\begin{equation*}
I(\nu)=\int_{\Omega}\langle\nu_{x},W\rangle\;dx=\vert\Omega\vert\langle\mu,W\rangle=\int_{\Omega}\langle\mu,W\rangle\;dx=I(\mu).
\end{equation*}\qed
\end{proof}

Thus to prove the first part of Theorem~\ref{theorem:main} it will suffice to show that $W$ is quasiconvex at $U_s$ in the interior and at faces and edges.

\begin{remark}
We note that for an interior point $x_0$ quasiconvexity of $W$ at $U_s$ in the interior is 
necessary for $\delta_{U_s}$ to be a minimizer with respect to localized variations in ${\mathcal A}(x_0)$. This is because by 
Lemma~\ref{lemma:regularity} the underlying deformation of any $\mu\in {\mathcal B}_i$ is a homeomorphism, so that $\mu$ can be rescaled to the ball 
$B(x_0,r)$ and extended by $\delta_{U_s}$ in the rest of $\Omega$, so that the resulting measure belongs to ${\mathcal A}(x_0)$. However, for faces, edges and corners, 
necessity does not obviously follow since measures in ${\mathcal A}(x_0)$ are required to satisfy the (C-N) constraint. 
For example, assume that $x_0$ belongs to a face contained in $\left\{x\cdot n=k\right\}$, $\delta_{U_s}$ is a 
minimizer with respect to variations in $\mathcal{A}(x_0)$ and let $\mu\in\mathcal{B}_{f}$. Taking $r>0$ such that $S(x_0)=x_0+rB_{f}$ and $\overline{S(x_0)}\setminus\{x\cdot n=k\}\subset\Omega$, 
define $\nu=(\nu_x)_{x\in\Omega}$ by
\begin{equation}
\nu_{x}=\left\{\begin{array}{ccc}
\mu_{\frac{x-x_0}{r}},&\quad &x\in x_0+rB_{f}\\
\delta_{U_{s}},&\quad &x\in\Omega\setminus (x_0+rB_f).
\end{array}\right.\nonumber
\end{equation}
As in the above proof, we may assume that $\mathrm{supp}\,\nu_x\subset K$ a.e.~so that $\nu$ is a $W^{1,\infty}$ gradient Young measure and its 
underlying deformation preserves orientation and (up to a constant) satisfies the boundary condition, i.e.~if $\nu$  satisfied the (C-N) constraint 
we would have that $\nu\in{\mathcal A}(x_0)$. We could then use the fact that $\delta_{U_s}$ is a minimizer to get that $I(\nu)\geq I(\delta_{U_s})$. But this implies that
\begin{equation}
\int_{x_0+rB_{f}}\langle\mu_{\frac{x-x_0}{r}},W\rangle\,dx\geq\int_{x_0+rB_{f}}W(U_{s})\,dx.\nonumber
\end{equation}
Then, making the change of variables $x'=(x-x_0)/r$ and dividing by $r^{3}$,
\[\int_{B_{f}}\langle\mu_{x'},W\rangle\,dx'\geq\int_{B_{f}}W(U_{s})\,dx',\]
proving the quasiconvexity of $W$ at $U_s$ on that face. However, it is entirely possible that the underlying deformation $z$ of $\mu$ satisfies (C-N) but that 
the underlying deformation, say $y$, of $\nu$ is not a.e.~injective in $\Omega$ and thus does not satisfy (C-N). This can happen whenever the image of the 
free boundary of $B_f$ `goes around a corner' and comes close to, or even into contact with, the image of the prescribed part of the boundary 
$\partial B_f\cap\Omega$; see Fig.~\ref{fig:cnfails} for an example.

\begin{figure}[ht]
	\centering
	\def\svgwidth{0.8\columnwidth}
	\begingroup
    \setlength{\unitlength}{\svgwidth}
  \begin{picture}(1,0.4)%
    \put(0,0){\includegraphics[width=\unitlength]{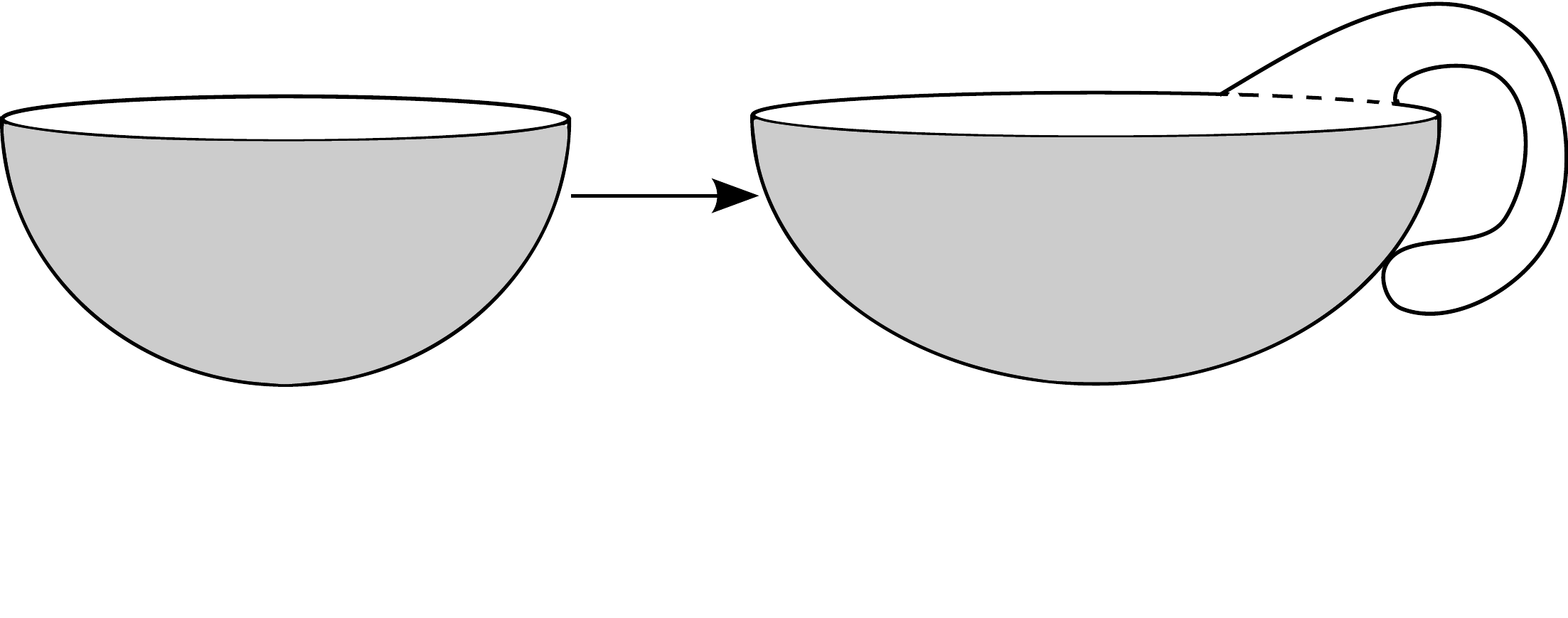}}%
    \put(0.41,0.3){\color[rgb]{0,0,0}\makebox(0,0)[lb]{\smash{$z$}}}%
    \put(0.16,0.225){\color[rgb]{0,0,0}\makebox(0,0)[lb]{\smash{$B_f$}}}%
    \put(0.64,0.225){\color[rgb]{0,0,0}\makebox(0,0)[lb]{\smash{$z(B_f)$}}}%
\end{picture}
\endgroup
	\caption{Depiction of a possible deformation $z$ underlying a measure $\mu\in\mathcal{B}_f$ which may satisfy the (C-N) constraint but the 
underlying deformation of $\nu$ cannot; in particular, $\nu\notin\mathcal{A}(x_0)$.}
	\label{fig:cnfails}
\end{figure}

Nevertheless, we mention that by altering the definition of the measures in $\mathcal{B}_{\omega}$, 
$\omega\in\left\{f,e,c\right\}$, it is possible to retain the necessity of the quasiconvexity conditions at $U_s$ on a face, an edge or at a corner for the 
measure $\delta_{U_s}$ to be a minimizer amongst the respective localized variations.
One possibility is to add a \textit{confinement condition} on the deformations underlying measures in $\mathcal{B}_{\omega}$ in the spirit of the 
confinement condition of Ciarlet \& Ne\v{c}as in~\cite{ciarletnecas}, so that the localized problem respects the nature of the original one.

To motivate this in our context, let $x_0$ be a typical point in the interior of an edge of $\Omega$ contained in the intersection of the planes 
$\left\{x\cdot n_i=k_i\right\}$, $S(x_0)$ an appropriate set of the form $B(x_0,r)\cap\Omega$ and $y$ a deformation underlying a measure in $\mathcal{A}(x_0)$ so 
that $y(x)=U_sx$ outside $S(x_0)$. Writing $S(x_0)=x_0+rB_e$ we see that the deformation $z$ defined for $x\in B_e$ by $z(x)=\frac{1}{r}[y(x_0+rx)-U_sx_0]$ is 
injective not only in $B_e$ but also in the larger set $D_r\subset\mathbb{R}^3$ such that $\Omega=x_0+rD_r$; 
then, $z$ can be extended linearly by $U_sx$ to $D_r$ while satisfying (C-N). By blowing up around the point $x_0$, i.e. by letting $r\rightarrow0$, we see 
that a natural confinement condition would be to require that any deformation $z$ underlying a measure in $\mathcal{B}_{e}$ can be extended linearly by 
$U_sx$ to the wedge 
\[D:=\bigcup^{2}_{i=1}\left\{x\cdot n_i<0\right\}\]
and that the extension satisfies (C-N) on any bounded open subset of the wedge. Similarly, for faces 
or corners, $D$ would be the half-space defined by the face or the `octant' defined by the three faces meeting at the corner respectively.

If we add the confinement condition in the definition of ${\mathcal B}_{\omega}$ as above, the quasiconvexity conditions at $U_s$ become trivially necessary for 
$\delta_{U_s}$ to be a minimizer in the respective class of admissible measures, but the conditions are no longer obviously sufficient. 
Restricting attention to a corner $x_0$ say, it is possible for a map $y$, underlying a measure 
in $\mathcal{A}(x_0)$, to map part of $S(x_0)$ into the region $U_sD$, where $D$ is the corresponding `octant', so that the map 
$z(x)=\frac{1}{r}[y(x_0+rx)-U_sx_0]$ is not injective in $D$. Hence, employing (C-N) in this way weakens our main result and it is thus not preferred.
Nevertheless, since finite-energy measures are almost everywhere supported in $K$, one may use the resulting uniform Lipschitz condition on the maps 
underlying admissible measures in $\mathcal{A}(x_0)$ to show that, if $x_0$ is the corner in question, there exists a neighbourhood $B(x_0,r)$ such that 
$\delta_{U_s}$ remains a minimizer amongst measures in $\mathcal{A}(x_0)$ provided $S(x_0)\subset B(x_0,r)$, i.e.~austenite cannot nucleate in 
$B(x_0,r)\cap\Omega$; similar statements hold for faces and edges.

Clearly, one could also assume the same confinement condition for the maps underlying our admissible measures and get both sufficiency and necessity as 
well as our main result. However, this is much too strong a condition and does not seem natural.
\end{remark}

We now proceed to the proof of our main result where we distinguish between three cases: corners, interior, and faces and edges.

\subsection{Corners}
\label{subsec:corners}

To resolve the second part of Theorem~\ref{theorem:main}, we construct an explicit Young measure in $\mathcal{A}(x_0)$ that lowers the energy. We note that the 
construction depends heavily on the orientation of $\Omega$ and the lattice parameters of the material.

\begin{lemma}
Assume that $\Omega\subset\mathbb{R}^3$ is a rectangular parallelepiped with edges along the vectors $e_1$, $e_2$, $e_3$ and 
define a coordinate system such that the axes are parallel to the edges and each corner of $\Omega$ belongs to a different octant $O_i$, $i=1,\ldots,8$. 
Fix $s\in\{1,\ldots,N\}$ and suppose that there exists $l\in\{1,\ldots,N\}$, $l\neq s$, such that
\begin{align}
\label{eq:compatibilitycorner1}
QU_l-U_s &=a\otimes n\\
\label{eq:compatibilitycorner2}
U_s+\lambda a\otimes n &=R+b\otimes m,
\end{align}
for some $\lambda\in(0,1)$, $R$, $Q\in SO(3)$, $a$, $b\in\mathbb{R}^3$ and $n$, $m\in S^2$ such that $n$, $m$ belong to the same octant, say $O_k$, they are not perpendicular to the vectors $e_i$, $i=1,2,3$ and $U^{-1}_{s}b\cdot n<0$. 
Then, for each of the corners $x_0$ belonging to the octants $O_k$ and $-O_k$, there exists an admissible measure $\nu\in\mathcal{A}(x_0)$ such that $I(\nu)<I(\delta_{U_s})$.

In particular, for the CuAlNi specimen of the experiment with $e_i$, $i=1,2,3$, the standard basis of $\mathbb{R}^3$ and lattice parameters $\alpha=1.06372$, $\beta=0.91542$, $\gamma=1.02368$, for each $s\in\{1,\ldots,6\}$, there exist precisely four such corners.
\label{prop:corner}
\end{lemma}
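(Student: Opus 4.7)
The plan is to exhibit, near each admissible corner, an explicit piecewise affine deformation whose Young measure lies in $\mathcal{A}(x_0)$ and contains a nontrivial austenite region. Shift coordinates so that $x_0=0$. Since $n,m\in O_k$ lie in the same open octant as the outward normals at the corner and, by hypothesis, are not orthogonal to $e_1,e_2,e_3$, for sufficiently small $r,h,k>0$ the three sets
\begin{align*}
\Omega_A &= \{x\in B(0,r)\cap\Omega:\ m\cdot x>-h\},\\
\Omega_T &= \{x\in B(0,r)\cap\Omega:\ m\cdot x\le -h,\ n\cdot x>-k\},\\
\Omega_P &= \Omega\setminus(\Omega_A\cup\Omega_T)
\end{align*}
form a wedge decomposition near the corner with $|\Omega_A|>0$ and $\partial B(0,r)\cap\Omega\subset\Omega_P$, so that $S(x_0):=\Omega_A\cup\Omega_T$ is of the admissible form of Definition~\ref{def:admissible+c-n}. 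Define
\[
\nu_x=\begin{cases}\delta_R,& x\in\Omega_A,\\ (1-\lambda)\delta_{U_s}+\lambda\delta_{QU_l},& x\in\Omega_T,\\ \delta_{U_s},& x\in\Omega_P,\end{cases}\qquad y(x)=\begin{cases}Rx+\lambda k a-hb,& x\in\Omega_A,\\ (U_s+\lambda a\otimes n)x+\lambda k a,& x\in\Omega_T,\\ U_s x,& x\in\Omega_P.\end{cases}
\]
Continuity of $y$ across $\{m\cdot x=-h\}$ and $\{n\cdot x=-k\}$ follows immediately from \eqref{eq:compatibilitycorner1}--\eqref{eq:compatibilitycorner2}; the measure $\nu$ is realised as the Young measure of the standard fine-laminate generating sequence in $\Omega_T$ (layers of width $O(1/j)$ with normal $n$ and gradients $U_s,QU_l$) glued to $y$ on $\Omega_A$ and $\Omega_P$, so $\nu\in\mathcal{G}^\infty(\Omega,\mathbb{R}^{3\times 3})$.

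\textbf{Admissibility.} The boundary condition $y=U_s x$ on $\partial S(x_0)\cap\Omega$ is immediate, and $\det Dy>0$ a.e.\ follows from $\det R=1$, $\det U_s>0$, and $\det(U_s+\lambda a\otimes n)=\det R$. For the Ciarlet--Ne\v{c}as constraint, by Lemma~\ref{lemma:regularity} it suffices to show that $y$ is a.e.\ injective. Each of the three affine pieces is injective with correctly signed Jacobian, and the two interfaces map to the planes $y(L_m)$ and $y(L_n)=U_s L_n$ with spatial normals proportional to $Rm$ and $U_s^{-1}n$ respectively. The $\Omega_T$--$\Omega_P$ and $\Omega_A$--$\Omega_T$ interfaces are individually non-folding, because $1+\lambda U_s^{-1}a\cdot n=\det R/\det U_s>0$ and $1+b\cdot Rm=\det R>0$. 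The only delicate point is that the austenite image $R\Omega_A+(\lambda k a-hb)$ does not fold across $U_s L_n$ into the bulk image $U_s\Omega_P$. For a reference point $x_0=-kn\in L_n$, a direct computation of the signed distance of $y(x)$ from the plane $U_s L_n$ along the spatial normal $U_s^{-1}n$, exploiting the symmetry $U_s=U_s^T$, gives
\[
(y(x)-U_s x_0)\cdot U_s^{-1}n=k\bigl(1+\lambda\,U_s^{-1}a\cdot n\bigr)-h\,U_s^{-1}b\cdot n+(U_s^{-1}n)\cdot Rx.
\]
The first term is strictly positive by the determinant identity above; under the assumption $U_s^{-1}b\cdot n<0$ the second term is also strictly positive; and, shrinking $r$ so that the remainder $(U_s^{-1}n)\cdot Rx$ is dominated by the first two contributions uniformly on $\Omega_A$, the whole expression remains positive. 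Therefore $y(\Omega_A)$ lies on the twin side of $U_s L_n$ and $y$ is globally injective, so $\nu\in\mathcal{A}(x_0)$. The corner in $-O_k$ is handled by the replacement $(n,m,a,b)\mapsto(-n,-m,-a,-b)$, which leaves \eqref{eq:compatibilitycorner1}--\eqref{eq:compatibilitycorner2} and the sign condition invariant.

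\textbf{Energy and counting.} Since $W\equiv 0$ on all martensitic wells and $W(R)=-\delta$,
\[
I(\nu)=\int_{\Omega_A}W(R)\,dx+\int_{\Omega_T}\langle\nu_x,W\rangle\,dx+\int_{\Omega_P}W(U_s)\,dx=-\delta|\Omega_A|<0=I(\delta_{U_s}).
\]
For the CuAlNi parameters $\alpha=1.06372$, $\beta=0.91542$, $\gamma=1.02368$, the rank-one pairs $QU_l-U_s=a\otimes n$ (twinning equation) and the austenite--twin solutions $R+b\otimes m=U_s+\lambda a\otimes n$ (habit-plane equation) are enumerated via the classical crystallographic formulas recalled in Section~\ref{sec:4.1}; a finite check of which of the resulting $(\lambda,a,n,R,b,m)$ satisfy the octant condition, the non-orthogonality to $e_1,e_2,e_3$, and the sign $U_s^{-1}b\cdot n<0$ selects exactly four corners per variant $U_s$. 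The principal obstacle in the whole proof is the injectivity step for (C--N): translating the algebraic inequality $U_s^{-1}b\cdot n<0$ into the geometric separation of $y(\Omega_A)$ from $U_s\Omega_P$ across the extended twin--pure interface. The remaining ingredients---the fine-laminate generating sequence, the energy computation, and the crystallographic enumeration---are routine once the wedge construction is in place.
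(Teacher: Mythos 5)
Your construction is essentially the paper's: the same three‑region wedge (austenite near the corner, simple laminate in the middle, pure variant outside), the same piecewise affine $y$, the same Hadamard checks and energy computation, and the same strategy of reducing (C--N) to global injectivity of $y$. However, the injectivity argument has a genuine gap, and in fact the same missing hypothesis also threatens the continuity of $y$.

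\textbf{Missing wedge‑compatibility condition.} Your three regions must satisfy, within $B(0,r)\cap\Omega$,
\[
\overline{\Omega_A}\cap\overline{\Omega_P}= \{x\in\Omega : m\cdot x\geq -h\}\cap\{x\in\Omega : n\cdot x\leq -k\}=\emptyset ,
\]
which is the paper's condition \eqref{eq:cornerconstruction0}. Without it, $\Omega_A$ and $\Omega_P$ share a piece of the plane $\{m\cdot x=-h,\ n\cdot x\leq -k\}$, across which your formulas give a jump $y_A(x)-y_P(x)=\lambda(n\cdot x+k)a\neq 0$, so $y$ is not even continuous. This condition is not a consequence of ``$r,h,k$ small'' alone: it forces $k$ to be not too small relative to $h$ (the depth of $\Omega_A$). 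The paper states it explicitly and observes that, since $n,m$ are in the open octant and not orthogonal to the edges, such $h,k$ can always be chosen.

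\textbf{Unsound remainder estimate.} Your signed-distance computation
\[
(y(x)-U_s(-kn))\cdot U_s^{-1}n = k\bigl(1+\lambda\,U_s^{-1}a\cdot n\bigr) - h\,U_s^{-1}b\cdot n + (U_s^{-1}n)\cdot Rx
\]
is correct, but your claim that ``shrinking $r$'' makes the third term small relative to the first two does not hold: $k$ and $h$ must scale with $r$ so that $\Omega_A\cup\Omega_T\subset B(0,r)$, and for $x\in\Omega_A$ one has $|x|\lesssim h$, so the remainder is of the same order as the first two terms. No quantitative estimate is needed: substituting $R=U_s+\lambda a\otimes n-b\otimes m$ and using $U_s^{-1}a\cdot n=0$ (which follows from $\det(QU_l)=\det U_s$) collapses the expression to
\[
(k+n\cdot x)\;-\;(h+m\cdot x)\,U_s^{-1}b\cdot n ,
\]
and the first bracket is positive by the wedge‑compatibility condition above while the second contribution is positive because $m\cdot x>-h$ on $\Omega_A$ and $U_s^{-1}b\cdot n<0$. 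This is the algebraic argument of case (c) in the paper, which holds for all $x\in\Omega_A$ without shrinking anything.

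\textbf{Minor algebraic slips.} The identities $\det(U_s+\lambda a\otimes n)=\det R$, $1+\lambda U_s^{-1}a\cdot n=\det R/\det U_s$, and $1+b\cdot Rm=\det R$ are all incorrect. The correct statements, which follow from $U_s^{-1}a\cdot n=0$, are $\det(U_s+\lambda a\otimes n)=\det U_s$, $1+\lambda U_s^{-1}a\cdot n=1$, and $1+R^Tb\cdot m=\det U_s$. Your conclusions (positivity of the relevant determinants and non‑folding of the two interfaces) survive, but the standing hypothesis $\det U_s\leq 1$ is what the paper uses to sharpen $R^Tb\cdot m\leq 0$ for case (a); your stated identities obscure this.
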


\begin{proof}
Under the assumptions of the lemma, consider the corner $x_0$ of $\Omega$ lying in the octant $O_k$ and let 
$S(x_0)\subset\Omega$ be an appropriate set of the form $B(x_0,r)\cap\Omega$. Define
\begin{eqnarray*}
 S^1&:=&\left\{x\in S(x_0): x\cdot m>k_m\right\},\\
 S^2&:=&\left\{x\in S(x_0): x\cdot m<k_m\:\:\mbox{and}\:\:x\cdot n>k_n\right\},\\
 S^3&:=&\left\{x\in S(x_0): x\cdot n<k_n\right\}
\end{eqnarray*}
where $k_m$, $k_n\in\mathbb{R}$ are such that
\begin{equation}
\label{eq:cornerconstruction0}
\left\{x\in \Omega: x\cdot m\geq k_m\right\}\cap\left\{x\in \Omega: x\cdot n\leq k_n\right\}=\emptyset.
\end{equation}
Note that, under the above choice of normals, it is always possible to choose $k_m$, $k_n$ verifying (\ref{eq:cornerconstruction0}) 
and such that the sets $S^i$, $i=1,2,3$, are nonempty.

Define a parametrized measure $\nu=(\nu_x)_{x\in\Omega}$, as in Figure~\ref{fig:min_corner}, by
\begin{equation}
\nu_{x}=\left\{\begin{array}{lcl}
\delta_{R},&\quad &x\in S^{1}\\
(1-\lambda)\delta_{U_{s}}+\lambda\delta_{QU_{l}},&\quad &x\in S^{2}\\
\delta_{U_{s}},&\quad &x\in S^3\cup(\Omega\setminus S(x_0)).
\end{array}\right.
\end{equation}
If this measure belongs to $\mathcal{A}(x_0)$, the proof is complete as then
$$I(\nu)=-\delta\mathcal{L}^3(S^{1})<0=I(\delta_{U_s}).$$
Let us first verify that $\nu$ defines a $W^{1,\infty}$ gradient Young measure. Note that in each of the regions $S^{1}$, $S^{2}$ and 
$S^3\cup(\Omega\setminus S(x_0))$, $\nu_{x}$ is a homogeneous $W^{1,\infty}$ gradient Young measure with underlying deformation gradient given by
\begin{equation}
\label{eq:5.1proof1}
Dy(x)=\left\{\begin{array}{lcl}
R,&\quad &x\in S^{1}\\
(1-\lambda)U_{s}+\lambda\,QU_{l},&\quad &x\in S^{2}\\
U_{s},&\quad &x\in S^3\cup(\Omega\setminus S(x_0)).
\end{array}\right.
\end{equation}
This is trivially the case for the regions $S^{1}$ and $S^3\cup(\Omega\setminus S(x_0))$; as for the simple laminate in $S^{2}$, it can be constructed as the 
weak$\ast$ limit of a sequence $y^k$ uniformly bounded in $W^{1,\infty}(S^2,\mathbb{R}^{3})$ such that 
$$\mathrm{dist}(Dy^k,\left\{U_s,QU_l\right\})\rightarrow 0$$ in measure, i.e.~the associated measure is a $W^{1,\infty}$ gradient Young measure supported on 
these two matrices; see e.g.~\cite{1}.

To verify that $\nu=(\nu_{x})_{x\in\Omega}$ is itself a $W^{1,\infty}$ gradient Young measure, by~\cite{36} we simply need to check that there is a $y$ belonging to 
$W^{1,\infty}(\Omega,\mathbb{R}^{3})$ satisfying \eqref{eq:5.1proof1}. But this reduces to verifying Hadamard's jump condition across the interfaces 
$\left\{x\cdot m=k_m\right\}$ and $\left\{x\cdot n=k_n\right\}$. This is immediate from \eqref{eq:compatibilitycorner2} for 
the interface $\left\{x\cdot m=k_m\right\}$, while for the interface $\{x\cdot n=k_n\}$ we have that 
\[\left[(1-\lambda)U_{s}+\lambda\,QU_{l}\right]-U_{s}=\lambda(QU_{l}-U_{s})=\lambda\,a\otimes n,\]
as required.

As $\nu$ is supported in $K$ a.e.~in $\Omega$, $\det Dy(x)>0$ a.e.~and it remains to verify the boundary condition on $\partial S(x_0)\cap\Omega$ and the 
(C-N) constraint. The underlying deformation $y$ is given up to a constant by
\begin{equation}
y(x)=\left\{\begin{array}{lcl}
Rx+k_mb-\lambda k_na,&\quad &x\in S^{1}\\
(U_s+\lambda a\otimes n)x-\lambda k_na,&\quad &x\in S^{2}\\
U_{s}x,&\quad &x\in S^3\cup(\Omega\setminus S(x_0)).
\end{array}\right.
\end{equation} 
Clearly, $\partial S(x_0)\cap\Omega\subset S^3\cup(\Omega\setminus S(x_0))$ and then $y\vert_{\partial S(x_0)\cap\Omega}=U_sx$. As for the (C-N) constraint, it 
suffices to show that $y$ is injective. To reach a contradiction, suppose that $x_i\neq x_j$ but $y(x_i)=y(x_j)$; there are three non-trivial cases to 
consider:
\begin{itemize}
 \item[(a)] $x_1\in \overline{S^1}$ and $x_2\in \overline{S^2}$;
 \item[(b)] $x_2\in \overline{S^2}$ and $x_3\in \overline{S^3\cup(\Omega\setminus S(x_0))}$;
 \item[(c)] $x_1\in \overline{S^1}$ and $x_3\in \overline{S^3\cup(\Omega\setminus S(x_0))}$.
\end{itemize}
Let us first treat case (b); $y(x_2)=y(x_3)$ implies that
\begin{equation}
\label{eq:cornerconstruction1}
 U_s(x_2-x_3)=\lambda(k_n-x_2\cdot n)a.
\end{equation}
But $U_s+a\otimes n=QU_l$ so that, by taking determinants on both sides, we infer that $(\det U_s)(1+U^{-1}_{s}a\cdot n)=\det U_l$; but 
$\det U_s = \det U_l\neq 0$ and hence
\begin{equation}
\label{eq:cornerconstruction2}
U^{-1}_{s}a\cdot n=0.
\end{equation}
Now multiplying \eqref{eq:cornerconstruction1} to the left by $U^{-1}_{s}$, taking the dot product with $n$ and using (\ref{eq:cornerconstruction2}), we 
find that $(x_2-x_3)\cdot n=0$. Since $x_2\cdot n\geq k_n$, $x_3\cdot n\leq k_n$ it follows that $x_2\cdot n=x_3\cdot n=k_n$ and hence $U_sx_2=U_s x_3$, implying $x_2=x_3$, a contradiction.

Next we treat case (a). Now, $y(x_1)=y(x_2)$ implies that
\begin{equation}
\label{eq:cornerconstruction3}
R(x_1-x_2)=(x_2\cdot m-k_m)b,
\end{equation}
where we have made use of (\ref{eq:compatibilitycorner2}). Also, taking determinants in (\ref{eq:compatibilitycorner2}) and using 
(\ref{eq:cornerconstruction2}), we infer that $\det U_s=1+R^Tb\cdot m$ and hence
\begin{equation}
\label{eq:cornerconstruction4}
-1<R^Tb\cdot m\leq0,
\end{equation}
since we are also assuming that $0<\det U_{s}\leq1$. Multiplying \eqref{eq:cornerconstruction3} on the left by $R^T$, taking the dot product with $m$ and 
using (\ref{eq:cornerconstruction4}), we find that 
$$(x_1-x_2)\cdot m\leq -(x_2\cdot m-k_m),$$
since also $x_2\cdot m\leq k_m$. Thus $x_1\cdot m=x_2\cdot m$ and hence $Rx_1=Rx_2$, contradicting $x_1\neq x_2$. 
(In fact we do not need to invoke the hypothesis $\det U_s\leq 1$ here, since if $\det U_s>1$ then $R^Tb\cdot m>0$ and we get a similar contradiction from \eqref{eq:cornerconstruction3}.)

As for case (c), we repeat a similar argument which reduces the injectivity of $y$ to the condition $U^{-1}_{s}b\cdot n<0$. However, we are unable to check this sign 
for general lattice parameters and, instead, we verify this numerically for Seiner's specimen. Suppose then that $y(x_1)=y(x_3)$, i.e.
$$Rx_1+k_mb-\lambda k_na=U_sx_3.$$ 
By (\ref{eq:compatibilitycorner2}), we may write $R=U_s+\lambda a\otimes n - b\otimes m$ so that $y(x_1)=y(x_3)$ becomes
\begin{equation}
\label{eq:cornerconstruction5}
 U_s(x_1-x_3)=\lambda(k_n-x_1\cdot n)a+(x_1\cdot m-k_m)b.
\end{equation}
We may now multiply to the left by $U^{-1}_{s}$ and take the dot product with $n$ so that (\ref{eq:cornerconstruction5}) becomes
$$(x_1-x_3)\cdot n=(x_1\cdot m-k_m)U^{-1}_{s}b\cdot n.$$
However, $x_1\cdot n>k_n$ by (\ref{eq:cornerconstruction0}) and hence $(x_1-x_3)\cdot n>0$; also, $x_1\cdot m\geq k_m$ and we reach a contradiction provided 
that $U^{-1}_{s}b\cdot n<0$.

We note that taking $\tilde{a}=-a$, $\tilde{n}=-n$, $\tilde{b}=-b$ and $\tilde{m}=-m$ in (\ref{eq:compatibilitycorner1}) and 
(\ref{eq:compatibilitycorner2}) does not alter anything in the above argument. Thus, if the construction is possible for a corner in the octant 
$O_k$ say, it is also possible for the corner in the octant $-O_k$.

In particular, for Seiner's specimen and each $s\in\left\{1,\ldots,6\right\}$, we can choose two sets of solutions to the twin and habit 
plane equations (\ref{eq:compatibilitycorner1}) and (\ref{eq:compatibilitycorner2}) respectively, such that the twin and habit plane normals are not 
perpendicular to any edge and they lie in the same octant as a corner. Provided $U_s^{-1}b\cdot n<0$ each set of solutions allows the construction of a microstructure 
lowering the energy for either of a pair of opposite corners, the two pairs of corners being distinct. This is indeed possible and we refer the reader 
to Appendix~B for the details. There, in Tables B.8 and B.9, one may also find a summary of the results for all values of $s\in\left\{1,\ldots,6\right\}$.\qed 
\end{proof}

\begin{figure}[ht]
	\centering
	\def\svgwidth{0.7\columnwidth}	
	\begingroup
    \setlength{\unitlength}{\svgwidth}
  \begin{picture}(1,0.59959741)%
    \put(0,0){\includegraphics[width=\unitlength]{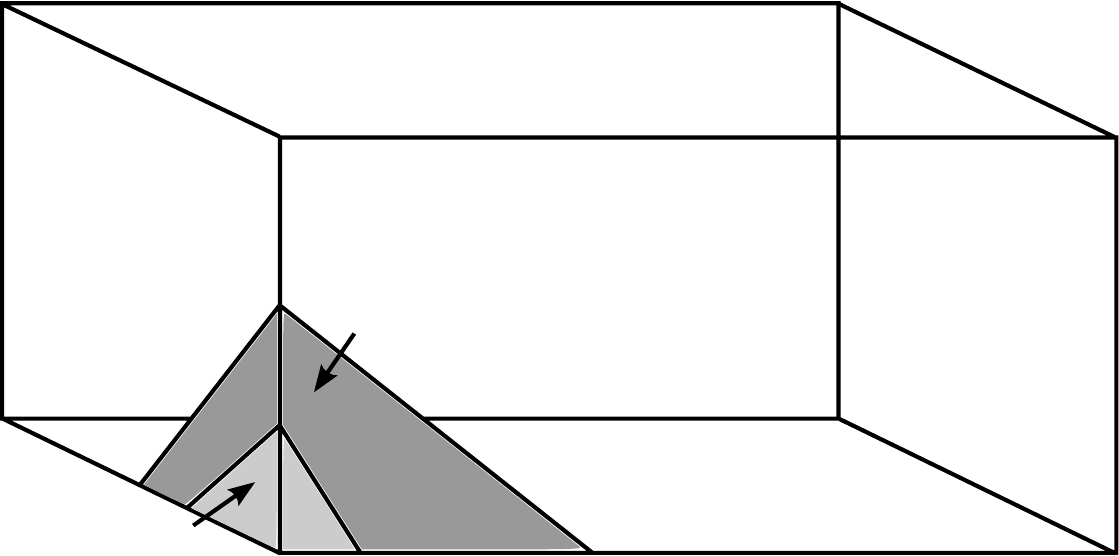}}%
    \put(0.25,0.42){\color[rgb]{0,0,0}\makebox(0,0)[lb]{\smash{$\nu_x=\delta_{U_{s}}$}}}%
    \put(-0.02,0.001){\color[rgb]{0,0,0}\makebox(0,0)[lb]{\smash{$S^{1}:\,\nu_x=\delta_R$}}}%
    \put(0.23,-0.015){\color[rgb]{0,0,0}\makebox(0,0)[lb]{\smash{$x_0$}}}%
    \put(0.27,0.215){\color[rgb]{0,0,0}\makebox(0,0)[lb]{\smash{$S^{2}:\,\nu_x=\lambda\delta_{U_{s}}+(1-\lambda)\delta_{QU_{l}}$}}}%
  \end{picture}%
\endgroup
	\caption{Depiction of a measure $\nu\in\mathcal{A}(x_0)$ such that $I(\nu)<I(\delta_{U_{s}})$ for a corner $x_0$. In the region $S^{1}$, $\nu_x=\delta_R$ for some $R\in\,SO(3)$ 
	so that austenite has nucleated at a corner; in the region $S^{2}$, $\nu_x=\lambda\delta_{U_{s}}+(1-\lambda)\delta_{QU_{l}}$ for some $Q\in\,SO(3)$ and $l\in\lbrace1,\ldots,6\rbrace$ such that the 
	matrices $R$ and $\lambda U_{s}+(1-\lambda)QU_{l}$ are rank-one connected, i.e.~$\nu_x$ corresponds to a simple laminate formed from the gradients $U_{s}$ and $QU_{l}$ there, forming a compatible interface with $R$. 
	Note that the normals to the interfaces between the austenite and the simple laminate (habit plane) and between the simple laminate and the pure phase of $U_{s}$ (twinned-to-detwinned interface) are different.}
	\label{fig:min_corner}
\end{figure}

\subsection{Interior}
\label{subsec:interior}

\quad

For the case of a point $x_0$ belonging to the interior we wish to deduce that the stabilized martensite $\delta_{U_{s}}$ is indeed a minimizer with respect to localized 
variations in $\mathcal{A}(x_0)$. In particular, we prove the following:

\begin{lemma}
The map $W:\mathbb{R}^{3\times 3}\rightarrow\overline{\mathbb{R}}$ given by (\ref{eq:W}) is quasiconvex at $U_{s}$ in the interior.
\label{lemma:quasiconvexW}
\end{lemma}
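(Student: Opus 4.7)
The plan is to unpack the quasiconvexity condition directly from Definition~\ref{def:quasiconvexity}(i) and reduce the problem to a single scalar identity coming from the minors relations. Fix $\mu\in\mathcal{B}_i$, i.e.\ a homogeneous $W^{1,\infty}$ gradient Young measure on $B$ with $\bar\mu=U_s$. I wish to prove $\langle\mu,W\rangle\geq W(U_s)=0$.

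First, I would dispose of the measures whose support leaves $K$. Since $W\equiv+\infty$ on $\mathbb{R}^{3\times 3}\setminus K$, if $\mu(\mathbb{R}^{3\times 3}\setminus K)>0$ then $\langle\mu,W\rangle=+\infty$ and the inequality holds trivially. So henceforth assume $\mathrm{supp}\,\mu\subset K$. Because $\mu$ is a probability measure and $W=-\delta$ on $SO(3)$, $W=0$ on $\bigcup_i SO(3)U_i$, the left-hand side collapses to
\begin{equation*}
\langle\mu,W\rangle=-\delta\,\mu(SO(3)),
\end{equation*}
and the entire task reduces to showing $\mu(SO(3))=0$.

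For this, I would exploit the minors relations \eqref{eq:minors0}: since $\det$ is quasiaffine and $\mu$ is a $W^{1,\infty}$ gradient Young measure with $\bar\mu=U_s$,
\begin{equation*}
\det U_s=\det\bar\mu=\langle\mu,\det\rangle.
\end{equation*}
All variants are symmetry related, so $\det U_i=\det U_s$ for every $i$, while $\det R=1$ for every $R\in SO(3)$. Splitting the integral over $SO(3)$ and $\bigcup_i SO(3)U_i$ therefore yields
\begin{equation*}
\det U_s=\mu(SO(3))\cdot 1+\bigl(1-\mu(SO(3))\bigr)\det U_s,
\end{equation*}
i.e.\ $\mu(SO(3))\bigl(1-\det U_s\bigr)=0$. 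Under the standing hypothesis $\det U_s\leq 1$, when $\det U_s<1$ this forces $\mu(SO(3))=0$ at once, which is what is needed.

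The only borderline point is the degenerate case $\det U_s=1$, which is where I expect the only (mild) obstacle. There the minors relation is vacuous, so I would invoke instead the convex function $F\mapsto|F|^{2}$: Jensen's inequality gives $\langle\mu,|\cdot|^{2}\rangle\geq|U_s|^{2}$, and since $|U_i|^{2}=\mathrm{tr}(U_i^{2})=\mathrm{tr}(U_s^{2})=|U_s|^{2}$ by the symmetry relation among the variants, while $|R|^{2}=3$ on $SO(3)$, this rearranges to $\mu(SO(3))\bigl(3-|U_s|^{2}\bigr)\geq 0$. For $U_s\neq I$ with $\det U_s=1$, the AM--GM inequality on the eigenvalues of $U_s$ gives $|U_s|^{2}>3(\det U_s)^{2/3}=3$, hence again $\mu(SO(3))=0$. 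Combining both cases establishes the claim, which by the form $\langle\mu,W\rangle=-\delta\mu(SO(3))$ is exactly the desired quasiconvexity at $U_s$ in the interior.
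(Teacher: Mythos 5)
Your proof is correct and follows essentially the same route as the paper's: reduce to $\mathrm{supp}\,\mu\subset K$, use the determinant minors relation to obtain $\mu(SO(3))(1-\det U_s)=0$, and in the borderline case $\det U_s=1$ invoke the convexity of $F\mapsto|F|^2$ together with AM--GM to conclude $\mu(SO(3))=0$. The one minor inessential point is that you appeal to the standing assumption $\det U_s\leq 1$, whereas the dichotomy $\mu(SO(3))=0$ or $\det U_s=1$ already follows directly from $\mu(SO(3))(1-\det U_s)=0$ without any sign restriction on $\det U_s$.
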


By the above lemma, along with Lemma~\ref{lemma:qciffmin}, we may immediately infer the required result which we provide for completeness in the form of a 
corollary.

\begin{corollary}
Assume that $\Omega\subset\mathbb{R}^{3}$ is a bounded convex polyhedral domain. Then, if $x_0\in \Omega$,
\[I(\nu)\geq I(\delta_{U_s})\mbox{ for all }\nu\in{\mathcal A}(x_0).\]
In particular, nucleation cannot occur in the interior.
\label{prop:interior}
\end{corollary}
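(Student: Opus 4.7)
My plan is to reduce the quasiconvexity inequality to a constraint on the mass $\mu(SO(3))$ and then extract that constraint from the minors relation combined with the assumption $\det U_s \leq 1$.

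Let $\mu \in \mathcal{B}_i$ be a homogeneous $W^{1,\infty}$ gradient Young measure with $\bar\mu = U_s$; the goal is $\langle\mu, W\rangle \geq W(U_s) = 0$. Since $W\equiv +\infty$ off $K$, the left-hand side equals $+\infty$ whenever $\mu$ places positive mass outside $K$, so I may reduce to the case $\mathrm{supp}\,\mu \subset K$. Under this reduction,
\[
\langle\mu,W\rangle \;=\; -\delta\, \mu(SO(3)),
\]
so everything comes down to proving $p := \mu(SO(3)) = 0$.

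The key tool is the minors relation \eqref{eq:minors0}: since $\det$ is quasiaffine, $\langle\mu,\det\rangle = \det U_s$. Because the variants are symmetry-related, $\det F = 1$ on $SO(3)$ and $\det F = \det U_s$ on each $SO(3) U_i$, so integrating against $\mu$ gives $p\cdot 1 + (1-p)\det U_s = \det U_s$, i.e.\ $p\bigl(1-\det U_s\bigr)=0$. Hence $p=0$ whenever $\det U_s < 1$, which already covers the CuAlNi parameters.

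For the borderline case $\det U_s = 1$ I would invoke Theorem~\ref{KPGYM} applied to the convex (hence quasiconvex) function $F\mapsto |F|^2$: this gives $\langle\mu, |\cdot|^2\rangle \geq |U_s|^2$. On $K$ one has $|F|^2 = 3$ on $SO(3)$ and $|F|^2 = |U_s|^2$ on every martensitic well (using that $|RU_i|^2=|U_i|^2$ and that the variants are related by conjugation by rotations, which preserves the Frobenius norm). The inequality then reads $3p + (1-p)|U_s|^2 \geq |U_s|^2$, i.e.\ $p\bigl(|U_s|^2-3\bigr)\leq 0$. Since $U_s$ is positive definite symmetric with $\det U_s = 1$ but $U_s\neq I$ (it is a genuine martensitic variant), the strict AM--GM inequality applied to its eigenvalues yields $|U_s|^2 > 3\,(\det U_s)^{2/3}=3$, forcing $p=0$. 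Combined with Lemma~\ref{lemma:qciffmin}, this establishes Corollary~\ref{prop:interior}.

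The main obstacle is simply the borderline case $\det U_s = 1$, and even there the rigidity comes essentially for free from AM--GM; no delicate rigidity argument on $K^{qc}$ is needed. The whole argument amounts to testing the Kinderlehrer--Pedregal characterization against two elementary integrands, $\det$ and $|\cdot|^2$, and reading off the two resulting moment identities.
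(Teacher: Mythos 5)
Your argument is essentially identical to the paper's proof of Lemma~\ref{lemma:quasiconvexW}: you first reduce to $\mathrm{supp}\,\mu\subset K$, use the minors relation for $\det$ to obtain $\mu(SO(3))(1-\det U_s)=0$, and then dispatch the borderline case $\det U_s=1$ by testing the convex integrand $F\mapsto|F|^2$ against $\mu$ together with the strict AM--GM inequality $|U_s|^2>3$. The only cosmetic difference is that you invoke Theorem~\ref{KPGYM}(iii) for the $|\cdot|^2$ step where the paper uses Jensen's inequality directly, and you frame the $\det U_s<1$ case as resting on the standing assumption $\det U_s\le 1$, whereas the minors identity $\mu(SO(3))(1-\det U_s)=0$ forces $\mu(SO(3))=0$ for any $\det U_s\neq 1$.
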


\begin{proof}\hspace{-0.15cm} {\it of Lemma}~\ref{lemma:quasiconvexW}
Let $\mu\in\mathcal{B}_{i}$; that is $\mu$ is a homogeneous gradient Young measure satisfying $\bar{\mu}=U_{s}$. We need to show that $\langle \mu,W\rangle\geq W(U_s)$, and thus we may assume that 
$\mathrm{supp}\,\mu\subset K$. Since the map $F\mapsto\det F$ is quasiaffine,
\begin{eqnarray}
\det U_{s}&=&\langle\mu,\det\rangle\nonumber\\
&=&\int_{SO(3)}\det A\;d\mu(A)+\int_{\bigcup_{i}SO(3)U_{i}}\det A\;d\mu(A)\nonumber\\
&=&\int_{SO(3)}1\;d\mu(A)+\int_{\bigcup_{i}SO(3)U_{i}}\det U_{s}\;d\mu(A),
\label{eq:prooflemmabarnu=us1}
\end{eqnarray}
since $\det U_i = \det U_s$ for all $i=1,\ldots,N$, the variants being symmetry related. On the other hand, as $\mu$ is a probability measure,
\begin{equation}
\det U_{s}=\int_{SO(3)}\det U_{s}\;d\mu(A)+\int_{\bigcup_{i}SO(3)U_{i}}\det U_{s}\;d\mu(A).
\label{eq:prooflemmabarnu=us2}
\end{equation}
Subtracting equations (\ref{eq:prooflemmabarnu=us1}) and (\ref{eq:prooflemmabarnu=us2}), we obtain
\[\int_{SO(3)}(1-\det U_{s})\;d\mu(A)=0.\]
Therefore, $\mu(SO(3))=0$ or $\det\,U_{s}=1$; the former case is precisely what we need to prove. So, let $\det\,U_{s}=\alpha\beta\gamma=1$ where 
$\alpha$, $\beta$ and $\gamma$ are the lattice parameters and eigenvalues of $U_s$. By the AM-GM inequality
\begin{equation}
\frac{\vert U_{s}\vert^{2}}{3}=\frac{\alpha^{2}+\beta^{2}+\gamma^{2}}{3}\geq(\alpha^{2}\beta^{2}\gamma^{2})^{1/3}=1\nonumber
\end{equation}
and thus $\vert U_{s}\vert^{2}>3=\vert\mathbf{1}\vert^{2}$. Note that the inequality is strict as otherwise $\alpha=\beta=\gamma=1$ and 
$U_{i}=\mathbf{1}$ for all $i=1,\ldots\,,N$. Also, the map $F\mapsto \vert F\vert^{2}$ is convex and hence
\begin{eqnarray}
\vert U_{s}\vert^{2}&\leq &\langle\mu,\vert\cdot\vert^{2}\rangle\nonumber\\
&=&\int_{SO(3)}\vert A\vert^{2}\;d\mu(A)+\int_{\bigcup_{i}SO(3)U_{i}}\vert A\vert^{2}\;d\mu(A)\nonumber\\
&=&\int_{SO(3)}3\;d\mu(A)+\int_{\bigcup_{i}SO(3)U_{i}}\vert U_{s}\vert^{2}\;d\mu(A)
\label{eq:prooflemmabarnu=us3}
\end{eqnarray}
as the martensitic variants are symmetry related and the norm stays constant. Moreover, $\mu$ being a probability measure,
\begin{equation}
\label{eq:prooflemmabarnu=us4}
\vert U_{s}\vert^{2}=\int_{SO(3)}\vert U_{s}\vert^{2}\;d\mu(A)+\int_{\bigcup_{i}SO(3)U_{i}}\vert U_{s}\vert^{2}\;d\mu(A)
\end{equation}
and subtracting equations (\ref{eq:prooflemmabarnu=us3}), (\ref{eq:prooflemmabarnu=us4}), we infer that
\begin{equation}
\int_{SO(3)}(3-\vert U_{s}\vert^{2})\;d\mu(A)\geq 0.\nonumber
\end{equation}
However, $\vert U_{s}\vert^{2}>3$ and hence, $\mu(SO(3))=0$ which completes the proof.\qed
\end{proof}

\begin{remark}
\label{remark:homogeneous}
The proof of Lemma~\ref{lemma:quasiconvexW} only uses the fact that $\bar{\mu}=U_{s}$. For an inhomogeneous measure $\mu=(\mu_x)_{x\in\Omega}$ with 
$\mathrm{supp}\,\mu_x\subset K$ and $\bar{\mu}_x=U_{s}$ a.e.~in $\Omega$, the same argument applies to show that for a.e.~$x\in\Omega$, $\mu_x(SO(3))=0$. 
We keep this remark in mind as it will be central in proving quasiconvexity at faces and edges.
\end{remark}

\subsection{Faces and Edges}
\label{subsec:faces&edges}

We treat the cases of faces and edges simultaneously. Both for faces and edges, we wish to conclude that if $x_0$ belongs to a face or edge then 
$\delta_{U_s}$ is a minimizer of $I$ in $\mathcal{A}(x_0)$, which in turn implies that nucleation cannot occur there. 
To prove this we establish the quasiconvexity of $W$ at $U_{s}$ on faces and edges, from which the desired result follows from Lemma~\ref{lemma:qciffmin}.

For convenience, we shall use the subscript $f,e$ in e.g.~$\mathcal{B}_{f,e}$ or $B_{f,e}$ to mean `either $\mathcal{B}_{f}$ or $\mathcal{B}_{e}$' and `either $B_{f}$ or 
$B_{e}$'. Here ${\mathcal B}_{f,e}$ is defined as in Definition~\ref{def:quasiconvexity} with $F=U_s$.

\begin{lemma}
\label{prop:feqc}
Assume that $\Omega\subset\mathbb{R}^{3}$ is a bounded convex polyhedral domain which is admissible for $U_s$ and let 
$W:\mathbb{R}^{3\times 3}\rightarrow\overline{\mathbb{R}}$ be as in (\ref{eq:W}). Then $W$ is quasiconvex at $U_{s}$ on all faces and edges of $\Omega$.
\end{lemma}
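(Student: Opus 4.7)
The plan is to reduce the claimed inequality to showing that the underlying deformation $z$ of any $\mu\in\mathcal B_{f,e}$ satisfies $Dz(x)=U_s$ a.e.\ in $B_{f,e}$. Since $W=+\infty$ outside $K$, one may assume $\mathrm{supp}\,\mu_x\subset K$ a.e.\ (otherwise $\int_{B_{f,e}}\langle\mu_x,W\rangle\,dx=+\infty$ and there is nothing to prove); by the analogue of Theorem~\ref{theorem:homeomorphic} (which reduces to Lemma~\ref{lemma:regularity} once the (C--N) and positivity-of-determinant constraints are invoked), $z$ is then a Lipschitz homeomorphism with Lipschitz inverse, $Dz\in K^{qc}$ a.e., and $z(x)=U_sx$ on the prescribed spherical piece $\partial B\cap\partial B_{f,e}$. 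Once $Dz=U_s$ a.e.\ has been obtained, the inhomogeneous variant of Lemma~\ref{lemma:quasiconvexW} recorded in Remark~\ref{remark:homogeneous} forces $\mu_x(SO(3))=0$ a.e.: quasiaffinity of $\det$ combined with $\det U_s\leq 1$ gives $\mu_x(SO(3))(1-\det U_s)=0$, while convexity of $F\mapsto|F|^2$ combined with $|U_s|^2>3$ (strict by AM--GM, since the lattice parameters exclude $U_s=\mathbf 1$) gives $\mu_x(SO(3))(|U_s|^2-3)\leq 0$. Hence $\langle\mu_x,W\rangle=-\delta\mu_x(SO(3))=0=W(U_s)$ a.e.\ and the inequality (indeed, equality) follows.

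The rigidity $Dz=U_s$ a.e.\ is obtained from admissibility of $\Omega$, which supplies an admissible direction $e\in\mathcal M_s\cup U_s^{-2}\mathcal M_s^{-1}$ associated with $B_{f,e}$: for $B_e$, the edge direction itself (perpendicular to both face normals $n_1,n_2$); for $B_f$, any edge direction of the relevant face (necessarily perpendicular to the face normal $n$). If $e\in\mathcal M_s$, I apply the classical length argument: every segment $\ell=[x_1,x_2]\subset B_{f,e}$ of length $L$ in direction $e$ has both endpoints on the prescribed spherical piece (since $e$ is perpendicular to all relevant face normals), so $|z(x_2)-z(x_1)|=L|U_se|$, whereas the deformed curve length $\int_\ell|Dz\cdot e|\,ds$ is bounded above by $L\max_K|Fe|=L|U_se|$ using Jensen's inequality for $\mu_x$ and the definition of $\mathcal M_s$; equality in the triangle inequality then forces $Dz\cdot e=U_se$ a.e.\ on $\ell$, and Fubini extends this to a.e.\ in $B_{f,e}$. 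If instead $e=U_s^{-2}f/|U_s^{-2}f|$ with $f\in\mathcal M_s^{-1}$, I pass to the deformed configuration via the inverse $z^{-1}$ (whose $W^{1,\infty}$ regularity is supplied by the (C--N) constraint) and invoke the polyconvex, hence quasiconvex, function $F\mapsto|\mathrm{cof}\,F\cdot f|$, whose maximum over $K$ equals $|\mathrm{cof}\,U_s\cdot f|$ by the definition of $\mathcal M_s^{-1}$; a surface-area comparison (bounding $\mathrm{Area}(z(\Sigma))$ above by the cofactor quasiconvexity and below by the area of the flat disk $U_s\Sigma$ that minimises area among surfaces spanning the planar boundary $U_s\partial\Sigma$) yields $\mathrm{cof}\,Dz\cdot f=\mathrm{cof}\,U_s\cdot f$ a.e.; since this cofactor identity combined with $\mathrm{supp}\,\mu_x\subset K$ pins down the rotation part of $Dz$ as a rotation fixing $U_s^{-1}f$, one deduces $Dz\cdot e=U_se$ a.e.

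Given $Dz\cdot e=U_se$ a.e., I decompose $x\in B_{f,e}$ as $x=x_\perp+se$ with $x_\perp\perp e$. Integration in $s$ yields $z(x_\perp+se)=z(x_\perp)+sU_se$. Because $e$ is perpendicular to all face normals of $B_{f,e}$, the line through $x_\perp$ in direction $e$ exits $B_{f,e}$ at the two points $s=\pm\sqrt{1-|x_\perp|^2}$, both of which lie on the prescribed spherical piece, so the Dirichlet datum $z=U_sx$ at these endpoints forces $z(x_\perp)=U_sx_\perp$. Hence $z(x)=U_sx$ throughout $B_{f,e}$ and $Dz=U_s$ a.e., completing the proof.

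The principal technical difficulty is the rigidity step for directions $e\in U_s^{-2}\mathcal M_s^{-1}$: the reference-configuration length argument does not transfer (since $|Fe|$ need not be maximised at $F=U_s$ for such $e$), and planar surfaces with normal $f$ do not in general have their entire boundary on the prescribed piece of $\partial B_{f,e}$ (since $f$ need not be perpendicular to the face normals). One must therefore combine cofactor quasiconvexity with the (C--N)-supplied inverse deformation and a careful geometric construction that ensures the Dirichlet-compatibility of the relevant boundaries; this is most naturally organised as preliminary rigidity lemmas (analogues of the length-rigidity lemma) before invoking them in the final assembly above.
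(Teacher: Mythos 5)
Your reduction to showing $Dz=U_s$ a.e., your treatment of directions $e\in\mathcal M_s$ via the one--dimensional length argument, and the final averaging step via Remark~\ref{remark:homogeneous} all match the paper exactly. The divergence, and the gap, is in the inverse case $e\in U_s^{-2}\mathcal M_s^{-1}$.

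Your proposed surface--area comparison using the polyconvex function $F\mapsto|(\mathrm{cof}\,F)f|$ does not close the argument. As you yourself note at the end, planar surfaces with normal $f$ do not have boundary on the prescribed spherical part of $\partial B_{f,e}$: for a face this requires $f\perp n$, and for an edge it requires $f$ perpendicular to both $n_1$ and $n_2$ --- i.e.\ $f$ parallel to the edge direction, in which case the normal plane through $f$ still cuts both free faces. The paper makes precisely this observation in its concluding remarks (``such a method allows one to deduce the quasiconvexity condition on faces with normals in $\mathcal M_s^{-1}$ and does not seem applicable to edges''), which is why Lemma~\ref{lemmamaximaldirectionsusinverse} instead applies the one--dimensional rigidity Lemma~\ref{lemma:rigidity} to the \emph{inverse} map $w=z^{-1}$ along the lines $\rho_x(t)=U_sr_x(t)$ in the deformed configuration, using the scalar quantity $\tilde\sigma_x(t)=U_s^2e\cdot w(\rho_x(t))$ and polyconvexity of $F\mapsto|(\mathrm{cof}\,F)U_s^2e\cdot U_se|$. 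Your sketch gestures at ``preliminary rigidity lemmas'' but never produces the argument that actually works.

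Even granting the $1$D argument on $w$, there is a second, entirely unaddressed, obstacle: the rigidity along $\rho_x$ requires that the segments $\{U_sr_x(t):t\in(t_x^-,t_x^+)\}$ lie in $z(B_{f,e})$ (the hypothesis $U_sD\subset z(B_{f,e})$ of Lemma~\ref{lemmamaximaldirectionsusinverse}), and this is not automatic --- Figure~\ref{fig:badsituation} illustrates how it can fail. The paper's proof spends most of its effort precisely here: Lemma~\ref{distancelemma2} shows the inclusion holds near the prescribed boundary (using openness of mappings of bounded distortion via Reshetnyak's theorem), and Lemma~\ref{lemmafefinal} runs a connectedness bootstrap (the maximal set $E$ covered by good segments is shown nonempty, open and closed in $B_{f,e}$, hence all of $B_{f,e}$). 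This is the genuinely hard part of the proof, and your proposal does not engage with it. The appeal to the Dirichlet datum ``at these endpoints'' in your assembly paragraph only works after this containment has been established; without it you cannot integrate along $\rho_x$ at all.
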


\begin{corollary}
\label{prop:fe}
Assume that $\Omega\subset\mathbb{R}^{3}$ as above is admissible for $U_s$. Then, if $x_0$ belongs to a face or edge
\begin{equation}
I(\nu)\geq I(\delta_{U_s})\mbox{ for all }\nu\in{\mathcal A}(x_0).\nonumber
\end{equation}
In particular, nucleation cannot occur at any face or edge.
\end{corollary}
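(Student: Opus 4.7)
The plan is to chain together the two preceding results, both of which are already in place. The hypothesis that $\Omega$ is admissible for $U_s$ is exactly the input required by Lemma~\ref{prop:feqc}, whose conclusion is that the singular density $W$ in \eqref{eq:W} is quasiconvex at $U_s$ on every face and every edge of $\Omega$ in the sense of Definition~\ref{def:quasiconvexity}. This quasiconvexity assertion is the whole mathematical content hiding behind the corollary.

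With quasiconvexity in hand, I would simply apply Lemma~\ref{lemma:qciffmin}. That lemma is formulated precisely so that quasiconvexity of $W$ at $U_s$ on a given face or edge implies $I(\delta_{U_s})\leq I(\nu)$ for every $\nu\in\mathcal{A}(x_0)$ whenever $x_0$ belongs to that face or edge. Since the chosen $x_0$ is in a face or edge of the admissible domain $\Omega$, the hypothesis of Lemma~\ref{lemma:qciffmin} is met and the desired inequality is immediate.

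For the second sentence of the statement (``nucleation cannot occur''), I would invoke the explicit form of the $\Gamma$-limit given by Proposition~\ref{lemmagammaconvergence}, as spelled out in Remark~\ref{rem:aftermain}: since $I(\delta_{U_s})=0$ and
\[
I(\nu)=-\delta\int_{\Omega}\nu_x(SO(3))\,dx,
\]
the nucleation of austenite in a neighbourhood of $x_0$ is equivalent to $\int_{\Omega}\nu_x(SO(3))\,dx>0$, which is in turn equivalent to $I(\nu)<0=I(\delta_{U_s})$. The first part of the corollary rules this out.

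There is no real obstacle in this argument: the corollary is a formal consequence of its two cited ingredients, and the only nontrivial work — verifying the quasiconvexity of $W$ at $U_s$ on faces and edges of an admissible $\Omega$, which is where the hypotheses $\det U_s\leq 1$, $\lambda_{\max}(\mathrm{cof}\,U_s)\geq 1$, the (C-N) constraint and the maximal-direction geometry really enter — has been packaged into Lemma~\ref{prop:feqc} and is not part of what must be proved at this stage.
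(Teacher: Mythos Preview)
Your proposal is correct and follows precisely the paper's approach: the corollary is stated as an immediate consequence of Lemma~\ref{prop:feqc} together with Lemma~\ref{lemma:qciffmin}, and the paper does not give it a separate proof. Your added remark about why the inequality precludes nucleation (via the explicit form of $I$ and Remark~\ref{rem:aftermain}) is also in line with the paper's reasoning.
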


\begin{remark}
Note that the statement of Lemma~\ref{prop:feqc} requires that $\Omega$ be admissible for $U_s$ and it is here that we use this 
condition for the first time. The (C-N) constraint as well as the conditions on the lattice parameters enter here too. This will become apparent in the proofs 
that follow.
\end{remark}

By Remark~\ref{remark:homogeneous}, we have the following lemma to which the proof of Lemma~\ref{prop:feqc} will be reduced.

\begin{lemma}
Assume that for a face or an edge $\mu\in\mathcal{B}_{f,e}$ satisfies $\bar{\mu}_{x}=U_{s}$ for a.e.~$x\in B_{f,e}$. Then,
\[\int_{B_{f,e}}\langle\mu_{x}, W\rangle\,dx\geq\int_{B_{f,e}}W(U_{s})\,dx,\]
i.e.~$W$ is quasiconvex at $U_s$ on that face or edge. In particular, if $\mathrm{supp}\,\mu_{x}\subset K$ a.e.~then $\mu_x(SO(3))=0$ for 
a.e.~$x\in B_{f,e}$.
\label{lemmabarnu=us}
\end{lemma}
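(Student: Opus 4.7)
The plan is to reduce this lemma to a pointwise version of the interior argument from Lemma~\ref{lemma:quasiconvexW}, since no geometric information about the face or edge actually enters the proof once we know that the barycenter $\bar\mu_x$ equals $U_s$ almost everywhere.

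First I would dispose of the case in which $\mathrm{supp}\,\mu_x \not\subset K$ on a set $E \subset B_{f,e}$ of positive Lebesgue measure. Because $W \equiv +\infty$ off $K$ and is bounded below on $K$, for $x \in E$ one has $\langle \mu_x, W\rangle = +\infty$; the left-hand side of the claimed inequality is therefore infinite while the right-hand side is finite (equal to $-\delta\,\mathcal{L}^3(B_{f,e})$), so the inequality holds trivially. This reduces matters to the case $\mathrm{supp}\,\mu_x \subset K$ a.e.

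For this substantive case I would invoke the argument of Lemma~\ref{lemma:quasiconvexW} pointwise in $x$, as anticipated in Remark~\ref{remark:homogeneous}: that argument only uses the barycenter relation $\bar\mu = U_s$, not homogeneity. Concretely, applying the quasiaffine integrand $F \mapsto \det F$ against $\mu_x$ and using that all variants share the same determinant yields
\[
\int_{SO(3)}(1 - \det U_s)\,d\mu_x(A) \;=\; 0,
\]
which forces either $\mu_x(SO(3)) = 0$ or $\det U_s = 1$. In the latter situation, AM--GM gives $|U_s|^2 \geq 3(\det U_s)^{2/3} = 3$, and strict inequality holds since equality would force $\alpha = \beta = \gamma = 1$, hence $U_s = \mathbf{1}$ and therefore all $U_i = \mathbf{1}$, contradicting the presence of distinct martensitic variants. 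Testing $\mu_x$ against the convex integrand $F \mapsto |F|^2$ then yields $\int_{SO(3)}(3 - |U_s|^2)\,d\mu_x \geq 0$ with the integrand strictly negative, again forcing $\mu_x(SO(3)) = 0$. This establishes the ``in particular'' assertion.

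The main inequality is then immediate: with $\mathrm{supp}\,\mu_x \subset \bigcup_{i=1}^N SO(3)U_i$ a.e., on which $W \equiv 0$, we obtain $\langle \mu_x, W\rangle = 0$ a.e.~in $B_{f,e}$, and $W(U_s) = 0$ as well, so the desired quasiconvexity inequality reduces to the trivial identity $0 \geq 0$. There is essentially no obstacle in this lemma itself; the real work, which I expect the authors to carry out afterwards, lies in deducing Lemma~\ref{prop:feqc} from Lemma~\ref{lemmabarnu=us} by showing that the boundary condition $z(x) = U_s x$ on $\partial B \cap \partial B_{f,e}$, together with admissibility of $\Omega$ and the Ciarlet--Ne\v{c}as constraint, forces $\bar\mu_x = U_s$ a.e.\ via the maximal-direction rigidity foreshadowed in Remark~\ref{rem:aftermain}.
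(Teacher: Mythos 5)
Your proof is correct and follows the same route the paper takes, namely to invoke Remark~\ref{remark:homogeneous}: since the argument of Lemma~\ref{lemma:quasiconvexW} uses only the barycenter relation $\bar\mu=U_s$ (through the quasiaffine determinant and the convex integrand $F\mapsto|F|^2$), it applies pointwise for a.e.~$x\in B_{f,e}$, giving $\mu_x(SO(3))=0$ and hence $\langle\mu_x,W\rangle=0=W(U_s)$ a.e., with the trivial case $\mathrm{supp}\,\mu_x\not\subset K$ on a set of positive measure handled separately. The only slip is the claim that the right-hand side equals $-\delta\,\mathcal{L}^3(B_{f,e})$: in fact $W(U_s)=0$ since $U_s\in\bigcup_i SO(3)U_i$, so the right-hand side is $0$; this does not affect your conclusion, which only needs finiteness.
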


Thus, to prove Lemma~\ref{prop:feqc}, it suffices to show that all finite-energy measures in $\mathcal{B}_{f}$ and $\mathcal{B}_{e}$ satisfy 
$\bar{\mu}_{x}=U_{s}$. In the case of the interior we reduced the problem to (homogeneous) measures $\mu$ such that $\bar{\mu}=U_{s}$ due to the fact that 
the boundary condition on deformations underlying measures in $\mathcal{A}(x_0)$ was satisfied on the entire boundary of $S(x_0)$. However, on a face or at an edge, averaging the 
measures does not work. Therefore, we follow a different approach in order to establish that $\bar{\mu}_{x}=U_{s}$ which is dependent on the orientation of $\Omega$ or, 
equivalently, the directions in which the specimen is cut.

This is based on a rigidity argument, which uses the notions of maximal directions for $U_{s}$ and $U^{-1}_{s}$ from Definition~\ref{definitionmaximal}, to 
deduce that $\bar{\mu}_{x}=U_{s}$ a.e.~in $B_{f,e}$ for all measures in $\mathcal{B}_{f,e}$, provided that $\Omega$ is admissible itself. This argument is 
presented in Lemma~\ref{lemma:rigidity} and applied in Lemma~\ref{lemmamaximaldirectionsus} and Lemma~\ref{lemmamaximaldirectionsusinverse}.

\begin{lemma}
\label{lemma:rigidity}
Let $t^{-}$, $t^{+}\in\mathbb{R}$ with $t^{-}<t^{+}$ and suppose that $\sigma:[t^{-},t^{+}]\rightarrow\mathbb{R}^d$, $d\geq1$, is absolutely continuous, and satisfies
\[\|\frac{d}{dt}\sigma\|_{\infty}\leq\frac{\vert\sigma(t^{+})-\sigma(t^{-})\vert}{\vert t^{+}-t^{-}\vert}.\]
Then,
\[
\sigma(t)=\sigma(t^{-})+(t-t^{-})(\sigma(t^{+})-\sigma(t^{-}))/(t^{+}-t^{-})
\]
for all $t\in[t^-,t^+]$.
\end{lemma}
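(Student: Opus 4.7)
The plan is to recognise this as a standard rigidity statement saying that if a curve realises the chord between its endpoints with the slowest possible speed permitted by its sup-norm Lipschitz bound, then it must be an affine reparametrisation of that chord. The key idea is to squeeze an equality chain between the two natural bounds on the length of $\sigma$.

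First I would dispose of the trivial case $\sigma(t^+)=\sigma(t^-)$: here the hypothesis forces $\|\dot\sigma\|_\infty=0$, so $\sigma$ is constant and the claim holds. Otherwise, set
\[
L:=\frac{|\sigma(t^+)-\sigma(t^-)|}{t^+-t^-},\qquad v:=\frac{\sigma(t^+)-\sigma(t^-)}{|\sigma(t^+)-\sigma(t^-)|},
\]
so that $L>0$, $|v|=1$, and by hypothesis $|\dot\sigma(t)|\le L$ for a.e.\ $t$. Using absolute continuity, I would then write the chord length in two ways:
\[
L(t^+-t^-)=|\sigma(t^+)-\sigma(t^-)|=\bigl\langle\sigma(t^+)-\sigma(t^-),v\bigr\rangle=\int_{t^-}^{t^+}\langle\dot\sigma(t),v\rangle\,dt\le\int_{t^-}^{t^+}|\dot\sigma(t)|\,dt\le L(t^+-t^-).
\]

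The equality of the extremes forces all intermediate inequalities to be equalities. The equality in Cauchy--Schwarz, $\langle\dot\sigma(t),v\rangle=|\dot\sigma(t)|$, combined with the equality $\int|\dot\sigma|=L(t^+-t^-)$ and the pointwise bound $|\dot\sigma|\le L$, yields $|\dot\sigma(t)|=L$ for a.e.\ $t$ and hence $\dot\sigma(t)=Lv$ for a.e.\ $t$. Integrating from $t^-$ using absolute continuity,
\[
\sigma(t)=\sigma(t^-)+(t-t^-)Lv=\sigma(t^-)+(t-t^-)\,\frac{\sigma(t^+)-\sigma(t^-)}{t^+-t^-},
\]
which is the desired conclusion. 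There is no real obstacle here; the only point to be careful about is the degenerate case and the fact that $\sigma$ being absolutely continuous (not merely Lipschitz) is already enough to legitimise the fundamental theorem of calculus used above.
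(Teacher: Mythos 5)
Your argument is correct, but it follows a genuinely different route from the paper's. You squeeze a chain of \emph{scalar} inequalities for the length: projecting onto the unit chord direction $v$, applying Cauchy--Schwarz pointwise, and using the uniform bound $|\dot\sigma|\le L$, you force every inequality in
\[
L(t^+-t^-)=\int_{t^-}^{t^+}\langle\dot\sigma,v\rangle\,dt\le\int_{t^-}^{t^+}|\dot\sigma|\,dt\le L(t^+-t^-)
\]
to be an equality, whence $\dot\sigma=Lv$ a.e. The paper instead computes the $L^2$ quantity $\int_{t^-}^{t^+}|\dot\sigma(t)-c|^2\,dt$ with $c=(\sigma(t^+)-\sigma(t^-))/(t^+-t^-)$, expands the square, uses $\int\dot\sigma=\sigma(t^+)-\sigma(t^-)$ to simplify the cross term, and obtains $\int|\dot\sigma|^2-|c|^2\le 0$; since the original integrand is nonnegative it must vanish, giving $\dot\sigma=c$ a.e. in one stroke. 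Your version is marginally more elementary (it only needs $\dot\sigma\in L^1$, automatic from absolute continuity, rather than the $L^2$ integrability that the paper's expansion implicitly uses, though here that is supplied by the $L^\infty$ hypothesis) and makes the geometric mechanism — equality in Cauchy--Schwarz pinning the direction, the sup bound pinning the speed — more visible; the cost is that you must dispose of the degenerate case $\sigma(t^+)=\sigma(t^-)$ separately, which the paper's $L^2$ identity handles automatically. Both are correct and complete.
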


\begin{proof}
By the Fundamental Theorem of Calculus and the uniform bound on the derivative of $\sigma$,
\begin{eqnarray}
0&\leq&\int^{t^{+}}_{t^{-}}\left|\frac{d}{dt}\sigma(t)-\frac{\sigma(t^{+})-\sigma(t^{-})}{t^{+}-t^{-}}\right|^2\,dt\nonumber\\
&=&\int^{t^{+}}_{t^{-}}\left|\frac{d}{dt}\sigma(t)\right|^2+\left|\frac{\sigma(t^{+})-\sigma(t^{-})}{t^{+}-t^{-}}\right|^2-2\frac{d}{dt}\sigma(t)\cdot\frac{\sigma(t^{+})-
\sigma(t^{-})}{t^{+}-t^{-}}\,dt\nonumber\\
&=&\int^{t^{+}}_{t^{-}}\left|\frac{d}{dt}\sigma(t)\right|^2 - \left|\frac{\sigma(t^{+})-\sigma(t^{-})}{t^{+}-t^{-}}\right|^2\,dt\leq0.\nonumber
\end{eqnarray}
That is $\frac{d}{dt}\sigma(t)=(\sigma(t^{+})-\sigma(t^{-}))/(t^{+}-t^{-})$ and the result follows.\qed
\end{proof}

We may now apply Lemma~\ref{lemma:rigidity} to get the following result:

\begin{lemma}
Let $\mu\in\mathcal{B}_{f,e}$ with $\mathrm{supp}\,\mu_{x}\subset\,K$ a.e.~in $B_{f,e}$ and let $z\in\,W^{1,\infty}(B_{f,e},\mathbb{R}^{3})$ be such that 
$Dz(x)=\bar{\mu}_{x}$ a.e.~in $B_{f,e}$. In addition, let $e\in\mathcal{M}_{s}$ and suppose that $D\subset B_{f,e}$ is an open set with the property that 
for each $x\in D$ the line segment parametrized by $r_x(t)=x+te$, $t\in [t^{-}_{x},t^{+}_{x}]$, where
\begin{eqnarray*}
 t^{-}_{x}&=&\sup\left\{t<0: x+te\notin B_{f,e}\right\}\\
 t^{+}_{x}&=&\inf\left\{t>0: x+te\notin B_{f,e}\right\},
\end{eqnarray*}
lies in $D$ for all $t\in (t^{-}_{x},t^{+}_{x})$ and $x+t^{\pm}_{x}e\in\partial B\cap\partial B_{f,e}$, the prescribed part of the boundary of $B_{f,e}$. 
Then $z(x)=U_sx$ for a.e. $x\in D$.
\label{lemmamaximaldirectionsus}
\end{lemma}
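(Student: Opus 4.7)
The plan is to reduce Lemma~\ref{lemmamaximaldirectionsus} to a one-dimensional rigidity statement on line segments in the direction $e$, where Lemma~\ref{lemma:rigidity} applies directly.

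First, I would establish the pointwise bound $|Dz(x)\,e|\leq |U_s e|$ for a.e.~$x\in B_{f,e}$. Since $\mu_x$ is a probability measure with $\mathrm{supp}\,\mu_x\subset K$ a.e., and since $F\mapsto |Fe|$ is convex (hence quasiconvex), Jensen's inequality gives
\[
|Dz(x)\,e|=|\bar{\mu}_x\,e|=\Bigl|\int_K A\,e\,d\mu_x(A)\Bigr|\leq \langle\mu_x,|\cdot\,e|\rangle \leq \max_{F\in K}|Fe|=|U_s e|,
\]
where the last equality is precisely the defining property of $e\in\mathcal{M}_s$.

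Next, since $z\in W^{1,\infty}(B_{f,e},\mathbb{R}^3)$ and $B_{f,e}$ is Lipschitz, $z$ extends uniquely as a Lipschitz map to $\overline{B_{f,e}}$, still denoted $z$; the prescribed boundary condition $z(y)=U_s y$ then holds pointwise on $\partial B\cap\partial B_{f,e}$. Choose coordinates so that $e$ is a coordinate direction and decompose $B_{f,e}$ into lines parallel to $e$. By Fubini, for a.e.~$x\in D$ the curve $\sigma(t):=z(x+te)$ is absolutely continuous on $[t_x^-,t_x^+]$, satisfies $\sigma'(t)=Dz(x+te)\,e$ for a.e.~$t$, and hence $\|\sigma'\|_\infty\leq|U_s e|$. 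Because the hypothesis places the endpoints $x+t_x^\pm e$ on the prescribed part of the boundary, we also have $\sigma(t_x^\pm)=U_s(x+t_x^\pm e)$, so
\[
\frac{|\sigma(t_x^+)-\sigma(t_x^-)|}{t_x^+-t_x^-}=|U_s e|\geq \|\sigma'\|_\infty .
\]
Lemma~\ref{lemma:rigidity} then yields $\sigma(t)=U_s(x+te)$ for every $t\in[t_x^-,t_x^+]$; evaluating at $t=0$ gives $z(x)=U_s x$, and Fubini upgrades this to $z=U_s(\cdot)$ a.e.~in $D$.

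The only subtlety I anticipate is making sure the boundary identity $\sigma(t_x^\pm)=U_s(x+t_x^\pm e)$ holds pointwise on a.e.~line rather than merely in the trace sense; this is the reason for invoking the continuous (Lipschitz) extension of $z$ to $\overline{B_{f,e}}$, and it is also where the hypothesis that both endpoints of each segment lie on the \emph{prescribed} part $\partial B\cap\partial B_{f,e}$ (and not on the free boundary) is essential. Everything else is a straightforward combination of Jensen's inequality, the definition of maximal direction, and the elementary rigidity Lemma~\ref{lemma:rigidity}.
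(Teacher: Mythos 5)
Your proof is correct and follows essentially the same route as the paper's: bound $|Dz(\cdot)e|$ by $|U_s e|$ using the maximal-direction property, invoke absolute continuity of $z$ along a.e.~line parallel to $e$, and then apply the elementary rigidity Lemma~\ref{lemma:rigidity} together with the prescribed boundary values $z=U_s(\cdot)$ at the endpoints. The only cosmetic difference is that you derive the gradient bound via Jensen's inequality on $\mu_x$ supported in $K$, whereas the paper phrases the same estimate through $Dz(x)\in K^{qc}$ and the convexity of $F\mapsto|Fe|$; these are equivalent.
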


\begin{proof}
For $x\in D$, let $\sigma_x(t)=z(r_x(t))$.
Since $z\in W^{1,\infty}$, by Morrey \cite[Theorem 3.1.2, Lemma 3.1.1]{morreybook},  $z(r_x(t))$ is, for a.e. $x\in D$, absolutely continuous in $t$ on 
$(t_x^-,t_x^+)$, $z(r_x(t))$ tends to limits as $t\to (t_x^-)^+, t\to (t_x^+)^-$ and, for a.e.~$t\in(t^{-}_{x},t^{+}_{x})$,
\begin{equation}
\label{eq:sigmaderivative}
\frac{d}{dt}\sigma_x(t)=\frac{d}{dt}z(r_x(t))= Dz(r_x(t))e.
\end{equation}
Note that for any fixed $e\in\mathbb{R}^{3}$ the function $F\mapsto \vert Fe\vert$ is a convex function of the matrix $F$. But 
$\bar{\mu}_{x}=Dz(x)\in\,K^{qc}$ a.e.~and, by Fubini's theorem, a.e.~point of a.e.~line segment parallel to the vector $e$ belongs to the subset of 
$B_{f,e}$ where $Dz(x)\in K^{qc}$. That is, for a.e.~$x\in\,D$ and a.e.~$t\in(t^{-}_{x},t^{+}_{x})$,
\begin{equation}
\vert Dz(r_x(t))e\vert\leq\max_{i}\left\lbrace \vert U_{i}e\vert, 1\right\rbrace =\vert U_{s}e\vert,\nonumber
\end{equation}
since $e\in S^{2}$ is a maximal direction for $U_{s}$. Combining with (\ref{eq:sigmaderivative}) we deduce that for a.e.~$x\in D$, and a.e. $t\in (t_x^-,t_x^+)$,
\begin{equation}
\left|\frac{d}{dt}\sigma_x(t)\right|=\vert Dz(r_x(t))e\vert\leq\vert U_{s}e\vert=\frac{\vert\sigma_x(t^{+}_x)-\sigma_x(t^{-}_x)\vert}{\vert t^{+}_x-t^{-}_x\vert}.
\end{equation}
Applying Lemma~\ref{lemma:rigidity}, we infer that for a.e.~$x\in D$ and all $t\in(t^{-}_{x},t^{+}_{x})$,
\[z(r_x(t))=z(r_x(t^{-}_x))+(t-t^{-}_x)\frac{z(r_x(t^{+}_x))-z(r_x(t^{-}_x))}{t^{+}_x-t^{-}_x}=U_sr_x(t),\]
so that passing to the limit $t\to 0$ we get $z(x)=U_sx$ as required.\qed
\end{proof}

The idea is now simple; let $e\in\mathcal{M}_{s}$ and suppose that we may take $D=B_{f,e}$, i.e.~the entire set $B_{f,e}$ can be covered by line segments 
in the direction of $e$ joining points on the prescribed boundary. Then, by Lemma~\ref{lemmamaximaldirectionsus}, we immediately deduce that for any 
$\mu\in\mathcal{B}_{f,e}$ such that $\mathrm{supp}\,\mu_x\subset K$ a.e.,
\[\bar{\mu}_{x}=U_{s}\quad\mbox{for a.e.~$x\in B_{f,e}$}\] 
and using Lemma~\ref{lemmabarnu=us} we infer the quasiconvexity of $W$ at $U_{s}$ on a face or an edge. In particular, we can prove the following:

\begin{lemma}
Let $\Omega\subset\mathbb{R}^{3}$ be a bounded convex polyhedral domain and suppose that $e\in\mathcal{M}_{s}$ is a vector parallel to an edge or perpendicular to a normal 
of a face of $\Omega$. Then $W$ is quasiconvex at $U_{s}$ on that edge or face. In particular, if $x_0$ belongs to the edge or face, then $I(\nu)\geq I(\delta_{U_s})$ for every 
$\nu\in\mathcal{A}(x_0)$.
\label{finalpropmaxus}
\end{lemma}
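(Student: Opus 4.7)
My plan is to reduce the statement to verifying the hypotheses of Lemma~\ref{lemmamaximaldirectionsus} with $D = B_{f,e}$ taken to be the entire domain. Indeed, by Lemma~\ref{lemmabarnu=us} it suffices to show that every $\mu \in \mathcal{B}_{f,e}$ with $\mathrm{supp}\,\mu_x \subset K$ a.e.\ satisfies $\bar{\mu}_x = U_s$ a.e., since finite-energy measures are automatically supported in $K$. Once quasiconvexity at $U_s$ is established, Lemma~\ref{lemma:qciffmin} yields the claim $I(\nu)\geq I(\delta_{U_s})$ for all $\nu\in\mathcal{A}(x_0)$.

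The key geometric observation is that when $e\in\mathcal{M}_s$ is parallel to the edge (or perpendicular to the face normal), the entire set $B_{f,e}$ can be foliated by line segments in direction $e$ whose endpoints lie on the prescribed (spherical) part of the boundary. In the face case, $B_f = \{x\in B : x\cdot n < 0\}$; for any $x\in B_f$ and $e\perp n$, the line $t\mapsto x+te$ satisfies $(x+te)\cdot n = x\cdot n < 0$ strictly, so it never intersects the free (flat) part $B\cap\{x\cdot n = 0\}$ and therefore exits $B_f$ only through $\partial B$, which coincides with $\partial B\cap\partial B_f$ on the hemisphere $\{x\cdot n<0\}$. In the edge case, $B_e = \{x\in B : x\cdot n_i < 0,\ i=1,2\}$, and $e$ being parallel to the edge means $e\perp n_1$ and $e\perp n_2$; the identical argument shows that line segments in direction $e$ through any $x\in B_e$ avoid both free faces and exit $B_e$ only through the spherical part of $\partial B$. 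In both cases one can take $D = B_{f,e}$ in Lemma~\ref{lemmamaximaldirectionsus}.

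Applying Lemma~\ref{lemmamaximaldirectionsus} with this choice of $D$, we conclude that the underlying deformation $z$ of $\mu$ satisfies $z(x) = U_s x$ for a.e.\ $x\in B_{f,e}$, and therefore $\bar{\mu}_x = Dz(x) = U_s$ a.e. This is precisely the hypothesis of Lemma~\ref{lemmabarnu=us}, which delivers the quasiconvexity of $W$ at $U_s$ on the given face or edge.

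I expect essentially no obstacle here: the hard analytic work is already packaged in Lemmas~\ref{lemma:rigidity} and~\ref{lemmamaximaldirectionsus}, and the proof reduces to the elementary geometric verification that segments in a direction orthogonal to all face normals of $B_{f,e}$ sweep out the whole half-ball (or quarter-ball) and touch only the spherical boundary. Note also that this part of the argument does not invoke either the (C-N) constraint or the assumptions $\det U_s \leq 1$, $\lambda_{\max}(\mathrm{cof}\,U_s)\geq 1$, as is consistent with the remark at the end of Theorem~\ref{theorem:main} that these assumptions are only needed for the complementary maximal-direction condition involving $U_s^{-2}\mathcal{M}_s^{-1}$.
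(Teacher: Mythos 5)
Your proposal is correct and follows essentially the same route as the paper: reduce to covering $B_{f,e}$ by segments in direction $e$ hitting only the spherical (prescribed) boundary, apply Lemma~\ref{lemmamaximaldirectionsus} with $D=B_{f,e}$ to get $\bar\mu_x=U_s$ a.e., then invoke Lemmas~\ref{lemmabarnu=us} and~\ref{lemma:qciffmin}. The geometric step is identical in content (the paper argues it by contradiction, you argue it directly from $(x+te)\cdot n = x\cdot n$), and your closing observation about the (C-N) constraint and lattice-parameter hypotheses being unused here is also consistent with the paper's remarks.
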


\begin{proof}
Let $\mu\in\mathcal{B}_{f,e}$ with $\mathrm{supp}\,\mu_x\subset\,K$ a.e.~in $B_{f,e}$. It suffices to show that if the normal to a face is perpendicular to, or an edge is 
parallel to, $e\in\mathcal{M}_{s}$ then any respective region $B_{f,e}$ can be covered by line segments in the direction of $e\in\mathcal{M}_{s}$, 
joining points on the prescribed boundary $\partial B\cap\partial B_{f,e}$.

Since it is open and convex, $B_{f,e}$ can be covered by lines parallel to any direction joining points on its boundary. Hence, it is a question of making sure that, 
if these are in the direction of $e$, they never intersect the free boundary $B\cap\partial B_{f,e}$.

For the case of a face, we have that $B_f=B\cap\{x:x\cdot n<0\}$. If  $x_0\in B_f$ and  $x_0+t_0e\in B\cap \partial B_f$ for some $t_0$, then $(x_0+t_0e)\cdot n=0$, so that $x_0\cdot n=0$, a contradiction. 
Similarly, for the case of an edge we have that $B_e=B\cap\{x:x\cdot n_1<0, x\cdot n_2<0\}$. So if $x_0\in B_e$ and $x_0+t_0e\in B\cap\partial B_e$ for some $t_0$, then for $i=1$ or $i=2$ 
we have that $(x_0+t_0e)\cdot n_i=0$, and hence $x_0\cdot n_i=0$, a contradiction.\qed
\end{proof}

Next, we employ the (C-N) constraint and the assumptions on the lattice parameters and we follow a similar method as for the maximal directions for $U_{s}$, this time 
basing our argument on the inverse deformation and maximal directions for $U^{-1}_{s}$. 
This method allows us to add more directions along which the underlying deformations of measures in $\mathcal{B}_{f,e}$ agree with $U_{s}x$; indeed, for 
the cubic-to-orthorhombic transition of CuAlNi, we will see in the following section that all added directions are in fact new.

In Theorem~\ref{theorem:homeomorphic} we established that for $x_0\in\bar\Omega$ measures in $\mathcal{A}(x_0)$ whose support is contained in $K$ have underlying deformations 
that are homeomorphic and remarked that this property is naturally inherited by measures in $\mathcal{B}_{f,e}$. We now restrict attention to the deformed 
configuration $z(B_{f,e})$ and the maps $z^{-1}:z(B_{f,e})\rightarrow B_{f,e}$. The underlying idea remains the rigidity argument of 
Lemma~\ref{lemma:rigidity} but this time applied in a more elaborate manner.

In applying our rigidity argument, we consider an open set in the deformed configuration, covered by line segments along directions in 
$U^{-2}_{s}\mathcal{M}^{-1}_{s}$ joining points on $z(\partial B\cap\partial B_{f,e})$. We claim that almost all of these lines necessarily deform linearly 
under the inverse map $z^{-1}$ and according to $U^{-1}_{s}$; then, returning to the forward deformation, we can establish that the pre-image of these 
segments deforms under $z$ as $U_{s}x$.

Nevertheless, as Fig.~\ref{fig:badsituation} below suggests for the case of a face (similarly for an edge), we need to make sure that such an open set can 
actually `fit' in the deformed region $z(B_{f,e})$; this is possible but, for now, we assume that this is indeed the case and we return to it after proving 
the result analogous to Lemma~\ref{lemmamaximaldirectionsus} concerning the maximal directions for $U^{-1}_{s}$.

\begin{figure}[ht]
	\centering
	\def\svgwidth{0.6\columnwidth}	
	\begingroup
    \setlength{\unitlength}{\svgwidth}
  \begin{picture}(1,0.3)%
    \put(0,0){\includegraphics[width=\unitlength]{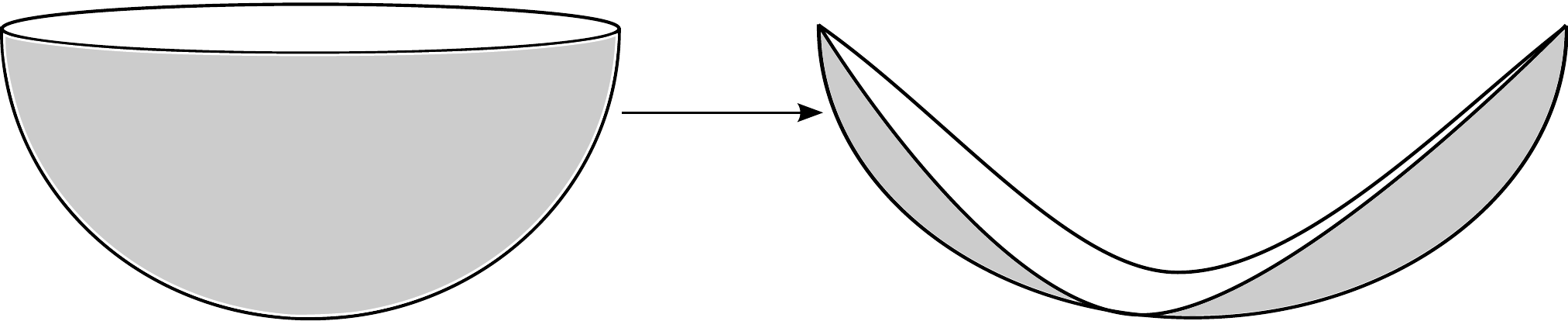}}%
    \put(0.2,0.22){\color[rgb]{0,0,0}\makebox(0,0)[lb]{\smash{$B_{f}$}}}%
    \put(0.68,0.22){\color[rgb]{0,0,0}\makebox(0,0)[lb]{\smash{$z(B_{f})$}}}%
    \put(0.44,0.18){\color[rgb]{0,0,0}\makebox(0,0)[lb]{\smash{$z$}}}%
  \end{picture}%
\endgroup
	\caption{Depiction of a deformation at a face; in this case, the method of maximal directions for $U^{-1}_{s}$ is not applicable.}
	\label{fig:badsituation}
\end{figure}

\begin{lemma}
Let $\mu\in\mathcal{B}_{f,e}$ with $\mathrm{supp}\,\mu_{x}\subset K$ a.e.~in $B_{f,e}$ and let $z\in\,W^{1,\infty}(B_{f,e},\mathbb{R}^{3})$ be such that 
$Dz(x)=\bar{\mu}_{x}$ a.e.~in $B_{f,e}$. In addition, let $e\in U^{-2}_{s}\mathcal{M}^{-1}_{s}$ and suppose that $D\subset B_{f,e}$ is an open set with 
the property that for each $x\in D$ the line segment parametrized by $r_x(t)=x+te$, $t\in [t^{-}_{x},t^{+}_{x}]$, where
\begin{eqnarray*}
 t^{-}_{x}&=&\sup\left\{t<0: x+te\notin B_{f,e}\right\}\\
 t^{+}_{x}&=&\inf\left\{t>0: x+te\notin B_{f,e}\right\},
\end{eqnarray*}
lies in $D$ for all $t\in (t^{-}_{x},t^{+}_{x})$ and $x+t^{\pm}_{x}e\in\partial B\cap\partial B_{f,e}$, the prescribed part of the boundary of $B_{f,e}$. 
Assume further that $U_sD\subset z(B_{f,e})$. Then $z(x)=U_sx$ for a.e. $x\in D$.
\label{lemmamaximaldirectionsusinverse}
\end{lemma}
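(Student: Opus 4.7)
The plan is to mirror the proof of Lemma~\ref{lemmamaximaldirectionsus}, but apply the rigidity argument to the inverse deformation $z^{-1}$ along line segments in the deformed configuration. Because $\mathrm{supp}\,\mu_x\subset K$ a.e., $z$ is a homeomorphism of $B_{f,e}$ onto $z(B_{f,e})$ with $z^{-1}\in W^{1,\infty}(z(B_{f,e}),\mathbb{R}^3)$ and $Dz^{-1}(x')=[Dz(z^{-1}(x'))]^{-1}$ a.e. Write $e=U_s^{-2}f/|U_s^{-2}f|$ with $f\in\mathcal{M}_s^{-1}$, so that $U_se=U_s^{-1}f/|U_s^{-2}f|$. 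For $x\in D$, the image $\gamma_x(t):=U_sr_x(t)$, $t\in[t_x^-,t_x^+]$, lies in $\overline{U_sD}\subset\overline{z(B_{f,e})}$, has direction $U_se$, and by the prescribed boundary condition has endpoints $\gamma_x(t_x^\pm)=U_s(x+t_x^\pm e)=z(x+t_x^\pm e)$. Set $\sigma_x(t):=z^{-1}(\gamma_x(t))$. A Morrey--Fubini argument applied to $z^{-1}$ along the foliation of the deformed ball by lines parallel to $U_se$ shows that, for a.e.\ $x\in D$, $\sigma_x$ is absolutely continuous with $\sigma_x(t_x^\pm)=x+t_x^\pm e$ and $\sigma_x'(t)=F^{-1}(U_se)$ for a.e.\ $t$, where $F:=Dz(\sigma_x(t))\in K$.

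A direct vector rigidity on $\sigma_x$ fails because $|F^{-1}U_se|$ depends on the rotational part of $F$ and cannot be bounded by $|e|$. I instead apply Lemma~\ref{lemma:rigidity} with $d=1$ to the scalar $\tau(t):=f\cdot\sigma_x(t)$. Using the identities $v\cdot F^{-1}w=(F^{-T}v)\cdot w$ and $F^{-T}=(\det F)^{-1}\mathrm{cof}(F)$, the maximality hypothesis $f\in\mathcal{M}_s^{-1}$ together with the standing assumption $\det U_s\leq 1$ yields $|F^{-T}f|\leq|U_s^{-1}f|$ for every $F\in K$: for $F\in SO(3)$ one has $|F^{-T}f|=1\leq 1/\det U_s\leq|U_s^{-1}f|$, and for $F\in SO(3)U_i$ one has $|F^{-T}f|=|U_i^{-1}f|\leq|U_s^{-1}f|$. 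A short computation using $U_s=U_s^T$ gives $|U_se|\,|U_s^{-1}f|=|U_s^{-1}f|^2/|U_s^{-2}f|=e\cdot f$, so Cauchy--Schwarz furnishes
\begin{equation*}
\tau'(t)=U_se\cdot F^{-T}f\leq|U_se|\,|F^{-T}f|\leq|U_se|\,|U_s^{-1}f|=e\cdot f\quad\text{a.e.}
\end{equation*}
On the other hand the endpoint values force $\tau(t_x^+)-\tau(t_x^-)=(t_x^+-t_x^-)(e\cdot f)$, so the fundamental theorem of calculus makes $\tau'(t)=e\cdot f$ a.e.\ along the line, and the whole chain of inequalities collapses to equality, giving $F^{-T}f=U_s^{-1}f$ a.e.\ (since $U_se$ is a positive multiple of $U_s^{-1}f$, Cauchy--Schwarz equality with $|F^{-T}f|=|U_s^{-1}f|$ pins down the vector exactly).

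In the strict first case of Definition~\ref{definitionmaximal}, the equality $|F^{-T}f|=|U_s^{-1}f|$ excludes $F\in SO(3)$ (as $|U_s^{-1}f|>1/\det U_s\geq 1$) and $F\in SO(3)U_i$ for $i\neq s$, so $F=RU_s$ with $R\in SO(3)$; the relation $F^{-T}f=U_s^{-1}f$ then reads $RU_s^{-1}f=U_s^{-1}f$, giving $R^T(U_se)=U_se$, and
\begin{equation*}
\sigma_x'(t)=U_s^{-1}R^T(U_se)=U_s^{-1}(U_se)=e\quad\text{a.e.}
\end{equation*}
Integration yields $\sigma_x(t)=x+te$, i.e.\ $z(x+te)=U_s(x+te)$ throughout the segment; evaluating at $t=0$ and invoking Fubini across lines in $D$ delivers $z(x)=U_sx$ for a.e.\ $x\in D$. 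The main obstacle I foresee is the second case $f=e_{\max}(\mathrm{cof}\,U_s)$, where maximality may fail to be strict: recovering $F\in SO(3)U_s$ with $RU_s^{-1}f=U_s^{-1}f$ then requires exploiting both $\det U_s\leq 1$ and $\lambda_{\max}(\mathrm{cof}\,U_s)\geq 1$ to rule out $F\in SO(3)$, and a symmetry-based analysis of the martensitic variants to ensure that any competing $U_i$, $i\neq s$, compatible with $F^{-T}f=U_s^{-1}f$ also forces $U_i^{-2}f=U_s^{-2}f$, so that $\sigma_x'(t)=e$ persists a.e.
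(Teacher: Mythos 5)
Your overall architecture matches the paper's: you pass to the inverse deformation, foliate by segments in the direction $U_se$, and apply the scalar rigidity of Lemma~\ref{lemma:rigidity} to $\tau(t)=f\cdot z^{-1}(U_s r_x(t))$ (your $\tau$ is a positive multiple of the paper's $\tilde\sigma_x(t)=U_s^2 e\cdot w(\rho_x(t))$, so this part is the same up to normalization). However, there is a genuine gap at the center of the argument.

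You write ``where $F:=Dz(\sigma_x(t))\in K$'' and then carry out a case analysis on $F\in SO(3)$ versus $F\in SO(3)U_i$, ultimately concluding ``so $F=RU_s$.'' This is not correct: $Dz(x)=\bar\mu_x$ is the \emph{barycenter} of a gradient Young measure supported in $K$, and so lies only in the quasiconvex hull $K^{qc}$, which is much larger than $K$. The hypothesis is $\mathrm{supp}\,\mu_x\subset K$, not $Dz(x)\in K$. Consequently the case analysis proving $|F^{-T}f|\leq|U_s^{-1}f|$, and the subsequent pointwise identification of $F$ with an element $RU_s$ of the martensite well, are both invalid as stated. The inequality itself can be rescued for $F\in K^{qc}$, because $F\mapsto|(\mathrm{cof}\,F)f|$ is polyconvex, $\det$ is quasiaffine, and $\det U_s\leq\det Dz(x)\leq 1$, so that $|F^{-T}f|=|(\mathrm{cof}\,F)f|/\det F\leq|(\mathrm{cof}\,U_s)f|/\det U_s$; but this is not what you argue. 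More seriously, the conclusion ``$F=RU_s$'' cannot be extracted from $F^{-T}f=U_s^{-1}f$ when $F$ ranges over $K^{qc}$: one has to pass to the measure $\mu_x$ itself. The paper does this explicitly: after the rigidity step it deduces $\det Dz=\det U_s$ and $|(\mathrm{cof}\,Dz)U_s^2e\cdot U_se|=|(\mathrm{cof}\,U_s)U_s^2e\cdot U_se|$, then uses the minors relations and the \emph{strict} inequality in \eqref{eq:maximalitycofactors} to force $\mu_{w(\rho_x(t))}(SO(3))=\mu_{w(\rho_x(t))}(SO(3)U_i)=0$ for $i\neq s$, and finally invokes the no-rank-one-connections result of \cite{ballnorankone} to obtain $\mu_p=\delta_{RU_s}$ with $R$ constant on connected components, which allows integration of $Dw$ and the use of the endpoint data to pin down $R$. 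That chain of measure-level arguments is precisely what is missing from your proposal.

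A secondary remark: you flag $f=e_{\max}(\mathrm{cof}\,U_s)$ as the ``main obstacle,'' but in the paper this is in fact the \emph{easier} case, disposed of by a direct vector rigidity applied to $\sigma_x(t)=w(\rho_x(t))$ using polyconvexity of $F\mapsto|(\mathrm{adj}\,F)e|$, the parallelism $U_se\parallel e$, and the assumption $\lambda_{\max}(\mathrm{cof}\,U_s)\geq 1$ to control the austenitic contribution. The genuinely delicate part is the generic case, for the reason above.
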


\begin{proof}
For $x\in D$, write $\rho_x(t)=U_{s}r_x(t)=U_sx+tU_se$. 
Let $w:z(B_{f,e})\rightarrow B_{f,e}$ be the inverse of $z$. We have remarked that maps underlying measures in $\mathcal{B}_{f,e}$ inherit the properties of maps 
underlying measures in $\mathcal{A}(x_0)$ so that, by Theorem~\ref{theorem:homeomorphic}, $w\in W^{1,\infty}(z(B_{f,e}),\mathbb{R}^3)$ is differentiable a.e.~in 
$z(B_{f,e})$ and $$Dw(v)=\left[Dz(w(v))\right]^{-1}$$ for a.e.~$v\in z(B_{f,e})$.

Also, for every $x\in D$, $r_x(t)\in D$ for all $t\in(t^{-}_{x},t^{+}_{x})$ and therefore, for $t\in(t^{-}_{x},t^{+}_{x})$, 
$\rho_x(t)\in U_sD\subset z(B_{f,e})$ by assumption; also, for each $x\in D$, the line segments parametrized by $\rho_x$ are all in the direction of the 
vector $U_se$. Hence, as in Lemma~\ref{lemmamaximaldirectionsus}, for a.e.~$x\in D$, $w(\rho_x(t))$ is absolutely continuous on $(t_x^-,t_x^+)$, tends to limits as $t\to t_x^-$ and $t\to t_x^+$, 
and $\frac{d}{dt}w(\rho_x(t))=Dw(\rho_x(t))U_se$.

Lastly, we note that $\bar{\mu}_x=Dz\left(x\right)\in K^{qc}$ a.e.~and hence, for a.e.~$x\in D$ and a.e.~$t\in(t^{-}_{x},t^{+}_{x})$,
$$Dz(w(\rho_x(t)))\in K^{qc}.$$
To see this, note that the set $N\subset B_{f,e}$ such that $Dz(x)\notin K^{qc}$ satisfies $\mathcal{L}^{3}(N)=0$; since $z\in W^{1,\infty}(B_{f,e},\mathbb{R}^3)$ 
and $B_{f,e}\subset\mathbb{R}^3$ is open and bounded, $z$ maps sets of measure zero to sets of measure zero (see~\cite{marcusmizel}), 
i.e.~$\mathcal{L}^3(z(N))=0$. Then, a.e.~point of a.e.~line in the direction of $U_se$ in the deformed configuration does not belong 
to the set $z(N)$ and their pre-images do not belong to $N$.

Now assume that $e=e_{\max}(\mathrm{cof}\,U_s)$ and let
$$\sigma_x(t)=w(\rho_x(t)).$$
The function $F\mapsto\vert(\mathrm{cof}\,F)^{T}e\vert=\vert(\mathrm{adj}\,F)e\vert$ is polyconvex. Since all the $U_{i}$'s are symmetric, $U_se\parallel e$ and, for 
a.e.~$x\in D$ and a.e.~$t\in(t^{-}_{x},t^{+}_{x})$, $Dz(w(\rho_x(t)))\in\,K^{qc}$, we infer that
\begin{eqnarray}
\vert\frac{d}{dt}\sigma_x(t)\vert&=&\vert Dw(\rho_x(t))U_se\vert\nonumber\\
&=&\frac{1}{\det Dz(w(\rho_x(t))}\vert(\mathrm{adj}\;Dz(w(\rho_x(t)))U_se\vert\nonumber\\
&\leq&\frac{1}{\det U_s}\max_{i, R\in SO(3)}\lbrace\vert(\mathrm{cof}\;U_{i}R)U_se\vert, \vert U_se\vert\rbrace\nonumber\\
&=&\frac{1}{\det U_s}\lambda_{\max}(\mathrm{cof}\,U_s)\vert U_se\vert\nonumber\\
&=&\frac{1}{\det U_s}\vert(\mathrm{cof}\,U_s)U_se\vert=1=\frac{\vert\sigma_x(t^{+}_{x})-\sigma_x(t^{-}_{x})\vert}{\vert t^{+}_{x}-t^{-}_{x}\vert}.
\label{eq:lemmaauxilliary1}
\end{eqnarray}
Above we have used the fact that the map $F\mapsto\det F$ is quasiaffine and, since $\det U_s\leq1$, $\det U_{s}\leq\det Dz(w(\rho_x(t)))\leq1$. Now 
Lemma~\ref{lemma:rigidity} applies to give that $$w(\rho_x(t))=w(\rho_x(t^{-}_{x}))+(t-t^{-}_{x})\frac{w(\rho_x(t^{+}_{x}))-w(\rho_x(t^{-}_{x}))}{t^{+}_{x}-t^{-}_{x}}
=r_x(t),$$
which implies, setting $t=0$, that $w(U_sx)=x$ and hence $z(x)=U_sx$ for a.e. $x$.

Next, assume that $e\in U^{-2}_{s}\mathcal{M}^{-1}_{s}$, $e\neq e_{\max}(\mathrm{cof}\,U_s)$, and let
$$\tilde{\sigma}_x(t)=U^{2}_{s}e\cdot w(\rho_x(t)).$$
Then, for a.e.~$x\in D$ and a.e.~$t\in(t^{-}_{x},t^{+}_{x})$,
\begin{equation}
\label{eq:sigma2derivative}
\frac{d}{dt}\tilde{\sigma}_x(t)=\frac{d}{dt}U^{2}_{s}e\cdot w(\rho_x(t))=U^{2}_{s}e\cdot Dw(\rho_x(t))U_se=U^{2}_{s}e\cdot [Dz(w(\rho_x(t)))]^{-1}U_se.
\end{equation}
However, the function $F\mapsto\vert(\mathrm{cof}\,F)U^{2}_{s}e\cdot U_se\vert$ is polyconvex and since, for a.e.~$x\in D$ and a.e.~$t\in(t^{-}_{x},t^{+}_{x})$, 
$Dz(w(\rho_x(t)))\in\,K^{qc}$, we infer that
\begin{eqnarray}
\label{eq:arrayproof}
\vert(\mathrm{cof}\,Dz(w(\rho_x(t))))U^{2}_{s}e\cdot U_se\vert &\leq&\max_{i,R\in SO(3)}\left\{R(\mathrm{cof}\,U_i)U^{2}_{s}e\cdot U_se, RU^{2}_{s}e\cdot 
U_se\right\}\nonumber\\
&\leq&\max_{i}\left\{\vert(\mathrm{cof}\,U_i)U^{2}_{s}e\vert\vert U_se\vert, \vert U^{2}_{s}e\vert\vert U_se\vert\right\}\nonumber\\
&=&\vert(\mathrm{cof}\,U_s)U^{2}_{s}e\vert\vert U_se\vert= \det\,U_s \vert U_{s}e\vert^2
\end{eqnarray}
since $e\in U^{-2}_{s}\mathcal{M}^{-1}_{s}$ and $U_s$ is symmetric. As before, $F\mapsto\det F$ is quasiaffine and $\det U_s\leq1$, hence 
\[\vert U^{2}_{s}e\cdot[Dz(w(\rho_x(t)))]^{-1}U_se\vert\leq \vert U_{s}e\vert^2.\]
Combining with (\ref{eq:sigma2derivative}) we deduce that for a.e.~$x\in D$ and a.e.~$t\in(t^{-}_{x},t^{+}_{x})$
\begin{equation}
\left|\frac{d}{dt}\tilde{\sigma}_x(t)\right|\leq\vert U_{s}e\vert^2=\vert U^{2}_{s}e\cdot e\vert=\frac{\vert\tilde{\sigma}_x(t^{+}_{x})-\tilde{\sigma}_x(t^{-}_{x})\vert}{\vert t^{+}_{x}-t^{-}_{x}
\vert}
\end{equation}
and applying the rigidity argument of Lemma~\ref{lemma:rigidity}, we infer that for a.e.~$x\in D$ and all $t\in(t^{-}_{x},t^{+}_{x})$,
\begin{equation}
\label{eq:detDw=DetUs}
U^{2}_{s}e\cdot Dw(\rho_x(t))U_se=\frac{U^{2}_{s}e\cdot w(\rho_x(t^{+}_{x}))-U^{2}_{s}e\cdot w(\rho_x(t^{-}_{x}))}{t^{+}_{x}-t^{-}_{x}}=\vert U_se\vert^2.
\end{equation}
To finish the proof, note that by (\ref{eq:arrayproof})
\[\vert\mathrm{cof}\,Dz(w(\rho_x(t)))U^{2}_{s}e\cdot U_{s}e\vert\leq\det U_s\vert U_{s}e\vert^2.\]
However, (\ref{eq:detDw=DetUs}) says that
\[\vert\mathrm{cof}\,Dz(w(\rho_x(t)))U^{2}_{s}e\cdot U_{s}e\vert=\det Dz(w(\rho_x(t)))\vert U_{s}e\vert^2\geq\det U_s\vert U_{s}e\vert^2\]
by our assumption that $\det U_s\leq 1$. But then $\det Dz(w(\rho_x(t)))=\det U_s$ and
\[\vert(\mathrm{cof}\,Dz(w(\rho_x(t))))U^{2}_{s}e\cdot U_{s}e\vert=\det U_s\vert U_{s}e\vert^2=\vert(\mathrm{cof}\,U_s)U^{2}_{s}e\cdot U_{s}e\vert.\]
Letting $\psi:\mathbb{R}^{3\times3}\rightarrow\mathbb{R}$ be the polyconvex function $\psi(F)=\vert(\mathrm{cof}\,F)U^{2}_{s}e\cdot U_{s}e\vert$ and using 
the fact that the measure $\mu=(\mu_x)_{x\in B_{f,e}}$ underlying the deformation $z$ is a $W^{1,\infty}$ gradient Young measure, we deduce that for 
a.e.~$x\in D$ and $t\in(t^{-}_{x},t^{+}_{x})$,
\begin{eqnarray}
\vert(\mathrm{cof}\,U_s)U^{2}_{s}e\cdot U_{s}e\vert&=&\psi(Dz(w(\rho_x(t))))\nonumber\\
&\leq&\langle\mu_{w(\rho_x(t))},\psi\rangle\nonumber\\
&=&\int_{SO(3)}\vert AU^{2}_{s}e\cdot U_{s}e\vert\,d\mu_{w(\rho_x(t))}(A)\nonumber\\
&&+\sum_{i}\int_{SO(3)U_i}\vert(\mathrm{cof}\,A)U^{2}_{s}e\cdot U_{s}e\vert\,d\mu_{w(\rho_x(t))}(A)\nonumber\\
&\leq&\mu_{w(\rho_x(t))}(SO(3))\vert U^{2}_{s}e\vert\vert U_{s}e\vert+\nonumber\\
&&\sum_{i}\mu_{w(\rho_x(t))}(SO(3)U_i)\vert(\mathrm{cof}\,U_i)U^{2}_{s}e\vert\vert U_{s}e\vert.
\label{eq:revisedproof1}
\end{eqnarray}
On the other hand, since $(\mathrm{cof}\,U_s)U^{2}_{s}e$ is parallel to $U_{s}e$, we also deduce that
\begin{eqnarray}
 \vert(\mathrm{cof}\,U_s)U^{2}_{s}e\cdot U_{s}e\vert&=&\vert(\mathrm{cof}\,U_s)U^{2}_{s}e\vert\vert U_{s}e\vert\nonumber\\
&=&\mu_{w(\rho_x(t))}(SO(3))\vert (\mathrm{cof}\,U_s)U^{2}_{s}e\vert\vert U_{s}e\vert+\nonumber\\
&&\sum_{i}\mu_{w(\rho_x(t))}(SO(3)U_i)\vert(\mathrm{cof}\,U_s)U^{2}_{s}e\vert\vert U_{s}e\vert.
\label{eq:revisedproof2}
\end{eqnarray}
Subtracting equation (\ref{eq:revisedproof2}) from (\ref{eq:revisedproof1}), we obtain
\begin{eqnarray*}
 0&\leq&\mu_{w(\rho_x(t))}(SO(3))\left[\vert U^{2}_{s}e\vert\vert U_{s}e\vert-\vert(\mathrm{cof}\,U_s)U^{2}_{s}e\vert\vert U_{s}e\vert\right]+\\
&&\sum_{i\neq s}\mu_{w(\rho_x(t))}(SO(3)U_i)\left[\vert(\mathrm{cof}\,U_i)U^{2}_{s}e\vert\vert U_{s}e\vert-\vert(\mathrm{cof}\,U_s)U^{2}_{s}e\vert
\vert U_{s}e\vert\right].
\end{eqnarray*}
However, since $e\in U^{-2}_{s}\mathcal{M}^{-1}_{s}$, all terms in the brackets are strictly negative and hence, for a.e.~$x\in D$ and 
$t\in(t^{-}_{x},t^{+}_{x})$,
\[\mu_{w(\rho_x(t))}(SO(3))=\mu_{w(\rho_x(t))}(SO(3)U_i)=0\:\:\mbox{for all $i\neq s$},\]
implying that $\mathrm{supp}\,\mu_{w(\rho_x(t))}\subset SO(3)U_s$ for a.e.~$x\in D$ and $t\in(t^{-}_{x},t^{+}_{x})$. Given a connected component $\Delta$ of $D$, the set $w(U_s\Delta)$ is open and connected, 
and consists of the union of all the sets $\{w(\rho_x(t)):t\in(t_x^-,t_x^+)\}$ for $x\in \Delta$. The set of points $y\in U_s\Delta$ with $\mathrm{supp}\,\mu_{w(y)}\subset SO(3)U_s$ is of full measure, 
and therefore so is the set of points $p\in w(U_s\Delta)$ with $\mathrm{supp}\,\mu_p\subset SO(3)$, since $w$ maps sets of measure zero to sets of measure zero. By standard results (see e.g.~\cite{ballnorankone}) 
this implies that $\mu_p=\delta_{RU_s}$ for a.e. $p\in w(U_s\Delta)$, where $R\in SO(3)$ is constant (possibly depending on $\Delta$). Thus for a.e. $x\in \Delta$ and a.e. $t\in(t_x^-,t_x^+)$ we have that 
$Dz(w(\rho_x(t)))=RU_s$, and thus
\begin{eqnarray*}
w(\rho_x(t))&=&\int_{t_x^-}^tDw(\rho_x(s))U_se\,ds+w(\rho_x(t_x^-))\\
&=&U_s^{-1}R^TU_se(t-t_x^-)+x+t_x^-e.
\end{eqnarray*}
Setting $t=t_x^+$ we deduce that $U_s^{-1}R^TU_se=e$, so that setting $t=0$ we obtain $w(U_sx)=x$, and thus $z(x)=U_sx$. 
Applying this argument to each connected component of $D$ completes the proof.\qed
\end{proof}

The idea is now almost identical to that of maximal directions for $U_{s}$; if we are able to take $D=B_{f,e}$, i.e.~if we can cover $B_{f,e}$ by segments 
in the direction of $e\in U^{-2}_{s}\mathcal{M}^{-1}_{s}$ joining points on the prescribed boundary $\partial B\cap\partial B_{f,e}$, we can immediately deduce 
that $\bar{\mu}_{x}=U_{s}$ a.e.~and Lemma~\ref{lemmabarnu=us} will provide the required quasiconvexity condition. However, there is a crucial obstacle, that in 
the statement of Lemma~\ref{lemmamaximaldirectionsusinverse} we assumed that the image of the set $D$ under $U_{s}$ lies entirely in $z(B_{f,e})$. 

We proceed with two lemmata to show that an appropriate open set $D\subset B_{f,e}$ exists and that it can be chosen as the entire set $B_{f,e}$. Firstly, 
we wish to show that points in $B_{f,e}$, sufficiently close to the prescribed boundary $\partial B\cap\partial B_{f,e}$, have the property that their image 
under the linear map $U_sx$ lies in $z(B_{f,e})$ for any deformation $z$ underlying an admissible measure in $\mathcal{B}_{f,e}$ supported within $K$. 
This allow us to cover a small region in $B_{f,e}$, sufficiently close to $\partial B\cap\partial B_{f,e}$, by the appropriate line segments such that their 
image under $U_sx$ lies in $z(B_{f,e})$, i.e.~one appropriate set $D$ exists. This is described in Lemma~\ref{distancelemma2} below and, as its proof is 
applicable to both faces and edges, we treat them together. We note that Lemma~\ref{lemmamaximaldirectionsusinverse} will then say that this 
(possibly small) region necessarily transforms like $U_sx$ and the idea is that this process can be continued to exhaust $B_{f,e}$.

\begin{lemma}
Let $z$ be a map underlying a measure in $\mathcal{B}_{f,e}$ with support contained in $K$ a.e.~and suppose that $C\subset\partial B\cap\partial B_{f,e}$ is 
closed. There exists some $\epsilon>0$ such that whenever $x\in B_{f,e}$ satisfies $\mathrm{dist}(x,C)<\epsilon$ then $U_sx\in z(B_{f,e})$, where for any 
two sets $A,\,B\subset\mathbb{R}^{3}$, $\mbox{dist}(A,\,B)=\inf\lbrace\vert a-b\vert,\,a\in A,\,b\in B\rbrace$.

In particular, if $r:\left[t^{-},t^{+}\right]\rightarrow\overline{B_{f,e}}$ is the parametrization of a line segment such that $r(t^{-}),\,r(t^{+})\in\,C$ 
and for all $t\in(t^{-},t^{+})$, $r(t)\in B_{f,e}$ then
\[\left\{U_{s}r(t)\,:\,t\in(t^{-},t^{+})\right\}\subset z(B_{f,e})\]
whenever $\mathrm{dist}(r(t),C)<\epsilon$ for all $t\in\left[t^{-},t^{+}\right]$.
\label{distancelemma2}
\end{lemma}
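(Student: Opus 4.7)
The ``in particular'' clause follows at once from the first assertion applied at each $t\in(t^-,t^+)$, so I focus on producing the uniform $\epsilon$. The approach is a compactness reduction followed by a local Jordan--Brouwer / degree argument. Since $C$ is a closed subset of the compact set $\partial B\cap\partial B_{f,e}$, it is itself compact; it therefore suffices to show that each $x_0\in C$ admits an open neighborhood $V_{x_0}\subset\mathbb{R}^3$ with $U_sx\in z(B_{f,e})$ for every $x\in V_{x_0}\cap B_{f,e}$. A finite subcover then produces the desired $\epsilon$.

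For the set-up, I recall that by Lemma~\ref{lemma:regularity} and the argument behind Theorem~\ref{theorem:homeomorphic}, $z$ extends to a homeomorphism $\overline{B_{f,e}}\to z(\overline{B_{f,e}})$ with Lipschitz inverse and $\partial z(B_{f,e})=z(\partial B_{f,e})$; since $\overline{B_{f,e}}$ is a topological $3$-ball, $z(\partial B_{f,e})$ is an embedded $2$-sphere and, by Jordan--Brouwer, $z(B_{f,e})$ is the bounded component of its complement. Both $z$ and $U_s\,\cdot\,$ are orientation preserving ($\det Dz>0$ a.e., $\det U_s>0$). Now fix $x_0\in C$ lying in the relative interior of the prescribed part of $\partial B_{f,e}$ (i.e.~$x_0\cdot n_i<0$ strictly for each face-normal $n_i$ defining $B_{f,e}$); then $x_0$ has a neighborhood $V$ in $\mathbb{R}^3$ with $V\cap\partial B_{f,e}\subset\partial B$. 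Choose an open $W\ni U_sx_0$ small enough that $W\cap z(\overline{B_{f,e}}\setminus V)=\emptyset$ and $W\cap U_s(\partial B)\subset U_s(V\cap\partial B)$. Because $z=U_s\,\cdot\,$ on $\partial B\cap\partial B_{f,e}$, we get $W\cap z(\partial B_{f,e})=W\cap U_s(\partial B)$, a smooth piece of the ellipsoid $U_s(\partial B)$ dividing $W$ into two components, of which $W^-:=W\cap U_s(B)$ is the inside. Testing on the radial sequence $x_k:=(1-k^{-1})x_0\in B_{f,e}$ yields $z(x_k)\to U_sx_0$ with $z(x_k)-U_sx_k\to 0$ and $U_sx_k\in W^-$, hence $z(x_k)\in W^-$ for large $k$. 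The set $z(B_{f,e})\cap W^-$ is nonempty and open in $W^-$, and its complement in $W^-$ equals $W^-\setminus\overline{z(B_{f,e})}$ (using $W^-\cap z(\partial B_{f,e})=\emptyset$), which is open; a clopen argument in the connected set $W^-$ then forces $W^-\subset z(B_{f,e})$. Since $U_s(B_{f,e})\cap W\subset U_s(B)\cap W=W^-$, I conclude $U_sx\in z(B_{f,e})$ whenever $x\in B_{f,e}$ and $U_sx\in W$, i.e.~for $x$ in a suitable neighborhood of $x_0$.

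The main obstacle is the case $x_0\in C$ on the intersection of the prescribed boundary with the free boundary (the equator for a face, or one of the two endpoints of the spherical arc for an edge): every neighborhood of $x_0$ then meets the free boundary, $z(\partial B_{f,e})$ near $U_sx_0$ contains the unconstrained piece $z(\text{free})$, and the clean Jordan--Brouwer picture above breaks down. My plan for this case is a localized Brouwer degree argument on a shrunken domain $D_{\eta,\delta}:=B(x_0,\eta)\cap B_{f,e}\cap\{x\cdot n_i<-\delta\}$ (with $n_i$ the normal of the adjacent free face and $0<\delta\ll\eta$) that avoids the free boundary, combined with the affine homotopy $\Phi_t:=(1-t)U_s\,\cdot\,+tz$. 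On the prescribed part of $\partial D_{\eta,\delta}$, $\Phi_t=U_s\,\cdot\,$; on the cap and on the interior slice $\{x\cdot n_i=-\delta\}\cap\overline{D_{\eta,\delta}}$, the Lipschitz bound on $z^{-1}$ together with the continuity of $z-U_s\,\cdot\,$ (which vanishes on $\partial B\cap\partial B_{f,e}$) yield a uniform lower bound $|\Phi_t(w)-U_sx_0|\geq c\delta>0$ for $\eta$ chosen small. Homotopy invariance of Brouwer degree then gives $\deg(z,D_{\eta,\delta},U_sx)=\deg(U_s\,\cdot\,,D_{\eta,\delta},U_sx)=1$ for $x\in D_{\eta,\delta}$ with $|x-x_0|$ small, so $U_sx\in z(D_{\eta,\delta})\subset z(B_{f,e})$; letting $\delta\downarrow 0$ along a diagonal sequence exhausts the remaining $x\in B_{f,e}$ near $x_0$. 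Combined with the compactness reduction, this produces the required uniform $\epsilon$.
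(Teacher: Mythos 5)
Your compactness reduction matches the paper's, but the local argument diverges and contains a genuine gap. You assert that $z$ extends to a homeomorphism $\overline{B_{f,e}}\to z(\overline{B_{f,e}})$ with $\partial z(B_{f,e})=z(\partial B_{f,e})$, so that $z(\partial B_{f,e})$ is an embedded $2$-sphere and Jordan--Brouwer applies. Neither claim is established. Lemma~\ref{lemma:regularity} and Theorem~\ref{theorem:homeomorphic} only give a homeomorphism between the \emph{open} set $B_{f,e}$ and its image; the continuous (Lipschitz) extension of $z$ to $\overline{B_{f,e}}$ need not be injective. This is precisely the one-sided contact pathology depicted in Fig.~\ref{fig:cnfails} and discussed in the remark following Lemma~\ref{lemma:qciffmin}: the image of the free boundary $z(B\cap\partial B_{f,e})$ may wrap around and touch $U_s(\partial B\cap\partial B_{f,e})$, including $U_sx_0$, so that your requirement $U_sx_0\notin z(\overline{B_{f,e}}\setminus V)$ (needed to even choose the window $W$) can fail. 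The degree argument you sketch for $x_0$ at the ``corner'' of the prescribed boundary has the additional, unresolved problem that the prescribed part of $\partial D_{\eta,\delta}$ passes arbitrarily close to the target points $U_sx$, so there is no uniform lower bound $|\Phi_t(w)-U_sx|\geq c>0$ along the homotopy; the claimed bound $c\delta$ is not justified on the portion of $\partial D_{\eta,\delta}$ near $\partial B$.

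The paper's proof avoids all of this and is substantially simpler. Fix $c_0\in C$ and take $\delta>0$ small enough that $B(c_0,\delta)$ avoids the free boundary $B\cap\partial B_{f,e}$. Glue $z$ on $B(c_0,\delta)\cap B_{f,e}$ with the linear map $x\mapsto U_sx$ on $B(c_0,\delta)\setminus B_{f,e}$; since the two agree on the prescribed boundary, the glued map $\tilde z$ is continuous, belongs to $W^{1,\infty}$, and has bounded distortion, so by Reshetnyak's theorem (Lemma~\ref{lemmareshetnyak}) it is an open mapping. Hence $\tilde z(B(c_0,\delta))$ contains a ball around $U_sc_0$, and then injectivity of the linear map $U_s\cdot$ shows that for $x\in B_{f,e}$ near $c_0$ the point $U_sx$ cannot come from the $U_s$-piece of $\tilde z(B(c_0,\delta))$, so it must lie in $z(B_{f,e})$. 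No separation theorem, degree argument, or homotopy estimate is needed: the Reshetnyak openness of the glued map does all the work your Jordan--Brouwer/degree picture was designed for, and it does so without ever having to rule out the boundary self-contact that your argument presupposes away.
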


\begin{proof}
Since $C\subset\partial B\cap\partial B_{f,e}$ is closed and the sets $\partial B\cap\partial B_{f,e}$, $B\cap\partial B_{f,e}$ are disjoint, there 
exists $\delta>0$ such that $\mathrm{dist}(C, B\cap\partial B_{f,e})=\delta$. Let $c_0\in C$; then 
$\mathrm{dist}(c_0,B\cap\partial B_{f,e})\geq\delta$. Restrict $z$ to $B(c_0,\delta)\cap B_{f,e}$ and extend it to 
$\tilde{z}:B(c_0,\delta)\rightarrow\mathbb{R}^3$ by $U_sx$, i.e.
$$\tilde{z}(x)=\left\{\begin{array}{rcl}z(x),&\:\:&x\in B(c_0,\delta)\cap B_{f,e}\\\:&\:\:&\:\: \\U_sx,&\:\:&x\in B(c_0,\delta)\setminus B_{f,e}.
\end{array}\right.$$
Note that $\tilde{z}$ is continuous, an element of $W^{1,\infty}(B(c_0,\delta),\mathbb{R}^3)$, and has bounded distortion. Thus, since $\tilde{z}$ is not 
identically equal to a constant, Lemma~\ref{lemmareshetnyak} says that $\tilde{z}$ is an open mapping.
Then, $\tilde{z}(B(c_0,\delta))$ is an open set and there exists $\epsilon_0$ such that $B(\tilde{z}(c_0),\epsilon_0)\subset\tilde{z}(B(c_0,\delta))$, i.e.
$$B(z(c_0),\epsilon_0)=B(U_sc_0,\epsilon_0)\subset\tilde{z}(B(c_0,\delta)).$$
We now claim that $U_s\left[B(c_0,\epsilon_0/\Vert U_s\Vert)\cap B_{f,e}\right]\subset z(B_{f,e})$ where $\Vert U_s\Vert=\sigma_{\max}(U_s)$; trivially, one obtains that 
$U_sB(c_0,\epsilon_0/\Vert U_s\Vert)\subset B(U_sc_0,\epsilon_0)$ and hence
$$U_s\left[B(c_0,\epsilon_0/\Vert U_s\Vert)\cap B_{f,e}\right]\subset\tilde{z}(B(c_0,\delta))=z(B(c_0,\delta)\cap B_{f,e})\cup U_s(B(c_0,\delta)\setminus 
B_{f,e}).$$
But $B(c_0,\epsilon_0/\Vert U_s\Vert)\cap B_{f,e}$ and $B(c_0,\delta)\setminus B_{f,e}$ are disjoint and, since $U_s$ is invertible, the linear map 
$U_sx$ is injective, i.e.
$$U_s\left[B(c_0,\epsilon_0/\Vert U_s\Vert)\cap B_{f,e}\right]\subset z(B(c_0,\delta)\cap B_{f,e})\subset z(B_{f,e}).$$

To conclude the proof, it suffices to find an $\epsilon$ such that $U_s\left[B(c,\epsilon)\cap B_{f,e}\right]\subset z(B_{f,e})$ for all $c\in C$; by the 
above argument, for any $c\in C$ there exists $\epsilon_c>0$ such that
$$U_s\left[B(c,\epsilon_c)\cap B_{f,e}\right]\subset z(B_{f,e})$$
and the family of sets $\left\{B(c,\epsilon_c)\right\}_{c\in C}$ is an open cover of $C$. Since $C$ is closed, it is also compact and there exist 
$c_i\in C$, $i=1,\ldots,N$ such that the sets $\left\{B(c_i,\epsilon_{c_i})\right\}_{i=1,\ldots,N}$ cover $C$; let
$${\displaystyle\epsilon=\mathrm{dist}([\bigcup^{N}_{i=1}B(c_i,\epsilon_{c_i})]^c,C)},$$
where for a set $A\subset\mathbb{R}^3$, $A^c$ denotes its complement in $\mathbb{R}^3$. Then $\epsilon>0$ and for any $\tilde{x}\in[\bigcup^{N}_{i=1}B(c_i,\epsilon_{c_i})]^c$ 
and $c\in C$ we have $\vert\tilde{x}-c\vert\geq\epsilon$. 
Hence, if $x\in B_{f,e}$ satisfies $\mathrm{dist}(x,C)<\epsilon$, $x\in(\bigcup^{N}_{i=1}B(c_i,\epsilon_{c_i}))\cap B_{f,e}$ and $U_sx\in z(B_{f,e})$.\qed
\end{proof}

\begin{remark}
\label{remark:selfcontact}
We note that the above result excludes a certain kind of `one-sided' contact between the images under $z$ of $\partial B\cap\partial B_{f,e}$ and $B\cap\partial B_{f,e}$, 
the prescribed and free parts of the boundary of $B_{f,e}$ respectively. In particular, it says that for any $x_0\in\partial B\cap\partial B_{f,e}$ there 
exists an $\epsilon_0>0$ such that the set $U_s\left[B(x_0,\epsilon_0)\cap B_{f,e}\right]$ lies in $z(B_{f,e})$ and hence does not intersect the image of 
the free boundary $z(B\cap\partial B_{f,e})$. By abusing terminology, $U_s\left[B(x_0,\epsilon_0)\cap B_{f,e}\right]$ can be thought of as a 
`neighbourhood' of $z(x_0)$ lying entirely within $z(B_{f,e})$.

Moreover let $r(t)$, $t\in[0,a]$, be the parametrization of a line segment in the direction of $e$ with endpoints on $\partial B\cap\partial B_{f,e}$ such that $r(t)\in B_{f,e}$ for 
any $t\in(0,a)$ and suppose that
\[
\left\{U_sr(t): t\in(0,a)\right\}\subset z(B_{f,e}).
\]
For any $t_0\in(0,a)$, $U_sr(t_0)\in z(B_{f,e})$ which is open, i.e.~there exists $\epsilon_0>0$ such that $B(U_sr(t_0),\epsilon_0)\subset z(B_{f.e})$ and 
hence
\[
U_s[B(r(t_0),\frac{\epsilon_0}{\Vert U_s\Vert})]\subset z(B_{f,e}).
\]
Treating endpoints, or generally boundary points, as in the proof of Lemma~\ref{distancelemma2} and repeating the argument, one can find $\epsilon>0$ such 
that whenever $x\in B_{f,e}$ and $\mathrm{dist}(x,r([0,a]))<\epsilon$, then $U_sx\in z(B_{f,e})$. In particular, whenever $\rho(t)$, $t\in[0,b]$ is another line segment in the direction of $e$ with endpoints 
on $\partial B\cap\partial B_{f,e}$ with the property that
\[
\mathrm{dist}(\rho([0,b]),r([0,a])<\epsilon,
\]
then the set $\left\{U_s\rho((0,b))\right\}$ lies in $z(B_{f,e})$; we note this as it will be used shortly in the proof of Lemma~\ref{lemmafefinal}.
\end{remark}

The following lemma shows that we may choose $D=B_{f,e}$ and resolves the case of faces perpendicular to, and edges parallel to, directions in 
$U^{-2}_{s}\mathcal{M}^{-1}_{s}$.

\begin{lemma}
Let $\Omega\subset\mathbb{R}^{3}$ be a bounded convex polyhedral domain and suppose that the vector $e\in U^{-2}_{s}\mathcal{M}^{-1}_{s}$ is parallel to an edge or 
perpendicular to the normal of a face of $\Omega$. Let $z\in W^{1,\infty}(B_{f,e},\mathbb{R}^{3})$ be a map underlying a Young measure in 
$\mathcal{B}_{f,e}$ with support contained in $K$ a.e.~in $B_{f,e}$. Then $z(x)=U_{s}x$ for all $x\in B_{f,e}$.
\label{lemmafefinal}
\end{lemma}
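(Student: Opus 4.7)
The plan is to verify the hypothesis of Lemma~\ref{lemmamaximaldirectionsusinverse} with the maximal choice $D=B_{f,e}$, since then its conclusion gives $z(x)=U_sx$ for a.e.~$x\in B_{f,e}$, and continuity of $z$ extends this to every $x\in B_{f,e}$. Because $B_{f,e}$ is the convex intersection of the ball $B$ with one or two open half-spaces whose defining normals are perpendicular to $e$, every line segment in the direction $e$ through a point of $B_{f,e}$ has interior in $B_{f,e}$ and endpoints on the prescribed part of the boundary $\partial B\cap\partial B_{f,e}$ (as already checked for the analogous situation in Lemma~\ref{finalpropmaxus}). So the geometric line-segment hypothesis on $D$ in Lemma~\ref{lemmamaximaldirectionsusinverse} is automatic when $D=B_{f,e}$, and the only thing left to verify is the inclusion $U_sB_{f,e}\subset z(B_{f,e})$.

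To establish this inclusion I would parametrize the family of relevant $e$-segments by their projections onto the plane $P\subset\mathbb{R}^3$ through the origin perpendicular to $e$. Write $\pi$ for the orthogonal projection onto $P$, and for each $p\in\pi(B_{f,e})$ let $\ell_p^\circ\subset B_{f,e}$ denote the open $e$-segment with $\pi(\ell_p^\circ)=\{p\}$, whose endpoints lie in $\partial B\cap\partial B_{f,e}$. Define
\[
S=\{\,p\in\pi(B_{f,e}):\ U_s\ell_p^\circ\subset z(B_{f,e})\,\}.
\]
Since $B_{f,e}$ is convex, $\pi(B_{f,e})$ is a convex (hence connected) subset of $P$, so it suffices to show that $S$ is nonempty, open and closed in $\pi(B_{f,e})$.

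Nonemptiness and openness are immediate from the tools already in place. Lemma~\ref{distancelemma2} applied to the closed set $C=\partial B\cap\partial B_{f,e}$ furnishes some $\epsilon>0$ such that $U_sx\in z(B_{f,e})$ whenever $\mathrm{dist}(x,C)<\epsilon$; any segment $\ell_p^\circ$ lying within this $\epsilon$-neighbourhood of $C$ therefore lies in $S$, and such segments obviously exist. Openness of $S$ is precisely the content of Remark~\ref{remark:selfcontact}: the inclusion $U_s\ell_p^\circ\subset z(B_{f,e})$ is stable under small perturbations of $\ell_p^\circ$ in the direction transverse to $e$.

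The main obstacle, and the step that carries the real content, is closedness of $S$. For $p\in S$, form the open tube $D_p=\bigcup_{|p'-p|<\delta}\ell_{p'}^\circ$ with $\delta$ chosen small enough (via Remark~\ref{remark:selfcontact}) that $U_sD_p\subset z(B_{f,e})$. By construction $D_p$ is a union of entire $e$-segments with endpoints on the prescribed boundary, so Lemma~\ref{lemmamaximaldirectionsusinverse} applies to $D_p$ and yields $z(x)=U_sx$ throughout $D_p$, in particular on $\ell_p^\circ$ itself. Now if $p_n\to p_0\in\pi(B_{f,e})$ with $p_n\in S$, then $z(x)=U_sx$ on each $\ell_{p_n}^\circ$, and continuity of $z$ propagates this identity to the limit segment $\ell_{p_0}^\circ\subset B_{f,e}$, giving $U_s\ell_{p_0}^\circ=z(\ell_{p_0}^\circ)\subset z(B_{f,e})$ and hence $p_0\in S$. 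Connectedness then forces $S=\pi(B_{f,e})$, so that $U_sB_{f,e}\subset z(B_{f,e})$, and Lemma~\ref{lemmamaximaldirectionsusinverse} with $D=B_{f,e}$ completes the proof.
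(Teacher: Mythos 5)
Your argument uses the same connectedness template as the paper's proof: establish that a set ($S$ in your notation, $E$ in the paper's) is nonempty, open and closed in a connected domain, invoking Lemma~\ref{distancelemma2} for nonemptiness, Remark~\ref{remark:selfcontact} for openness, and Lemma~\ref{lemmamaximaldirectionsusinverse} together with the continuity of $z$ for closedness. Parametrizing by the orthogonal projection $\pi$ onto the plane perpendicular to $e$ is a harmless repackaging of the paper's set $E\subset B_{f,e}$ (the union of the covering segments), and your closedness argument, which applies Lemma~\ref{lemmamaximaldirectionsusinverse} on a thin tube $D_p$ and then passes to the limit segment by continuity, is sound.

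The one place where your proof does not go through as written is the nonemptiness step. You apply Lemma~\ref{distancelemma2} with $C=\partial B\cap\partial B_{f,e}$, but the proof of that lemma relies on having $\mathrm{dist}(C,\,B\cap\partial B_{f,e})=\delta>0$ so that for each $c_0\in C$ the ball $B(c_0,\delta)$ misses the free boundary, allowing one to glue $z$ on $B(c_0,\delta)\cap B_{f,e}$ with $U_sx$ on $B(c_0,\delta)\setminus B_{f,e}$ to obtain a continuous open map of bounded distortion. For your choice of $C$ that distance is zero: the closed prescribed boundary $\partial B\cap\partial B_{f,e}$ and the relatively open free boundary $B\cap\partial B_{f,e}$ meet along their common relative boundary (the equatorial circle $\partial B\cap\{x\cdot n=0\}$ for a face, and analogously for an edge), so there is no uniform $\delta$ at points $c_0$ on that circle. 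The paper sidesteps this by taking $C$ to be a compact spherical cap near the pole, $C=\{x\in\partial B:x\cdot n\leq -1+\tau\}$ for a face and $C=\{x\in\partial B:x\cdot (n_1+n_2)/|n_1+n_2|\leq -1+\tau\}$ for an edge, which is at positive distance from the free boundary; the short $e$-segments near the pole then lie in the resulting $\epsilon$-neighbourhood, giving nonemptiness. Substituting such a cap for your $C$ repairs the argument; everything else is correct.
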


\begin{proof}
Let $\mu\in\mathcal{B}_{f,e}$ with support a.e.~in $K$ and suppose that $z\in\,W^{1,\infty}(B_{f,e},\mathbb{R}^{3})$ is its underlying deformation. As in 
Lemma~\ref{finalpropmaxus}, we may cover $B_{f,e}$ with line segments in the direction of $e\in\,U^{-2}_{s}\mathcal{M}^{-1}_{s}$ whose endpoints lie on 
$\partial B\cap\partial B_{f,e}$. In particular, for any $x\in B_{f,e}$ let $t_x^-,t_x^+$ be as defined previously and $r_x(t)=x+te$, $t\in[t^{-}_{x},t^{+}_{x}]$ be a parametrization of the 
line segment through $x$, in the direction of $e$, with endpoints on $\partial B\cap\partial B_{f,e}$.

Let $E$ be the maximal subset of $B_{f,e}$ with the property that for every $x\in E$, $r_x(t)\in E$ for all $t\in(t^{-}_{x},t^{+}_{x})$ and 
$\left\{U_sr_x(t):t\in(t^{-}_{x},t^{+}_{x})\right\}\subset z(B_{f,e})$, i.e.~$E$ is the maximal subset of $B_{f,e}$ that can be covered by line segments in 
the direction of $e$, joining points on $\partial B\cap\partial B_{f,e}$ such that their image under the linear map $x\mapsto U_sx$ lies in $z(B_{f,e})$. Such 
a maximal set clearly exists.

We aim to show that $E$ is a non-empty, open and closed subset of $B_{f,e}$, where open and closed is meant in terms of the induced topology on $B_{f,e}$ as 
a subset of $\mathbb{R}^3$. Then, $B_{f,e}$ being connected, $E=B_{f,e}$; by Lemma~\ref{lemmamaximaldirectionsusinverse} and the continuity of $z$ we then 
deduce that $z(x)=U_sx$ for all $x\in B_{f,e}$.

Firstly, we can choose an appropriate closed set $C$ so that we can apply Lemma~\ref{distancelemma2}. For a face, we can choose $C=\{x\in\partial B:x\cdot n\leq -1+\tau\}$, 
and for an edge $C=\{x\in\partial B:x\cdot\frac{n_1+n_2}{|n_1+n_2|}\leq -1+\tau\}$, where $\tau>0$ is sufficiently small. Then Lemma~\ref{distancelemma2} says that there exists $\epsilon>0$ such 
that any line segment in the direction of $e$ with endpoints on $C$, say parametrized by $r(t)$, $t\in\left[t^{-},t^{+}\right]$, with 
$\mathrm{dist}(r(t),\,C)<\epsilon$ for all $t\in\left[t^{-},t^{+}\right]$ satisfies
\[\left\{U_{s}r(t)\,:\,t\in(t^{-},t^{+})\right\}\subset z(B_{f,e}).\]
In particular, $\left\{r(t): t\in(t^{-},t^{+})\right\}\subset E$ and $E$ is non-empty.

Let $x_0\in E$ and $r_{x_0}(t)$, $t\in[t^{-}_{x_0},t^{+}_{x_0}]$, be the parametrization of the corresponding line segment. By Remark~\ref{remark:selfcontact} following 
Lemma~\ref{distancelemma2}, there exists yet another $\epsilon>0$ such that whenever $\rho(t)$, $t\in[t^{-},t^{+}]$, is a line segment in the direction of 
$e$ with endpoints on $\partial B\cap\partial B_{f,e}$ with the property that
\[
\mathrm{dist}(\rho([t^{-},t^{+}]),r_{x_0}([t^{-}_{x_0},t^{+}_{x_0}]))<\epsilon,
\]
then $\left\{U_s\rho(t): t\in (t^{-},t^{+})\right\}\subset z(B_{f,e})$. In particular, this gives a neighbourhood of 
$\left\{r_{x_0}(t):t\in(t^{-}_{x_0},t^{+}_{x_0})\right\}$, say $D$, which is open in $B_{f,e}$ such that for any point $\tilde{x}\in D$, the image under 
$U_sx$ of the corresponding line segment parametrized by $r_{\tilde{x}}(t)$, $t\in (t^{-}_{\tilde{x}},t^{+}_{\tilde{x}})$ lies in $z(B_{f,e})$. 
But $E$ is the maximal subset of $B_{f,e}$ with this property and hence $D\subset E$. Trivially, \[x_0=r_{x_0}(0)\in D\subset E\] and $E$ is therefore open.

We note that, since $E$ is open and can be covered by line segments in the direction of $e$ with endpoints on $\partial B\cap\partial B_{f,e}$ 
whose image under $U_sx$ lies in $z(B_{f,e})$, by Lemma~\ref{lemmamaximaldirectionsusinverse} and the continuity of $z$, $z(x)=U_sx$ for all 
$x\in\overline{E}$. Note that, since $z$ is continuous up to the boundary of $B_{f,e}$, this is also true if $\overline{E}$ is viewed as the closure of 
$E$ in $\mathbb{R}^3$.

It is now easy to infer that $E$ is also closed in $B_{f,e}$. Let $x_k\in E$ be a sequence of points such that $x_k\rightarrow x\in B_{f,e}$. Let $r_{x}(t)$, 
$t\in[t^{-}_{x},t^{+}_{x}]$ be the parametrization of the line segment through $x$ and similarly $r_{x_k}(t)$ for the points $x_k$. The idea is to show 
that $z(r_x(t))=U_sr_{x}(t)$ for all $t\in(t^{-}_{x},t^{+}_{x})$; then surely $U_sr_{x}(t)\in z(B_{f,e})$ for all $t\in(t^{-}_{x},t^{+}_{x})$ and, by the 
maximality of $E$, $x\in E$.

Let $t_0\in(t^{-}_{x},t^{+}_{x})$ and consider the point $r_x(t_0)=x+t_0e$. Since $x_k+t_0e\to x+t_0e\in B_{f,e}$, it follows that $t_0\in(t_{x_k}^-,t_{x_k}^+)$ for $k$ sufficiently large 
and $r_{x_k}(t_0)\to r_x(t_0)$. Thus 
\[
z(r_x(t_0))=\lim_{k\to\infty}z(r_{x_k}(t_0))=\lim_{k\to\infty}U_sr_{x_k}(t_0)=U_sr_x(t_0).
\]
Hence $x\in E$ and $E$ is closed.\qed
\end{proof}

We can now easily deduce Lemma~\ref{prop:feqc} regarding the quasiconvexity of $W$ at $U_{s}$ at any face or edge of a domain admissible for $U_s$ as well as establish 
our main result, Theorem~\ref{theorem:main}.

\begin{proof}\hspace{-0.15cm} {\it of Lemma}~\ref{prop:feqc}
Suppose that $\mu\in\mathcal{B}_{f,e}$ is such that $\mathrm{supp}\,\mu_{x}\subset\,K$ a.e., otherwise the quasiconvexity follows trivially. 
Combining Lemmas ~\ref{lemmabarnu=us}, \ref{finalpropmaxus} and \ref{lemmafefinal} we deduce that, whenever $\Omega$ is admissible for $U_s$, $W$ is quasiconvex at $U_s$ on faces and edges.\qed
\end{proof}

\begin{proof}\hspace{-0.15cm} {\it of Theorem}~\ref{theorem:main}
The second part of the Theorem was established in Lemma~\ref{prop:corner}. As for the first part, it suffices to show that $W$ is quasiconvex at $U_{s}$ in the interior, 
on faces and edges; but this is immediate by Lemma~\ref{prop:interior} for the interior and Lemma~\ref{prop:feqc} for faces and edges.\qed
\end{proof}

In particular, we have shown that if the specimen is cut in such a way that its edges are parallel to vectors in $\mathcal{M}_{s}\cup U^{-2}_{s}\mathcal{M}^{-1}_{s}$, the 
austenite cannot nucleate anywhere in the interior, on faces or edges and we have shown how nucleation can occur at a corner for Seiner's specimen. We now turn our 
attention to the cubic-to-orthorhombic transition of CuAlNi, calculate the sets of maximal directions for each $s=1,\ldots\,,6$ and infer whether our simplified model and 
main result can indeed provide an explanation for the location of the nucleation points in Seiner's experiment.

\section{Maximal directions}
\label{sec:4.1}

In this section we identify the maximal directions for $U_{s}$ and $U^{-1}_{s}$ for the cubic-to-orthorhombic variants given in (\ref{eq:orthorhombicvariants}). These 
directions are dependent on the lattice parameters and, therefore, further assumptions must be made.

\paragraph{Assumptions on lattice parameters:}
\begin{itemize}
\item [(A1)] $\beta\leq 1\leq\gamma<\alpha$;
\item [(A2)] $2\alpha^{2}+\beta^{2}\geq 3$; equivalently $\vert U_{s}\vert ^{2}\geq 3+\gamma^{2}-\alpha^{2}$;
\item [(A3)] $\alpha^{2}\gamma^{2}+\alpha^{2}\beta^{2}+\beta^{2}\gamma^{2}\geq 3$; equivalently $\vert\mbox{cof}\,U_{s}\vert ^{2}\geq 3$;
\item [(A4)] $A-B>0$ where $A=\alpha ^{2}\gamma ^{2}-\beta ^{2}\left(\alpha ^{2}+\gamma ^{2}\right)/2>0$ and $B=\beta ^{2}\left(\alpha ^{2}-\gamma ^{2}\right)>0$.
\end{itemize}
We note that these assumptions are in accordance with the CuAlNi specimen of the experiment where $\alpha=1.06372$, $\beta=0.91542$ and $\gamma=1.02368$. Then, 
$\vert U_{s}\vert ^{2}-\gamma^{2}+\alpha^{2}=3.10099$, $\vert\mbox{cof}\,U_{s}\vert ^{2}=3.01206$ and $A-B=0.202513$.

For the remainder of this section we prove a series of results concerning the maximal directions for $U_{s}$ and $U^{-1}_{s}$; we warn the reader that the proofs of these 
rely on simple, but often long, calculations.

\begin{lemma}
Assume that the lattice parameters satisfy {\rm (A1)} and {\rm (A2)} and for a vector $e\in S^{2}$ write $e=\left(e_{1},e_{2},e_{3}\right)^{T}$. Then, for each 
$s=1,\ldots ,6$,
\begin{eqnarray*}
\mathcal{M}_{s}&=&\left\lbrace e\in S^{2}: \left(-1\right)^{s-1}e_{2}e_{3}\geq 0,\:\:\vert e_{1}\vert\leq\min\left\lbrace\vert e_{2}\vert ,\vert e_{3}\vert\right\rbrace 
\right\rbrace,\,\,\mbox{for $s=1,2$}\\
\mathcal{M}_{s}&=&\left\lbrace e\in S^{2}: \left(-1\right)^{s-1}e_{1}e_{3}\geq 0,\:\:\vert e_{2}\vert\leq\min\left\lbrace\vert e_{1}\vert ,\vert e_{3}\vert\right\rbrace 
\right\rbrace,\,\,\mbox{for $s=3,4$}\\
\mathcal{M}_{s}&=&\left\lbrace e\in S^{2}: \left(-1\right)^{s-1}e_{1}e_{2}\geq 0,\:\:\vert e_{3}\vert\leq\min\left\lbrace\vert e_{1}\vert ,\vert e_{2}\vert\right\rbrace 
\right\rbrace,\,\,\mbox{for $s=5,6$}.
\end{eqnarray*}
\label{lemmamaximalus}
\end{lemma}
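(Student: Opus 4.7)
The plan is to exploit the cubic symmetry of the variants to reduce to a single case, and then verify the characterization by explicit comparison of the squared norms $|U_ie|^2$. Since the set $\{U_1,\dots,U_6\}$ is invariant under the cubic symmetry group $\mathcal{P}^{24}$, for each $s$ there is some $Q\in\mathcal{P}^{24}$ with $U_s=Q^TU_1Q$, whence $|U_se|=|U_1(Qe)|$ and the full expression $\max_i\{|U_ie|,1\}$ is invariant under $e\mapsto Qe$. Consequently $\mathcal{M}_s=Q^T\mathcal{M}_1$, and the stated characterizations for $s=2,\dots,6$ follow from that of $s=1$ once one checks that the cubic rotation intertwining $U_1$ and $U_s$ transforms the region $\{e_2e_3\geq 0,\ |e_1|\leq\min\{|e_2|,|e_3|\}\}$ into the stated region for $\mathcal{M}_s$ (coordinate permutations handle $s=3,5$, and the appropriate half-turns handle the sign flips within each pair).

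For the core case $s=1$, I would expand $|U_ie|^2$ for each $i$ using the explicit form of the $U_i$; each is a quadratic form in $e$ whose off-diagonal part is a single product $\pm(\alpha^2-\gamma^2)e_je_k$. The pairwise differences $|U_1e|^2-|U_je|^2$ for $j=2,\dots,6$ each factor as the product of a simple linear form (such as $e_2-e_1$, $e_2+e_1$, $e_3-e_1$, or $e_3+e_1$) with a linear combination of the $e_i$ whose coefficients are positive by assumption (A1) (which gives $\tfrac{\alpha^2+\gamma^2}{2}>\beta^2$ and $\alpha^2>\gamma^2$). Under the stated hypotheses, a sign analysis (WLOG taking $e_2,e_3\geq 0$ after replacing $e$ by $-e$) shows each such difference is nonnegative, yielding sufficiency. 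Conversely, if one of the hypotheses is violated then one of these factorizations produces a strictly negative difference, yielding necessity.

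The only step genuinely requiring assumption (A2) is the comparison $|U_1e|^2\geq 1=|e|^2$, forced on us because $SO(3)\subset K$. Expanding yields
\[
(\beta^2-1)e_1^2+\bigl(\tfrac{\alpha^2+\gamma^2}{2}-1\bigr)(e_2^2+e_3^2)+(\alpha^2-\gamma^2)e_2e_3\geq 0.
\]
The hypotheses imply $e_2e_3\geq e_1^2$ and $e_2^2+e_3^2\geq 2e_1^2$, and substituting these lower bounds reduces the whole expression to $(2\alpha^2+\beta^2-3)e_1^2$, which is nonnegative by (A2). Necessity of $|e_1|\leq\min\{|e_2|,|e_3|\}$ is witnessed already by $e=(1,0,0)$, where $|U_1e|^2=\beta^2\leq 1$. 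The principal obstacle is not any single computation but the bookkeeping across the six pairwise comparisons and the six symmetry-related cases; the cubic-symmetry reduction is precisely what keeps this manageable.
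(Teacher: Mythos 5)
Your approach matches the paper's almost step for step: expand $|U_ie|^2$ explicitly, compare pairwise differences, use $(A2)$ to handle the $|U_1e|\geq 1$ condition by inserting the lower bounds $e_2e_3\geq e_1^2$ and $e_2^2+e_3^2\geq 2e_1^2$, and reduce $s=2,\dots,6$ to $s=1$ by cubic symmetry via $U_s=QU_1Q$. The only wrinkle is your description of the factorizations: the $j=2$ difference is $2Pe_2e_3$ and the $j=3,\dots,6$ differences do each factor, but the second factor (e.g.\ $N(e_1+e_2)+Pe_3$) is not of determinate sign for arbitrary $e$; the paper instead passes to absolute values (once $e_2e_3\geq 0$) and takes the minimum of each pair $\{j,j+1\}$, which factors as $(|e_1|-|e_2|)$ times a manifestly nonpositive bracket $-N|e_1|-N|e_2|-P|e_3|$, from which both directions of the equivalence drop out without any further case split on the sign of $e_1$. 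That is a presentational cleanup rather than a conceptual difference, and the rest of your sketch is sound.
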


\begin{proof}
 We first prove this for $s=1$. Writing out the expressions for $\vert U_{i}e\vert^{2}$ we get
\begin{eqnarray}
\vert U_{s}e\vert^{2}&=&\beta^{2}e^{2}_{1}+\frac{\alpha^{2}+\gamma^{2}}{2}\left(e^{2}_{2}+e^{2}_{3}\right)+\left(-1\right)^{s-1}\left(\alpha^{2}-\gamma^{2}\right)e_{2}
e_{3}\quad\mbox{for $s=1,2$},\nonumber\\
\vert U_{s}e\vert^{2}&=&\beta^{2}e^{2}_{2}+\frac{\alpha^{2}+\gamma^{2}}{2}\left(e^{2}_{1}+e^{2}_{3}\right)+\left(-1\right)^{s-1}\left(\alpha^{2}-\gamma^{2}\right)e_{1}
e_{3}\quad\mbox{for $s=3,4$},\nonumber\\
\vert U_{s}e\vert^{2}&=&\beta^{2}e^{2}_{3}+\frac{\alpha^{2}+\gamma^{2}}{2}\left(e^{2}_{1}+e^{2}_{2}\right)+\left(-1\right)^{s-1}\left(\alpha^{2}-\gamma^{2}\right)e_{1}
e_{2}\quad\mbox{for $s=5,6$.}\nonumber
\end{eqnarray}
We first show that $e\in\mathcal{M}_{1}$ as given in the statement of the lemma is necessary and sufficient for $\vert U_{1}e\vert=\max_{i}\vert U_{i}e\vert$. We deal with the condition $|U_1e|\geq 1$ later. 
Writing $N=\frac{\alpha^{2}+\gamma^{2}}{2}-\beta^{2}>0$ and $P=\alpha^{2}-\gamma^{2}>0$,
\begin{eqnarray}
\label{eq:U1-2}
\vert U_{1}e\vert^{2}-\vert U_{2}e\vert^{2}&=&2Pe_{2}e_{3},\\
\label{eq:U1-3}
\vert U_{1}e\vert^{2}-\vert U_{3}e\vert^{2}&=&-N(e^{2}_{1}-e^{2}_{2})+Pe_{3}(e_{2}-e_{1}),\\
\label{eq:U1-4}
\vert U_{1}e\vert^{2}-\vert U_{4}e\vert^{2}&=&-N(e^{2}_{1}-e^{2}_{2})+Pe_{3}(e_{2}+e_{1}),\\
\label{eq:U1-5}
\vert U_{1}e\vert^{2}-\vert U_{5}e\vert^{2}&=&-N(e^{2}_{1}-e^{2}_{3})+Pe_{2}(e_{3}-e_{1}),\\
\label{eq:U1-6}
\vert U_{1}e\vert^{2}-\vert U_{6}e\vert^{2}&=&-N(e^{2}_{1}-e^{2}_{3})+Pe_{2}(e_{3}+e_{1}).
\end{eqnarray}
Trivially, (\ref{eq:U1-2}) is non-negative if and only if $e_2e_3\geq0$, i.e. $\vert e_2e_3\vert =e_2e_3$. Then
\begin{eqnarray}
\vert U_{1}e\vert^{2}-\vert U_{3}e\vert^{2}&=&-N(\vert e_{1}\vert^2-\vert e_{2}\vert^2)+P\vert e_{3}e_{2}\vert -Pe_3e_{1},\nonumber\\
\vert U_{1}e\vert^{2}-\vert U_{4}e\vert^{2}&=&-N(\vert e_{1}\vert^2-\vert e_{2}\vert^2)+P\vert e_{3}e_{2}\vert +Pe_3e_{1}\nonumber
\end{eqnarray}
and the minimum between (\ref{eq:U1-3}) and (\ref{eq:U1-4}) is given by
\begin{equation}
\label{eq:U1-3U1-4min}
-N(\vert e_{1}\vert^2-\vert e_{2}\vert^2)+P\vert e_{3}e_{2}\vert -P\vert e_3e_{1}\vert=(\vert e_{1}\vert-\vert e_{2}\vert)\left[-N\vert e_1\vert-N\vert e_2\vert-P\vert e_3\vert\right].
\end{equation}
Simply by interchanging the roles of $e_2$ and $e_3$, the minimum between (\ref{eq:U1-5}) and (\ref{eq:U1-6}) is given by
\begin{equation}
\label{eq:U1-5U1-6min}
-N(\vert e_{1}\vert^2-\vert e_{3}\vert^2)+P\vert e_{2}e_{3}\vert -P\vert e_2e_{1}\vert=(\vert e_{1}\vert-\vert e_{3}\vert)\left[-N\vert e_1\vert-N\vert e_3\vert-P\vert e_2\vert\right].
\end{equation}
Then, (\ref{eq:U1-2})-(\ref{eq:U1-6}) are all non-negative if and only if $e_2e_3\geq0$ and both (\ref{eq:U1-3U1-4min}) and (\ref{eq:U1-5U1-6min}) are non-negative, i.e. $e_2e_3\geq0$ and $\vert e_1\vert\leq\min\left\{\vert e_2\vert,\vert e_3\vert\right\}$.

It now suffices to show that any vector $e$ as above satisfies $\vert U_{1}e\vert^{2}\geq 1$. Writing $e^{2}_{1}+e^{2}_{2}+e^{2}_{3}=1$,
\begin{eqnarray}
\vert U_{1}e\vert^{2}-1&=&\beta^{2}e^{2}_{1}+\frac{\alpha^{2}+\gamma^{2}}{2}\left(e^{2}_{2}+e^{2}_{3}\right)+\left(\alpha^{2}-\gamma^{2}\right)e_{2}e_{3}-1\nonumber\\
&=&\left(\beta^{2}-1\right)e^{2}_{1}+\left(\frac{\alpha^{2}+\gamma^{2}}{2}-1\right)\left(e^{2}_{2}+e^{2}_{3}\right)+\left(\alpha^{2}-\gamma^{2}\right)e_{2}e_{3}.\nonumber
\end{eqnarray}
But $\vert e_{1}\vert\leq\min\left\lbrace\vert e_{2}\vert ,\vert e_{3}\vert\right\rbrace$ and $e_2e_3\geq 0$ implies that $2e^{2}_{1}\leq e^{2}_{2}+e^{2}_{3}$ and $e^{2}_{1}\leq e_{2}e_{3}$. Noting that $\vert U_{1}\vert^{2}\geq 3+\gamma^{2}-\alpha^{2}$,
\begin{eqnarray}
\vert U_{1}e\vert^{2}-1&\geq &\left(\beta^{2}-1\right)e^{2}_{1}+2\left(\frac{\alpha^{2}+\gamma^{2}}{2}-1\right)e^{2}_{1}+\left(\alpha^{2}-\gamma^{2}\right)e^{2}_{1}\nonumber\\
&=&\left(\vert U_{1}\vert^{2}-3+\alpha^{2}-\gamma^{2}\right)e^{2}_{1}\geq0.\nonumber
\end{eqnarray}
For the remaining variants, the proof is almost identical. Also note that the result follows easily due to the symmetry relations between the martensitic variants 
(see Appendix A). Suppose we wish to calculate the maximal directions for $U_{s}$. There exists a rotation $Q=Q\left[\pi,a\right]$ about an axis 
$a\in\mathbb{R}^{3}$ such that $U_{s}=QU_{1}Q$. But then,
\begin{eqnarray}
e\in\mathcal{M}_{s}&\Leftrightarrow &\vert U_{s}e\vert =\max_{i}\lbrace\vert U_{i}e\vert ,\;1\rbrace\nonumber\\
&\Leftrightarrow &\vert QU_{1}Qe\vert=\max_{i}\lbrace\vert U_{i}e\vert ,\;1\rbrace\nonumber\\
&\Leftrightarrow &\vert U_{1}\left(Qe\right)\vert =\max_{j}\lbrace\vert U_{j}Qe\vert ,\;1\rbrace\Leftrightarrow Qe\in\mathcal{M}_{1}\nonumber
\end{eqnarray}
since for each $j=1,\ldots,6$ there exists a unique $i\in\left\{1,\ldots,6\right\}$ such that $QU_jQ=U_i$ and hence $\vert U_jQe\vert=\vert U_ie\vert$. For example, for $U_2$ we have that $Q=Q\left[\pi,i_{3}\right]$ and
\begin{equation}
e\in\mathcal{M}_{2}\Leftrightarrow Q_{17}e\in\mathcal{M}_{1}\Leftrightarrow \left(-e_{1},-e_{2},e_{3}\right)\in\mathcal{M}_{1}\Leftrightarrow e_{2}e_{3}\leq 0\:\:
\mbox{and}\:\:\vert e_{1}\vert\leq\min\lbrace\vert e_{2}\vert ,\;\vert e_{3}\vert\rbrace.\nonumber
\end{equation}\qed
\end{proof}
 
\begin{figure}[ht]
	\centering
	\includegraphics[scale=0.5]{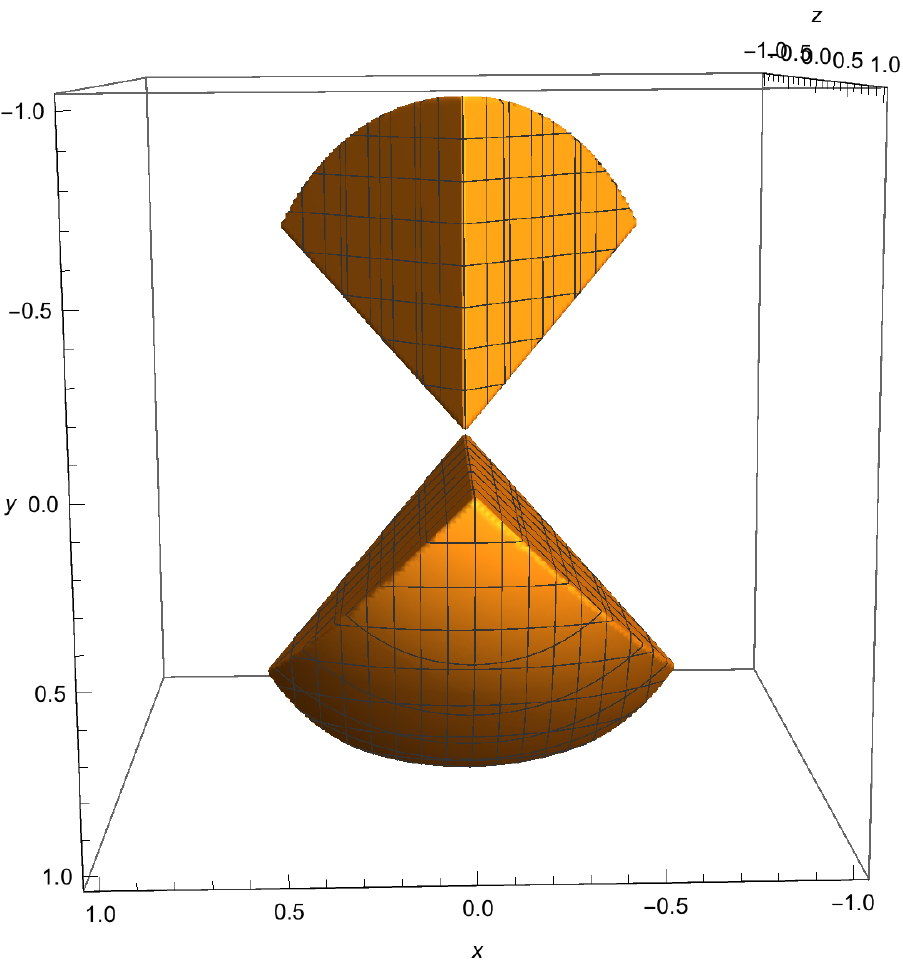} 
	\caption{All vectors $e\in\mathcal{M}_{1}$ calculated using the lattice parameters of the CuAlNi specimen in Seiner's experiment.}
\end{figure}

We now turn our attention to the problem of calculating the maximal directions for $U^{-1}_{s}$.
\begin{lemma}
Assume that the lattice parameters satisfy {\rm (A1)}, {\rm (A3)} and {\rm (A4)}. For a vector $e\in S^{2}$ write $e=\left(e_{1},e_{2},e_{3}\right)^{T}$. Then, for each $s=1,\ldots ,6$
\begin{eqnarray*}
\mathcal{M}^{-1}_{s}&=&\left\lbrace e\in S^{2}: \left(-1\right)^{s-1}e_{2}e_{3}< 0,\: \vert e_{1}\vert>\max\left\lbrace\vert e_{2}\vert ,\vert e_{3}\vert\right\rbrace
\right\rbrace\cup (1,0,0)^{T},\, s=1,2,\\
\mathcal{M}^{-1}_{s}&=&\left\lbrace e\in S^{2}: \left(-1\right)^{s-1}e_{1}e_{3}< 0,\: \vert e_{2}\vert>\max\left\lbrace\vert e_{1}\vert ,\vert e_{3}\vert\right\rbrace
\right\rbrace\cup (0,1,0)^{T},\, s=3,4,\\
\mathcal{M}^{-1}_{s}&=&\left\lbrace e\in S^{2}: \left(-1\right)^{s-1}e_{1}e_{2}< 0,\: \vert e_{3}\vert>\max\left\lbrace\vert e_{1}\vert ,\vert e_{2}\vert\right\rbrace
\right\rbrace\cup (0,0,1)^{T},\, s=5,6.
\end{eqnarray*}
\label{lemmamaximalusinverse}
\end{lemma}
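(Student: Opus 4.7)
The plan is to mimic the proof of Lemma \ref{lemmamaximalus}, with $\mathrm{cof}\,U_s=(\det U_s)U_s^{-1}$ playing the role of $U_s$. The cubic symmetry $U_i=Q^T U_1 Q$ with $Q\in\mathcal{P}^{24}$ yields $\mathrm{cof}\,U_i=Q^T(\mathrm{cof}\,U_1)Q$, so $|(\mathrm{cof}\,U_i)e|=|(\mathrm{cof}\,U_1)(Qe)|$; this reduces the problem to the case $s=1$ and allows the remaining five characterizations to be deduced by transporting $\mathcal{M}_1^{-1}$ under the cubic rotations relating $U_s$ to $U_1$, exactly as at the end of the proof of Lemma \ref{lemmamaximalus}.

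For $s=1$ I would first compute $\mathrm{cof}\,U_1$ explicitly; it has the same block form as $U_1$, with diagonal $(1,1)$ entry $\alpha\gamma$ and bottom $2\times 2$ block having diagonal entries $\beta(\alpha+\gamma)/2$ and off-diagonal entries $-\beta(\alpha-\gamma)/2$. A direct calculation then gives, for $e=(e_1,e_2,e_3)^T\in S^2$,
\[
|(\mathrm{cof}\,U_1)e|^2=\alpha^2\gamma^2 e_1^2+\beta^2\tfrac{\alpha^2+\gamma^2}{2}(e_2^2+e_3^2)-B\,e_2 e_3,
\]
where $B:=\beta^2(\alpha^2-\gamma^2)>0$; setting $A:=\alpha^2\gamma^2-\beta^2\tfrac{\alpha^2+\gamma^2}{2}$, assumption (A4) is precisely $A>B>0$. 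The analogous expressions for the other variants follow by symmetry: $|(\mathrm{cof}\,U_2)e|^2$ differs only in the sign of the $Be_2 e_3$ term, while $i=3,4$ (resp.\ $5,6$) interchange $e_1\leftrightarrow e_2$ (resp.\ $e_1\leftrightarrow e_3$) with an analogous sign choice. The five differences $|(\mathrm{cof}\,U_1)e|^2-|(\mathrm{cof}\,U_i)e|^2$ then factor: $-2Be_2 e_3$ for $i=2$, and expressions of the form $(e_1\mp e_j)[A(e_1\pm e_j)\mp Be_k]$ for $i=3,4,5,6$, with $\{j,k\}\subset\{2,3\}$ and specific sign choices. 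A case analysis on the signs of $e_1,e_2,e_3$, using $A>B$, then shows that all five differences are strictly positive iff $e_2 e_3<0$ and $|e_1|>\max\{|e_2|,|e_3|\}$.

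To close I would compare with $|e|^2=1$ and handle the eigenvector exception. For $e$ in the stated region, $e_1^2>1/3$; substituting $e_2^2+e_3^2=1-e_1^2$ and dropping the nonnegative $-Be_2 e_3$ term gives $|(\mathrm{cof}\,U_1)e|^2\geq \beta^2(\alpha^2+\gamma^2)/2+Ae_1^2$, which upon using (A3) (equivalent to $\alpha^2\gamma^2+\beta^2(\alpha^2+\gamma^2)\geq 3$) and $e_1^2>1/3$ exceeds $1$ strictly. Separately, the eigenvalues of $\mathrm{cof}\,U_1$ are $\alpha\gamma,\alpha\beta,\beta\gamma$; by (A1) the largest is $\alpha\gamma$, with eigenvector $(1,0,0)^T$, so $e_{\max}(\mathrm{cof}\,U_1)=(1,0,0)^T$ must be added by hand, since there the strict inequality against $|(\mathrm{cof}\,U_2)e|=\alpha\gamma$ degenerates to equality. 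The analogous additions $(0,1,0)^T$ and $(0,0,1)^T$ for $s=3,\ldots,6$ arise by transporting under the cubic rotations.

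The main obstacle is the case analysis in the second paragraph: in contrast to Lemma \ref{lemmamaximalus}, where strict inequality fell out essentially automatically from $N,P>0$, here strictness genuinely hinges on the sharp hypothesis (A4), and one must carefully track the mixed $\pm B$ signs across the four factorizations together with the sub-cases on the sign of $e_1$ (with $e_1=0$ excluded by $|e_1|>\max\{|e_2|,|e_3|\}$). Ensuring both that the five strict inequalities hold throughout the claimed region \emph{and} that at least one of them degenerates outside it — except precisely along the coordinate axis giving $e_{\max}(\mathrm{cof}\,U_1)$ — is the chief bookkeeping challenge.
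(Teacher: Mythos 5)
Your proposal is correct and follows essentially the same route as the paper: reduce to $s=1$ by cubic symmetry, compute the five differences $|(\mathrm{cof}\,U_1)e|^2-|(\mathrm{cof}\,U_i)e|^2$, use $A>B>0$ from (A4) to characterize strict positivity, use (A3) to get $|(\mathrm{cof}\,U_1)e|^2>1$, and add $e_{\max}(\mathrm{cof}\,U_1)=(1,0,0)^T$ by hand. The only presentational difference is that the paper, having established $e_2e_3<0$, immediately rewrites everything in terms of $|e_1|,|e_2|,|e_3|$ and analyzes the two pairwise minima $(|e_1|-|e_j|)[A|e_1|+A|e_j|-B|e_k|]$, which avoids the full eight-way sign case analysis you anticipate and also fixes the small sign slips in your stated factorization template.
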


\begin{proof}
We only prove this for $s=1$. The rest follows from the symmetry relations, since $\vert\mbox{cof}\,(QU_{1}Q)e\vert=\vert\mbox{cof}\,U_{1}
\left(Qe\right)\vert$. First note that $\lambda_{\max}(\mathrm{cof}\,U_1)=\alpha\gamma$ by (A1), so that $e_{\max}(\mathrm{cof}\,U_1)=(1,0,0)^T$. As for the remaining 
directions, note that
\begin{eqnarray}
\vert\mbox{cof}\;U_{s}e\vert^{2}&=&\alpha^{2}\gamma^{2}e^{2}_{1}+\beta^{2}\frac{\alpha^{2}+\gamma^{2}}{2}\left(e^{2}_{2}+e^{2}_{3}\right)+\left(-1\right)^{s-1}\beta^{2}
\left(\gamma^{2}-\alpha^{2}\right)e_{2}e_{3},\,s=1,2,\nonumber\\
\vert\mbox{cof}\;U_{s}e\vert^{2}&=&\alpha^{2}\gamma^{2}e^{2}_{2}+\beta^{2}\frac{\alpha^{2}+\gamma^{2}}{2}\left(e^{2}_{1}+e^{2}_{3}\right)+\left(-1\right)^{s-1}\beta^{2}
\left(\gamma^{2}-\alpha^{2}\right)e_{1}e_{3},\,s=3,4,\nonumber\\
\vert\mbox{cof}\;U_{s}e\vert^{2}&=&\alpha^{2}\gamma^{2}e^{2}_{3}+\beta^{2}\frac{\alpha^{2}+\gamma^{2}}{2}\left(e^{2}_{1}+e^{2}_{2}\right)+\left(-1\right)^{s-1}\beta^{2}
\left(\gamma^{2}-\alpha^{2}\right)e_{1}e_{2},\,s=5,6.\nonumber
\end{eqnarray}
First we show that $e\in\mathcal{M}^{-1}_{1}$, $e\neq e_{\max}(\mathrm{cof}\,U_s)$, with the above representation is necessary and sufficient for 
$\vert\mbox{cof}\,U_{1}e\vert>\max_{i\neq 1}\vert\mbox{cof}\,U_{i}e\vert$. We deal with the condition 
$\vert\mbox{cof}\; U_{s}e\vert> 1$ later. With $A>0$, $B>0$ as in the statement,
\begin{eqnarray}
\label{eq:cofU1-2}
\vert\mbox{cof}\;U_{1}e\vert^{2}-\vert\mbox{cof}\;U_{2}e\vert^{2}&=&-2Be_{2}e_{3}\\
\label{eq:cofU1-3}
\vert\mbox{cof}\;U_{1}e\vert^{2}-\vert\mbox{cof}\;U_{3}e\vert^{2}&=&A(e^{2}_{1}-e^{2}_{2})-Be_{3}(e_{2}-e_{1})\\
\label{eq:cofU1-4}
\vert\mbox{cof}\;U_{1}e\vert^{2}-\vert\mbox{cof}\;U_{4}e\vert^{2}&=&A(e^{2}_{1}-e^{2}_{2})-Be_{3}(e_{2}+e_{1})\\
\label{eq:cofU1-5}
\vert\mbox{cof}\;U_{1}e\vert^{2}-\vert\mbox{cof}\;U_{5}e\vert^{2}&=&A(e^{2}_{1}-e^{2}_{3})-Be_{2}(e_{3}-e_{1})\\
\label{eq:cofU1-6}
\vert\mbox{cof}\;U_{1}e\vert^{2}-\vert\mbox{cof}\;U_{6}e\vert^{2}&=&A(e^{2}_{1}-e^{2}_{3})-Be_{2}(e_{3}+e_{1}).
\end{eqnarray}
Trivially, (\ref{eq:cofU1-2}) is positive if and only if $e_{2}e_{3}< 0$; in particular, $\vert e_{2}e_{3}\vert=-e_{2}e_{3}$. Then,
\begin{eqnarray}
\vert\mbox{cof}\;U_{1}e\vert^{2}-\vert\mbox{cof}\;U_{3}e\vert^{2}&=&A(\vert e_1\vert^2-\vert e_2\vert^2)+B\vert e_3e_2\vert+Be_3e_1\vert,\nonumber\\
\vert\mbox{cof}\;U_{1}e\vert^{2}-\vert\mbox{cof}\;U_{4}e\vert^{2}&=&A(\vert e_1\vert^2-\vert e_2\vert^2)+B\vert e_3e_2\vert-Be_3e_1\vert\nonumber
\end{eqnarray}
and the minimum between (\ref{eq:cofU1-3}) and (\ref{eq:cofU1-4}) is given by
\begin{equation}
\label{eq:cofU1-3cofU1-4min}
A(\vert e_1\vert^2-\vert e_2\vert^2)+B\vert e_3e_2\vert+B\vert e_3e_1\vert=(\vert e_1\vert - \vert e_2\vert)\left[A\vert e_1\vert+A\vert e_2\vert-B\vert e_3\vert\right].
\end{equation}
Similarly, for (\ref{eq:cofU1-5}) and (\ref{eq:cofU1-6}), we need only interchange the roles of $e_2$ and $e_3$ and the minimum is given by
\begin{equation}
\label{eq:cofU1-5cofU1-6min}
A(\vert e_1\vert^2-\vert e_3\vert^2)+B\vert e_2e_3\vert+B\vert e_2e_1\vert=(\vert e_1\vert - \vert e_3\vert)\left[A\vert e_1\vert+A\vert e_3\vert-B\vert e_2\vert\right].
\end{equation}
Since $A>B$, if $\vert e_1\vert>\max\left\{\vert e_2\vert,\vert e_3\vert\right\}$, both (\ref{eq:cofU1-3cofU1-4min}) and (\ref{eq:cofU1-5cofU1-6min}) are positive and so 
are (\ref{eq:cofU1-3})-(\ref{eq:cofU1-6}). Conversely, suppose that (\ref{eq:cofU1-3})-(\ref{eq:cofU1-6}) are all positive. In particular,
\begin{eqnarray}
\label{eq:cofU1-3cofU1-4min0}
(\vert e_1\vert - \vert e_2\vert)\left[A\vert e_1\vert+A\vert e_2\vert-B\vert e_3\vert\right]&> &0\\
\label{eq:cofU1-5cofU1-6min0}
(\vert e_1\vert - \vert e_3\vert)\left[A\vert e_1\vert+A\vert e_3\vert-B\vert e_2\vert\right]&> &0.
\end{eqnarray}
Note that if $\vert e_1\vert>\vert e_3\vert$,
\[A\vert e_1\vert+A\vert e_2\vert-B\vert e_3\vert>0\]
since $A> B$ and $e\neq0$. Then (\ref{eq:cofU1-3cofU1-4min0}) says that $\vert e_1\vert>\vert e_2\vert$. Similarly, if $\vert e_1\vert>\vert e_2\vert$,
\[A\vert e_1\vert+A\vert e_3\vert-B\vert e_2\vert>0\]
and (\ref{eq:cofU1-5cofU1-6min0}) says that $\vert e_1\vert>\vert e_3\vert$, i.e.
\[\vert e_1\vert>\vert e_3\vert\,\Leftrightarrow\,\vert e_1\vert>\vert e_2\vert.\]
So, if $A\vert e_1\vert+A\vert e_2\vert-B\vert e_3\vert>0$ or $A\vert e_1\vert+A\vert e_3\vert-B\vert e_2\vert>0$, by the above argument, $\vert e_1\vert>\vert e_2\vert$ 
and $\vert e_1\vert>\vert e_3\vert$ and we need only examine the case
\begin{equation}
A\vert e_1\vert+A\vert e_2\vert-B\vert e_3\vert\leq0\:\:\:\mbox{and}\:\:\:A\vert e_1\vert+A\vert e_3\vert-B\vert e_2\vert\leq0.\nonumber
\end{equation}
Adding them up, $2A\vert e_1\vert+(A-B)(\vert e_2\vert+\vert e_3\vert)\leq0$ which is a contradiction since $A>B$.

To finish the proof, we show that for all vectors $e\in\mathcal{M}^{-1}_{1}$, $e\neq e_{\max}(\mathrm{cof}\,U_s)$, $\vert\mbox{cof}\;U_{1}e\vert^{2} -1> 0$. 
Writing $e^{2}_{1}+e^{2}_{2}+e^{2}_{3}=1$,
\begin{eqnarray}
\vert\mbox{cof}\;U_{1}e\vert^{2} -1&=&\alpha^{2}\gamma^{2}e^{2}_{1}+\beta^{2}\frac{\alpha^{2}+\gamma^{2}}{2}\left(e^{2}_{2}+e^{2}_{3}\right)+\beta^{2}\left(\gamma^{2}-\alpha^{2}\right)e_{2}e_{3}-e^{2}_{1}-e^{2}_{2}-e^{2}_{3}\nonumber\\
&=&\left(\alpha^{2}\gamma^{2}-1\right)e^{2}_{1}+\left(\beta^{2}\alpha^{2}+\beta^{2}\gamma^{2}-2\right)
\frac{e^{2}_{2}+e^{2}_{3}}{2}+\beta^{2}\left(\gamma^{2}-\alpha^{2}\right)e_{2}e_{3}\nonumber
\end{eqnarray}
However, $\vert e_{1}\vert>\max\lbrace\vert e_{2}\vert , \vert e_{3}\vert\rbrace$, so that $\vert e_{1}\vert ^{2}>(\vert e_{2}\vert ^{2}+\vert e_{3}\vert ^{2})/2$ and, 
since $e_{2}e_{3}< 0$,
\begin{eqnarray}
\vert\mbox{cof}\;U_{1}e\vert^{2} -1&\geq &\left(\alpha^{2}\gamma^{2}+\alpha^{2}\beta^{2}+\beta^{2}\gamma^{2}-3\right)\frac{e^{2}_{2}+e^{2}_{3}}{2}+\beta^{2}\left(\gamma^{2}-\alpha^{2}\right)e_{2}e_{3}\nonumber\\
&=&\left(\vert\mbox{cof}\;U_{1}\vert ^{2}-3\right)\frac{e^{2}_{2}+e^{2}_{3}}{2}+\beta^{2}\left(\alpha^{2}-\gamma^{2}\right)\vert e_{2}e_{3}\vert>0\nonumber
\end{eqnarray}
since $\vert\mathrm{cof}\;U_{1}\vert ^{2}\geq 3$.\qed
\end{proof}

\begin{figure}[ht]
	\centering
		\includegraphics[scale=0.5]{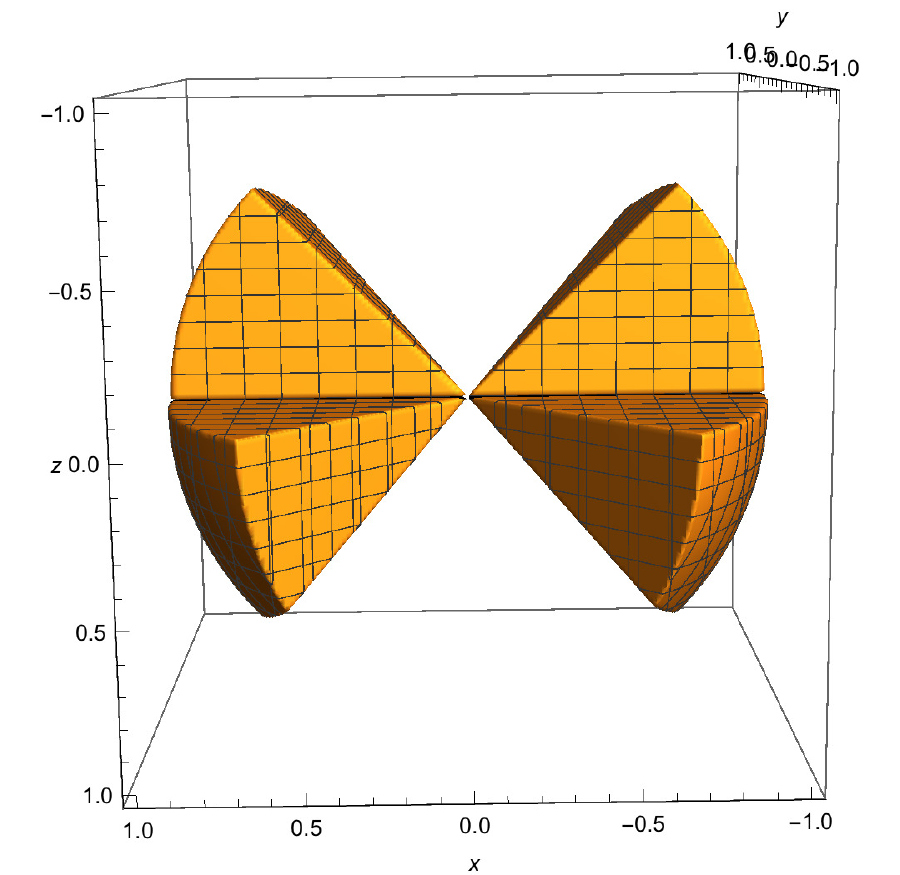} 
	\caption{All vectors $e\in\mathcal{M}^{-1}_{1}$ calculated using the lattice parameters of the CuAlNi specimen in Seiner's experiment.}
	\label{fig:coveringthesphereesinverses}
\end{figure}

It is natural to ask whether we are actually adding any new directions compared to the ones we had already obtained from $\mathcal{M}_{s}$. The answer is that all the 
directions in $U^{-2}_{s}\mathcal{M}^{-1}_{s}$ are in fact new. To show this, consider a vector $f=\left(f_{1},f_{2},f_{3}\right)\in\mathcal{M}^{-1}_{1}$, say, and 
suppose that $f$ is parallel to $U_1^2e$ for some unit vector $e=\left(e_{1},e_{2},e_{3}\right)^T$ in $U^{-2}_{1}\mathcal{M}^{-1}_{1}$. We claim that $e\notin\mathcal{M}_{1}$, where
\[\mathcal{M}_{1}=\left\lbrace e\in S^{2}: e_{2}e_{3}\geq 0,\:\:\vert e_{1}\vert\leq\min\left\lbrace\vert e_{2}\vert ,\vert e_{3}\vert\right\rbrace \right\rbrace.\]
This is because, for $\rho = |U^2_1e|$,
\[f=\frac{1}{\rho} U^{2}_{1}e=\frac{1}{\rho} \left(\beta^2 e_{1},\frac{\alpha^2+\gamma^2}{2}e_{2}+\frac{\alpha^2-\gamma^2}{2}e_{3},\frac{\alpha^2-\gamma^2}{2}e_{2}+\frac{\alpha^2+\gamma^2}{2}e_{3}\right)\]
must satisfy $f_{2}f_{3}< 0$ (being in $\mathcal{M}^{-1}_{1}$), i.e.
\begin{equation}
\frac{\alpha^{4}-\gamma^{4}}{4}\left(e^{2}_{2}+e^{2}_{3}\right)+\frac{\alpha^{4}+\gamma^{4}}{2}e_{2}e_{3}< 0
\label{notinA}
\end{equation}
However, the first term in the sum is non-negative and so is $\frac{\alpha^{4}+\gamma^{4}}{2}$. So, it must be the case that $e_{2}e_{3}<0$ and thus, 
$e\notin\mathcal{M}_{1}$, i.e.~$\mathcal{M}_{1}$ and $U^{-2}_{1}\mathcal{M}^{-1}_{1}$ are disjoint, so that we are genuinely adding new directions. On the other hand, it 
is clear that if $f$ is a unit vector parallel to $U^{2}_{1}e_{\max}(\mathrm{cof}\,U_1)$, then $f=e_{\max}(\mathrm{cof}\,U_1)\notin\mathcal{M}_{1}$.

An important issue is whether these directions can cover the unit sphere and hence exhaust all possible domains or, equivalently, whether we can deduce that 
for any bar-shaped specimen, nucleation can only occur at a corner. The answer is easily seen to be negative as the union of the sets  $\mathcal{M}_{s}$ and 
$U^{-2}_{s}\mathcal{M}^{-1}_{s}$ is a proper subset of the unit sphere. For example, consider $s=1$ and the unit vector $e=\frac{1}{\sqrt{2}}\left(0,\;1,\;-1\right)^T$. 
Since $e_{2}e_{3}=-1<0$, it becomes clear that $e\notin\mathcal{M}_{1}$ and on the other hand, $U^{2}_{1}e=\gamma^2 e$. But then, $\left[U^{2}_{1}e\right]_{1}=0$ and 
$U^{2}_{1}e\notin\mathcal{M}^{-1}_{1}$ or, equivalently, $e\notin U^{-2}_{1}\mathcal{M}^{-1}_{1}$. We note that writing out an explicit expression for all vectors outside 
$\mathcal{M}_{s}\bigcup U^{-2}_{s}\mathcal{M}^{-1}_{s}$ is a tedious task and, instead, we see these numerically for $s=1$ and the lattice parameters of Seiner's specimen 
in Fig.~\ref{fig:coveringthespherees} below.

\begin{figure}[ht]
	\centering
		\includegraphics[scale=0.5]{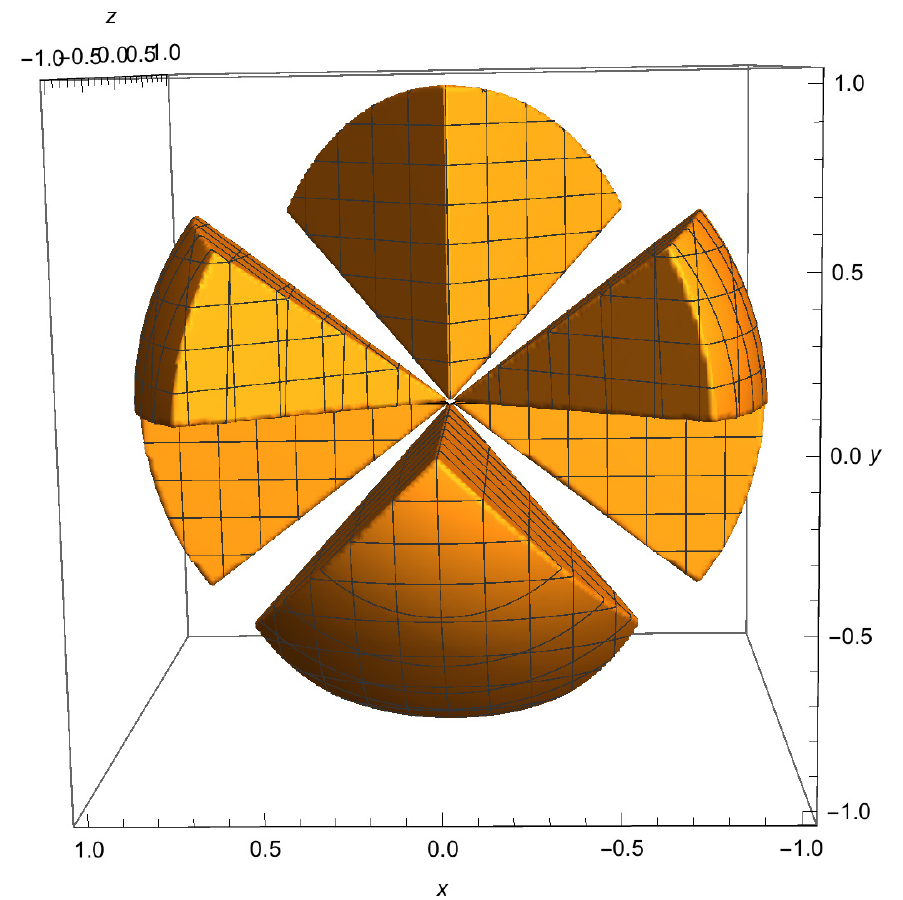} 
	\caption{All vectors $e\in\mathcal{M}_{1}\cup U^{-2}_{1}\mathcal{M}^{-1}_{1}$ calculated using the lattice parameters of the CuAlNi specimen in Seiner's experiment.}
	\label{fig:coveringthespherees}
\end{figure}

However, in the case of a face, this does not reveal much. The fact that the above directions do not cover the unit sphere does not necessarily imply that the normal vectors to these directions do not cover the sphere as well. Moreover, it would be more convenient to know all the possible normals to the faces that the method can decide on rather than the possible vectors lying on the face. The rest of this section aims to do precisely that. Let
\begin{eqnarray}
\mathcal{N}_{s}&=&\lbrace n\in S^{2}:\mbox{ there exists }e\in\mathcal{M}_{s}\mbox{ with }e\cdot n=0\rbrace\quad\mbox{and}\nonumber\\
\mathcal{N}^{-1}_{s}&=&\lbrace n\in S^{2}:\mbox{ there exists }e\in\mathcal{M}^{-1}_{s}\mbox{ with }e\cdot n=0\rbrace .\nonumber
\end{eqnarray}
The set we are then interested in is $\mathcal{N}_{s}\cup U^{2}_{s}\mathcal{N}^{-1}_{s}$. To see this, let $n\in\mathcal{N}_{s}\cup U^{2}_{s}\mathcal{N}^{-1}_{s}$.
\begin{itemize}
\item If $n\in\mathcal{N}_{s}$ there exists $e\in\mathcal{M}_{s}$ such that $e\cdot n=0$ and we can apply the method to any face with normal $n$ as there will exist a maximal direction $e$ which lies on the face.
\item On the other hand, if $n\in U^{2}_{s}\mathcal{N}^{-1}_{s}$ then there exists $m\in\mathcal{N}^{-1}_{s}$ such that $n=U^{2}_{s}m$. Since $m\in\mathcal{N}^{-1}_{s}$ there exists $e\in\mathcal{M}^{-1}_{s}$ such that $e\cdot m=0$. Once again, we can then apply the method to any face with normal $n$ as
\[U^{-2}_{s}e\cdot n=U^{-2}_{s}e\cdot U^{2}_{s}m=e\cdot m=0,\]
i.e.~$U^{-2}_{s}e$ lies on the face with normal $n$ and $e$ is a maximal direction for $U^{-1}_{s}$.
\end{itemize}

Let us characterize these sets and see whether these can exhaust the unit sphere, i.e.~whether our methods can be applied to all possible faces.

\begin{lemma}
Under the assumptions {\rm (A1)} and {\rm (A2)} on the lattice parameters and for each $s=1,\ldots\,,6$
\begin{eqnarray*}
\mathcal{N}_{s}&=&\left\lbrace n\in S^{2}: \left(-1\right)^{s-1}n_{2}n_{3}\leq 0\;\mbox{or}\;\vert n_{1}\vert\geq\vert n_{2}\vert +\vert n_{3}\vert\right\rbrace,\,\,
s=1,2\\ 
\mathcal{N}_{s}&=&\left\lbrace n\in S^{2}: \left(-1\right)^{s-1}n_{1}n_{3}\leq 0\;\mbox{or}\;\vert n_{2}\vert\geq\vert n_{1}\vert +\vert n_{3}\vert\right\rbrace,\,\,
s=3,4\\ 
\mathcal{N}_{s}&=&\left\lbrace n\in S^{2}: \left(-1\right)^{s-1}n_{1}n_{2}\leq 0\;\mbox{or}\;\vert n_{3}\vert\geq\vert n_{1}\vert +\vert n_{2}\vert\right\rbrace,\,\,
s=5,6.
\end{eqnarray*}
\label{lemmanormalstoa}
\end{lemma}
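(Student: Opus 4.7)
The plan is to reduce the general case to $s=1$ via the cubic symmetry relations used in the proof of Lemma~\ref{lemmamaximalus}; the heart of the matter is a convex-geometry analysis of the cone
\[
C \,=\, \{e\in\mathbb{R}^3 : e_2, e_3\geq 0,\ |e_1|\leq e_2,\ |e_1|\leq e_3\},
\]
whose intersection with $S^2$ is one of the two antipodal spherical polygons comprising $\mathcal{M}_1$. Since $\mathcal{M}_1=-\mathcal{M}_1$, one has $n\in\mathcal{N}_1$ if and only if the hyperplane $\{e\cdot n=0\}$ meets $C$ nontrivially.

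Next I would identify the extreme rays of $C$. The cone is defined by the four inequalities $e_2-e_1\geq 0$, $e_2+e_1\geq 0$, $e_3-e_1\geq 0$, $e_3+e_1\geq 0$. Solving each pair with equality and discarding the combinations that force $e_1=e_2=e_3=0$ or are infeasible yields exactly the four generators
\[
v_1=(0,0,1),\qquad v_2=(1,1,1),\qquad v_3=(0,1,0),\qquad v_4=(-1,1,1).
\]
Consequently $\{e\cdot n=0\}$ misses $C\setminus\{0\}$ precisely when the four scalars $v_i\cdot n$ all share the same strict sign. The condition $v_i\cdot n>0$ for every $i$ reads $n_2>0$, $n_3>0$ and $n_2+n_3>|n_1|$; the opposite condition $v_i\cdot n<0$ reads $n_2<0$, $n_3<0$ and $|n_2|+|n_3|>|n_1|$. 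Complementing the union of these two open sets gives
\[
\mathcal{N}_1 \,=\, \{n\in S^2 : n_2 n_3\leq 0\ \text{or}\ |n_1|\geq |n_2|+|n_3|\},
\]
as claimed.

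For $s\in\{2,\ldots,6\}$, I would invoke the involution $Q_s=Q[\pi,a_s]$ with $U_s=Q_sU_1Q_s$ used in the proof of Lemma~\ref{lemmamaximalus}, which yields $e\in\mathcal{M}_s$ if and only if $Q_se\in\mathcal{M}_1$, and hence $n\in\mathcal{N}_s$ if and only if $Q_sn\in\mathcal{N}_1$. Applying the appropriate coordinate permutation and sign change to the formula for $\mathcal{N}_1$ then produces the stated description in each case; for instance $Q_2=\mathrm{diag}(-1,-1,1)$ turns the condition $(Q_2n)_2(Q_2n)_3\leq 0$ into $n_2n_3\geq 0$, matching $(-1)^{s-1}n_2n_3\leq 0$ for $s=2$, while leaving $|n_1|\geq|n_2|+|n_3|$ invariant; the three rotations realising $s=3,4,5,6$ swap the distinguished axis from $e_1$ to $e_2$ or $e_3$, which is exactly the difference between the three lines of the claimed formula.

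The main obstacle is the correct enumeration of the extreme rays of $C$ and verifying that no further feasible generators arise from the remaining pairs of defining inequalities; once this is settled, the sign case analysis for $s=1$ and the symmetry transfer for the other variants are routine.
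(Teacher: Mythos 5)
Your argument is correct, and it takes a genuinely different route from the paper. The paper's proof is directly computational: for sufficiency it constructs explicit perpendicular vectors in $\mathcal{M}_1$ (namely $\frac{1}{\rho}(0,|n_3|,|n_2|)$ when $n_2n_3\leq 0$ and $\frac{1}{\rho}(-(n_2+n_3)/n_1,1,1)$ when $|n_1|\geq|n_2|+|n_3|$ with $n_2,n_3>0$), and for necessity it runs an ad hoc chain of inequalities using $|n_2e_2+n_3e_3|=|n_2e_2|+|n_3e_3|$ to derive a contradiction. You instead recognize that $\mathcal{M}_1$ is $(C\cup(-C))\cap S^2$ for the pointed polyhedral cone $C=\{e_2,e_3\geq 0,\ |e_1|\leq\min(e_2,e_3)\}$, enumerate its four extreme rays $v_1=(0,0,1)$, $v_2=(1,1,1)$, $v_3=(0,1,0)$, $v_4=(-1,1,1)$ by solving pairs of the defining inequalities with equality (correctly discarding the two infeasible pairings), and observe that a hyperplane through the origin avoids $C\setminus\{0\}$ precisely when the generators all lie strictly on one side. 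Taking the complement of the two open conditions gives exactly the stated set. The cone-geometric approach is more systematic and would adapt easily if the shape of $\mathcal{M}_s$ were changed, at the cost of invoking (implicitly) the Minkowski--Weyl representation of pointed polyhedral cones and the separation principle for their generators; the paper's proof is fully elementary and self-contained. Both handle $s\neq 1$ by the same symmetry transfer $n\in\mathcal{N}_s\Leftrightarrow Q_sn\in\mathcal{N}_1$, and your $Q_2=\mathrm{diag}(-1,-1,1)$ example correctly reproduces the sign flip $(-1)^{s-1}$.
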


\begin{proof}
We only prove this for $s=1$. The rest follow easily due to symmetry. Let us first show that if a vector $n\in S^2$ has the above representation then there exists 
$e\in\mathcal{M}_{1}$ such that $e\cdot n=0$. Suppose that $n$ satisfies $n_{2}n_{3}\leq 0$. If $n_{2}=n_{3}=0$ then $n=\pm\left(1,0,0\right)^T$ and 
$e=\left(0,1,0\right)^T\in\mathcal{M}_{1}$ satisfies $e\cdot n=0$. So, assume that $\vert n_2\vert + \vert n_3\vert\neq0$ and let $e\in\mathcal{M}_{1}$ be the vector 
$\frac{1}{\rho}\left(0,\vert n_{3}\vert , \vert n_{2}\vert\right)$ where $\rho>0$ is a constant making $e$ of unit length. Then,
\begin{equation}
\rho e\cdot n=n_{2}\vert n_{3}\vert +n_{3}\vert n_{2}\vert =0\nonumber
\end{equation}
as either one of $n_2$, $n_3$ is zero or they have opposite sign. On the other hand, suppose that $\vert n_{1}\vert\geq\vert n_{2}\vert +\vert n_{3}\vert$. Clearly, we 
may additionally assume that $n_{2}n_{3}>0$ as otherwise it reduces to the previous case. Note that if $e=\left(e_{1},e_{2},e_{3}\right)^T$ satisfies $e\cdot n=0$ then
\begin{equation}
f\cdot\left(n_{1},-n_{2},-n_{3}\right)^T=0\nonumber
\end{equation}
for $f=\left(e_{1},-e_{2},-e_{3}\right)^T$ and if $e\in\mathcal{M}_{1}$ so is $f$. Hence, without loss of generality, we may assume that $n_{2}>0$ and $n_{3}>0$. Then, 
$\vert n_{2}\vert +\vert n_{3}\vert =n_{2}+n_{3}$ and $(\vert n_{2}+n_{3}\vert)/\vert n_{1}\vert\leq 1$ by assumption. So, let $e\in\mathcal{M}_1$ be the vector 
$\frac{1}{\rho}\left(-\left(n_{2}+n_{3}\right)/n_{1},1,1\right)^T$, where $\rho>0$ forces $\vert e\vert=1$, to get
\[\rho e\cdot n=-\left(n_{2}+n_{3}\right)+n_{2}+n_{3}=0\]
and sufficiency is established.

Conversely, to reach a contradiction, suppose that $n\cdot e=0$ for some $e\in\mathcal{M}_{1}$ but
\[n_{2}n_{3}>0\quad\mbox{and}\quad\vert n_{1}\vert <\vert n_{2}\vert +\vert n_{3}\vert.\]
Since $n_{2}n_{3}>0$ and $e_{2}e_{3}\geq 0$, it must be the case that $\left(n_{2}e_{2}\right)\left(n_{3}e_{3}\right)\geq 0$ and hence
\begin{equation}
\label{eq:lemmanormalstoa1}
\vert n_{2}e_{2}+n_{3}e_{3}\vert =\vert n_{2}e_{2}\vert +\vert n_{3}e_{3}\vert.
\end{equation}
If $n_{1}e_{1}+n_{2}e_{2}+n_{3}e_{3}=0$, by (\ref{eq:lemmanormalstoa1}),
\begin{equation}
\vert n_{1}e_{1}\vert =\vert n_{2}e_{2}\vert +\vert n_{3}e_{3}\vert\geq\min\lbrace\vert e_{2}\vert ,\vert e_{3}\vert\rbrace\left(\vert n_{2}\vert +\vert n_{3}\vert\right)
\geq\vert e_{1}\vert\left(\vert n_{2}\vert +\vert n_{3}\vert\right),\nonumber
\end{equation}
so that $\vert n_{1}\vert\geq\vert n_{2}\vert +\vert n_{3}\vert$, a contradiction proving necessity. Note that for the remaining variants
\begin{eqnarray}
n\in\mathcal{N}_{s}&\Leftrightarrow &\mbox{ there exists }e\in\mathcal{M}_{s}\mbox{ such that }e\cdot n=0\nonumber\\
&\Leftrightarrow &\mbox{ there exists }f\in\mathcal{M}_{1}\mbox{ such that }Qf\cdot n=0\mbox{ where $QU_{1}Q=U_{s}$}\nonumber\\
&\Leftrightarrow &Qn\in\mathcal{N}_{1}\nonumber
\end{eqnarray}
and the result follows easily.\qed
\end{proof}

\begin{figure}[ht]
	\centering
		\includegraphics[scale=0.5]{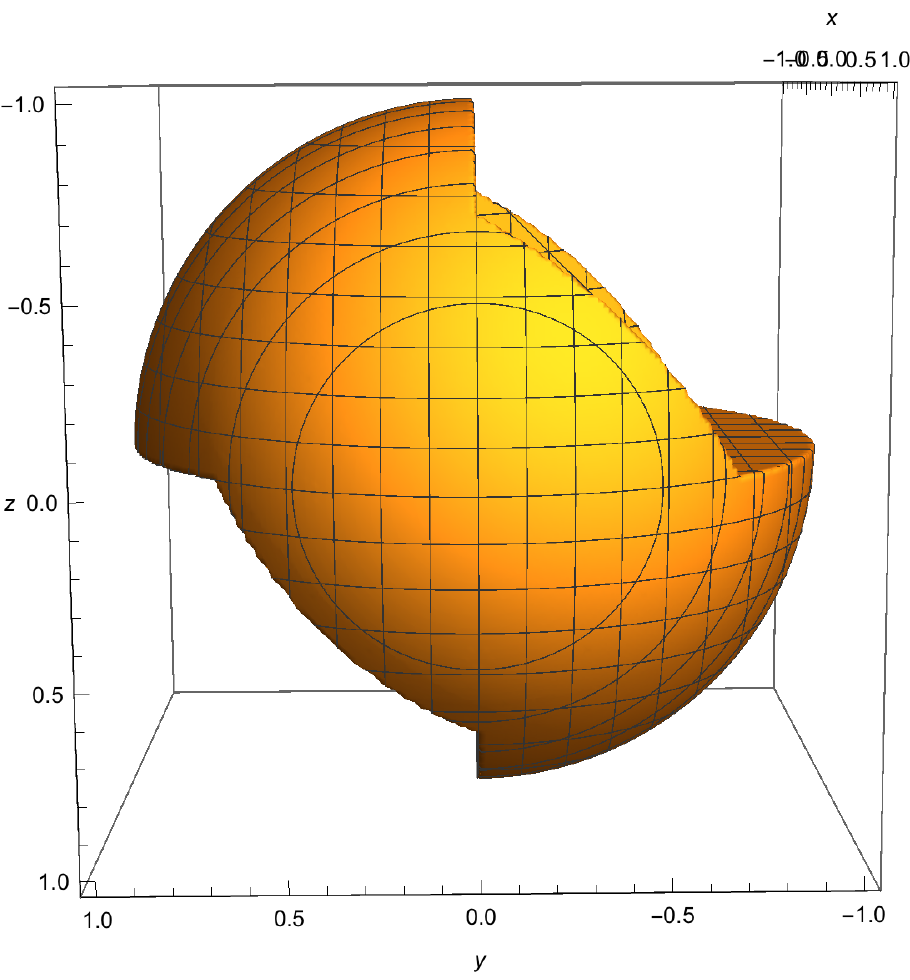} 
	\caption{All vectors $n\in\mathcal{N}_{1}$ calculated using the lattice parameters of the CuAlNi specimen in Seiner's experiment.}
	\label{fig:coveringthespherestraight}
\end{figure}

\begin{lemma}
Under the assumptions {\rm (A1)}, {\rm (A3)} and {\rm (A4)} on the lattice parameters
\begin{eqnarray*}
\mathcal{N}^{-1}_{s}&=&\left\lbrace n\in S^{2}: \left(-1\right)^{s-1}n_{2}n_{3}\leq 0\;\mbox{and}\;\vert n_{1}\vert<\vert n_{2}\vert +\vert n_{3}\vert\right\rbrace\\
&\cup&\left\lbrace n\in S^{2}: \left(-1\right)^{s-1}n_{2}n_{3}\geq 0\;\mbox{and}\;\vert n_{1}\vert<\max\lbrace\vert n_{2}\vert ,\;\vert n_{3}\vert\rbrace\right\rbrace
\cup\lbrace n\in S^{2}: n\cdot (1,0,0)^T=0\rbrace,\,\, s=1,2,\\
\mathcal{N}^{-1}_{s}&=&\left\lbrace n\in S^{2}: \left(-1\right)^{s-1}n_{1}n_{3}\leq 0\;\mbox{and}\;\vert n_{2}\vert<\vert n_{1}\vert +\vert n_{3}\vert\right\rbrace\\
&\cup&\left\lbrace n\in S^{2}: \left(-1\right)^{s-1}n_{1}n_{3}\geq 0\;\mbox{and}\;\vert n_{2}\vert<\max\lbrace\vert n_{1}\vert ,\;\vert n_{3}\vert\rbrace\right\rbrace 
\cup\lbrace n\in S^{2}: n\cdot (0,1,0)^T=0\rbrace,\,\, s=3,4,\\
\mathcal{N}^{-1}_{s}&=&\left\lbrace n\in S^{2}: \left(-1\right)^{s-1}n_{1}n_{2}\leq 0\;\mbox{and}\;\vert n_{3}\vert<\vert n_{1}\vert +\vert n_{2}\vert\right\rbrace\\
&\cup&\left\lbrace n\in S^{2}: \left(-1\right)^{s-1}n_{1}n_{2}\geq 0\;\mbox{and}\;\vert n_{3}\vert<\max\lbrace\vert n_{1}\vert ,\vert n_{2}\vert\rbrace\right\rbrace
\cup\lbrace n\in S^{2}: n\cdot (0,0,1)^T=0\rbrace,\,\, s=5,6.
\end{eqnarray*}
\label{lemmanormalstob}
\end{lemma}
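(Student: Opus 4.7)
The plan is to mirror the proof of Lemma~\ref{lemmanormalstoa}. First, by the symmetry relations, it suffices to establish the characterization for $s=1$: for each $s\in\{1,\dots,6\}$ there is a rotation $Q$ of order $2$ with $QU_1Q=U_s$, and since $\mathrm{cof}(QU_1Q)=Q\,\mathrm{cof}(U_1)\,Q$ with $Q$ orthogonal, we have $e\in\mathcal{M}^{-1}_s$ iff $Qe\in\mathcal{M}^{-1}_1$, and therefore $n\in\mathcal{N}^{-1}_s$ iff $Qn\in\mathcal{N}^{-1}_1$. For $s=1$ I use the explicit form of $\mathcal{M}^{-1}_1$ from Lemma~\ref{lemmamaximalusinverse} and split into sufficiency and necessity.

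For sufficiency I construct $e\in\mathcal{M}^{-1}_1$ orthogonal to $n$ explicitly in each of the three cases. If $n_1=0$, then $e=(1,0,0)^T=e_{\max}(\mathrm{cof}\,U_1)\in\mathcal{M}^{-1}_1$ does the job. If $n_2n_3<0$ and $|n_1|<|n_2|+|n_3|$ with $n_1\neq 0$, reducing to $n_2>0>n_3$ by possibly replacing $n$ with $-n$, the vector $e\propto(-(n_2+|n_3|)/n_1,\,1,\,-1)^T$ satisfies $e\cdot n=0$, $e_2e_3=-1$, and $|e_1|=(n_2+|n_3|)/|n_1|>1$ exactly by the hypothesis. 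If $n_2n_3>0$ and $|n_1|<\max\{|n_2|,|n_3|\}$ with $n_1\neq 0$, reducing to $n_3\geq n_2\geq 0$, the vector $e\propto((n_3-\varepsilon n_2)/n_1,\,-\varepsilon,\,1)^T$ works for all sufficiently small $\varepsilon>0$, since then $|e_1|$ is close to $n_3/|n_1|>1>\varepsilon$. The degenerate subcases where one of $n_2,n_3$ vanishes are handled analogously by a small perturbation, e.g.\ $e\propto(n_2,\,-n_1,\,\varepsilon\,\mathrm{sgn}(n_1))^T$ if $n_3=0$.

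For necessity I argue by contradiction: suppose $e\in\mathcal{M}^{-1}_1$ satisfies $e\cdot n=0$ but $n$ lies in none of the three listed sets, so in particular $n_1\neq 0$. The possibility $e=(1,0,0)^T$ forces $n_1=0$, a contradiction; hence $e_2e_3<0$ and $|e_1|>\max\{|e_2|,|e_3|\}$. A three-way sign analysis on $n_2n_3$ closes the argument. If $n_2n_3<0$, then $(e_2n_2)(e_3n_3)>0$, so $|e_1||n_1|=|e_2||n_2|+|e_3||n_3|<|e_1|(|n_2|+|n_3|)$, yielding $|n_1|<|n_2|+|n_3|$ and contradicting $n$ not in the first set. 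If $n_2n_3>0$, then $(e_2n_2)(e_3n_3)<0$, so $|e_1||n_1|\leq\max\{|e_2||n_2|,|e_3||n_3|\}<|e_1|\max\{|n_2|,|n_3|\}$, contradicting $n$ not in the second set. If $n_2n_3=0$, say $n_3=0$, then $|e_1n_1|=|e_2n_2|$ forces $|n_1|<|n_2|$ (the further subcase $n_2=n_3=0$ would force $e_1=0$), again a contradiction.

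The main obstacle is purely organizational: the argument is a careful case split driven by the explicit description of $\mathcal{M}^{-1}_1$, with each case reducing to a single inequality manipulation. No analytic ideas beyond those already used in the proofs of Lemmas~\ref{lemmamaximalus}--\ref{lemmanormalstoa} are required.
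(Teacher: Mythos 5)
Your proof follows essentially the same route as the paper: reduce to $s=1$ by the symmetry $\mathrm{cof}(QU_1Q)=Q\,\mathrm{cof}(U_1)\,Q$, handle the plane $n_1=0$ via $e_{\max}(\mathrm{cof}\,U_1)=(1,0,0)^T$, establish sufficiency by exhibiting an explicit $e\in\mathcal{M}^{-1}_1$ orthogonal to $n$ in each regime of $\mathrm{sgn}(n_2n_3)$, and establish necessity by the sign analysis on $(e_2n_2)(e_3n_3)$ combined with $|e_1|>\max\{|e_2|,|e_3|\}$. The necessity argument is identical to the paper's. For sufficiency in the case $n_2n_3<0$ your vector $\propto(-(n_2+|n_3|)/n_1,1,-1)^T$ is, up to scaling, the same as the paper's $(n_2-n_3,-n_1,n_1)^T$.

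The one slip is in the $n_2n_3>0$ construction. With $e\propto\bigl((n_3-\varepsilon n_2)/n_1,\,-\varepsilon,\,1\bigr)^T$ one gets $e\cdot n=2(n_3-\varepsilon n_2)\neq0$; the first component should carry the opposite sign, i.e.\ $e_1=(\varepsilon n_2-n_3)/n_1$, after which $e\cdot n=0$ and, for $\varepsilon$ small, $|e_1|=(n_3-\varepsilon n_2)/|n_1|>1=\max\{|e_2|,|e_3|\}$ and $e_2e_3=-\varepsilon<0$, so $e\in\mathcal{M}^{-1}_1$. Once that sign is fixed, your small-$\varepsilon$ perturbation is a perfectly good alternative to the paper's device of scaling the third component by a large $\kappa\geq1$; the two tricks buy the same thing, namely enough slack to push $|e_1|$ above $\max\{|e_2|,|e_3|\}$ while keeping $e_2e_3<0$ strictly. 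Your separate treatment of the degenerate subcases ($n_2n_3=0$) is organized slightly differently from the paper, which folds them into the $\geq0$ branch, but the content is the same.
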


\begin{proof}
We only show this for $s=1$ and the rest follows easily due to symmetry. Note that the plane $\lbrace n\in S^{2}: n\cdot (1,0,0)^T=0 \rbrace$ 
corresponds precisely to the set of vectors perpendicular to $e_{\max}(\mathrm{cof}\,U_1)$ and we only deal with the remaining maximal directions.

Assume first that $n\in S^2$ has the representation above; we wish to conclude that $e\cdot n=0$ for some $e\in\mathcal{M}^{-1}_{1}$. Note that $n_2$ and $n_3$ cannot 
both be zero and we may also assume that $n_1\neq0$ as otherwise
\[n\cdot (1,0,0)^T=0.\]
Next, suppose that either $n_2=0$ or $n_3=0$; without loss of generality assume that $n_3=0$. Then, from the above representations, $\vert n_1\vert<\vert n_2\vert$ so 
that the vector $e=\frac{1}{\rho}(n_2,-n_1,n_1)$ is an element of $\mathcal{M}^{-1}_{1}$ ($\rho>0$ forces $\vert e\vert=1$) and satisfies
\[\rho e\cdot n= n_2n_1-n_1n_2=0.\]
Hence, we may assume that $n_1n_2n_3\neq0$. Let us consider the case $n_{2}n_{3}< 0$ and $\vert n_{1}\vert<\vert n_{2}\vert +\vert n_{3}\vert $. Since $n_2n_3<0$ we infer 
that $\vert n_2-n_3\vert=\vert n_2\vert +\vert n_3\vert$, i.e.~$\vert n_2-n_3\vert>\vert n_1\vert$ and the vector $\frac{1}{\rho}(n_2-n_3,-n_1,n_1)^T$ is an 
element of $\mathcal{M}^{-1}_{1}$ satisfying
\[\frac{1}{\rho}(n_2-n_3,-n_1,n_1)^T\cdot n=0.\]
Next let $n_{2}n_{3}> 0$ and $\vert n_{1}\vert<\max\lbrace\vert n_{2}\vert,\vert n_{3}\vert\rbrace$. Without loss of generality, suppose that $\vert n_3\vert\geq\vert n_2\vert$ and let $\kappa\geq1$ be any number such that
\[\kappa\frac{\vert n_3\vert}{\vert n_1\vert}-\frac{\vert n_2\vert}{\vert n_1\vert}>\kappa.\]
The existence of $\kappa$ is trivial since $\vert n_3\vert>\vert n_1\vert$. With $\rho>0$ a normalizing factor, define the vector
\[e=\frac{1}{\rho}(\kappa n_3-n_2,n_1,-\kappa n_1)^T.\]
Clearly, $e\cdot n=0$ and we are left to show that $e\in\mathcal{M}^{-1}_{1}$. But $e_2e_3=-\kappa n^{2}_{1}<0$ since we are assuming that $n_1\neq0$; also, since 
$n_2n_3>0$ and $\kappa\geq 1$ we infer that
\begin{equation*}
\vert e_1\vert=\vert\kappa n_3- n_2\vert=\kappa\vert n_3\vert-\vert n_2\vert>\kappa\vert n_1\vert=\max\left\{\vert e_2\vert , \vert e_3\vert\right\}.
\end{equation*}

Conversely, assume that $n\in S^2$ and that there exists $e\in\mathcal{M}^{-1}_{1}$ such that $e\cdot n=0$. Then,
\begin{equation}
\label{eq:lemmanormalstob2}
\vert n_{1}e_{1}\vert =\vert n_{2}e_{2}+n_{3}e_{3}\vert
\end{equation}
and we distinguish between two cases depending on the sign of $n_{2}n_{3}$. Note that since $e\in\mathcal{M}^{-1}_{1}$, it must be the case that $e_{2}e_{3}< 0$.
If $n_{2}n_{3}\leq 0$, $\left(n_{2}e_{2}\right)\left(n_{3}e_{3}\right)\geq 0$ and $\vert n_{2}e_{2}+n_{3}e_{3}\vert=\vert n_{2}e_{2}\vert +\vert n_{3}e_{3}\vert$, i.e. by 
(\ref{eq:lemmanormalstob2}),
\begin{equation*}
\vert n_{1}e_{1}\vert =\vert n_{2}e_{2}\vert +\vert n_{3}e_{3}\vert\leq\max\lbrace\vert e_{2}\vert,\vert e_{3}\vert\rbrace\left(\vert n_{2}\vert +\vert n_{3}\vert\right)
<\vert e_{1}\vert\left(\vert n_{2}\vert +\vert n_{3}\vert\right)
\end{equation*}
since $e\in\mathcal{M}^{-1}_{1}$. Now $e_{1}\neq 0$ as otherwise, $e$ belonging to $\mathcal{M}^{-1}_{1}$, forces $e_{2}=e_{3}=0$. Therefore, $\vert n_{1}\vert < \vert n_{2}\vert +\vert n_{3}\vert$ and this case is finished.

On the other hand, if $n_{2}n_{3}\geq 0$ we get that $\left(n_{2}e_{2}\right)\left(n_{3}e_{3}\right)\leq 0$ and thus, $\vert n_{1}e_{1}\vert =\vert\vert n_{2}e_{2}\vert -\vert n_{3}e_{3}\vert\vert$. Then, by (\ref{eq:lemmanormalstob2}),
\begin{equation*}
\vert n_{1}e_{1}\vert =\vert\vert n_{2}e_{2}\vert -\vert n_{3}e_{3}\vert\vert\leq\max\lbrace\vert n_{2}e_{2}\vert,\vert n_{3}e_{3}\vert\rbrace <
\vert e_{1}\vert\max\lbrace\vert n_{2}\vert,\vert n_{3}\vert\rbrace
\end{equation*}
since $e\in\mathcal{M}^{-1}_{1}$. But $e_{1}\neq 0$ now says that $\vert n_{1}\vert < \max\lbrace\vert n_{2}\vert,\vert n_{3}\vert\rbrace$ and the proof is complete.\qed 
\end{proof}

\begin{figure}[ht]
	\centering
		\includegraphics[scale=0.5]{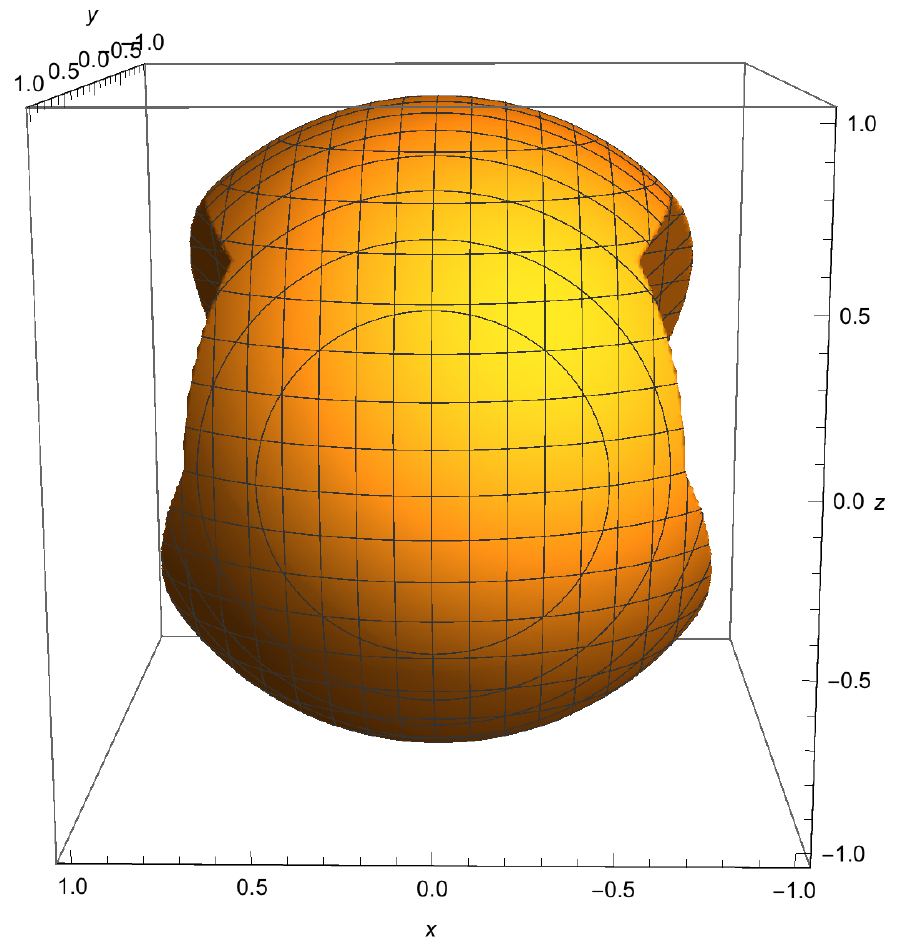} 
	\caption{All vectors $n\in\mathcal{N}^{-1}_{1}$ calculated using the lattice parameters of the CuAlNi specimen in Seiner's experiment.}
	\label{fig:coveringthesphereinverse}
\end{figure}

As described already, we are interested in whether the set $\mathcal{N}_{s}\cup U^{2}_{s}\mathcal{N}^{-1}_{s}$ can exhaust the entire unit sphere; the answer is next seen 
to be negative.

\begin{lemma}
Suppose that a unit vector $n$ satisfies $n\notin\mathcal{N}_{s}\cup\mathcal{N}^{-1}_{s}$. Then,
\[U^{2}_sn\notin\mathcal{N}_{s}\cup U^{2}_{s}\mathcal{N}^{-1}_{s}.\]
In particular, there exists $m\in S^{2}$ such that $m\notin\mathcal{N}_{s}\cup U^{2}_{s}\mathcal{N}^{-1}_{s}$.
\end{lemma}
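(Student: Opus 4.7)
The plan is to reduce to the case $s=1$ using the symmetry-related structure of the variants (exactly as in the proofs of Lemmas~\ref{lemmamaximalus}--\ref{lemmanormalstob}), and then verify the claim by direct, largely algebraic computation using the explicit characterizations of $\mathcal{N}_1$ and $\mathcal{N}_1^{-1}$.

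First I would unwind the hypothesis $n\notin\mathcal{N}_1\cup\mathcal{N}_1^{-1}$ by taking complements in the explicit descriptions. From $n\notin\mathcal{N}_1$ (Lemma~\ref{lemmanormalstoa}) one gets $n_2n_3>0$ and $|n_1|<|n_2|+|n_3|$. From $n\notin\mathcal{N}_1^{-1}$ (Lemma~\ref{lemmanormalstob}), combined with $n_2n_3>0$, the only surviving obstruction is $|n_1|\geq\max\{|n_2|,|n_3|\}$ together with $n_1\neq 0$. Thus the assumption on $n$ is equivalent to
\[
n_2n_3>0,\qquad n_1\neq 0,\qquad \max\{|n_2|,|n_3|\}\leq |n_1|<|n_2|+|n_3|.
\]
Next, because $U_1^2$ is invertible, $U_1^2 n\in U_1^2\mathcal{N}_1^{-1}$ is equivalent (after normalization) to $n\in\mathcal{N}_1^{-1}$, which is ruled out by hypothesis. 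So the whole content of the claim is to show $U_1^2 n\notin\mathcal{N}_1$.

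I would then carry out the explicit computation of $m:=U_1^2 n$, which gives $m_1=\beta^2 n_1$ together with the $(2,3)$-block
\[
m_2=\tfrac{\alpha^2+\gamma^2}{2}n_2+\tfrac{\alpha^2-\gamma^2}{2}n_3,\qquad m_3=\tfrac{\alpha^2-\gamma^2}{2}n_2+\tfrac{\alpha^2+\gamma^2}{2}n_3.
\]
Two algebraic facts drive the argument. The first is the cross-term identity
\[
m_2m_3=\tfrac{\alpha^4-\gamma^4}{4}(n_2^2+n_3^2)+\tfrac{\alpha^4+\gamma^4}{2}n_2n_3,
\]
which is strictly positive under assumption (A1), since $\alpha>\gamma$ and $n_2n_3>0$ (the degenerate case $n_2=n_3=0$ contradicts $|n_1|<|n_2|+|n_3|$). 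The second is the miraculous cancellation $m_2+m_3=\alpha^2(n_2+n_3)$, which, combined with $m_2m_3>0$, yields $|m_2|+|m_3|=|m_2+m_3|=\alpha^2(|n_2|+|n_3|)$. Since $\beta<1<\alpha$ by (A1), we then have $|m_1|=\beta^2|n_1|<\alpha^2|n_1|<\alpha^2(|n_2|+|n_3|)=|m_2|+|m_3|$, so $m\notin\mathcal{N}_1$ by the characterization of Lemma~\ref{lemmanormalstoa}. This proves the first claim for $s=1$, and the general case follows from the symmetry relations $U_s=QU_1Q$ used earlier in the section.

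For the ``in particular'' conclusion, I only need to exhibit one $n$ in the complement, and the symmetric choice $n=\tfrac{1}{\sqrt 3}(1,1,1)^T$ works: $n_2n_3=1/3>0$, $n_1\neq 0$, $\max\{|n_2|,|n_3|\}=|n_1|=1/\sqrt 3$, and $|n_1|=1/\sqrt 3<2/\sqrt 3=|n_2|+|n_3|$, so $n\notin\mathcal{N}_1\cup\mathcal{N}_1^{-1}$, and the first part of the lemma applies. I do not anticipate a genuine obstacle in this argument; the only delicate point is the bookkeeping of the complement of $\mathcal{N}_1^{-1}$ (which is a union of three pieces in Lemma~\ref{lemmanormalstob}), but once one observes that two of those three conditions collapse under $n_2n_3>0$, everything reduces to the two short algebraic identities above.
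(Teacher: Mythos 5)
Your proof is correct and follows essentially the same strategy as the paper: establish that $n\notin\mathcal{N}_1$ implies $U_1^2n\notin\mathcal{N}_1$ via the computation of $[U_1^2n]_2[U_1^2n]_3>0$ and $[U_1^2n]_2+[U_1^2n]_3=\alpha^2(n_2+n_3)$, combine with the tautological observation that $U_1^2$ is invertible to handle the $\mathcal{N}_1^{-1}$ part, and then exhibit vectors satisfying the complement conditions. One tiny slip: (A1) gives $\beta\leq 1\leq\gamma<\alpha$, so $\beta\leq 1$ rather than $\beta<1$; this is harmless since you only use $\beta<\alpha$ (and $|n_1|>0$, which follows from the hypothesis), but the paper avoids the issue by writing $\beta^2|n_1|<\alpha^2(|n_2|+|n_3|)$ directly, which only needs $\beta\leq\alpha$ together with the strict inequality $|n_1|<|n_2|+|n_3|$.
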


\begin{proof}
We only prove this for $s=1$, the rest being identical. Let us first show that
\begin{equation}
\label{eq:normalsdonotexhaustsphere1}
 n\notin\mathcal{N}_{1}\Rightarrow U^{2}_{1}n\notin\mathcal{N}_{1}.
\end{equation}
Let $n\notin\mathcal{N}_{1}$ so that $n_{2}n_{3}>0$ and $\vert n_{1}\vert <\vert n_{2}\vert +\vert n_{3}\vert$. Then,
\begin{equation*}
U^{2}_{1}n=\left(\beta^{2} n_{1},\;\frac{\alpha^{2}+\gamma^{2}}{2}n_{2}+\frac{\alpha^{2}-\gamma^{2}}{2}n_{3},\;\frac{\alpha^{2}-\gamma^{2}}{2}n_{2}+\frac{\alpha^{2}+
\gamma^{2}}{2}n_{3}\right)
\end{equation*}
and we infer that, since $n_{2}n_{3}>0$ and $\alpha>\gamma>0$, $\left[U^{2}_{1}n\right]_{2}\left[U^{2}_{1}n\right]_{3}>0$. Moreover, as $n\notin\mathcal{N}_{1}$ and 
$\beta\leq\alpha$,
$$\vert\left[U^{2}_{1}n\right]_{1}\vert=\beta^{2}\vert n_{1}\vert<\alpha^{2}\left(\vert n_{2}\vert +\vert n_{3}\vert\right)=\alpha^{2}\vert n_{2}+n_{3}\vert\,\,
(\mbox{since $n_{2}n_{3}>0$}).$$
But then, since $\left[U^{2}_{1}n\right]_{2}\left[U^{2}_{1}n\right]_{3}>0$,
\begin{eqnarray}
\vert\left[U^{2}_{1}n\right]_{1}\vert&=&\vert\left(\frac{\alpha^{2}+\gamma^{2}}{2}n_{2}+\frac{\alpha^{2}-\gamma^{2}}{2}n_{3}\right)+\left(\frac{\alpha^{2}-\gamma^{2}}
{2}n_{2}+\frac{\alpha^{2}+\gamma^{2}}{2}n_{3}\right)\vert\nonumber\\
&=&\vert\left[U^{2}_{1}n\right]_{2}+\left[U^{2}_{1}n\right]_{3}\vert=\vert\left[U^{2}_{1}n\right]_{2}\vert +\vert\left[U^{2}_{1}n\right]_{3}\vert\nonumber
\end{eqnarray}
and (\ref{eq:normalsdonotexhaustsphere1}) is established. Then, $n\notin\mathcal{N}_{1}\cup\mathcal{N}^{-1}_{1}\Rightarrow U^{2}_{1}n\notin\mathcal{N}_{1}\cup 
U^{2}_{1}\mathcal{N}^{-1}_{1}$.
Finally, it is easy to check that whenever $n=\left(n_{1},n_{2},n_{3}\right)^T$ satisfies
\begin{equation}
n_{2}n_{3}>0\:\:\mbox{and}\:\:\max\lbrace\vert n_{2}\vert,\vert n_{3}\vert\rbrace\leq\vert n_{1}\vert <\vert n_{2}\vert +\vert n_{3}\vert ,\nonumber
\end{equation}
then $n\notin\mathcal{N}_{1}\cup\mathcal{N}^{-1}_{1}$, i.e.~$m=U^{2}_{1}n\notin\mathcal{N}_{1}\cup U^{2}_{1}\mathcal{N}^{-1}_{1}$.\qed
\end{proof}

\begin{figure}[ht]
	\centering
		\includegraphics[scale=0.5]{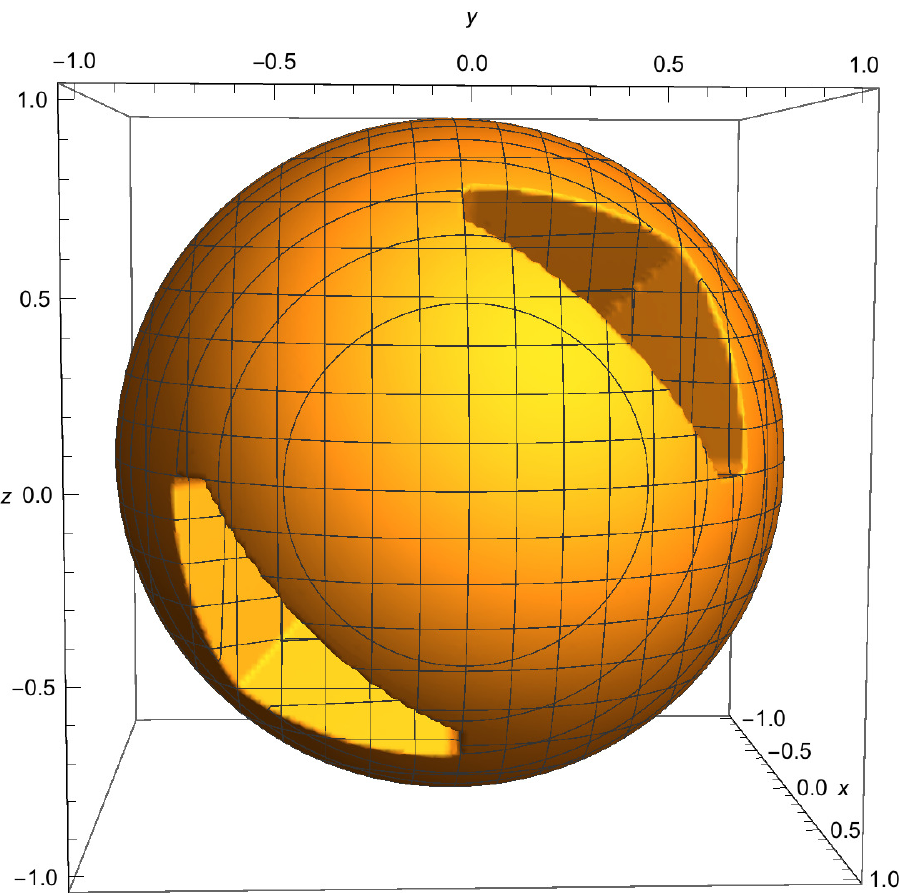} 
	\caption{All vectors $n\in\mathcal{N}_{1}\cup U^{2}_{1}\mathcal{N}^{-1}_{1}$ calculated using the lattice parameters of the CuAlNi specimen in Seiner's experiment.}
	\label{fig:coveringthesphere}
\end{figure}

The edges of the CuAlNi specimen in Seiner's experiment were oriented very nearly along $\left(1,0,0\right)^{T}$, $\left(0,1,0\right)^{T}$ and $\left(0,0,1\right)^{T}$ 
and our result becomes applicable for any $s=1,\ldots\,,6$. In particular, suppose that the mechanically stabilized variant of martensite is $U_{1}$.

As remarked already, the lattice parameters satisfy assumptions (A1)-(A4) and the representations of $\mathcal{M}_{1}$ and $\mathcal{M}^{-1}_{1}$ given in 
Lemma~\ref{lemmamaximalus} and Lemma~\ref{lemmamaximalusinverse} hold. In particular, the maximal directions for $U_{1}$ and $U^{-1}_{1}$ are
\begin{eqnarray*}
\mathcal{M}_{1}&=&\left\lbrace e\in S^{2}: e_{2}e_{3}\geq 0,\:\:\vert e_{1}\vert\leq\min\left\lbrace\vert e_{2}\vert ,\vert e_{3}\vert\right\rbrace \right\rbrace,\\
\mathcal{M}^{-1}_{1}&=&\left\lbrace e\in S^{2}:e_{2}e_{3}< 0,\:\:\vert e_{1}\vert>\max\left\lbrace\vert e_{2}\vert ,\vert e_{3}\vert\right\rbrace \right\rbrace\cup(1,0,0)^T.
\end{eqnarray*}
But the vectors $\left(0,1,0\right)^T$, $\left(0,0,1\right)^T$ belong to $\mathcal{M}_{1}$ and the vector $\left(1,0,0\right)^{T}$ is an element of $\mathcal{M}^{-1}_{1}$ and, being an eigenvector of $U_{1}$, it is also an element of $U^{-2}_{1}\mathcal{M}^{-1}_{1}$.
Therefore, one can cover any nucleation region $B_{f}$ on a face or $B_{e}$ at an edge and our main result applies.

\section{General remarks}
\label{sec:4.2}

It is worth mentioning that the set of maximal directions - as calculated in Lemma~\ref{lemmamaximalusinverse}, say for $s=1$, (similarly for the rest) - is the interior 
of the set
\[\left\lbrace e\in S^{2}: e_{2}e_{3}\leq 0,\: \vert e_{1}\vert\geq\max\left\lbrace\vert e_{2}\vert ,\vert e_{3}\vert\right\rbrace\right\rbrace,\]
where strict inequalities have been replaced with inequalities. In particular, we note that the above set contains the vector $e_{\max}(\mathrm{cof}\,U_1)$. Then, 
defining the set
\[\bar{\mathcal{M}}^{-1}_{s}:=\left\lbrace e\in S^{2}: \vert\mathrm{cof}\,U_se\vert=\max_{i}\left\{\vert\mathrm{cof}\,U_ie\vert,1\right\}\right\rbrace\]
it is possible to use a continuity argument and deduce the quasiconvexity conditions at any face with normal perpendicular to, or any edge in the direction of, a vector 
in $U^{-2}_{s}\bar{\mathcal{M}}^{-1}_{s}$. However, we have not been able to show that the set $\mathcal{M}^{-1}_{s}$ is the interior of $\bar{\mathcal{M}}^{-1}_{s}$ for a general transformation and 
without stringent assumptions on the lattice parameters.

Also, we note that for any $s=1,\ldots\,,6$ and appropriate lattice parameters, the set $\mathcal{M}_{s}\cup U^{-2}_{s}\mathcal{M}^{-1}_{s}$ contains several sets of 
three linearly independent directions and our methods apply to a variety of parallelepipeds with edges along these directions. 
However, for these lattice parameters, $\mathcal{M}_{s}\cup U^{-2}_{s}\mathcal{M}^{-1}_{s}$ does not exhaust the unit sphere. Hence our result 
leaves open the possibility that for differently cut specimens nucleation could occur at a face or an edge.

We stress that our analysis carries through even if we depart from the exact form of the energy wells of the cubic-to-orthorhombic transition. 
This exact form of the set $K$ is only relevant for the calculation of the maximal directions and the remainder of our analysis remains applicable for any number of martensitic variants 
and any transformation with cubic austenite. In fact, for our analysis to apply we need only require that
\[K=SO\left(3\right)\cup\bigcup^{N}_{i=1}SO\left(3\right)U_{i}\]
is such that the variants $U_{i}$ are symmetry related and satisfy the constraint on the determinant and the largest eigenvalue of the cofactor matrix. Of course, this is 
not necessarily the case for the construction at a corner.

We note that the argument of maximal directions reduces to measuring the length of lines; having fixed the endpoints of the deformed segments, the definition of a maximal 
direction ensures that its length cannot be greater than the length of the straight line joining the endpoints and, thus, it must precisely be that straight line. Similar ideas are 
also used in Sivaloganathan \& Spector~\cite{jay&spector} to show that for an incompressible cylinder under uniaxial 
extension, the unique minimizer is a homogeneous, isoaxial deformation (under additional constitutive hypotheses on the stored-energy function).
Note that it is also possible to use a similar method based on the cofactor matrix and areas of surfaces rather than lengths. We mention, however, that such a method allows 
one to deduce the quasiconvexity condition on faces with normals in $\mathcal{M}^{-1}_{s}$ and does not seem applicable to edges. In particular, since 
$\mathcal{M}^{-1}_{s}\subset\mathcal{N}_{s}$, we would add no new faces.

Also, as we alluded to in the introduction, there are connections between our work and that of Grabovsky \& Mengesha~\cite{grabovsky2009}. In particular, we mentioned 
that Grabovsky and Mengesha base their sufficiency proof on a decomposition lemma splitting arbitrary variations into a strong and a weak part which cannot lower the 
energy due to the (uniform) positivity of the second variation and the (uniform) quasiconvexity conditions, respectively.
In our analysis, we have restricted attention to localized variations corresponding to the localized nucleation of austenite. Nevertheless, if we depart from our 
simplified setting of the singular energy, one expects to be able to prove something stronger using the machinery of Grabovsky and Mengesha; in particular, one might be 
able to show that whenever $\nu$ is a $W^{1,\infty}$ gradient Young measure such that $\mathrm{d}\left(\delta_{U_s},\nu\right)$ is sufficiently small and 
$I\left(\nu\right)<I\left(\delta_{U_s}\right)$, then $\nu$ must necessarily involve nucleation at a corner. Here, the distance $\mathrm{d}\left(\cdot,\,\cdot\right)$, 
given by (\ref{eq:ymdistance}), comes from the metrization of the weak$\ast$ topology in $L^{\infty}_{w^{\ast}}\left(\Omega,\mathcal{M}\left(M^{3\times 3}\right)\right)$. 
This is a direction worth investigating further.

Concluding our final remarks, we mention that the same nucleation mechanism at a corner was also observed by Seiner for a CuAlNi specimen which was mechanically stabilized as a 
compound twin. We recall that, for typical lattice parameters, compound twins are also unable to form directly compatible interfaces with austenite so that the mechanical 
stabilization effect comes into play. We note that our methods, presented in this chapter, may be applicable to this case as well.
Nucleation at a corner also occurs for a homogeneously heated specimen consisting of a pure variant of martensite, but at a temperature significantly higher than for a corresponding specimen consisting of 
thermally induced martensite. Idealizing the latter by a simple laminate of martensite that is compatible with the austenite, one can intuitively understand this difference of temperatures. 
For the thermally induced martensite, nucleating a volume $V$ of austenite at a corner reduces the energy by $\delta V$ together with another term proportional to $V$ representing the energy of the twin 
interfaces in the laminate that are no longer present in the austenite. However, nucleating the same volume $V$ from a single variant using the construction in Lemma~\ref{prop:corner} again reduces the energy by $\delta V$, 
but there is an {\it increase} of energy required for the formation of the twin interfaces in the laminate interpolating between the austenite and the single variant. 
Thus it is energetically easier to nucleate the austenite in the thermally induced case. Of course the model considered in this paper ignores such interfacial energy contributions, 
and it would be interesting to be able to include them.

Incorporating interfacial energy would, however,  give rise to some nontrivial issues. For example, it is shown in Ball \& Crooks \cite{ballcrooks} that in a second gradient model of interfacial energy the austenite lies in a (shallow) potential well in $L^1$, rather than being unstable to the nucleation of an austenite-martensite interface as in our model. 

Another natural question concerns how our results are affected by a slight smoothing of the edges and corners of the specimen. A natural conjecture would be that the pure variant of martensite might then be stable, lying in a shallow potential well as in the case of small interfacial energy just described, rather than being unstable to nucleation of austenite at a corner as in our model.

Lastly, similar situations in which the incompatibility of gradients results in hysteresis are documented in other contexts, e.g.~Ball \& James~\cite{newBJ} or Ball, Chu \& 
James~\cite{ballchujames}. In the latter, though in a different way, the mathematical analysis argues that despite the existence of a state with lower energy than a certain 
martensitic variant, it is necessarily geometrically incompatible with it, giving rise to an energy barrier which keeps the specific martensitic variant stable.
\newpage

\section*{Appendix A : Symmetry relations between the cubic-to-orthorhombic variants}
\label{appendix1}

\noindent\textbf{Cubic symmetry group}

\noindent Below we give the 24 elements of the cubic symmetry group. The orthonormal vectors $\left\{i_{1},i_{2},i_{3}\right\}$ correspond to the cubic basis and $Q=Q\left[\phi,e\right]$ denotes a rotation by angle $\phi$ about the axis $e$.
\begin{eqnarray}
\mathbf{1}&=& i_{1}\otimes i_{1}+i_{2}\otimes i_{2}+i_{3}\otimes i_{3}\nonumber\\
Q_{1}&=&Q\left[2\pi /3,i_{1}+i_{2}+i_{3}\right]=i_{1}\otimes i_{3}+i_{2}\otimes i_{1}+i_{3}\otimes i_{2}\nonumber\\
Q_{2}&=&Q\left[-2\pi /3,i_{1}+i_{2}+i_{3}\right]=i_{1}\otimes i_{2}+i_{2}\otimes i_{3}+i_{3}\otimes i_{1}\nonumber\\
Q_{3}&=&Q\left[2\pi /3,-i_{1}+i_{2}+i_{3}\right]=-i_{1}\otimes i_{2}+i_{2}\otimes i_{3}-i_{3}\otimes i_{1}\nonumber\\
Q_{4}&=&Q\left[-2\pi /3,-i_{1}+i_{2}+i_{3}\right]=-i_{1}\otimes i_{3}-i_{2}\otimes i_{1}+i_{3}\otimes i_{2}\nonumber\\
Q_{5}&=&Q\left[2\pi /3,i_{1}-i_{2}+i_{3}\right]=-i_{1}\otimes i_{2}-i_{2}\otimes i_{3}+i_{3}\otimes i_{1}\nonumber\\
Q_{6}&=&Q\left[-2\pi /3,i_{1}-i_{2}+i_{3}\right]=i_{1}\otimes i_{3}-i_{2}\otimes i_{1}-i_{3}\otimes i_{2}\nonumber\\
Q_{7}&=&Q\left[2\pi /3,i_{1}+i_{2}-i_{3}\right]=i_{1}\otimes i_{2}-i_{2}\otimes i_{3}-i_{3}\otimes i_{1}\nonumber\\
Q_{8}&=&Q\left[-2\pi /3,i_{1}+i_{2}-i_{3}\right]=-i_{1}\otimes i_{3}+i_{2}\otimes i_{1}-i_{3}\otimes i_{2}\nonumber\\
Q_{9}&=&Q\left[\pi, i_{1}+i_{2}\right]=i_{1}\otimes i_{2}+i_{2}\otimes i_{1}-i_{3}\otimes i_{3}\nonumber\\
Q_{10}&=&Q\left[\pi, i_{1}-i_{2}\right]=-i_{1}\otimes i_{2}-i_{2}\otimes i_{1}-i_{3}\otimes i_{3}\nonumber\\
Q_{11}&=&Q\left[\pi, i_{1}+i_{3}\right]=i_{1}\otimes i_{3}-i_{2}\otimes i_{2}+i_{3}\otimes i_{1}\nonumber\\
Q_{12}&=&Q\left[\pi, i_{1}-i_{3}\right]=-i_{1}\otimes i_{3}-i_{2}\otimes i_{2}-i_{3}\otimes i_{1}\nonumber\\
Q_{13}&=&Q\left[\pi, i_{2}+i_{3}\right]=-i_{1}\otimes i_{1}+i_{2}\otimes i_{3}+i_{3}\otimes i_{2}\nonumber\\
Q_{14}&=&Q\left[\pi, i_{2}-i_{3}\right]=-i_{1}\otimes i_{1}-i_{2}\otimes i_{3}-i_{3}\otimes i_{2}\nonumber\\
Q_{15}&=&Q\left[\pi, i_{1}\right]=i_{1}\otimes i_{1}-i_{2}\otimes i_{2}-i_{3}\otimes i_{3}\nonumber\\
Q_{16}&=&Q\left[\pi, i_{2}\right]=-i_{1}\otimes i_{1}+i_{2}\otimes i_{2}-i_{3}\otimes i_{3}\nonumber\\
Q_{17}&=&Q\left[\pi, i_{3}\right]=-i_{1}\otimes i_{1}-i_{2}\otimes i_{2}+i_{3}\otimes i_{3}\nonumber\\
Q_{18}&=&Q\left[\pi /2, i_{1}\right]=i_{1}\otimes i_{1}-i_{2}\otimes i_{3}+i_{3}\otimes i_{2}\nonumber\\
Q_{19}&=&Q\left[-\pi /2, i_{1}\right]=i_{1}\otimes i_{1}+i_{2}\otimes i_{3}-i_{3}\otimes i_{2}\nonumber\\
Q_{20}&=&Q\left[\pi /2, i_{2}\right]=i_{1}\otimes i_{3}+i_{2}\otimes i_{2}-i_{3}\otimes i_{1}\nonumber\\
Q_{21}&=&Q\left[-\pi /2, i_{2}\right]=-i_{1}\otimes i_{3}+i_{2}\otimes i_{2}+i_{3}\otimes i_{1}\nonumber\\
Q_{22}&=&Q\left[\pi /2, i_{3}\right]=-i_{1}\otimes i_{2}+i_{2}\otimes i_{1}+i_{3}\otimes i_{3}\nonumber\\
Q_{23}&=&Q\left[-\pi /2, i_{3}\right]=i_{1}\otimes i_{2}-i_{2}\otimes i_{1}+i_{3}\otimes i_{3}\nonumber
\end{eqnarray}

The following relations between the cubic-to-orthorhombic variants can be found in Hane~\cite{41}.
\newpage

\begin{table}[ht]
\centering
\begin{tabular}{|c|c|c|c|c|c|c|c|c|c|c|c|c|}
\hline  & $\mathbf{1}$ & $Q_{1}$ & $Q_{2}$ & $Q_{3}$ & $Q_{4}$ & $Q_{5}$ & $Q_{6}$ & $Q_{7}$ & $Q_{8}$ & $Q_{9}$ & $Q_{10}$ & $Q_{11}$ \\ 
\hline $U_{1}$ &1 &3 &5 &6 &4 &5 &4 &6 &3 &4 &3 &6 \\ 
\hline $U_{2}$ &2 &4 &6 &5 &3 &6 &3 &5 &4 &3 &4 &5 \\ 
\hline $U_{3}$ &3 &5 &1 &2 &5 &2 &6 &1 &6 &2 &1 &3 \\ 
\hline $U_{4}$ &4 &6 &2 &1 &6 &1 &5 &2 &5 &1 &2 &4 \\ 
\hline $U_{5}$ &5 &1 &3 &3 &2 &4 &1 &4 &2 &5 &5 &2 \\ 
\hline $U_{6}$ &6 &2 &4 &4 &1 &3 &2 &3 &1 &6 &6 &1 \\ 
\hline  & $Q_{12}$ & $Q_{13}$ & $Q_{14}$ & $Q_{15}$ & $Q_{16}$ & $Q_{17}$ & $Q_{18}$ & $Q_{19}$ & $Q_{20}$ & $Q_{21}$ & $Q_{22}$ & $Q_{23}$ \\ 
\hline $U_{1}$ &5 &1 &1 &1 &2 &2 &2 &2 &5 &6 &4 &3 \\ 
\hline $U_{2}$ &6 &2 &2 &2 &1 &1 &1 &1 &6 &5 &3 &4 \\ 
\hline $U_{3}$ &3 &6 &5 &4 &3 &4 &6 &5 &4 &4 &1 &2 \\ 
\hline $U_{4}$ &4 &5 &6 &3 &4 &3 &5 &6 &3 &3 &2 &1 \\ 
\hline $U_{5}$ &1 &4 &3 &6 &6 &5 &3 &4 &2 &1 &6 &6 \\ 
\hline $U_{6}$ &2 &3 &4 &5 &5 &6 &4 &3 &1 &2 &5 &5 \\ 
\hline
\end{tabular}
\label{table}
\caption{Symmetry relations amongst the variants of a cubic-to-orthorhombic transformation, where the rotations of the cubic symmetry group are given above.}
\end{table}

As an example of how Table~1 is meant to be used, consider the variant $U_{3}$ and the rotation $Q_{14}$; the number corresponding to these is 5. Then, the table says that $Q_{14}U_{3}Q^{T}_{14}=U_{5}$.

\section*{Appendix B : Twin and habit plane elements for CuAlNi}
\label{appendix4}
In this Appendix we give the promised missing details of the proof of Lemma~\ref{prop:corner}. In order to do so, we need to summarize the results concerning the twin and habit plane shears and normals for all variant pairs $(l,s)$ in the cubic-to-orthorhombic transition of CuAlNi, i.e. the vectors $a$, $n$ (twin plane) and the vectors $b$, $m$ (habit plane) such that
\begin{eqnarray}
\label{eq:appendixb1}
 QU_l-U_s&=&a\otimes n,\\
\label{eq:appendixb2}
 U_s+\lambda a\otimes n&=&R\left(\mathbf{1}+b\otimes m\right).
\end{eqnarray}
We use the following notation:
\begin{eqnarray*}
 a^{I},\,n^{I}&:&\mbox{Type-I twin plane shear and normal respectively;}\\
 a^{II},\,n^{II}&:&\mbox{Type-II twin plane shear and normal respectively;}\\
 b^{+}_{1},\,m^{+}_{1}&:&\mbox{habit plane shear and normal respectively, using the Type-II twinning}\\
&&\mbox{elements $a^{II}$, $n^{II}$, $\kappa=+1$ as in~\cite{1}}\\
&&\mbox{and volume fraction $\lambda\in(0,1/2)$;}\\
 b^{-}_{1},\,m^{-}_{1}&:&\mbox{habit plane shear and normal respectively, using the Type-II twinning}\\
&&\mbox{elements $a^{II}$, $n^{II}$, $\kappa=-1$ as in~\cite{1}}\\
&&\mbox{and volume fraction $\lambda\in(0,1/2)$;}\\
 b^{+}_{2},\,m^{+}_{2}&:&\mbox{habit plane shear and normal respectively, using the Type-II twinning}\\
&&\mbox{elements $a^{II}$, $n^{II}$, $\kappa=+1$ as in~\cite{1}}\\
&&\mbox{and volume fraction $\lambda\in(1/2,1)$;}\\
 b^{-}_{2},\,m^{-}_{2}&:&\mbox{habit plane shear and normal respectively, using the Type-II twinning}\\
&&\mbox{elements $a^{II}$, $n^{II}$, $\kappa=-1$ as in~\cite{1}}\\
&&\mbox{and volume fraction $\lambda\in(1/2,1)$.}
\end{eqnarray*}
The Type-I and Type-II twinning elements can also be found in Hane~\cite{41}. The algebra for the habit plane elements of the austenite - Type-II twinned martensite interfaces becomes too involved for explicit analysis 
and we calculate the components numerically using \textit{Mathematica}.
The procedure involved in the numerical calculation simply verifies the hypotheses for the existence of a solution to the habit plane equation and calculates the solutions directly using the formulae from~\cite{1}. 
We note that the lattice parameters of Seiner's specimen are given by $\alpha=1.06372$, $\beta=0.91542$ and $\gamma=1.02368$. For these parameters and the Type-II twinning elements of any variant pair, $\lambda^{\ast}=0.300782\in(0,1/2)$ and
\[\delta =-2.37742<-2,\:\:\eta=0.0091991>0,\] 
where $\lambda^{\ast}$, $\delta$, $\eta$ are as in~\cite{1}, i.e.~for each variant pair there are four distinct solutions to the habit plane equation using the Type-II twinning elements as above.

In this Appendix, we do not present the twin plane elements for compound twins as, for our lattice parameters, these cannot form compatible interfaces with austenite (see Bhattacharya~\cite{45} for the appropriate conditions on the lattice parameters allowing for austenite - compound twinned martensite interfaces)
and hence cannot be used for the construction in Lemma~\ref{prop:corner} of the microstructure reducing the energy at a corner.
Also, we do not give the habit plane elements for austenite - Type-I twinned martensite interfaces as the Type-I twin planes belong to the family of crystallographically equivalent planes $\left\{110\right\}$ and the corresponding
twin plane normals are perpendicular to the edges of Seiner's specimen (see Table~2), i.e.~these cannot be used for our construction in Lemma~\ref{prop:corner} either.

Table~2 gives the Type-I twin plane normal $n^{I}$ and twin plane shear $a^{I}$ for the variant pair $(l,s)$. The components $u_1$, $u_2$, $u_3$ of the shears $a^{I}$ are given by
\[\left(\begin{array}{c} u_1\\ u_2\\ u_3\end{array}\right)=\frac{\sqrt{2}}{2\alpha^2\gamma^2 + \beta^2(\alpha^2 + \gamma^2)}\left(\begin{array}{c}
\frac{\alpha+\gamma}{2}\left(4\alpha\gamma\beta^2 - 2\alpha^2\gamma^2 - \beta^2(\alpha^2 + \gamma^2)\right)\\
\beta\left(\beta^2(\alpha^2 + \gamma^2) - 2\alpha^2\gamma^2\right)\\
\frac{\gamma-\alpha}{2}\left(4\alpha\gamma\beta^2 + 2\alpha^2\gamma^2 + \beta^2(\alpha^2 + \gamma^2)\right)                                                            
\end{array}\right).\]

Table~3 gives the Type-II twin plane normal $n^{II}$ and twin plane shear $a^{II}$ for the variant pair $(l,s)$. The components $t_1$, $t_2$ of the normals $n^{II}$ and $v_1$, $v_2$, $v_3$ of the shears $a^{II}$ are given by
\begin{eqnarray*}
 \left(\begin{array}{c} t_1\\ t_2\end{array}\right)&=&\frac{1}{\sqrt{8\beta^2(\beta^2-\alpha^2-\gamma^2)+6\alpha^4-4\alpha^2\gamma^2+6\gamma^2}}\left(\begin{array}{c}
2\beta^2-\alpha^2-\gamma^2\\                                                           
2(\gamma^2-\alpha^2)\end{array}\right),\\
\left(\begin{array}{c} v_1\\ v_2\\ v_3\end{array}\right)&=&\frac{\sqrt{8\beta^2(\beta^2-\alpha^2-\gamma^2)+6\alpha^4-4\alpha^2\gamma^2+6\gamma^2}}{\alpha^2+\gamma^2+2\beta^2}\left(\begin{array}{c}
\frac{\alpha + \gamma}{2}\\
-\beta\\                                                          
\frac{\alpha - \gamma}{2}\end{array}\right).
\end{eqnarray*}

\begin{table}[ht]
\centering
\begin{tabular}{|c|c|c|c|c|c|}
\hline  pair & $\sqrt{2}n^{I}$ & $a^{I}$ & pair & $\sqrt{2}n^{I}$ & $a^{I}$ \\
\hline $(1,3)$ & $\left(1,-1,0\right)$ & $\left(u_1,u_2,u_3\right)$ & $(3,1)$ & $\left(1,-1,0\right)$ & $\left(-u_2,-u_1,-u_3\right)$ \\
\hline $(1,4)$ & $\left(1,1,0\right)$ & $\left(u_1,-u_2,-u_3\right)$ & $(4,1)$ & $\left(1,1,0\right)$ & $\left(-u_2,u_1,u_3\right)$ \\
\hline $(1,5)$ & $\left(1,0,-1\right)$ & $\left(u_1,u_3,u_2\right)$ & $(5,1)$ & $\left(1,0,-1\right)$ & $\left(-u_2,-u_3,-u_1\right)$ \\ 
\hline $(1,6)$ & $\left(1,0,1\right)$ & $\left(u_1,-u_3,-u_2\right)$ & $(6,1)$ & $\left(1,0,1\right)$ & $\left(-u_2,u_3,u_1\right)$ \\
\hline $(2,3)$ & $\left(1,1,0\right)$ & $\left(u_1,-u_2,u_3\right)$ & $(3,2)$ & $\left(1,1,0\right)$ & $\left(-u_2,u_1,-u_3\right)$ \\
\hline $(2,4)$ & $\left(1,-1,0\right)$ & $\left(u_1,u_2,-u_3\right)$ & $(4,2)$ & $\left(1,-1,0\right)$ & $\left(-u_2,-u_1,u_3\right)$ \\
\hline $(2,5)$ & $\left(1,0,1\right)$ & $\left(u_1,u_3,-u_2\right)$ & $(5,2)$ & $\left(1,0,1\right)$ & $\left(-u_2,-u_3,u_1\right)$ \\
\hline $(2,6)$ & $\left(1,0,-1\right)$ & $\left(u_1,-u_3,u_2\right)$ & $(6,2)$ & $\left(1,0,-1\right)$ & $\left(-u_2,u_3,-u_1\right)$ \\
\hline $(3,5)$ & $\left(0,1,-1\right)$ & $\left(u_3,u_1,u_2\right)$ & $(5,3)$ & $\left(0,1,-1\right)$ & $\left(-u_3,-u_2,-u_1\right)$ \\
\hline $(3,6)$ & $\left(0,1,1\right)$ & $\left(-u_3,u_1,-u_2\right)$ & $(6,3)$ & $\left(0,1,1\right)$ & $\left(u_3,-u_2,u_1\right)$ \\
\hline $(4,5)$ & $\left(0,1,1\right)$ & $\left(u_3,u_1,-u_2\right)$ & $(5,4)$ & $\left(0,1,1\right)$ & $\left(-u_3,-u_2,u_1\right)$ \\ 
\hline $(4,6)$ & $\left(0,1,-1\right)$ & $\left(-u_3,-u_1,u_2\right)$ & $(6,4)$ & $\left(0,1,-1\right)$ & $\left(u_3,-u_2,-u_1\right)$ \\
\hline
\end{tabular}
\label{tableb1}
\caption{Components of the Type-I twin plane normal and shear for the different variant pairs in a cubic-to-orthorhombic transition~\cite{41}.}
\end{table}
\begin{table}[ht]
\centering
\begin{tabular}{|c|c|c|c|c|c|}
\hline  pair & $n^{II}$ & $a^{II}$ & pair & $n^{II}$ & $a^{II}$ \\
\hline $(1,3)$ & $\left(t_1,t_1,t_2\right)$ & $\left(v_1,v_2,v_3\right)$ & $(3,1)$ & $\left(t_1,t_1,t_2\right)$ & $\left(v_2,v_1,v_3\right)$ \\
\hline $(1,4)$ & $\left(-t_1,t_1,t_2\right)$ & $\left(-v_1,v_2,v_3\right)$ & $(4,1)$ & $\left(-t_1,t_1,t_2\right)$ & $\left(-v_2,v_1,v_3\right)$ \\
\hline $(1,5)$ & $\left(t_1,t_2,t_1\right)$ & $\left(v_1,v_3,v_2\right)$ & $(5,1)$ & $\left(t_1,t_2,t_1\right)$ & $\left(v_2,v_3,v_1\right)$ \\ 
\hline $(1,6)$ & $\left(-t_1,t_2,t_1\right)$ & $\left(-v_1,v_3,v_2\right)$ & $(6,1)$ & $\left(-t_1,t_2,t_1\right)$ & $\left(-v_2,v_3,v_1\right)$ \\
\hline $(2,3)$ & $\left(t_1,-t_1,t_2\right)$ & $\left(v_1,-v_2,v_3\right)$ & $(3,2)$ & $\left(t_1,-t_1,t_2\right)$ & $\left(v_2,-v_1,v_3\right)$ \\
\hline $(2,4)$ & $\left(t_1,t_1,-t_2\right)$ & $\left(v_1,v_2,-v_3\right)$ & $(4,2)$ & $\left(t_1,t_1,-t_2\right)$ & $\left(v_2,v_1,-v_3\right)$ \\
\hline $(2,5)$ & $\left(t_1,t_2,-t_1\right)$ & $\left(v_1,v_3,-v_2\right)$ & $(5,2)$ & $\left(t_1,t_2,-t_1\right)$ & $\left(v_2,v_3,-v_1\right)$ \\
\hline $(2,6)$ & $\left(t_1,-t_2,t_1\right)$ & $\left(v_1,-v_3,v_2\right)$ & $(6,2)$ & $\left(t_1,-t_2,t_1\right)$ & $\left(v_2,-v_3,v_1\right)$ \\
\hline $(3,5)$ & $\left(t_2,t_1,t_1\right)$ & $\left(v_3,v_1,v_2\right)$ & $(5,3)$ & $\left(t_2,t_1,t_1\right)$ & $\left(v_3,v_2,v_1\right)$ \\
\hline $(3,6)$ & $\left(t_2,-t_1,t_1\right)$ & $\left(v_3,-v_1,v_2\right)$ & $(6,3)$ & $\left(t_2,-t_1,t_1\right)$ & $\left(v_3,-v_2,v_1\right)$ \\
\hline $(4,5)$ & $\left(t_2,t_1,-t_1\right)$ & $\left(v_3,v_1,-v_2\right)$ & $(5,4)$ & $\left(t_2,t_1,-t_1\right)$ & $\left(v_3,v_2,-v_1\right)$ \\ 
\hline $(4,6)$ & $\left(-t_2,t_1,t_1\right)$ & $\left(-v_3,v_1,v_2\right)$ & $(6,4)$ & $\left(-t_2,t_1,t_1\right)$ & $\left(-v_3,v_2,v_1\right)$ \\
\hline
\end{tabular}
\label{tableb2}
\caption{Components of the Type-II twin plane normal and shear for the different variant pairs in a cubic-to-orthorhombic transition~\cite{41}.}
\end{table}
We note that for Seiner's CuAlNi specimen the components of the twinning elements become:
\begin{equation*}
\left(\begin{array}{c} u_1\\ u_2\\ u_3\end{array}\right)=\left(\begin{array}{c}
0.197977\\
-0.173644\\                                                          
0.00379754\end{array}\right),
\end{equation*}
\begin{equation*}
\begin{array}{cc}
\left(\begin{array}{c} t_1\\ t_2\end{array}\right)=\left(\begin{array}{c}
-0.688388\\                                                           
-0.228571\end{array}\right),&
\left(\begin{array}{c} v_1\\ v_2\\ v_3\end{array}\right)=\left(\begin{array}{c}
0.197977\\
-0.173644\\                                                          
0.00379754\end{array}\right).
\end{array}
\end{equation*}

For the habit plane elements, the calculations are purely numerical. In order to shorten the otherwise long tables that follow, let us write
\begin{equation*}
 \begin{array}{ccc}
  s_1=0.141221,& s_2=0.668151,& s_3=0.730501,\\
  s_4=0.261549,& s_5=0.727152,& s_6=0.634699,\\
  z_1=0.0244382,& z_2=0.0728267,& z_3=0.0575181,\\
  z_4=0.0123419,& z_5=0.0674388,& z_6=0.0671488.
 \end{array}
\end{equation*}

\begin{table}[ht]
\centering
\begin{tabular}{|c|c|c|c|c|c|}
\hline  pair & $m^{+}_{1}$ & $b^{+}_{1}$ & pair & $m^{+}_{1}$ & $b^{+}_{1}$ \\
\hline $(1,3)$ & $\left(-s_1, s_2, -s_3\right)$ & $\left(-z_1, -z_2, -z_3\right)$ & $(3,1)$ & $\left(-s_2, s_1, s_3\right)$ & $\left(z_2, z_1, z_3\right)$ \\
\hline $(1,4)$ & $\left(s_4, -s_5, -s_6\right)$ & $\left(z_4, z_5, -z_6\right)$ & $(4,1)$ & $\left(-s_5, s_4, s_6\right)$ & $\left(z_5, z_4, z_6\right)$ \\
\hline $(1,5)$ & $\left(s_1, s_3, -s_2\right)$ & $\left(z_1, z_3, z_2\right)$ & $(5,1)$ & $\left(-s_5, -s_6, -s_4\right)$ & $\left(z_5, -z_6, -z_4\right)$ \\ 
\hline $(1,6)$ & $\left(s_4, -s_6, -s_5\right)$ & $\left(z_4, -z_6, z_5\right)$ & $(6,1)$ & $\left(-s_2, -s_3, -s_1\right)$ & $\left(z_2, -z_3, -z_1\right)$ \\
\hline $(2,3)$ & $\left(s_4, -s_5, s_6\right)$ & $\left(z_4, z_5, z_6\right)$ & $(3,2)$ & $\left(-s_2, -s_1, s_3\right)$ & $\left(z_2, -z_1, z_3\right)$ \\
\hline $(2,4)$ & $\left(-s_1, s_2, s_3\right)$ & $\left(-z_1, -z_2, z_3\right)$ & $(4,2)$ & $\left(-s_5, -s_4, s_6\right)$ & $\left(z_5, -z_4, z_6\right)$ \\
\hline $(2,5)$ & $\left(s_4, s_6, -s_5\right)$ & $\left(z_4, z_6, z_5\right)$ & $(5,2)$ & $\left(-s_5, -s_6, s_4\right)$ & $\left(z_5, -z_6, z_4\right)$ \\
\hline $(2,6)$ & $\left(s_1, -s_3, -s_2\right)$ & $\left(z_1, -z_3, z_2\right)$ & $(6,2)$ & $\left(-s_2, -s_3, s_1\right)$ & $\left(z_2, -z_3, z_1\right)$ \\
\hline $(3,5)$ & $\left(s_3, s_1, -s_2\right)$ & $\left(z_3, z_1, z_2\right)$ & $(5,3)$ & $\left(s_3, -s_2, s_1\right)$ & $\left(z_3, z_2, z_1\right)$ \\
\hline $(3,6)$ & $\left(s_3, -s_1, -s_2\right)$ & $\left(z_3, -z_1, z_2\right)$ & $(6,3)$ & $\left(-s_6, s_5, -s_4\right)$ & $\left(-z_6, -z_5, -z_4\right)$ \\
\hline $(4,5)$ & $\left(s_6, s_4, -s_5\right)$ & $\left(z_6, z_4, z_5\right)$ & $(5,4)$ & $\left(s_3, -s_2, -s_1\right)$ & $\left(z_3, z_2, -z_1\right)$ \\ 
\hline $(4,6)$ & $\left(s_6, -s_4, -s_5\right)$ & $\left(z_6, -z_4, z_5\right)$ & $(6,4)$ & $\left(-s_6, s_5, s_4\right)$ & $\left(-z_6, -z_5, z_4\right)$ \\
\hline
\end{tabular}
\label{tableb3}
\caption{Components of the habit plane normal $m^{+}_{1}$ and shear $b^{+}_{1}$ for the different variant pairs in the cubic-to-orthorhombic transition of Seiner's CuAlNi specimen.}
\end{table}

\begin{table}[ht]
\centering
\begin{tabular}{|c|c|c|c|c|c|}
\hline  pair & $m^{-}_{1}$ & $b^{-}_{1}$ & pair & $m^{-}_{1}$ & $b^{-}_{1}$ \\
\hline $(1,3)$ & $\left(s_4, s_5, s_6\right)$ & $\left(z_4, -z_5, z_6\right)$ & $(3,1)$ & $\left(-s_5, -s_4, -s_6\right)$ & $\left(z_5, -z_4, -z_6\right)$ \\
\hline $(1,4)$ & $\left(-s_1, -s_2, s_3\right)$ & $\left(-z_1, z_2, z_3\right)$ & $(4,1)$ & $\left(-s_2, -s_1, -s_3\right)$ & $\left(z_2, -z_1, -z_3\right)$ \\
\hline $(1,5)$ & $\left(-s_4, -s_6, -s_5\right)$ & $\left(-z_4, -z_6, z_5\right)$ & $(5,1)$ & $\left(-s_2, s_3, s_1\right)$ & $\left(z_2, z_3, z_1\right)$ \\ 
\hline $(1,6)$ & $\left(-s_1, s_3, -s_2\right)$ & $\left(-z_1, z_3, z_2\right)$ & $(6,1)$ & $\left(-s_5, s_6, s_4\right)$ & $\left(z_5, z_6, z_4\right)$ \\
\hline $(2,3)$ & $\left(-s_1, -s_2, -s_3\right)$ & $\left(-z_1, z_2, -z_3\right)$ & $(3,2)$ & $\left(-s_5, s_4, -s_6\right)$ & $\left(z_5, z_4, -z_6\right)$ \\
\hline $(2,4)$ & $\left(s_4, s_5, -s_6\right)$ & $\left(z_4, -z_5, -z_6\right)$ & $(4,2)$ & $\left(-s_2, s_1, -s_3\right)$ & $\left(z_2, z_1, -z_3\right)$ \\
\hline $(2,5)$ & $\left(-s_1, -s_3, -s_2\right)$ & $\left(-z_1, -z_3, z_2\right)$ & $(5,2)$ & $\left(-s_2, s_3, -s_1\right)$ & $\left(z_2, z_3, -z_1\right)$ \\
\hline $(2,6)$ & $\left(-s_4, s_6, -s_5\right)$ & $\left(-z_4, z_6, z_5\right)$ & $(6,2)$ & $\left(-s_5, s_6, -s_4\right)$ & $\left(z_5, z_6, -z_4\right)$ \\
\hline $(3,5)$ & $\left(-s_6, -s_4, -s_5\right)$ & $\left(-z_6, -z_4, z_5\right)$ & $(5,3)$ & $\left(-s_6, -s_5, -s_4\right)$ & $\left(-z_6, z_5, -z_4\right)$ \\
\hline $(3,6)$ & $\left(-s_6, s_4, -s_5\right)$ & $\left(-z_6, z_4, z_5\right)$ & $(6,3)$ & $\left(s_3, s_2, s_1\right)$ & $\left(z_3, -z_2, z_1\right)$ \\
\hline $(4,5)$ & $\left(-s_3, -s_1, -s_2\right)$ & $\left(-z_3, -z_1, z_2\right)$ & $(5,4)$ & $\left(-s_6, -s_5, s_4\right)$ & $\left(-z_6, z_5, z_4\right)$ \\ 
\hline $(4,6)$ & $\left(-s_3, s_1, -s_2\right)$ & $\left(-z_3, z_1, z_2\right)$ & $(6,4)$ & $\left(s_3, s_2, -s_1\right)$ & $\left(z_3, -z_2, -z_1\right)$ \\
\hline
\end{tabular}
\label{tableb4}
\caption{Components of the habit plane normal $m^{-}_{1}$ and shear $b^{-}_{1}$ for the different variant pairs in the cubic-to-orthorhombic transition of Seiner's CuAlNi specimen.}
\end{table}

\begin{table}[ht]
\centering
\begin{tabular}{|c|c|c|c|c|c|}
\hline  pair & $m^{+}_{2}$ & $b^{+}_{2}$ & pair & $m^{+}_{2}$ & $b^{+}_{2}$ \\
\hline $(1,3)$ & $\left(-s_2, s_1, s_3\right)$ & $\left(z_2, z_1, z_3\right)$ & $(3,1)$ & $\left(s_4, s_5, s_6\right)$ & $\left(z_4, -z_5, z_6\right)$ \\
\hline $(1,4)$ & $\left(-s_5, s_4, s_6\right)$ & $\left(z_5, z_4, z_6\right)$ & $(4,1)$ & $\left(-s_1, -s_2, s_3\right)$ & $\left(-z_1, z_2, z_3\right)$ \\
\hline $(1,5)$ & $\left(-s_5, -s_6, -s_4\right)$ & $\left(z_5, -z_6, -z_4\right)$ & $(5,1)$ & $\left(s_1, s_3, -s_2\right)$ & $\left(z_1, z_3, z_2\right)$ \\ 
\hline $(1,6)$ & $\left(-s_2, -s_3, -s_1\right)$ & $\left(z_2, -z_3, -z_1\right)$ & $(6,1)$ & $\left(s_4, -s_6, -s_5\right)$ & $\left(z_4, -z_6, z_5\right)$ \\
\hline $(2,3)$ & $\left(-s_2, -s_1, s_3\right)$ & $\left(z_2, -z_1, z_3\right)$ & $(3,2)$ & $\left(s_4, -s_5, s_6\right)$ & $\left(z_4, z_5, z_6\right)$ \\
\hline $(2,4)$ & $\left(-s_5, -s_4, s_6\right)$ & $\left(z_5, -z_4, z_6\right)$ & $(4,2)$ & $\left(-s_1, s_2, s_3\right)$ & $\left(-z_1, -z_2, z_3\right)$ \\
\hline $(2,5)$ & $\left(-s_5, -s_6, s_4\right)$ & $\left(z_5, -z_6, z_4\right)$ & $(5,2)$ & $\left(s_1, s_3, s_2\right)$ & $\left(z_1, z_3, -z_2\right)$ \\
\hline $(2,6)$ & $\left(-s_2, -s_3, s_1\right)$ & $\left(z_2, -z_3, z_1\right)$ & $(6,2)$ & $\left(s_4, -s_6, s_5\right)$ & $\left(z_4, -z_6, -z_5\right)$ \\
\hline $(3,5)$ & $\left(s_3, -s_2, s_1\right)$ & $\left(z_3, z_2, z_1\right)$ & $(5,3)$ & $\left(s_3, s_1, -s_2\right)$ & $\left(z_3, z_1, z_2\right)$ \\
\hline $(3,6)$ & $\left(-s_6, s_5, -s_4\right)$ & $\left(-z_6, -z_5, -z_4\right)$ & $(6,3)$ & $\left(s_3, -s_1, -s_2\right)$ & $\left(z_3, -z_1, z_2\right)$ \\
\hline $(4,5)$ & $\left(s_3, -s_2, -s_1\right)$ & $\left(z_3, z_2, -z_1\right)$ & $(5,4)$ & $\left(s_6, s_4, -s_5\right)$ & $\left(z_6, z_4, z_5\right)$ \\ 
\hline $(4,6)$ & $\left(-s_6, s_5, s_4\right)$ & $\left(-z_6, -z_5, z_4\right)$ & $(6,4)$ & $\left(s_6, -s_4, -s_5\right)$ & $\left(z_6, -z_4, z_5\right)$ \\
\hline
\end{tabular}
\label{tableb5}
\caption{Components of the habit plane normal $m^{+}_{2}$ and shear $b^{+}_{2}$ for the different variant pairs in the cubic-to-orthorhombic transition of Seiner's CuAlNi specimen.}
\end{table}
\begin{table}[ht]
\centering
\begin{tabular}{|c|c|c|c|c|c|}
\hline  pair & $m^{-}_{2}$ & $b^{-}_{2}$ & pair & $m^{-}_{2}$ & $b^{-}_{2}$ \\
\hline $(1,3)$ & $\left(-s_5, -s_4, -s_6\right)$ & $\left(z_5, -z_4, -z_6\right)$ & $(3,1)$ & $\left(-s_1, s_2, -s_3\right)$ & $\left(-z_1, -z_2, -z_3\right)$ \\
\hline $(1,4)$ & $\left(-s_2, -s_1, -s_3\right)$ & $\left(z_2, -z_1, -z_3\right)$ & $(4,1)$ & $\left(s_4, -s_5, -s_6\right)$ & $\left(z_4, z_5, -z_6\right)$ \\
\hline $(1,5)$ & $\left(-s_2, s_3, s_1\right)$ & $\left(z_2, z_3, z_1\right)$ & $(5,1)$ & $\left(-s_4, -s_6, -s_5\right)$ & $\left(-z_4, -z_6, z_5\right)$ \\ 
\hline $(1,6)$ & $\left(-s_5, s_6, s_4\right)$ & $\left(z_5, z_6, z_4\right)$ & $(6,1)$ & $\left(-s_1, s_3, -s_2\right)$ & $\left(-z_1, z_3, z_2\right)$ \\
\hline $(2,3)$ & $\left(-s_5, s_4, -s_6\right)$ & $\left(z_5, z_4, -z_6\right)$ & $(3,2)$ & $\left(-s_1, -s_2, -s_3\right)$ & $\left(-z_1, z_2, -z_3\right)$ \\
\hline $(2,4)$ & $\left(-s_2, s_1, -s_3\right)$ & $\left(z_2, z_1, -z_3\right)$ & $(4,2)$ & $\left(s_4, s_5, -s_6\right)$ & $\left(z_4, -z_5, -z_6\right)$ \\
\hline $(2,5)$ & $\left(-s_2, s_3, -s_1\right)$ & $\left(z_2, z_3, -z_1\right)$ & $(5,2)$ & $\left(-s_4, -s_6, s_5\right)$ & $\left(-z_4, -z_6, -z_5\right)$ \\
\hline $(2,6)$ & $\left(-s_5, s_6, -s_4\right)$ & $\left(z_5, z_6, -z_4\right)$ & $(6,2)$ & $\left(-s_1, s_3, s_2\right)$ & $\left(-z_1, z_3, -z_2\right)$ \\
\hline $(3,5)$ & $\left(-s_6, -s_5, -s_4\right)$ & $\left(-z_6, z_5, -z_4\right)$ & $(5,3)$ & $\left(-s_6, -s_4, -s_5\right)$ & $\left(-z_6, -z_4, z_5\right)$ \\
\hline $(3,6)$ & $\left(s_3, s_2, s_1\right)$ & $\left(z_3, -z_2, z_1\right)$ & $(6,3)$ & $\left(-s_6, s_4, -s_5\right)$ & $\left(-z_6, z_4, z_5\right)$ \\
\hline $(4,5)$ & $\left(-s_6, -s_5, s_4\right)$ & $\left(-z_6, z_5, z_4\right)$ & $(5,4)$ & $\left(-s_3, -s_1, -s_2\right)$ & $\left(-z_3, -z_1, z_2\right)$ \\ 
\hline $(4,6)$ & $\left(s_3, s_2, -s_1\right)$ & $\left(z_3, -z_2, -z_1\right)$ & $(6,4)$ & $\left(-s_3, s_1, -s_2\right)$ & $\left(-z_3, z_1, z_2\right)$ \\
\hline
\end{tabular}
\label{tableb6}
\caption{Components of the habit plane normal $m^{-}_{2}$ and shear $b^{-}_{2}$ for the different variant pairs in the cubic-to-orthorhombic transition of Seiner's CuAlNi specimen.}
\end{table}
We note that the above components of the habit plane elements are only approximate. However, they have been verified by checking the relation
\[(U_s+\lambda n\otimes a)(U_s+\lambda a\otimes n)=(\mathbf{1}+m\otimes b)(\mathbf{1}+b\otimes m).\]

We can now verify that it is indeed possible to construct the microstructure at the corner that reduces the energy as in Lemma~\ref{prop:corner}. To see this we need to consider each possible value
of $s\in\left\{1,\ldots,6\right\}$ and check the corners at which the required microstructure can be constructed.

As in the proof of Lemma~\ref{prop:corner}, suppose that the coordinate system has been chosen in such a way that the edges of $\Omega$ are parallel to the axes and each corner of $\Omega$ lies in a different octant.
Let us write $x=(x_1,x_2,x_3)^T$ for the coordinates of the point $x\in\mathbb{R}^3$ in the standard basis of $\mathbb{R}^3$ and denote the octants as follows:
\begin{eqnarray*}
 O_1&:=&\left\{x\in\mathbb{R}^3:x_1>0,\,x_2>0,\,x_3>0\right\},\:\:O_5=-O_1\\
 O_2&:=&\left\{x\in\mathbb{R}^3:x_1<0,\,x_2>0,\,x_3>0\right\},\:\:O_6=-O_2\\
 O_3&:=&\left\{x\in\mathbb{R}^3:x_1>0,\,x_2<0,\,x_3>0\right\},\:\:O_7=-O_3\\
 O_4&:=&\left\{x\in\mathbb{R}^3:x_1>0,\,x_2>0,\,x_3<0\right\},\:\:O_8=-O_4.
\end{eqnarray*}
The results are summarized in Tables~8 and~9 for the habit plane solutions using $\lambda=\lambda^{\ast}\in(0,1/2)$ and $\lambda=1-\lambda^{\ast}\in(1/2,1)$ respectively and are explained below:
\begin{table}[ht]
\centering
\begin{tabular}{|c|c|c|c|c|c|c|c|}
\hline  \backslashbox{$s$}{$l$} & $1$ & $2$ & $3$ & $4$ & $5$ & $6$ & $O$ \\
\hline $1$ & - & - & $m^{-}_{1}$ & $-m^{+}_{1}$ & $m^{+}_{1}$ & $-m^{-}_{1}$ & $1,2,5,6$ \\
\hline $2$ & - & - & $m^{-}_{1}$ & $m^{+}_{1}$ & $m^{+}_{1}$ & $m^{-}_{1}$ & $3,4,7,8$ \\
\hline $3$ & $-m^{-}_{1}$ & $-m^{+}_{1}$ & - & - & $m^{-}_{1}$ & $m^{+}_{1}$ & $1,3,5,7$ \\ 
\hline $4$ & $m^{+}_{1}$ & $-m^{-}_{1}$ & - & - & $m^{-}_{1}$ & $-m^{+}_{1}$ & $2,4,6,8$ \\
\hline $5$ & $m^{-}_{1}$ & $-m^{+}_{1}$ & $m^{-}_{1}$ & $-m^{+}_{1}$ & - & - & $1,4,5,8$ \\
\hline $6$ & $m^{+}_{1}$ & $m^{-}_{1}$ & $m^{-}_{1}$ & $m^{+}_{1}$ & - & - & $2,3,6,7$ \\
\hline
\end{tabular}
\label{tableb7}
\caption{For each $s\in\left\{1,\ldots,6\right\}$, the table gives all the habit plane normals with $\lambda\in(0,1/2)$ that lie in the same octant as the Type-II twin normal for the possible values of $l$. The final column gives the octants in which the construction at the corner is possible.}
\end{table}
\begin{table}[ht]
\centering
\begin{tabular}{|c|c|c|c|c|c|c|c|}
\hline   \backslashbox{$s$}{$l$} & $1$ & $2$ & $3$ & $4$ & $5$ & $6$ & $O$ \\
\hline $1$ & - & - & $-m^{+}_{2}$ & $m^{-}_{2}$ & $m^{-}_{2}$ & $m^{+}_{2}$ & $1,2,5,6$ \\
\hline $2$ & - & - & $-m^{+}_{2}$ & $-m^{-}_{2}$ & $m^{-}_{2}$ & $-m^{+}_{2}$ & $3,4,7,8$ \\
\hline $3$ & $m^{-}_{2}$ & $m^{-}_{2}$ & - & - & $m^{-}_{2}$ & $m^{-}_{2}$ & $1,3,5,7$ \\ 
\hline $4$ & $-m^{+}_{2}$ & $m^{+}_{2}$ & - & - & $-m^{+}_{2}$ & $m^{+}_{2}$ & $2,4,6,8$ \\
\hline $5$ & $m^{+}_{2}$ & $m^{+}_{2}$ & $m^{-}_{2}$ & $m^{-}_{2}$ & - & - & $1,4,5,8$ \\
\hline $6$ & $-m^{-}_{2}$ & $m^{-}_{2}$ & $m^{+}_{2}$ & $-m^{+}_{2}$ & - & - & $2,3,6,7$ \\
\hline
\end{tabular}
\label{tableb8}
\caption{For each $s\in\left\{1,\ldots,6\right\}$, the table gives all the habit plane normals with $\lambda\in(1/2,1)$ that lie in the same octant as the Type-II twin normal for the possible values of $l$. The final column gives the octants in which the construction at the corner is possible.}
\end{table}

For example, let $s=1$. From Table~3 and the lattice parameters of Seiner's specimen, the Type-II twin normal is given by
\begin{equation*}
n=n^{II}=\left\{\begin{array}{rl}
(-0.688388,-0.688388,-0.228571)^T,& l=3\\
(0.688388,-0.688388,-0.228571)^T,& l=4\\
(-0.688388,-0.228571,-0.688388)^T,& l=5\\
(0.688388,-0.228571,-0.688388)^T,& l=6.
\end{array}\right.
\end{equation*}
For each $l\in\left\{3,\ldots,6\right\}$ and $\lambda\in(0,1/2)$, we see from Tables~4 and~5 that we may choose habit plane normal $m$ in the same octant as $n^{II}$ as follows:
\begin{equation*}
m=\left\{\begin{array}{rl}
m^{-}_{1},& l=3\\
-m^{+}_{1},& l=4\\
m^{+}_{1},& l=5\\
-m^{-}_{1},& l=6
\end{array}\right.
\end{equation*}
and similarly, for $\lambda\in(1/2,1)$ and Tables~6,~7,
\begin{equation*}
m=\left\{\begin{array}{rl}
-m^{+}_{2},& l=3\\
m^{-}_{2},& l=4\\
m^{-}_{2},& l=5\\
m^{+}_{2},& l=6.
\end{array}\right.
\end{equation*}

Note that we have multiplied some of the above habit plane normals by $-1$ implying that we also need to multiply the corresponding shears by $-1$ so that the tensor product $m\otimes b$ remains unaltered.
Then, for $l=3,5$ the Type-II twin normal lies in $O_5$ and we may choose a habit plane normal $m$ in the same octant such that neither $m$ nor $n^{II}$ are perpendicular to any of the edges of $\Omega$ (recall that the edges are along the principal cubic axes). 
Also, for $l=4,6$ the Type-II twin normal lies in $O_6$ and we may again choose an appropriate habit plane normal.

Clearly, by considering $-a^{II}$, $-n^{II}$ and respectively $-m$, $-b$, we may also choose twin and habit plane normals lying in the octants $O_1=-O_5$ and $O_2=-O_6$. The construction is now possible provided that the underlying deformation remains injective, i.e.~provided that
\[U^{-1}_{1}\tilde{b}\cdot n<0,\]
where $\tilde{b}=Rb$, $R\in SO(3)$ as in (\ref{eq:appendixb2}). Note that multiplying all the elements of the twin and habit planes by $-1$ leaves the above dot product unchanged. The rotation $R$ can be easily computed in each case by calculating
\[R=(U_1+\lambda a\otimes n)(\mathbf{1}+b\otimes m)^{-1}.\]
Then one finds that $U^{-1}_{1}\tilde{b}\cdot n=-0.0226521$ for any $l\in\left\{3,\ldots,6\right\}$ and any choice of the above habit plane elements.

The case $s=2,\ldots,6$ is identical to the above. The only non-trivial part is verifying the inequality
\[U^{-1}_{s}\tilde{b}\cdot n<0,\]
for the appropriate choice of habit plane normals. In fact, the value of the above dot product remains $-0.0226521$ for any $s$ and $l$, as long as we have chosen a habit plane normal that lies in the same quadrant as the Type-II twin normal.
This is surprising and leads one to conjecture that there must be some underlying structure for this to be true; nevertheless, algebraic complexity prevents us from unravelling this structure.

\def\cprime{$'$}


\end{document}